\newtheorem{theorem}{Theorem}[section]
\newtheorem{lemma}{Lemma}[section]
\newtheorem{corollary}{Corollary}[section]
\newtheorem{proposition}{Proposition}[section]
\newtheorem*{definition}{Definition}
\newtheorem*{theorem*}{Theorem}
\theoremstyle{definition}
\newtheorem{remark}[theorem]{Remark}
\numberwithin{equation}{subsection}
\newcommand{\ignore}[1]{}
\newcommand{\mynote}[1]{}
\begin{document}
\setcounter{section}{0}
\title{\bf Embeddings of Rank- $2$ tori in Algebraic groups}
\author{Neha Hooda\\ \small {Indian Statistical Institute, 7-S.J.S. Sansanwal Marg} \\ \small {New Delhi 110016, India} \\ \small {e-mail: neha.hooda2@
gmail.com}}
\date{}
\maketitle
\begin{abstract}
\noindent
Let $k$ be a field of characteristic different from $2$ and $3$. In this paper we study connected simple algebraic groups of type $A_2$, $G_2$ and $F_4$ defined over $k$, via their rank-$2$ $k$-tori. 
Simple, simply connected groups of type $A_2$ play a pivotal role in the study of exceptional groups and this aspect is brought out by the results in this paper. We refer to tori, which are maximal tori of $A_n$ type groups, as unitary tori. We discuss conditions necessary for a rank-$2$ unitary $k$-torus to embed in simple $k$-groups of type $A_2$, $G_2$ and $F_4$ in terms of the mod-$2$ Galois cohomological invariants attached with these groups. We calculate the number of rank-$2$ $k$-unitary tori generating these algebraic groups (in fact exhibit such tori explicitly). The results in this paper and our earlier paper (\cite{NH}) show that the mod-$2$ invariants of groups of type $G_2,F_4$ and $A_2$ are controlled by their $k$-subgroups of type $A_1$ and $A_2$ as well as the unitary $k$-tori embedded in them. 
\end{abstract}
\section{\bf Introduction}
The main aim of this paper is to investigate embeddings of rank-$2$ tori in groups of type $G_2,F_4$ and $A_2$. In our earlier work (\cite{NH}), we studied $k$-embeddings of connected, simple algebraic groups of type $A_1$ and $A_2$ in simple groups of type $G_2$ and $F_4$, defined over $k$, in terms of their respective mod-$2$ Galois cohomological invariants. We also showed that these groups are generated by their $A_2$ type $k$-subgroups. Owing to these results, importance of groups of type $A_2$ becomes evident in studying exceptional groups.

In the present paper, the mod-$2$ invariants of groups of type $F_4,G_2$ and $A_2$ are studied via the embeddings of certain rank-$2$ $k$-tori. To a simple, simply connected algebraic group $G$ of type $F_4$, $G_2$ or $A_2$ defined over $k$, one attaches certain mod-$2$ Galois cohomological invariants, which are the Arason invariants of 
some Pfister forms attached to 
these groups. Let $G$ be a group of type $F_4$ defined over $k.$ Then there exists an Albert algebra $A$ over $k$ such that $G={\bf Aut}(A)$, the full group of automorphisms of $A$. To any Albert algebra $A$, one attaches a certain reduced Albert algebra $\mathcal{H}_3(C,\Gamma)$, for an octonion algebra $C$ over $k$ and $\Gamma = Diag( \gamma_1,\gamma_2,\gamma_3) \in GL_3(k)$ (\cite{PR}). This defines two mod-$2$ invariants for 
$G={\bf Aut}(A)$:
$$f_3(G)=f_3(A):=e_3(n_C)\in H^3(k,\mathbb{Z}/2\mathbb{Z}),$$
$$f_5(G)=f_5(A):= e_5(n_C \otimes <<-\gamma_1^{-1}\gamma_2, -\gamma_2^{-1}\gamma_3>>)\in H^5(k,\mathbb{Z}/2\mathbb{Z}),$$ where $e_3$ and $e_5$ are the Arason invariants of the respective Pfister forms and $n_C$ is the norm form of $C$. 
We set $Oct(G)=Oct(A):=C$. 
Similarly, an algebraic group of type $G_2$ defined over $k$ is precisely of the form 
${\bf Aut}(C)$, for an octonion algebra $C$ over $k$ with norm form $n_C$, and this is classified by the Arason invariant $f_3(G)=e_3(n_C)\in H^3(k,\mathbb{Z}/2\mathbb{Z})$. Define $Oct(G):= C$. Finally, let $G$ be a simple, simply connected group of type $A_2$ defined over $k$. To such a group $G$, one attaches an invariant $f_3(G)\in H^3(k,\mathbb{Z}/2\mathbb{Z})$, the Arason invariant of a $3$-fold Pfister form over $k$, which is the norm form of an octonion algebra $C$. We define $Oct(G):= C$ (see \S 2.5).

Let $L, K$ be \'etale algebras over $k$ of dimensions $3, 2$ resp. and $T= {\bf SU}(L\otimes K, 1 \otimes ~\bar{}~),$ where ~$\bar{}$~ denotes the non-trivial involution on $K$. Then $T$ is a torus defined over $k$, referred to in the paper as the $K$-{\bf unitary torus} associated with the pair $(L, K)$. For this torus, we let $q_T:= <1, -\alpha \delta>=N_{k(\sqrt{\alpha\delta})/k}$, where $Disc(L)= k(\sqrt{\delta})$ and $K= k(\sqrt {\alpha})$. Such tori are important as they occur as maximal tori in simple, simply connected groups of type $A_2$ and $G_2$. We will be interested in conditions under which such tori embed in groups of type $A_2, G_2$ or $F_4$ defined over $k$. A unitary torus $T$ will be called {\bf distinguished} over $k$ if $q_T$ is hyperbolic over $k$, or equivalently, if $Disc(L)=K$. We shall see that the behaviour of the invariant $f_3$ for groups of type $A_2$ and $G_2$ is somewhat analogous to the behaviour of the invariant $f_5$ for groups of type $F_4$, as far as embeddings of unitary tori in such groups is our concern. 

Now we describe our main results. Let $L, K$ be \'etale algebras over $k$ of dimensions $3, 2$ resp. and $T$ be the $K$-{\bf unitary torus} associated with the pair $(L, K)$. Let $G$ be a simple, simply connected $k$-group of type $A_2$ or $G_2$. We prove that if $T \hookrightarrow G$ over $k$, then $q_T$ divides $f_3(G)$ (see Theorem \ref{fact1}). We show that $G$ contains a distinguished maximal $k$-torus if and only if $f_3(G)= 0$ (see  Theorems \ref{g2to}, \ref{dist}). Similarly, for the groups of type $F_4$ we prove the following:  Let $G$ be a simple, simply connected $k$-group of type $F_4$. If $T \hookrightarrow G$ over $k$, then $q_T$ divides $f_5(G)$ (see Theorems \ref{albi}). We show that a $k$-group $G$ of type $F_4$ contains a distinguished $k$-torus if and only if $f_5(G) = 0$ (see Theorems \ref{f5}). Let $G$ be a $k$-group of type $G_2$ or a simply connected, simple group of type $A_2$ defined over $k$. We prove that a $k$-embedding $T \hookrightarrow G$ forces $K \subseteq Oct(G)$ (i.e, the Pfister form $<1, -\alpha>$ divides $f_3(G)$) (see Theorem \ref{KC}). This fails to hold for groups of type $F_4$, but holds in the special case when $L$ has trivial discriminant (see Theorem \ref{KC1}). Let $G$ be a $k$-group of type $F_4$. Then the existence of a $k$-embedding $T \hookrightarrow G$ for a $K=k(\sqrt{\alpha})$-unitary torus $T$ implies that the  Pfister form $<1,-\alpha>$ divides $f_5(G)$ (see Theorem \ref{albi1}).

Let $L, K$ be \'etale algebras over $k$ of dimensions $3, 2$ resp. and let $(E,\tau)=(L\otimes K,1\otimes~\bar{}~)$, where $x\mapsto \overline{x}$ is the non-trivial $k$-automorphism of $K$. We define \'etale Tits process algebras $J_1$ and $J_2$ arising from the pair $(L,K)$ to be $L$-isomorphic, if there exists a $k$-isomorphism $J_1\rightarrow J_2$ which restricts to the subalgebra 
$L$ of $J_1$ and $J_2$. We establish a relation between $H^1(k,{\bf SU}(E,\tau))$ and the set of $L$-isomorphism classes of \'etale Tits process algebras arising from $(L,K)$ (see Theorem \ref{isomclas}). We also show that $H^1(k,{\bf SU}(E,\tau))=0$ if and only if all \'etale Tits process algebras arising from $(L,K)$ are $L$-isomorphic to the \'etale Tits process $J(E,\tau, 1,1)$ (see Theorem \ref{titsisom}).  

We study next the effect of the presence of a unitary torus $T$ as above in groups of type $A_2,G_2$ and $F_4$ when $H^1(k,T)=0$. We show that a $k$-group $G$ of type $G_2$ contains a maximal $k$-torus 
$T\subset G$ such that $H^1(k,T)=0$ if and only if the associated mod-$2$ invariant $f_3(G)$ vanishes, i.e, $G$ splits (see Theorem \ref{introg2}). We derive a similar result for simple, simply connected $k$-groups of type $A_2$: if such a group $G$ has a maximal $k$-torus $T$ with $H^1(k,T)=0$, then $f_3(G)=Oct(G)$ splits.  The converse holds in the case when the group arises from a matrix algebra (see Theorem \ref{cohomoo}). For $k$-groups of type $F_4$, we can prove a weaker result. Let $G$ be a group of type $F_4$ (resp. $G_2$ or a simple, simply connected group of type $A_2$) defined over a field $k$. Assume that there is a $k$-embedding $T \hookrightarrow G$ for a unitary torus $T$ associated to an \'etale pair $(L,K)$ as above. If $H^1({\bf U}(L \otimes K, ~1 \otimes~\bar{}~))= 0$ then $f_5(A)= 0$ (resp. $Oct(G)$ splits) (see Theorem \ref{introf4}).

In the final section of this paper, we compute the number of rank-$2$ $k$-tori ($k$ a perfect field) generating simple, simply connected $k$-groups of type $A_2$, $G_2$, and $F_4$, arising from division algebras and for $k$-subgroups of type $D_4$ of ${\bf Aut}(A)$, where $A$ is an Albert division algebra. In fact, we explicitly exhibit such $k$-tori in each case.
It seems likely that these numbers are minimal in each case. The numbers are mentioned in the table below. 
\begin{table}[ht]
\caption{Number of $k$-tori required for generation of groups} 
\centering 
\begin{tabular}{c c } 
\hline\hline 
Type of group & Number of rank-$2$ $k$-tori required for generation   \\ [1ex] 
\hline 
$A_2$ & 2  \\ 
$G_2$ & 3  \\
$D_4$ & 3  \\
$F_4$ & 4   \\ [1ex]
\hline 
\end{tabular}
\label{table:nonlin} 
\end{table}
\vskip1mm
\noindent
At the time of submission of this paper, we discovered the paper (\cite{CPT}) by C. Beli, P. Gille and T.-Y, Lee, posted recently on the math arXiv.  They have studied maximal tori in groups of type $G_2$. Some of our results on groups of type $G_2$ partially match with results in this paper (see \cite{CPT}, Proposition 4.3.1., Corollary 4.4.2., Remarks 5.2.5. (b), Proposition 5.2.6 (i)), however the scope of our paper and methods of proofs are different. 
\vskip2mm
\noindent
{\bf Structure of the paper:} In this paper, some of the results are valid over fields of general characteristics, however, for simplicity, we will work over a base field $k$ of characteristic different from $2$ and $3$. We now proceed to describe the structure of the paper briefly: 

\S 2 contains material, mainly preliminary in nature, on Albert algebras, octonion algebras, algebraic groups and the theory of unitary involutions on central simple algebras. We also fix some notation and terminology to be used in the rest of the paper. In \S 3, \S 4, we study the $k$-embeddings of rank-$2$ unitary $k$-tori in algebraic groups of type $A_2,~G_2$ and $F_4$ defined over $k$, in terms of the mod-$2$ Galois cohomological invariants attached to these groups. In \S 4.1, we discuss cohomology computations of unitary tori. \S 5 contains some applications of the cohomology computations in \S 4.1 to \'etale Tits process algebras. In \S 5.1 we mainly discuss consequences of presence of maximal $k$-tori $T$ with $H^1(k,T)=0$, in $k$-groups of type $A_2$ and $G_2$, on the mod-$2$ invariants of such groups. In the final section \S 6, we prove the generation of $k$-groups of type $A_2,~G_2$ and $F_4$ (arising from division algebras) by their rank-$2$ unitary $k$-tori. We also compute the numbers of such tori, and exhibit such tori explicitly, in each of these cases.       
\section{Preliminaries}
\subsection{\bf Notations} In this section we collect some notations which will be used in the paper. Let $A$ be an associative algebra. By $A^*$ we shall denote the group of units in $A$. In particular, for a field $k$, $k^*:= k -\{0\}$. Let $L$ be a finite \'etale extension of $k$. 
We will denote the norm and the trace maps by $N_{L/k}$, $T_{L/k}$ respectively. By $R_{L/k}^{(1)}(\mathbb{G}_m)$ we shall denote the $k$-torus 
$Ker\{ N_{L/k}: L^* \rightarrow k^*\}$ of norm $1$ elements in $L$ and, when convenient, this torus will also be denoted by ${\bf L}^{(1)}$. Let $G$ be an algebraic group defined over $k$. By $G(k)$ we denote the group of $k$-rational points in $G$. For a finite \'etale extension $L$ of $k$ the group of $k$-points of ${\bf L}^{(1)}$ will be denoted by $L^{(1)}$. We define an algebraic group $G$ to be simple if $G$ has no non-trivial proper connected normal subgroups.
By $<<a_1, a_2,\cdots,a_n>>$ we shall mean the $n$-fold Pfister form $ <1, -a_1> \otimes  <1, -a_2> \otimes\cdots\otimes  <1, -a_n>$ over $k$.
\noindent
In the paper, unadorned tensor products will be understood to be over base fields and dimensions, when not specified, are over the base fields. For an object $X$ defined over $k$, $X \otimes_k L$ will denote the base change $X \times_k L$ of $X$. By $K=k(\sqrt{\alpha})$ we denote the quadratic algebra $k[x]/(x^2-\alpha)$ for $\alpha \in k^*$.
\subsection{\bf Unitary Involutions and their invariants}
We need some results form the theory of unitary involutions on central simple algebras, for details we refer to (\cite{HKRT}, \cite{KMRT}). Let $k$ be a field with characteristic different from $2$. Let $K$ be a quadratic \'etale extension of $k$ and let $B$ be a central simple algebra of degree $3$ over $K$ with an involution $\sigma$ of the second kind. Let $T_B$ be the reduced trace map on $B$ and $Q_{\sigma}$ be the restriction of the trace form $Q: (x, y) \rightarrow T_B(xy)$ to $(B, \sigma)_+$, the $k$-space of symmetric elements of $B$. Then the decomposition of  $Q_{\sigma}$ is given by 
\begin{proposition}   ( \cite{HKRT}, \S 4) Let $K=k(\sqrt{\alpha})$. Then there exist $b,c \in k^*$ such that,
$$Q_{\sigma}\cong  <1,1,1> \bot  <2>.<<\alpha>>.<-b,-c,bc>.$$
\end{proposition}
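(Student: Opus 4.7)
I would approach this by first computing $Q_\sigma$ explicitly in the split case $B=M_3(K)$ and then reducing the general case via a suitable étale subalgebra.

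For the split case, by a Skolem--Noether-type reduction we may assume $\sigma(X)=\beta\bar{X}^t\beta^{-1}$ with $\beta=\mathrm{diag}(\beta_1,\beta_2,\beta_3)\in GL_3(k)$ (diagonalizing the hermitian matrix defining $\sigma$). The symmetric matrices then satisfy $X_{ii}\in k$ and $X_{ji}=(\beta_j/\beta_i)\overline{X_{ij}}$. Parametrizing the independent off-diagonal entries as $X_{ij}=a_{ij}+b_{ij}\sqrt{\alpha}$ for $i<j$ and computing,
\[
\mathrm{tr}(X^2)\;=\;\sum_i X_{ii}^2\;+\;2\sum_{i<j}\frac{\beta_j}{\beta_i}\bigl(a_{ij}^2-\alpha b_{ij}^2\bigr).
\]
Reading off the right-hand side exhibits $Q_\sigma\cong\langle 1,1,1\rangle\perp\langle 2\rangle\langle\langle\alpha\rangle\rangle\otimes\langle \beta_2/\beta_1,\beta_3/\beta_1,\beta_3/\beta_2\rangle$. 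Since $(\beta_2/\beta_1)(\beta_3/\beta_1)(\beta_3/\beta_2)=(\beta_3/\beta_1)^2$ is a square, the $3$-dim form has trivial discriminant and therefore equals $\langle -b,-c,bc\rangle$ for $b=-\beta_2/\beta_1$, $c=-\beta_3/\beta_1$.

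For the general case, I would invoke the classical structure result for degree-$3$ algebras with involution of the second kind (see \cite{HKRT}~\S 3 or \cite{KMRT},~(19.14)): $B$ contains a $\sigma$-stable maximal étale $K$-subalgebra of the form $E=L\otimes_k K$, with $L\subset(B,\sigma)_+$ cubic étale over $k$ and $\sigma\vert_L=\mathrm{id}$. Orthogonally decomposing $(B,\sigma)_+=L\oplus V$ with respect to $Q_\sigma$, we get $Q_\sigma\vert_L=T_{L/k}$; the $6$-dimensional complement $V$ is a free rank-$1$ $E$-module carrying a natural hermitian form (coming from the reduced trace of $B$), and its transfer to $k$ identifies $Q_\sigma\vert_V=\langle 2\rangle\langle\langle\alpha\rangle\rangle\cdot T_{L/k}(\langle\mu\rangle)$ for some $\mu\in L^*$. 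This gives
\[
Q_\sigma\;\cong\;T_{L/k}\;\perp\;\langle 2\rangle\langle\langle\alpha\rangle\rangle\cdot T_{L/k}(\langle\mu\rangle),
\]
and a Witt-ring rearrangement---using identities of the form $\langle d,-d\alpha\rangle\cong\langle d\rangle\cdot\langle\langle\alpha\rangle\rangle$ to shift one-dimensional summands between the two orthogonal pieces---rewrites this in the claimed normal form $\langle 1,1,1\rangle\perp\langle 2\rangle\langle\langle\alpha\rangle\rangle\langle -b,-c,bc\rangle$.

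\textit{Main obstacle.} The principal technical step is the final Witt-ring rearrangement. Since $T_{L/k}$ differs from $\langle 1,1,1\rangle$ by a form whose class modulo $I^2(k)$ equals $\langle\langle\mathrm{disc}(L)\rangle\rangle$, isolating the $\langle 1,1,1\rangle$ summand requires absorbing this discrepancy into the $\langle\langle\alpha\rangle\rangle$-multiplied piece; this relies on a compatibility between $\mathrm{disc}(L)$ and $N_{K/k}(K^*)$ forced by the way $L\otimes K$ sits in $B$. The flexibility in the choice of $b,c\in k^*$ (subject only to $\langle -b,-c,bc\rangle$ having trivial discriminant) provides exactly the degrees of freedom needed to accomplish this rearrangement, with the split-case formula serving as the template for verifying the final identification.
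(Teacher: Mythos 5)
The paper offers no proof of this proposition at all: it is quoted verbatim from (\cite{HKRT}, \S 4), so the only comparison available is with that source. Your split-case computation is correct and reproduces exactly the formula the paper records for $B=M_3(K)$, $\sigma=Int(a)\circ\tau$ (namely $Q_\sigma\cong\langle 1,1,1\rangle\perp\langle 2\rangle\cdot\langle\langle\alpha\rangle\rangle\cdot\langle a_1a_2,a_1a_3,a_2a_3\rangle$, a $3$-dimensional factor of trivial discriminant). The genuine gap is in your general case. The intermediate identity you propose, $Q_\sigma\cong T_{L/k}\perp\langle 2\rangle\langle\langle\alpha\rangle\rangle\cdot T_{L/k}(\langle\mu\rangle)$, cannot hold when $\delta=\mathrm{disc}(L)$ is a nonsquare: for any $3$-dimensional form $q$ the $6$-dimensional form $\langle 2\rangle\langle\langle\alpha\rangle\rangle\cdot q$ has determinant $-\alpha$ modulo squares, while $T_{L/k}\cong\langle 1,2,2\delta\rangle$ has determinant $\delta$, so your right-hand side has determinant $-\alpha\delta$; but $Q_\sigma$ has determinant $-\alpha$ (as your own split computation, or the target normal form, shows), and this determinant cannot depend on the choice of $L$. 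The correct complement formula — which this paper itself quotes later, in the proof of Lemma \ref{copys}, as (\cite{HKRT}, Prop.~11) — is $Q_\sigma\cong\langle 1,2,2\delta\rangle\perp\langle 2\rangle\cdot\langle\langle\alpha\delta\rangle\rangle\cdot t_{L/k}(\langle\mu\rangle)$ with the extra constraint $N_{L/k}(\mu)\in k^{*2}$; the twist by $\delta$ and the norm condition on $\mu$ are exactly what make the eventual reduction to $\langle 1,1,1\rangle\perp\langle 2\rangle\langle\langle\alpha\rangle\rangle\langle -b,-c,bc\rangle$ possible.

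Because of this, the resolution you sketch for your ``main obstacle'' rests on a false premise: there is no forced compatibility between $\mathrm{disc}(L)$ and $N_{K/k}(K^*)$ coming from the embedding $L\otimes K\subseteq B$. For a fixed $(B,\sigma)$, cubic \'etale subalgebras of $(B,\sigma)_+$ with different discriminants occur (cf. \cite{HKRT}, Prop.~17 and Thm.~16, both used in this paper), while the normal form, hence $b,c$, does not depend on $L$; so the $\delta$-dependence must cancel internally via the $\langle\langle\alpha\delta\rangle\rangle$ factor and the condition $N_{L/k}(\mu)\in k^{*2}$, not via a norm relation for $K$. Two further points would need honest proofs in your scheme: that $V=L^{\perp}\cap(B,\sigma)_+$ really carries the module/hermitian structure you assert (the naive $E$-action does not preserve $\sigma$-symmetric elements, so this step is not formal), and the final Witt-ring rearrangement itself, which is the actual content of the proposition beyond the split case and is left unperformed. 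As it stands the proposal proves the split case only.
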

\noindent
In (\cite{HKRT}, Theorem 15), it was proved that the $3$-fold Pfsiter form $f_3(B, \sigma):= <<\alpha, b, c>>$ is an invariant for $\sigma$. The unitary involution $\sigma$ is called {\bf distinguished} if $f_3(B, \sigma)$ is hyperbolic (see \cite{HKRT}, \S 4). For $B= M_3(K)$, up to automorphisms of $(B, \sigma)$, we
have $ \sigma = Int(a) \circ \tau$, where $\tau(x_{ij})=(\overline{x_{ij}})^t$ with $a= diag(a_1,a_2,a_3)\in GL_3(k)$. By  (\cite{HKRT}, Prop.  2) we have,
 $$Q_{\sigma} \cong  <1,1,1> \bot <2>.<<\alpha>>.<a_1a_2,a_1a_3,a_2a_3>.$$
In this case  $f_3(B, \sigma)= <<\alpha, -a_1a_2, -a_2a_3>>$.
 Hence, if $\sigma$ is distinguished and $K$ is a field, then $<a_1a_2,a_1a_3,a_2a_3>_K \cong <1,-1,-1>_K$. \\
\vskip0.5mm
\noindent
The Arason invariant $e_n(<<a_1,a_2,\cdots,a_n>>)$ of the $n$-fold Pfister form $<< a_1,a_2,\cdots,a_n>>$ is given by, 
$$e_n(<< a_1,a_2,..,a_n>>)=(a_1)\cup(a_2)\cup\cdots\cup(a_n) \in H^n(k,\mathbb{Z}/2\mathbb{Z}),$$
where, for $a\in k^*,~(a)$ denotes the class of $a$ in $H^1(k,\mathbb{Z}/2\mathbb{Z})$ (see \cite{A}, Pg. 453).
\begin{proposition}( \cite{HKRT}, Prop. 17) Let $B$ be as above. For every cubic \'etale $k$-subalgebra $L \subseteq B$, there is a distinguished involution $\sigma$ such that $L \subseteq (B, \sigma)_+$.
\end{proposition}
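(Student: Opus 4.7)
The plan is to set up a family of unitary involutions on $B$, all pointwise fixing $L$, indexed by $u \in L^*$, and then to locate among them a distinguished one. To construct a base involution, observe that since $L$ is cubic \'etale over $k$, the subalgebra $E := L \otimes_k K$ has $K$-dimension $3 = \deg_K(B)$, so it is a maximal \'etale $K$-subalgebra of $B$; it carries the canonical second-kind involution $\tau_E := 1 \otimes \bar{~}$, whose $k$-fixed subspace is precisely $L$. By the standard extension result for involutions on central simple algebras containing a stable maximal \'etale subalgebra (see \cite{KMRT}), $\tau_E$ lifts to a unitary involution $\sigma_0$ on $B$, so $L \subseteq (B, \sigma_0)_+$. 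Any other unitary involution $\sigma$ on $B$ with $L \subseteq (B, \sigma)_+$ has the form $\sigma_u := Int(u) \circ \sigma_0$: compatibility with $\sigma_0$ on $L$ forces $u \in C_B(L) = E$, and $\sigma^2 = \mathrm{id}$ together with a Hilbert~90 adjustment places $u$ in the $\sigma_0$-symmetric part of $E$, which is $L$. Thus the candidates form a family $\{\sigma_u : u \in L^*\}$ (with $u$ determined up to $k^*$).

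Next I would evaluate the invariant $f_3(B, \sigma_u)$ using the trace-form description of Proposition~2.1. In the split model $B = M_3(K)$, $L = k \times k \times k$, with $u = \mathrm{diag}(u_1, u_2, u_3)$, Proposition~2.1 directly yields
\[
f_3(B, \sigma_u) = <<\alpha, -u_1 u_2, -u_2 u_3>>,
\]
which is hyperbolic as soon as $u_1 u_2 \in \alpha \cdot k^{*2}$ (take, e.g., $u_1 = \alpha$, $u_2 = u_3 = 1$), since $<<\alpha, -\alpha>>$ itself is hyperbolic. This settles the split case. For general $L$, I would pass to a finite field extension $F/k$ splitting both $B$ and $L$, produce a distinguished $\sigma_{u_F}$ via the split-case recipe over $F$, and descend to $k$. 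The descent is controlled by the Galois cohomology of the norm-one torus ${\bf R}^{(1)}_{L/k}(\mathbb{G}_m)$ (equivalently ${\bf SU}(E, \tau_E)$), which governs how a twisting parameter that works over $F$ can be adjusted to a $k$-rational one.

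The main obstacle is this descent step when $L/k$ is a cubic field extension. I would approach it by regarding $u \mapsto f_3(B, \sigma_u)$ as a map $L^*/k^* \to H^3(k, \mathbb{Z}/2\mathbb{Z})$, computing the difference $f_3(B, \sigma_u) - f_3(B, \sigma_{u_0})$ for a reference $u_0 \in L^*$ as an explicit $3$-fold Pfister symbol built from $\alpha$ and invariants of $u/u_0$ under appropriate trace/norm maps on $L$, and then showing that as $u$ varies this difference sweeps out enough of the target to realize the class of $f_3(B, \sigma_{u_0})$. The \'etale algebra $L$ provides exactly the right amount of flexibility---the $2$-dimensional $k$-variety $L^*/k^*$ of parameters---to absorb the obstruction; making this flexibility precise, and in particular ensuring that the required $u$ lives in $L^*$ itself rather than only in $(L \otimes F)^*$, is the crux of the argument.
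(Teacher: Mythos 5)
Your setup is sound and in fact matches the standard parametrization used in the source: once one unitary involution $\sigma_0$ with $L \subseteq (B,\sigma_0)_+$ exists (your extension of $\tau_E$ from the maximal \'etale subalgebra $E = L\otimes K$ is essentially the content of \cite{HKRT}, Prop.~17 / \cite{KMRT}, Cor.~19.30), every unitary involution fixing $L$ pointwise is $Int(u)\circ\sigma_0$ with $u\in L^*$ modulo $k^*$, and your split-case computation is correct. The problem is that the only case you actually prove is the split one, and the case the proposition is really about --- $L$ a cubic field --- is left to a descent scheme that cannot work as stated. ``Distinguished'' means a specific $3$-fold Pfister form is hyperbolic; knowing that $f_3(B,\sigma_{u_F})$ is hyperbolic over a field $F$ splitting both $B$ and $L$ gives no control over $k$, since such an $F$ generically has even degree over $k$ (it must absorb $K$, the discriminant of $L$, or both), so Springer's theorem is unavailable, and the condition ``$\sigma_u$ is distinguished'' has no torsor interpretation under which $H^1$ of ${\bf SU}(E,\tau)$ would let you adjust $u_F$ to a $k$-rational parameter. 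The final ``sweeping out enough of the target'' step is exactly the assertion to be proved, and you say so yourself; as it stands this is a genuine gap.

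The actual argument (\cite{HKRT}, Prop.~17; the paper reruns a variant of it in the proof of Lemma \ref{copys}) is rational over $k$ and requires no base change, because the diagonal formula you used in the split case has a version valid for an arbitrary cubic \'etale $L$: for $u\in L^*$ with $N_{L/k}(u)\in k^{*2}$ one has $Q_{\sigma_u}\cong \langle 1,2,2\delta\rangle \perp \langle 2\rangle\cdot\langle\langle \alpha\delta\rangle\rangle\cdot t_{L/k}(\langle u\rangle)$, where $\delta$ is the discriminant of $L$ and $t_{L/k}$ is the Scharlau transfer (\cite{HKRT}, Prop.~11 and 13), and $\sigma_u$ is distinguished as soon as the accompanying $3$-fold Pfister form has Witt index at least $2$ (\cite{HKRT}, Thm.~16(c)). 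Now pick $\lambda_0\in L^*$ with $T_{L/k}(\lambda_0)=0$ and set $\lambda=\lambda_0/N_{L/k}(\lambda_0)$, so $N_{L/k}(\lambda)\in k^{*2}$; replacing the involution by $Int(\lambda^{-1}\mu)\circ\sigma_0$ (with $\mu$ the parameter attached to $\sigma_0$) makes the transfer form $t_{L/k}(\langle\lambda\rangle)$ appear, and this form is isotropic because it represents $T_{L/k}(\lambda)=0$ at the nonzero vector $1\in L$. Hence $\langle\langle\alpha\rangle\rangle\cdot t_{L/k}(\langle\lambda\rangle)$ has Witt index at least two and the resulting involution is distinguished, with $L$ still inside its symmetric elements. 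So the missing ingredients are precisely the transfer-form expression of $Q_{\sigma_u}$ for non-split $L$ and the trace-zero choice of $u$; with them your family $\{\sigma_u\}_{u\in L^*}$ already contains a distinguished member over $k$, and the splitting-field detour disappears.
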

\subsection{\bf Albert Algebras}
{\bf Octonion algebras and Pfister forms:}
Let $C$ be an octonion algebra over $k$ and let $n_C$ denote its norm form. Then $C$ is determined, up to 
isomorphism by $n_C$, which is a $3$-fold Pfister form over $k$. Conversely, any $3$-fold Pfister form is the norm form of a unique (up to isomorphism) octonion algebra over $k$. Recall that an octonion algebra $C$ over $k$ is {\bf split} if and only if the associated norm form $n_C$ is isotropic over $k$. Note that a Pfister form $q$ over $k$ is either hyperbolic over $k$ or is $k$-anisotropic (\cite{TYL}, Chap. X, Theorem 1.7). Let $C$ be an octonion algebra over $k$ and $K\subseteq C$ be a quadratic \'etale subalgebra. Then $K^{\perp}$ in $C$ with respect to the norm form on $C$, has a rank-$3$ hermitian module structure over $K$. We record this below:  
\begin{proposition}{\label{hermitian}}(\cite{NJ1}, \S 5) Let $C$ be an octonion algebra over $k$ and $K\subseteq C$ be a quadratic \'etale subalgebra. Then $K^{\perp}\subseteq C$ has a rank-$3$ $K$-hermitian module structure given as follows:\\
Let $K=k(\sqrt{\alpha}),~\alpha\in k^*$. Define $h:K^{\perp}\times K^{\perp}\longrightarrow K$ by 
$$h(x,y)=N(x,y)+\alpha^{-1}N(\alpha x,y),$$
where $N(x,y)$ is the norm bilinear form of $C$ and $K$ acts on $K^{\perp}$ from the left via the multiplication in $C$. 
\end{proposition}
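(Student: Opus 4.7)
The plan is to establish three things: (i) $K^\perp$ is preserved under left multiplication by $K$, giving a left $K$-module structure; (ii) this $K$-module is free of rank $3$; and (iii) the prescribed formula defines a non-degenerate hermitian form relative to the non-trivial involution of $K/k$.

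The main tool throughout is the composition-algebra identity $N(ab,c)=N(b,\bar{a}c)$, a standard consequence of polarizing the norm multiplicativity $N(xy)=N(x)N(y)$. Applied with $a\in K$, $x\in K^\perp$ and arbitrary $y\in K$, it gives $N(ax,y)=N(x,\bar{a}y)=0$ since $\bar{a}y\in K$, so $K\cdot K^\perp\subseteq K^\perp$ and $C=K\oplus K^\perp$ is a $K$-module decomposition. For (ii), $\dim_k K^\perp=6$; when $K$ is a field, $K^\perp$ is automatically a $3$-dimensional $K$-vector space. When $K=k\times k$ with orthogonal idempotents $e_1,e_2$, the same identity gives $N(e_1 x,e_1 y)=N(x,e_2 e_1 y)=0$, so each $e_iC$ is totally isotropic in $(C,N)$; non-degeneracy of $N$ and $\dim_k C=8$ then force $\dim_k e_iC=4$, whence $\dim_k e_i K^\perp=3$. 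In either case $K^\perp$ is $K$-free of rank $3$.

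For (iii), let $\sqrt{\alpha}\in K$ denote the element with $(\sqrt{\alpha})^2=\alpha\in k^*$, so $\overline{\sqrt{\alpha}}=-\sqrt{\alpha}$. I would read the proposition's formula as
\[
h(x,y)=N(x,y)+(\sqrt{\alpha})^{-1}N(\sqrt{\alpha}\cdot x,y),
\]
with $\sqrt{\alpha}\cdot x$ the left $K$-action; reading $\alpha$ as a mere $k$-scalar would collapse the second term to $N(x,y)$ and make $h=2N$, so the $K$-action must be via the non-$k$-rational generator. $K$-linearity in the first slot is the direct computation
\[
h(\sqrt{\alpha}\cdot x,y)=N(\sqrt{\alpha}\cdot x,y)+(\sqrt{\alpha})^{-1}N(\alpha x,y)=N(\sqrt{\alpha}\cdot x,y)+\sqrt{\alpha}\,N(x,y)=\sqrt{\alpha}\,h(x,y),
\]
using $(\sqrt{\alpha})^2=\alpha\in k$ and $k$-bilinearity of $N$. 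The hermitian property reduces, via $\overline{(\sqrt{\alpha})^{-1}}=-(\sqrt{\alpha})^{-1}$, to the skew identity $N(\sqrt{\alpha}\cdot y,x)=-N(\sqrt{\alpha}\cdot x,y)$, which is again immediate from the composition identity: $N(\sqrt{\alpha}\cdot y,x)=N(y,-\sqrt{\alpha}\cdot x)=-N(\sqrt{\alpha}\cdot x,y)$.

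Non-degeneracy of $h$ then drops out of $\mathrm{Tr}_{K/k} h(x,y)=h(x,y)+\overline{h(x,y)}=2N(x,y)$ together with non-degeneracy of $N|_{K^\perp}$ and the standing assumption $\mathrm{char}\,k\neq 2$. The only mildly delicate step is handling the split case of $K$ in (ii) and pinning down the intended meaning of $\alpha$ in the formula for $h$; once the composition identity is in hand, the remainder reduces to the direct computations above.
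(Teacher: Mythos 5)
Your proof is correct. There is nothing in the paper itself to compare it with: the proposition is stated with a citation to (\cite{NJ1}, \S 5) and no proof is given, and your verification is essentially the standard one (Jacobson's), resting on the composition identity $N(ab,c)=N(b,\bar{a}c)$. You were also right to flag the displayed formula: read literally, with $\alpha\in k^{*}$ a scalar, it collapses to $2N(x,y)$, and the intended meaning is $h(x,y)=N(x,y)+(\sqrt{\alpha})^{-1}N(\sqrt{\alpha}\cdot x,y)$ with $\sqrt{\alpha}$ acting by left multiplication, equivalently $h(x,y)=N(x,y)+\alpha^{-1}N(\sqrt{\alpha}\cdot x,y)\sqrt{\alpha}$, so that $\mathrm{Tr}_{K/k}\,h=2N|_{K^{\perp}}$, which is exactly what your non-degeneracy argument uses. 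The only points you leave tacit are consequences of the non-associativity of $C$: that the left action is a genuine module action, i.e. $(ab)x=a(bx)$ for $a,b\in K$, $x\in K^{\perp}$; the identity $\sqrt{\alpha}\cdot(\sqrt{\alpha}\cdot x)=\alpha x$ hidden in your sesquilinearity computation; and, in the split case, $e_2(e_1y)=(e_2e_1)y=0$. All of these follow from the alternative laws (or Artin's theorem applied to the subalgebra generated by $x$ and a single generator of $K$), and one sentence saying so would make the argument airtight; the closure of $K^{\perp}$ under the action, the rank-$3$ count (including the split case via the totally isotropic subspaces $e_iC$), sesquilinearity, hermitian symmetry, and non-degeneracy are all correctly established.
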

\noindent
Let $C$ be an octonion algebra over $k$ with $x\mapsto \bar{x}$ as its canonical involution. Let $M_3(C)$ denote the algebra of $3 \times 3$ matrices with entries in $C$. 
Let $\Gamma= diag(\gamma_1, \gamma_2, \gamma_3)\in GL_3(k)$. Let $$\mathcal{H}_3(C, \Gamma) = \{ X \in M_3(C)| \Gamma^{-1}\overline{X}^{t}\Gamma = X\},$$
where, for $X = (x_{ij}), \overline{X}:=  (\overline{x_{ij}})$, and $X^t$ is the transpose of $X$. The product  $X \circ Y := \frac{1}{2}(XY+YX)$ defines a simple Jordan algebra structure on $\mathcal{H}_3(C, \Gamma)$, here $X, Y \in \mathcal{H}_3(C, \Gamma)$ and $XY$ is the product of $X$ and $Y$ in $M_3(C)$. With this structure, $\mathcal{H}_3(C, \Gamma)$, is called a {\bf reduced Albert algebra}. 
\begin{definition} A $k$-algebra $A$ is called an Albert algebra if, over some field extension $L$ of $k$, $A \otimes_k L$ is isomorphic to a reduced Albert algebra.
\end{definition}
\vskip0.5mm
\noindent
{\bf Reduced models for Albert algebras:}\\
\noindent
Let $A$ be an Albert algebra over $k$. Then there exists, up to a $k$-isomorphism, a unique reduced Albert algebra $A_{red}:=\mathcal{H}_3(C,\Gamma)$ over $k$, such that, for any extension $L$ of $k$ with $A \otimes_k L$ reduced, $A \otimes_k L \cong\mathcal{H}_3(C \otimes_k L, \Gamma)$ (see \cite{PR7}, \S 7). We call $A_{red}$ the {\bf reduced model} of $A$. By (\cite{PR7}, \S4, \S7), the isomorphism class of $A$ determines $A_{red}$, and hence $C$, up to $k$-isomorphism. If $A_{red}=\mathcal{H}_3(C,\Gamma)$, we call $C$ the {\bf octonion algebra} of $A$, denoted by ${\bf Oct}(A)$.
\vskip0.5mm
\noindent
{\bf Tits construction of Albert algebras:}
 Let $k$ be a field of characteristic different from $2$ and $3$. There are two rational constructions of Albert algebras due to Tits, which are exhaustive (though not exclusive). We briefly describe them below:
\vskip0.5mm
 \noindent
{\bf Tits's first construction}\\
\noindent
Let $D$ be a central simple algebra of degree $3$ over a field $k$ and let $N_D$ and $T_D$ denote respectively the reduced norm and reduced trace maps on $D$. Let $\mu \in k^*$. Let $J(D, \mu):= D_0 \oplus D_1 \oplus D_2$, where  $ D_i =D$ for $ i= 1,2,3$. Then there is a product on  $J(D, \mu)$ with $(1, 0, 0)$ as identity and $$N(x, y, z)= N_D(x)+ \mu N_D(y)+ \mu^{-1}N_D(z) -T_D(xyz),~ x,~ y,~ z \in D,$$ as cubic norm, such that  $J(D, \mu)$ is an Albert algebra over $k$, referred to as a {\bf first Tits construction} Albert algebra. It is known that $J(D, \mu)$ is a division algebra if and only if $N(x, y, z)$ has no non-trivial zeros, if and only if $\mu$ is not a reduced norm from $D$ (\cite{KMRT}, Remark 39.15). By (\cite{KMRT}, Prop. 40.5), an Albert algebra $A$ over $k$ is a first construction if and only if $Oct(A)$ is split over $k$.
\vskip0.5mm
\noindent
{\bf Tits's second construction}\\
\noindent
Let $K$ be a quadratic \'etale extension of $k$ and $B$ be a central simple algebra of degree $3$ over $K$. Let $\sigma$ be an involution of the second kind on $B$. Let $ x \mapsto\overline{x}$ denote the non-trivial Galois automorphism of $K/k$.
Let $(B, \sigma)_+$ be the $k$-subspace of $B$ of $\sigma$-symmetric elements in $B$. Fix a unit $u$ in $(B, \sigma)_+$ such that $N_B(u)=\mu\overline{\mu}$ for some $\mu\in K^*$. Let $J(B, \sigma,u, \mu):= (B, \sigma)_+ \oplus B$. There is a product on $J(B, \sigma,u, \mu)$ with $(1,0)$ as identity and $$N(a, x)= N_B(a) +\mu N_B(x)+ \overline{\mu}N_B(\sigma(a))- T_B(axu\sigma(a)),~ a \in (B, \sigma)_+,~ x \in B,$$ as cubic norm, such that $J(B, \sigma,u, \mu)$ is an Albert algebra over $k$, referred to as a {\bf second Tits construction} Albert algebra.
One knows that $J(B, \sigma,u, \mu)$ is a division algebra if and only if $N(a, x)$ has no non-trivial zeros, if and only if
$\mu$ is not a reduced norm from $B$ (\cite{KMRT}, Theorem 39.18). 
\vskip0.5mm
\noindent 
\begin{theorem}(\cite{J}, Chapter. VI, Thm. 7){\label{jordoisomp}} Let $k$ be a field of characteristic different from $2$ and $3$.. Let $J$, $J'$ be finite dimensional Jordan algebras over $k$ with identity such that $J'$ is separable. If $\eta$ is a norm isometry of $J$ onto $J'$ preserving identities, then $\eta$ is an isomorphism of $k$-algebras.
\end{theorem}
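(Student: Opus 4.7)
The plan is to recover the Jordan product on $J$ from the generic norm $N_J$ and the identity element, using the fact that for a finite-dimensional unital Jordan algebra one has a well-defined generic minimal polynomial whose coefficients are derived from $N_J$ by differentiation. The key principle is that any linear bijection preserving the norm and the identity automatically preserves every polynomial invariant obtained from $N_J$ by linearization, so the Jordan multiplication, once expressed in terms of these invariants, is forced to be preserved.

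First I would extract from $N_J$ the generic trace and the bilinear trace form. The generic trace is $T_J(x) = \frac{d}{dt}\big|_{t=0} N_J(1 + tx)$ and the bilinear form $T_J(x,y)$ is obtained by a further directional derivative; since $\eta$ is linear, sends $1$ to $1'$, and satisfies $N_{J'}(\eta(x)) = N_J(x)$ for all $x$, it follows immediately that $T_{J'}(\eta(x)) = T_J(x)$ and $T_{J'}(\eta(x), \eta(y)) = T_J(x,y)$. Iterating, all coefficients of the generic minimal polynomial of $x$ are transported correctly by $\eta$, so $\eta(x)$ satisfies the $\eta$-image of the generic minimal polynomial of $x$ in $J'$.

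Next I would invoke the Cayley--Hamilton identity for Jordan algebras: every element $x$ of $J$ satisfies an equation of the form $x^n - \sigma_1(x) x^{n-1} + \sigma_2(x) x^{n-2} - \cdots \pm N_J(x) \cdot 1 = 0$, where the $\sigma_i$ are polynomial functions derived from $N_J$, and this identity together with the unit element determines powers $x^n$ inductively once $x^2$ is known for generic $x$. Here separability of $J'$ enters decisively: it guarantees that the trace bilinear form $T_{J'}$ is nondegenerate, which in turn allows one to recover the left multiplication $L_{x'}$ on $J'$ from the norm data (via the identity $T_{J'}(L_{x'} y, z) = T_{J'}(x', y \cdot z)$ and the associated polarizations of $N_{J'}$). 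Combining the preservation of all trace data with the nondegeneracy on the $J'$ side, I would conclude $\eta(x^2) = \eta(x)^2$ for generic $x$, and hence for all $x$ by Zariski density; polarization (valid since $\operatorname{char}(k) \ne 2$) then gives $\eta(x \cdot y) = \eta(x) \cdot \eta(y)$.

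The main obstacle is the passage from ``$\eta$ preserves the generic minimal polynomial'' to ``$\eta$ preserves the Jordan product.'' Without separability one only recovers the \emph{isotopy class} of the product, not the product itself, because different Jordan structures on the same underlying space can share the same generic norm and identity (e.g.\ mutations). The separability of $J'$ is precisely what rigidifies the situation, via nondegeneracy of the trace form, and the characteristic restriction $\operatorname{char}(k) \ne 2,3$ ensures that the standard polarization identities and the cubic reduction formulas (needed for the Albert-algebra applications later in the paper) are available. For a complete argument I would follow the generic-norm machinery developed in Chapter VI of Jacobson's book, where this theorem appears as Theorem 7.
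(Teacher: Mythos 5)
You should first note that the paper contains no proof of this statement at all: it is imported verbatim from Jacobson (\cite{J}, Chapter VI, Theorem 7) and used as a black box (e.g.\ in Remark \ref{converseisotope}), so there is no in-paper argument to compare yours against. Your outline is, in substance, Jacobson's own generic-norm argument: the coefficients of the generic characteristic polynomial, in particular the generic trace $T$ and the trace bilinear form, are obtained from $N$ by differentiation at the identity, hence are preserved by any unital norm isometry; separability of $J'$ gives nondegeneracy of $T_{J'}$; and one concludes multiplicativity, using $\mathrm{char}(k)\neq 2$ to polarize. So the approach and the role you assign to separability are exactly right.

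The one place your sketch does not yet amount to a proof is the decisive step. The identity $T_{J'}(L_{x'}y,z)=T_{J'}(x',y\cdot z)$ is just associativity of the trace form and involves the product of $J'$ on both sides, so it cannot by itself ``recover the multiplication from the norm data''; similarly, the Cayley--Hamilton recursion presupposes knowledge of $x^2$, which is precisely what is at stake, and the Zariski-density step is then unsupported. What actually closes the argument (and what Jacobson proves in Chapter VI) is that the full trilinear form $T(x\cdot y,z)$ is itself expressible purely in terms of directional (logarithmic) derivatives of the generic norm at the identity. Granting that identity in both $J$ and $J'$, preservation of $N$ and of the unit gives $T_{J'}(\eta(x\cdot y),\eta(z))=T_{J'}(\eta(x)\cdot\eta(y),\eta(z))$ for all $z$, and nondegeneracy of $T_{J'}$ (separability) yields $\eta(x\cdot y)=\eta(x)\cdot\eta(y)$ directly, with no genericity or density argument needed. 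As written, your proposal gestures at this via ``polarizations of $N_{J'}$'' but ultimately defers the key identity to Jacobson's book --- which is no worse than the paper itself, but it is the gap that a self-contained proof would have to fill.
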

\subsection{Algebraic groups}
In this section, we collect some results on algebraic groups that we will need. We begin with some notation and terminology. For a finite dimensional $k$-algebra $A$, ${\bf Aut}(A)$ will denote the algebraic group $ Aut_{\overline{k}}(A\otimes_k\overline{k})$ of algebra automorphisms defined over $k$. For a connected algebraic group $G$ defined over $k$, the {\bf $k$-rank} of $G$ is defined as the dimension of a maximal $k$-split torus contained in $G$. \\
\noindent
For a connected reductive group $G$ over $k$, we say $G$ is $k$-{\bf isotropic} if there exists a non-central $k$-split torus in $G$ and $k$-{\bf anisotropic} otherwise. By the {\bf type} of a connected reductive algebraic group $G$ defined over $k$, we mean the Cartan-Killing type of its root system. By the {\bf absolute rank} of an algebraic group $G$ defined over $k$ we mean the dimension of a maximal torus in $G$. By the {\bf rank} of a torus $T$ we mean its dimension. \\ 
\noindent
Let $A$ be a finite dimensional $k$-algebra and $S\subset A$ be a $k$-subalgebra. In the rest of the paper, we shall denote by ${\bf Aut}(A/S)$ the $k$-subgroup of ${\bf Aut}(A)$ consisting of all automorphisms of $A$ which fix $S$ pointwise and ${\bf Aut}(A, S)$ will denote the $k$-subgroup of ${\bf Aut}(A)$ mapping $S$ to $S$.
\vskip0.5mm
\noindent 
{\bf Unitary groups and Unitary tori :} Let $K$ be a quadratic \'etale extension of a field $k$ and let $B$ be either a central simple $K$-algebra or an \'etale $K$-algebra in the sense of (\cite{KMRT}, \S 18.A). Assume that there is an involution $\sigma$ on $B$ of the second kind over $K$, i.e. $\sigma$ restricts to $K$ as the non-trivial $k$-automorphism of $K$. Let $N_B$ denote the reduced norm map of the central simple algebra $B$ or the norm map on the \'etale algebra $B$. We then define the algebraic groups 
${\bf U}(B,\sigma)$ and ${\bf SU}(B,\sigma)$, by specifying the group of $L$-rational points, for any finite dimensional commutative $k$-algebra $L$, as follows :
$${\bf U}(B,\sigma)(L)=\{x\in B\otimes L|~x\sigma(x)=1\},~~{\bf SU}(B,\sigma)(L)=\{x\in {\bf U}(B,\sigma)|~N_B(x)=1\}.$$  
\noindent
We note here that ${\bf SU}(B,\sigma)$ is a simple, simply connected algebraic groups of type $A_2$ defined over $k$ when $B$ is a central simple $K$-algebra of degree $3$. When $B$ is an \'etale algebra over $K$ of rank $n$, then ${\bf U}(B,\sigma)$ and ${\bf SU}(B,\sigma)$ are tori defined over $k$ (of rank resp. $n$ and $n-1$).    
\noindent
We will denote ${\bf U}(B,\sigma)(k)$ by $U(B,\sigma)$ and ${\bf SU}(B,\sigma)(k)$ by $SU(B,\sigma)$. 
\begin{proposition}{\label{unipotred}}(\cite{R}, Prop. 6.3) Let $G$ be a connected reductive algebraic group defined over a perfect (infinite) field $k$, then $G$ is $k$-anisotropic if and only if $G(k)$ contains no non-trivial unipotents . 
\end{proposition}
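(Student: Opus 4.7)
The plan is to prove both implications using the Borel--Tits structure theory of unipotent elements in reductive groups.

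For the forward direction I would argue by contrapositive. Suppose $u \in G(k)$ is a non-trivial unipotent element. By the Borel--Tits theorem, over any field every unipotent element of $G(k)$ lies in the unipotent radical of some parabolic $k$-subgroup $P \subseteq G$; since $G$ is reductive we have $R_u(G) = 1$, so non-triviality of $u$ forces $P$ to be a proper parabolic $k$-subgroup of $G$. The existence of a proper parabolic $k$-subgroup then yields a non-central $k$-split torus in $G$: choosing a minimal parabolic $P_0 \subseteq P$ with Levi decomposition $P_0 = L_0 \ltimes R_u(P_0)$, the connected center of $L_0$ contains a $k$-split torus $S$ that is not central in $G$, for otherwise $P_0$ would be all of $G$. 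Thus $G$ is $k$-isotropic.

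For the reverse direction, suppose $G$ contains a non-central $k$-split torus $S$. The relative root system $\Phi(G,S)$ is then non-empty, and for each $\alpha \in \Phi(G,S)$ the corresponding root subgroup $U_\alpha$ is a non-trivial connected unipotent $k$-subgroup of $G$, defined as the unique smooth connected $k$-subgroup on which $S$ acts with weight $\alpha$. Since $k$ is infinite, $U_\alpha$ admits a $k$-filtration whose successive quotients are $k$-forms of $\mathbb{G}_a$, so $U_\alpha(k)$ is infinite; any non-identity element of $U_\alpha(k)$ supplies a non-trivial unipotent in $G(k)$.

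The main obstacle is the Borel--Tits theorem invoked in the forward direction, whose proof is substantial and genuinely exploits the perfectness of $k$: it rests on the fact that the Zariski closure of the cyclic group generated by a unipotent element of $G(k)$ is a smooth connected unipotent $k$-subgroup, a property that can fail over imperfect fields. Modulo this input, both implications reduce to standard structure theory of reductive groups, and since the proposition is quoted from (\cite{R}, Prop.~6.3), one can take this theorem as a black box and carry out the rest in a few lines.
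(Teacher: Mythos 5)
The paper offers no proof of this proposition at all: it is quoted verbatim from the literature (the citation (\cite{R}, Prop.~6.3)), so there is nothing internal to compare your argument against. Your sketch is the standard Borel--Tits proof and its logical skeleton is sound: over a perfect field a non-trivial unipotent of $G(k)$ lies in $R_u(P)(k)$ for some parabolic $k$-subgroup $P$, reductivity forces $P\subsetneq G$, and a proper parabolic yields a non-central $k$-split torus via the Levi of a minimal parabolic; conversely an isotropic $G$ has a non-empty relative root system, and any root group $U_\alpha$ is a positive-dimensional \emph{split} unipotent $k$-group (since $S$ is split and $k$ is perfect, the filtration quotients are genuine vector groups, not merely forms of $\mathbb{G}_a$), so $U_\alpha(k)\neq\{1\}$ already suffices, with infiniteness of $k$ not really needed at that point. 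One caveat: your stated reason why perfectness enters Borel--Tits is wrong. In characteristic $p>0$ a unipotent element has finite order $p^n$, so the Zariski closure of the cyclic group it generates is a finite group scheme and is never connected, over any field; the genuine role of perfectness is that every smooth connected unipotent $k$-subgroup is $k$-split and every unipotent rational point lies in such a subgroup, which can fail over imperfect fields (where the single-element Borel--Tits statement, and the proposition itself, admit counterexamples). Since you treat Borel--Tits as a black box this misstatement does not break the argument, but it should not be presented as the content of that theorem's proof.
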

\begin{proposition}{\label{red}} (\cite{SV}, Thm. 7.2.1, \cite{PR2}, Pg. 205) Let $A$ be an Albert algebra over a field $k$. Let $G= {\bf Aut}(A)$ be the algebraic group of automorphisms of $A$. Then $G$ is a connected simple algebraic group of type $F_4$ defined over $k$. Moreover, $G$ is $k$-isotropic if and only if $A$ is reduced and $f_5(A)=0$.
\end{proposition}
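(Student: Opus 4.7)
\textbf{Proof plan for Proposition \ref{red}.}

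For the first assertion, my plan is to base-change to the algebraic closure $\bar k$, where it is a classical fact that any Albert algebra becomes isomorphic to the split reduced Albert algebra $\mathcal{H}_3(C_s,I)$, with $C_s$ the split octonion algebra. The group $\mathbf{Aut}(\mathcal{H}_3(C_s,I))$ is well known to be the split simple connected group of type $F_4$ (this goes back to Chevalley--Schafer; the tangent algebra is the exceptional Lie algebra $\mathfrak{f}_4$, which acts on $A$ by derivations). Hence $\mathbf{Aut}(A)$ is a $k$-form of split $F_4$, in particular connected and simple of type $F_4$ over $k$.

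For the isotropy criterion, I would argue both implications by separate structural arguments. The easier direction is: if $A$ is an Albert division algebra, then $\mathbf{Aut}(A)$ is $k$-anisotropic. By Proposition \ref{unipotred}, it suffices to show $\mathbf{Aut}(A)(k)$ has no non-trivial unipotents. The plan is to suppose $u\in\mathbf{Aut}(A)(k)$ is unipotent and differentiate: the derivation $\log u$ is a nilpotent derivation of the division algebra $A$, and one shows via the Jordan trace form / quadratic trace argument (using that any derivation preserves the non-degenerate trace form and that in a division Jordan algebra the associated generic minimal polynomial of any element is irreducible or a power of an irreducible) that no nilpotent derivation can exist. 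In parallel, if $A$ is reduced but $f_5(A)\ne 0$, one uses that $f_5$ is the Arason invariant of the $5$-fold Pfister form $n_C\otimes\langle\!\langle -\gamma_1^{-1}\gamma_2,-\gamma_2^{-1}\gamma_3\rangle\!\rangle$; its non-vanishing forces the associated quadratic trace form of $A$ to remain anisotropic on the relevant subspace, which rules out a non-trivial $k$-split torus in $\mathbf{Aut}(A)$ by the Peirce-decomposition description of split tori.

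For the converse, assume $A=\mathcal{H}_3(C,\Gamma)$ is reduced and $f_5(A)=0$. The strategy is to produce a non-trivial $k$-split torus in $\mathbf{Aut}(A)$ explicitly. When $f_5(A)=0$, the $5$-fold Pfister form is hyperbolic; using the characterization of reduced Albert algebras (via Springer--Veldkamp, Chapter $5$) one can rescale $\Gamma$ so that, up to isotopy, $\Gamma$ may be chosen with $\gamma_1=\gamma_2$ (or some pair of diagonal entries equal). Then the group contains the $1$-parameter subgroup acting on the Peirce components by $(t,t^{-1},1)$ on the off-diagonal slots $C_{12},C_{21}$, which is a $k$-split $\mathbb{G}_m$. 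This exhibits $\mathbf{Aut}(A)$ as $k$-isotropic.

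The main obstacle is the implication ``reduced and $f_5(A)\ne 0 \Rightarrow \mathbf{Aut}(A)$ is $k$-anisotropic.'' The delicate point is to extract from the non-vanishing of the Arason invariant of a $5$-fold Pfister form an obstruction to the existence of any non-trivial $k$-split torus in $\mathbf{Aut}(A)$; this is where one must combine the description of maximal $k$-split tori of $F_4$-forms with the Tits index classification and the precise way $f_5$ controls the reduced form of $A$ up to $k$-isomorphism. This is exactly the place where I would invoke the detailed analysis in \cite{SV}, Thm. 7.2.1 or \cite{PR2}, p.~205 to close the argument.
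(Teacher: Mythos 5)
The paper itself gives no argument for this proposition: it is quoted verbatim from the literature (\cite{SV}, Thm. 7.2.1, \cite{PR2}, p. 205), so your proposal has to be judged as a free-standing proof sketch. Your first part (pass to $\bar k$, identify $\mathbf{Aut}$ of the split algebra with split $F_4$, conclude $G$ is a $k$-form of $F_4$) is fine and standard. The isotropy criterion, however, has genuine gaps. The most concrete one is your explicit split torus in the converse direction: in $\mathcal{H}_3(C,\Gamma)$ there are no independent slots ``$C_{12}$ and $C_{21}$'' --- the $(2,1)$ entry is determined by the $(1,2)$ entry via $\Gamma$ and the canonical involution, and the Peirce components relative to $e_1,e_2,e_3$ are $A_{12},A_{23},A_{31}\cong C$. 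Any automorphism fixing $e_1,e_2,e_3$ must preserve the products $a[ij]\circ b[ij]\in k e_i\oplus k e_j$, which are given by the norm bilinear form of $C$ (up to the $\gamma$'s), so it acts on each $A_{ij}$ by an isometry of $n_C$; the subgroup fixing the diagonal frame is (a form of) $\mathrm{Spin}(n_C)$, which is anisotropic when $C$ is division, and it contains no $\mathbb{G}_m$ acting by $(t,t^{-1},1)$. So the proposed one-parameter subgroup simply does not lie in $\mathbf{Aut}(A)$. The normalization feeding into it is also off: for reduced $A$, $f_5(A)=0$ does not let you make two diagonal entries of $\Gamma$ equal ``up to isotopy'' (isotopy changes the automorphism group anyway); what it gives is that the rank-$3$ $\Gamma$-hermitian form over $C$ is isotropic, i.e.\ $A\cong\mathcal{H}_3(C,\mathrm{diag}(1,-1,1))$, equivalently $A$ contains nonzero nilpotent elements.

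That last equivalence is in fact the bridge you are missing in both directions: $G$ is $k$-isotropic if and only if $A$ has nonzero nilpotents (a proper parabolic is the stabilizer of a singular/nilpotent flag, and conversely a rank-one nilpotent produces isotropy), and $A$ has nonzero nilpotents if and only if $A$ is reduced with isotropic hermitian form, i.e.\ reduced with $f_5(A)=0$. Your treatment of ``reduced with $f_5(A)\neq 0\Rightarrow$ anisotropic'' is only a hand-wave about the trace form, and your division-algebra case leans on Proposition \ref{unipotred}, which is stated for perfect (infinite) $k$, plus a $\log$-of-unipotent/derivation step that needs characteristic-zero or separate justification, whereas Proposition \ref{red} is asserted over every $k$ with $\mathrm{char}\,k\neq 2,3$. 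Finally, you close the hardest implication by ``invoking \cite{SV}, Thm. 7.2.1 or \cite{PR2}, p. 205'' --- but that is precisely the statement to be proved (and exactly what the paper cites), so as written the proposal is not an independent proof of the isotropy criterion.
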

\noindent
Similarly, for groups of type $G_2$, we have the following,
\begin{proposition} ( \cite{SV}, \S 2.3, Thm. 2.3.5, \cite{Spr}, Prop. 17.4.2, 17.4.5){\label{red1}} {\bf (a)} Let $C$ be an octonion algebra over a field $k$ and let $G={\bf Aut}(C)$ be the algebraic group of automorphisms of $C$. Then $G$ is a connected, simple algebraic group of type $G_2$ defined over $k$ and $G$ is either $k$-anisotropic or $k$-split. Moreover, \\
{\bf (b)} $G$ is $k$-anisotropic if and only if $C$ is a division algebra, if and only if the norm form of $C$ is $k$-anisotropic.  
\end{proposition}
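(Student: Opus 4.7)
The plan is to combine a descent argument with the Pfister form dichotomy for $n_C$ and a weight-space analysis of the action of any $k$-split torus on $C$. Both parts are classical, so I would present them as a synthesis rather than develop anything new.

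First, for the structural assertion in (a), I would base change to an algebraic closure $\overline{k}$: every octonion algebra over $\overline{k}$ is split, so $\mathbf{Aut}(C)\otimes_k \overline{k}$ is the automorphism group of the split octonion algebra. A direct computation (e.g.\ via the faithful action on the $7$-dimensional space of trace-zero elements, which identifies it with the standard $7$-dimensional representation) shows this is a connected simple group of type $G_2$. Hence $\mathbf{Aut}(C)$ is a $k$-form of split $G_2$, and since the Dynkin diagram of $G_2$ has no nontrivial automorphisms, all such forms are inner, giving a connected simple $k$-group of type $G_2$.

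For the split/anisotropic dichotomy in (a), the clean argument is to invoke the Tits classification: the only admissible indices for a group of type $G_2$ are the trivial one ($k$-rank $0$) and the full one ($k$-rank $2$, which coincides with being split because $G_2$ has no outer forms). I would prefer, however, to give a self-contained verification: suppose $G$ has positive $k$-rank and choose a nontrivial $k$-split subtorus $S \cong \mathbb{G}_m \hookrightarrow G$. Acting by algebra automorphisms, $S$ gives a weight decomposition $C=\bigoplus_{n\in\mathbb{Z}} C_n$. Since every automorphism preserves $n_C$, its polar form pairs $C_n$ with $C_{-n}$ only, so any $0\neq x\in C_n$ with $n\neq 0$ satisfies $n_C(x)=0$. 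The $G_2$-action on the trace-zero subspace is faithful, so $S$ acts nontrivially on $C$ and such an $x$ exists; thus $n_C$ is isotropic.

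Part (b) then follows by assembling the pieces. Since $n_C$ is a $3$-fold Pfister form, by (\cite{TYL}, Chap.\ X, Thm.\ 1.7) it is either hyperbolic or anisotropic, and by the classification of octonion algebras by their norm forms, $n_C$ is hyperbolic iff $C$ is split. If $C$ is split, the Cartan subalgebra of split $G_2$ realized inside $\mathbf{Aut}(C_{\mathrm{split}})$ exhibits a maximal $k$-split torus of rank $2$, so $G$ is split and in particular not anisotropic. Conversely, the weight-space computation above shows that $G$ $k$-isotropic forces $n_C$ isotropic, hence $C$ split. The main obstacle I anticipate is the \textit{no-intermediate-index} step: cleanly ruling out a putative $k$-rank one form of $G_2$ either requires citing the full Tits classification or, as above, checking faithfulness of the split torus action on $C$, which is routine but must be argued to ensure the weight decomposition is nontrivial.
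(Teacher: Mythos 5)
The paper offers no proof of this proposition at all: it is quoted as a known result with citations to Springer--Veldkamp (Thm.\ 2.3.5) and Springer (Prop.\ 17.4.2, 17.4.5), so there is nothing internal to compare your argument against line by line. Your reconstruction is correct and is essentially the standard one from those sources: the identification of $\mathbf{Aut}(C)\otimes_k\overline{k}$ with split $G_2$ gives part (a)'s structural claim, and your weight-space argument for the dichotomy is sound --- a nontrivial $k$-split $\mathbb{G}_m\subset G$ acts nontrivially on $C$ (the action of $\mathbf{Aut}(C)$ on $C$ is tautologically faithful, so you do not even need to pass to the trace-zero subspace), any nonzero vector of nonzero weight is $n_C$-isotropic, the Pfister dichotomy then forces $n_C$ hyperbolic, hence $C$ split, hence $G$ split since $\mathbf{Aut}$ of the split octonions visibly contains a rank-$2$ split torus (e.g.\ inside the ${\bf SL}_3$-subgroup $\mathbf{Aut}(C/K)$ for $K=k\times k$, cf.\ Theorem \ref{typeA2}); this neatly avoids invoking the Tits index tables. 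Part (b) then follows as you say from the facts, also recorded in \S 2.3 of the paper, that a Pfister form is hyperbolic or anisotropic and that $C$ is split iff $n_C$ is isotropic (equivalently, $C$ is division iff $n_C$ is anisotropic). The only cosmetic slip is the phrase ``Cartan subalgebra of split $G_2$'' where you mean a split maximal torus of $\mathbf{Aut}(C_{\mathrm{split}})$; with that wording fixed, the proposal stands as a complete, self-contained proof of the cited statement.
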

\noindent
The results below describe certain $k$-subgroups in $k$-groups of type $G_2$ and $F_4$, these will be needed in the sequel.
\begin{proposition}{\label{D4}}(Jacobson) Let $A$ be an Albert algebra defined over $k$ and let $G={\bf Aut}(A)$ denote the algebraic group of type $F_4$ associated with $A$. Let $L\subset A$ be a cubic \'etale subalgebra. Then the subgroup ${\bf Aut}(A/L)$ is a simply connected, simple group of type $D_4$ defined over $k$. 
\end{proposition}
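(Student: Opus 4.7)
The plan is to reduce to the split situation over the algebraic closure $\overline{k}$, identify the geometric stabilizer with $\mathrm{Spin}_8$ via the Peirce decomposition and triality, and then conclude by Galois descent. First, I would base change to $\overline{k}$: since $L$ is \'etale, $L \otimes_k \overline{k} \cong \overline{k}^3$, and by Proposition \ref{red} plus the fact that $L_{\overline{k}}$ contains three orthogonal primitive idempotents $e_1, e_2, e_3$ with $e_1+e_2+e_3=1$, the Albert algebra $A_{\overline{k}}$ is reduced with split octonion coordinate algebra. The Peirce decomposition with respect to $(e_1, e_2, e_3)$ gives
$$A_{\overline{k}} \;=\; \bigoplus_{i=1}^{3}\overline{k}\,e_i \;\oplus\; \bigoplus_{1\le i<j\le 3} V_{ij},\qquad \dim_{\overline{k}} V_{ij} = 8.$$

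Next, I would observe that any $\varphi \in {\bf Aut}(A/L)(\overline{k})$ fixes each $e_i$ pointwise and hence preserves the Peirce decomposition; it therefore acts trivially on $\sum \overline{k}\,e_i$ and is determined by its triple of restrictions $\varphi_{ij} \in GL(V_{ij})$. The Jordan product induces bilinear composition maps $V_{ij}\times V_{jk}\to V_{ik}$ for $\{i,j,k\}=\{1,2,3\}$, together with non-degenerate quadratic forms $q_{ij}$ on $V_{ij}$ determined by $v\circ v = q_{ij}(v)(e_i+e_j)$, and the condition that $\varphi$ be a Jordan automorphism is precisely the compatibility of the triple $(\varphi_{12}, \varphi_{23}, \varphi_{13})$ with these forms and compositions.

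Then I would invoke Jacobson's coordinatization of reduced Albert algebras: each $V_{ij}$ can be identified with the split octonion algebra $C_0$, the forms $q_{ij}$ with scalar multiples of the octonion norm (the scalars coming from $\Gamma$), and the composition $V_{ij}\times V_{jk}\to V_{ik}$ with octonion multiplication (twisted by the canonical involution). Under this identification, the compatibility conditions for $(\varphi_{12},\varphi_{23},\varphi_{13})$ become exactly the classical triality relations, so the group of admissible triples is the split simply connected group $\mathrm{Spin}_8$, acting via its three inequivalent $8$-dimensional representations (the vector representation and the two half-spin representations). The hard step is precisely this identification: one must match the Peirce product structure on $\bigoplus V_{ij}$ to the triality action, which is the content of the Jacobson coordinatization theorem.

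Finally, I would descend to $k$. The functor ${\bf Aut}(A/L)$ is a closed $k$-subgroup scheme of ${\bf Aut}(A)$, smooth since $\mathrm{char}(k)\ne 2,3$. Its geometric fiber being the simply connected simple group $\mathrm{Spin}_8$ of type $D_4$, it is a $k$-form of $\mathrm{Spin}_8$; in particular, ${\bf Aut}(A/L)$ is a simply connected simple $k$-group of type $D_4$, as claimed. The type, rank, and simple-connectedness are preserved under inner twisting, so no further argument is needed on the descent side beyond the smoothness and connectedness of the fixer.
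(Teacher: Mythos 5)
The paper does not actually prove this proposition; it is quoted as a known theorem of Jacobson, so there is no internal proof to compare against. Your sketch is, in substance, the classical argument behind that theorem (the one found in Jacobson and in Springer--Veldkamp): pass to $\overline{k}$, where $L$ yields a frame of three orthogonal primitive idempotents (primitivity follows since $A$ has degree $3$), take the Peirce decomposition $A_{\overline{k}}=\bigoplus \overline{k}e_i\oplus\bigoplus V_{ij}$, observe that an automorphism fixing $L$ pointwise is determined by its triple of restrictions to the $V_{ij}$, and identify the group of admissible triples with $\mathrm{Spin}_8$ via local triality; this also gives connectedness of the pointwise stabilizer. That is correct and complete enough as an outline, with one caveat at the descent step: the justification ``preserved under inner twisting'' is not the right one, since ${\bf Aut}(A/L)$ is in general an \emph{outer} (indeed trialitarian) form of $\mathrm{Spin}_8$. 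What you actually need, and what suffices, is that Cartan--Killing type, absolute rank and simple-connectedness are geometric invariants, unchanged under arbitrary Galois twisting, so any $k$-form of $\mathrm{Spin}_8$ is simply connected simple of type $D_4$; together with smoothness and connectedness of the fixer (which your triality identification supplies over $\overline{k}$), the conclusion follows.
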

\begin{theorem}{\label{typeA}}  (\cite{NJ4}, Thm. 9, \cite{KMRT}, \S 39, Chap. IX) Let $A$ be an Albert algebra over $k$ and let $S$ be a $9$-dimensional cubic separable Jordan subalgebra of $A$. The subgroup ${\bf Aut}(A/S)$ (resp. ${\bf Aut}(A, S)$) is a simply connected, simple algebraic group of type $A_2$ (resp. $A_2 \times A_2$) defined over $k$. 
\end{theorem}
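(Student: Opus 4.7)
The plan is to reduce to the Tits construction realizing $A$ as built from $S$, then compute the two subgroups explicitly. By the structure theory of $9$-dimensional separable cubic Jordan algebras, $S$ is either of the form $D^+$ for a central simple $k$-algebra $D$ of degree $3$ (so $A$ is a first Tits construction $J(D,\mu)$), or of the form $(B,\sigma)_+$ for a central simple algebra $B$ of degree $3$ over a quadratic \'etale extension $K/k$ with $\sigma$ an involution of the second kind (so $A$ is a second Tits construction $J(B,\sigma,u,\mu)$). In both cases one obtains an orthogonal decomposition $A=S\oplus M$ with respect to the norm bilinear form, where $M=D$ or $M=B$ is the second summand of the Tits construction.

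For ${\bf Aut}(A/S)$: any $\phi$ fixing $S$ pointwise preserves $M=S^{\perp}$, so is of the form $\phi(s,m)=(s,\psi(m))$ for some $k$-linear $\psi\colon M\to M$. Imposing that $\phi$ be a Jordan homomorphism, via the explicit Tits formulas for the circle products $s\circ m$ (for $s\in S,m\in M$) and $m\circ m'$, forces $\psi$ to be $K$-linear of reduced norm $1$ satisfying $\psi(x)u\sigma(\psi(y))=xu\sigma(y)$ in the second-construction case, and analogously $\psi(x)\psi(y)=xy$ with $N_D(\psi(x))=1$ in the first. These are the defining equations of $\mathbf{SU}(B,\sigma')$ with $\sigma'=\mathrm{Int}(u^{-1})\circ\sigma$ in the second case, and $\mathbf{SL}_1(D)$ in the first, each a simply connected simple $k$-group of type $A_2$. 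Conversely, each such $\psi$ yields an automorphism of $A$ fixing $S$ by direct verification from the Tits formulas, so the correspondence is an isomorphism of $k$-groups.

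For ${\bf Aut}(A,S)$: consider the restriction homomorphism $\pi\colon{\bf Aut}(A,S)\to{\bf Aut}(S)$, whose kernel is ${\bf Aut}(A/S)$, of type $A_2$ by the previous step. In the second construction case the target ${\bf Aut}((B,\sigma)_+)$ is, via the Jacobson coordinatization of degree-$3$ cubic Jordan algebras, the group of $k$-automorphisms of the associative algebra with involution $(B,\sigma)$, i.e.\ $\mathbf{PGU}(B,\sigma)$, a $k$-group of type $A_2$. Surjectivity of $\pi$ follows from the functoriality of the Tits construction: any $\alpha\in\mathrm{Aut}(B,\sigma)$ extends, after possibly rescaling via the equivalence $(u,\mu)\sim(bu\sigma(b),N_B(b)\mu)$ for $b\in B^{\times}$, to an automorphism of $J(B,\sigma,u,\mu)$ preserving the subalgebra $S$. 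Thus ${\bf Aut}(A,S)$ is an extension of an $A_2$ by an $A_2$; comparing root systems over $\bar{k}$, where the two factors act on distinct factors of $B\otimes\bar{k}\cong M_3(\bar{k})\times M_3(\bar{k})$, shows it is a semisimple $k$-group of type $A_2\times A_2$.

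The main obstacle is the bookkeeping around the twisting and the lift. One must verify that $\sigma'=\mathrm{Int}(u^{-1})\circ\sigma$ remains of the second kind over $K/k$, so that $\mathbf{SU}(B,\sigma')$ is indeed a $k$-form of type $A_2$; and that every Jordan automorphism of $(B,\sigma)_+$ genuinely lifts through $\pi$ rather than only up to an inner modification on the $M$-component. Both points follow from viewing the Tits construction as a functor on pairs $(B,\sigma)$ modulo the equivalence class of $(u,\mu)$, together with the standard identifications of $\mathbf{SU}_3$ (simply connected) and $\mathbf{PGU}_3$ (adjoint) as the two $A_2$-factors.
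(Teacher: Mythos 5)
The paper itself gives no proof of this statement --- it is quoted from Jacobson (\cite{NJ4}, Thm.~9) and \cite{KMRT}, \S 39 --- so there is no in-paper argument to compare against; I therefore assess your proposal on its own, and it has genuine gaps. Your skeleton (realize $A$ as $J(D,\mu)$ or $J(B,\sigma,u,\mu)$ with $S$ as the degree-$3$ summand, then compute the two subgroups) is the standard one, but the crucial step for ${\bf Aut}(A/S)$ is asserted rather than proved. Since automorphisms of a separable cubic Jordan algebra are exactly the norm isometries fixing the identity, what you get directly from $\phi=(\mathrm{id},\psi)$ is the \emph{diagonal} identity $\psi(x)u\sigma(\psi(x))=xu\sigma(x)$ for all $x$, and linearizing yields only the symmetrized form of your displayed condition $\psi(x)u\sigma(\psi(y))=xu\sigma(y)$; deducing from this that $\psi$ is $K$-linear (rather than possibly semilinear) and is right translation by some $w$ with $wu\sigma(w)=u$, $N_B(w)=1$, i.e.\ $w\in {\bf SU}(B,\mathrm{Int}(u)\circ\sigma)$, is precisely the content of Jacobson's theorem being cited, and your write-up supplies no argument for it. The same gap recurs in the first construction, where in addition you must exclude automorphisms fixing $D^{+}$ pointwise that mix $D_1$ and $D_2$ (a priori they only preserve $D_1\oplus D_2=S^{\perp}$); your equations there ($\psi(x)\psi(y)=xy$, $N_D(\psi(x))=1$) do not parse as stated. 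Finally, your opening classification needs $S$ to be absolutely simple: there exist non-simple $9$-dimensional separable cubic Jordan algebras (e.g.\ $k\times J$ with $J$ of degree $2$ and dimension $8$), and such algebras do occur as subalgebras of Albert algebras, so the reduction to $D^{+}$ or $(B,\sigma)_+$ should be flagged as using the intended simplicity hypothesis.

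The ${\bf Aut}(A,S)$ half is where the argument actually fails. First, ${\bf Aut}((B,\sigma)_+)$ is not ${\bf PGU}(B,\sigma)$: it also contains automorphisms induced by $K$-semilinear automorphisms of $(B,\sigma)$, so over $\overline{k}$ it is $PGL_3\rtimes \mathbb{Z}/2\mathbb{Z}$, with transpose in the outer component; combined with the surjectivity of $\pi$ that you invoke, this forces ${\bf Aut}(A,S)$ to be disconnected --- and it is: for the split algebra $J(M_3(k),1)$ the map $(x,y,z)\mapsto(x^t,z^t,y^t)$ is an automorphism stabilizing $M_3(k)^{+}$ and inducing transpose on it. Second, even on identity components, knowing only that the group is an extension of an adjoint $A_2$ by a simply connected $A_2$ does not determine its isogeny type, so ``comparing root systems'' cannot deliver simple connectedness; in fact the identity component of ${\bf Aut}(A,S)$ is the image of ${\bf SU}(B,\sigma)\times{\bf SU}(B,\mathrm{Int}(u)\circ\sigma)$ under $(p,w)\mapsto\bigl((a,x)\mapsto(pap^{-1},pxw)\bigr)$, whose kernel is a central subgroup scheme that over $\overline{k}$ is a diagonally embedded $\mu_3$; over $\overline{k}$ one gets $(SL_3\times SL_3)/\mu_3$, which is of type $A_2\times A_2$ but neither simply connected nor simple. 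So the qualifiers ``simply connected, simple'' can only be meant for ${\bf Aut}(A/S)$; what a correct proof of the second half should establish --- and what suffices for the uses of this theorem later in the paper, where only the type $A_2\times A_2$ enters the Borel--de Siebenthal arguments --- is that ${\bf Aut}(A,S)^{\circ}$ is semisimple of type $A_2\times A_2$. The efficient route is to write down the homomorphism above, verify norm preservation directly (using $\sigma(p)=p^{-1}$ and $wu\sigma(w)=u$), compute its kernel, and compare dimensions, rather than to argue abstractly from the extension $1\to{\bf Aut}(A/S)\to{\bf Aut}(A,S)\to{\bf Aut}(S)$.
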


\begin{theorem}{\label{typeA2}} (\cite{NJ1}, Thm. 3, Thm. 4, Thm. 5) Let $C$ be an octonion algebra over $k$ and let $S$ be a quadratic \'etale (resp. quaternion) subalgebra of $C$. Then the subgroup ${\bf Aut}(C/S)$ is a simply connected, simple group of type $A_2$ (resp. $A_1$) defined over $k$. 
\end{theorem}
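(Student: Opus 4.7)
The plan is to exploit the extra structure on the orthogonal complement of $S$ in $C$ in each of the two cases, and then identify ${\bf Aut}(C/S)$ as the corresponding structure-preserving group. In both cases an automorphism $\phi$ of $C$ fixing $S$ pointwise automatically preserves the norm form on $C$ (a general property of automorphisms of octonion algebras) and therefore preserves $S^{\perp}$.

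For the quadratic \'etale case, let $K=S=k(\sqrt{\alpha})$. By Proposition \ref{hermitian}, the $k$-subspace $K^{\perp}\subset C$ carries a natural rank-$3$ hermitian $K$-module structure $(K^{\perp},h)$, with $K$ acting by left multiplication in $C$. Because $\phi$ is an algebra automorphism fixing $K$, the restriction $\phi|_{K^{\perp}}$ is $K$-linear: $\phi(kx)=\phi(k)\phi(x)=k\phi(x)$ for $k\in K$, $x\in K^{\perp}$. Since $\phi$ preserves the norm bilinear form, it preserves $h$. Thus restriction yields a morphism of $k$-algebraic groups
\[
\rho:{\bf Aut}(C/K)\longrightarrow {\bf U}(K^{\perp},h).
\]
The key step is to show that the image lands in ${\bf SU}(K^{\perp},h)$ and that $\rho$ is an isomorphism. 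One direction uses that an automorphism is determined by its values on $K^{\perp}$ (since $C=K\oplus K^{\perp}$ and $\phi$ fixes $K$), giving injectivity. For surjectivity and the fact that the image is the \emph{special} unitary group, one reconstructs the octonion multiplication on $C=K\oplus K^{\perp}$ from the data $(K,h)$ and a cubic volume form on $K^{\perp}$: an element of ${\bf U}(K^{\perp},h)$ extends to an algebra automorphism precisely when it has determinant $1$ with respect to the distinguished $K$-volume form coming from the octonion product. Since ${\bf SU}$ of a rank-$3$ hermitian form over a quadratic \'etale extension is a simply connected simple group of type $A_2$, this gives the result. The main obstacle here is the surjectivity/determinant identification; this is a standard computation using Cayley--Dickson.

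For the quaternion case, write $S=Q$ and use Cayley--Dickson: $C=Q\oplus Qv$ with $v^{2}=\lambda\in k^{*}$, and the product determined by the norm on $Q$. An automorphism $\phi$ fixing $Q$ pointwise is determined by $\phi(v)=qv$ for some $q\in Q^{*}$, and a short computation using the Cayley--Dickson multiplication and the requirement that $\phi$ preserves $v^{2}=\lambda$ and the norm shows that $q$ must satisfy $q\bar q=1$, i.e. $q\in {\bf SL}_{1}(Q)$. Conversely, every such $q$ defines an automorphism by $a+bv\mapsto a+(bq)v$, yielding a $k$-isomorphism ${\bf Aut}(C/Q)\cong {\bf SL}_{1}(Q)$, which is simply connected of type $A_{1}$.

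Finally, both statements should be checked on an algebraic closure (to deduce smoothness, connectedness and type from the geometric fibre) and the descent is immediate from the constructions, which are defined over $k$. The main technical point is the hermitian/volume identification in the quadratic case; in the quaternion case the argument is essentially a direct Cayley--Dickson computation.
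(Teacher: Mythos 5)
Your outline is correct and is essentially the classical argument of Jacobson that the paper itself invokes: the paper gives no proof of this statement, citing \cite{NJ1} (Thms.\ 3--5), and those theorems are proved exactly by identifying ${\bf Aut}(C/K)$ with the special unitary group of the rank-$3$ hermitian space $K^{\perp}$ (via the cross-product/volume-form argument you sketch) and ${\bf Aut}(C/Q)$ with ${\bf SL}_1(Q)$ via Cayley--Dickson doubling. The only loose ends are routine: the determinant-$1$ extension step you defer is the standard computation in \cite{NJ1}, and in the quaternion case whether $\phi(v)=qv$ gives left or right multiplication on the $Qv$-component depends on the doubling convention, which does not affect the conclusion.
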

\noindent
{\bf Group actions :} Let $G$ be a group acting on a set $X$. Let $g\in G$ and $H$ be a subgroup of $G$. We denote the set of fixed points of $g$ in $X$ by $X^g$ and the set of fixed points of $H$ in $X$ by $X^H$. We will often use the following obvious fact 
\begin{lemma} Let $g\in G$ and $h\in Z_G(g)$, the centralizer of $g$ in $G$. Then $h$ stabilizes $X^g$, i.e., maps $X^g$ to itself. Consequently, for a subgroup $H$ of $G$, its centralizer $Z_G(H)$ maps $X^H$ to itself.
\end{lemma}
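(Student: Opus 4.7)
The plan is to verify both assertions by a direct computation with the defining equations of fixed-point sets, using the commutation hypothesis on elements of the centralizer.

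First I would handle the single-element case. Let $x \in X^g$, so by definition $g\cdot x = x$. Given $h \in Z_G(g)$, I want to show $h\cdot x \in X^g$, i.e.\ that $g\cdot(h\cdot x) = h\cdot x$. Using the action axiom and the fact that $gh = hg$ in $G$, I compute
\[
g\cdot (h\cdot x) \;=\; (gh)\cdot x \;=\; (hg)\cdot x \;=\; h\cdot(g\cdot x) \;=\; h\cdot x,
\]
which is exactly the required equality. Hence $h$ maps $X^g$ into itself; since the same argument applies to $h^{-1} \in Z_G(g)$, the map is a bijection of $X^g$.

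For the second assertion, note that $X^H = \bigcap_{h'\in H} X^{h'}$. Fix $h \in Z_G(H)$, so $h$ commutes with every $h' \in H$, and therefore $h \in Z_G(h')$ for each such $h'$. By the first part, $h$ stabilizes each $X^{h'}$, and hence stabilizes their intersection $X^H$. This gives the desired conclusion with no further work.

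The statement is a standard observation and I do not foresee any real obstacle; the only subtlety worth flagging is that one uses both $h \in Z_G(g)$ and $h^{-1} \in Z_G(g)$ (equivalently the stability of $Z_G(g)$ under inversion) to upgrade ``maps into'' to ``maps onto,'' though the lemma as stated only asserts stabilization, so even this is unnecessary.
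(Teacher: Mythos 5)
Your proof is correct and is exactly the standard computation the paper has in mind; indeed the paper states this lemma without proof, calling it an obvious fact, and your verification $g\cdot(h\cdot x)=(hg)\cdot x=h\cdot(g\cdot x)=h\cdot x$ together with $X^H=\bigcap_{h'\in H}X^{h'}$ is precisely the argument it leaves to the reader.
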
 
\noindent
{\bf Embeddings of $A_2$ in $F_4$ :} Let $B$ be a degree $3$ central simple algebra over a quadratic \'etale extension $K$ of $k$ with an involution $\sigma$ of the second kind.
Let $A= J(B, \sigma, u, \mu)$ be a second Tits construction Albert algebra. Let $G= {\bf Aut}(A)$. Then $G$ is a group of type $F_4$ over $k$. We have the following embedding of the special unitary group ${\bf SU}(B,\sigma)$ in $G$,\\
$${\bf SU}(B, \sigma)\hookrightarrow G~\text{via}~p\mapsto\phi_p,~\text{where}~\phi_p : (x , y) \mapsto (px \tau(p), py),~
\text{for~all}~(x,y)\in A.$$
\noindent
For a central simple algebra $D$ over $k$, ${\bf SL}_1(D)$ denotes the algebraic group of norm $1$ elements in $D$. When $A=J(D, \mu)$ is a first Tits construction Albert algebra, we have the embedding of ${\bf SL}_1(D)$ in $G= {\bf Aut}(A)$ given by,
$${\bf SL}_1(D) \hookrightarrow G~\text{via}~p\mapsto \phi_p,~\text{where}~\phi_p : (x, y, z) \mapsto (x, yp, p^{-1}z),~\text{for~all}~(x, y, z) \in A.$$
\subsection{ \bf Mod-$2$ invariants for groups of type $A_2$, $G_2$ and $F_4$}
Let $G$ be a group of type $G_2$ defined over $k$. Then $G \cong {\bf Aut}(C)$ for a suitable octonion algebra $C$ over $k$  (\cite{Spr}, \S 17.4). Recall that $C$ is determined by its {\bf norm form} $n_C$, which is a $3$-fold Pfister form over $k$. Hence the groups $G$ over $k$ of type $G_2$ are classified by the Arason invariant $f_3(G):= e_3(n_C) \in H^3(k,\mathbb{Z}/2\mathbb{Z})$, where $G \cong {\bf Aut}(C)$ as above. \\
\noindent
 Let $G$ be a group of type $F_4$ defined over $k$. Then there exists an Albert algebra over $k$ such that $G={\bf Aut}(A)$, the full group of automorphisms of $A$ (\cite{Spr}, \S 17.6). Let $A$ be an Albert algebra over $k$ and $A_{red}=\mathcal{H}_3(C,\Gamma)$ be the reduced model for $A$, as defined in \S 2.3. This defines two mod-$2$ invariants for $G={\bf Aut}(A)$:
$$f_3(G)=f_3(A):=e_3(n_C)\in H^3(k,\mathbb{Z}/2\mathbb{Z}),$$
$$f_5(G)=f_5(A)=e_5(n_C \otimes \langle 1,\gamma_1^{-1}\gamma_2\rangle \otimes\langle 1,\gamma_2^{-1}\gamma_3\rangle)\in H^5(k,\mathbb{Z}/2\mathbb{Z}).$$    
\begin{proposition}{\label{coprime3}} Let $D$ be a degree $3$ central division algebra or an Albert division algebra over $k$. Then $D$ remains a division algebra over field extensions of degree coprime to $3$.
\end{proposition}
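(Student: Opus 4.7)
My plan is to handle the two cases separately, both by $3$-torsion arguments in Galois cohomology. For a degree-$3$ central division $k$-algebra $D$, the class $[D] \in \mathrm{Br}(k)$ has order $3$ (period equals index in this case). For $L/k$ with $\gcd([L:k],3)=1$, if $D\otimes_k L$ were not division, then since $\mathrm{ind}(D\otimes_k L)\mid \mathrm{ind}(D)=3$ it would have to be split; but the restriction-corestriction identity $\mathrm{cor}\circ\mathrm{res} = [L:k]\cdot\mathrm{id}$ on $\mathrm{Br}(k)$ would then force $[L:k]\cdot[D]=0$, i.e., $3\mid[L:k]$, a contradiction. Hence $D\otimes_k L$ is still division.

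For an Albert division algebra $A$ I would invoke the Tits-construction dichotomy: $A$ is either a first Tits construction $J(D,\mu)$ or a second Tits construction $J(B,\sigma,u,\mu)$, and division is equivalent to $\mu \notin \mathrm{Nrd}(D^*)$ (resp.\ $\mu \notin \mathrm{Nrd}_{B/K}(B^*)$). The crucial algebraic input is that $k^*/\mathrm{Nrd}(D^*)$ and $K^*/\mathrm{Nrd}_{B/K}(B^*)$ are $3$-torsion, since $\alpha^3 = \mathrm{Nrd}(\alpha\cdot 1)$ for any scalar $\alpha$. Using $H^1(k,{\bf SL}_1(D))\cong k^*/\mathrm{Nrd}(D^*)$ and the analogous restriction-corestriction identity, if $\mu \in \mathrm{Nrd}(D_L^*)$ then $\mu^{[L:k]} \in \mathrm{Nrd}(D^*)$; together with $\mu^3 \in \mathrm{Nrd}(D^*)$ and a Bezout relation $a[L:k] + 3b = 1$ this forces $\mu = (\mu^{[L:k]})^a(\mu^3)^b \in \mathrm{Nrd}(D^*)$, contradicting the division hypothesis on $A$. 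This settles the first-construction case.

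For the second Tits construction I would split on the behaviour of $K\otimes_k L$. If it is a field $K'$, then $A\otimes_k L = J(B\otimes_K K',\sigma',u,\mu)$ is again a second Tits construction over $L$, and the identical Bezout argument inside $K^*/\mathrm{Nrd}_{B/K}(B^*)$ (with $[K':K]=[L:k]$ coprime to $3$) applies directly. If instead $K\subseteq L$, so $K\otimes_k L = L\times L$, then $B\otimes_k L \cong D\times D^{\mathrm{op}}$ with exchange involution for some degree-$3$ CSA $D$ over $L$, and $A\otimes_k L$ reduces to a first Tits construction $J(D,\nu)$ for an explicit $\nu\in L^*$ coming from the image of $\mu$ under one of the two projections $K\otimes_k L\to L$ (adjusted by a reduced-norm factor from $u$). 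The restriction $K^*/\mathrm{Nrd}_{B/K}(B^*)\to L^*/\mathrm{Nrd}(D^*)$ is injective on $3$-torsion — by the same Bezout argument, now with $[L:K]=[L:k]/2$ still coprime to $3$ — giving $\nu\notin\mathrm{Nrd}(D^*)$ and hence that $A\otimes_k L$ is division.

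The main obstacle is the split second-construction sub-case: identifying $A\otimes_k L$ explicitly as a first Tits construction $J(D,\nu)$ and matching $\nu$ with the restriction class of $\mu$ requires careful tracking of the Tits-process isomorphism $J(B,\sigma,u,\mu)\otimes_k L \cong J(D,\nu)$. A cleaner organisation, which I would prefer, is to work throughout on the level of pointed cohomology sets: realise the division-obstruction of $A$ as a class in an appropriate $3$-torsion subquotient of $H^1(k,{\bf SU}(B,\sigma))$ and use injectivity of restriction to $L$ on this $3$-torsion (a direct consequence of $\mathrm{cor}\circ\mathrm{res} = [L:k]\cdot\mathrm{id}$ together with $\gcd([L:k],3)=1$), which gives a uniform treatment across both Tits constructions and bypasses the explicit Tits-process bookkeeping.
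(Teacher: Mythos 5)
Your argument is essentially correct, but it is a genuinely different route from the paper's: the paper disposes of this proposition in two lines by citation, quoting an exercise in Jacobson's \emph{Basic Algebra II} (Section 4.6, Exercise 9) for the degree-$3$ central division algebra case and Petersson--Racine (\cite{PR2}, Cor., p.~205) for the Albert case. What you have written is, in effect, a self-contained proof of those cited results. For the associative case your period--index/restriction--corestriction argument is the standard one and is fine. For the Albert case your strategy (exhaustiveness of the Tits constructions, the division criterion ``$\mu$ not a reduced norm'' from \cite{KMRT} 39.15/39.18, the observation that $k^*/\mathrm{Nrd}(D^*)$ and $K^*/\mathrm{Nrd}_{B/K}(B^*)$ are $3$-torsion, and a Bezout identity $a[L:k]+3b=1$) is exactly the kind of argument underlying the Petersson--Racine statement, and your case split on whether $K\otimes_k L$ is a field, with the second construction collapsing to a first construction $J(B\otimes_K L,\mu)$ when $K\subseteq L$, is correctly organised. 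What your route buys is transparency and independence from the references; what the paper's route buys is brevity.

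Two caveats. First, the step ``$\mu\in\mathrm{Nrd}(D_L^*)$ implies $\mu^{[L:k]}\in\mathrm{Nrd}(D^*)$'' cannot literally be obtained from $\mathrm{cor}\circ\mathrm{res}=[L:k]\cdot\mathrm{id}$ on $H^1(k,{\bf SL}_1(D))$, because ${\bf SL}_1(D)$ is not abelian and no corestriction is available on that $H^1$; the fact you actually need is that $N_{L/k}\bigl(\mathrm{Nrd}_{D_L}(D_L^*)\bigr)\subseteq \mathrm{Nrd}_D(D^*)$, which is the compatibility of reduced norms with the transfer on $K_1$ (Draxl's \emph{Skew Fields}, for instance), a standard but nontrivial input that should be cited rather than waved at cohomologically. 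The same remark applies to $N_{K'/K}$ in the second-construction case. Second, for the same reason your preferred ``cleaner organisation'' in the last paragraph does not work as stated: ${\bf SU}(B,\sigma)$ is a non-abelian simple group, so there is no corestriction on $H^1(k,{\bf SU}(B,\sigma))$ and no identity $\mathrm{cor}\circ\mathrm{res}=[L:k]\cdot\mathrm{id}$ to exploit there; the concrete Tits-process bookkeeping you were trying to avoid (or a direct appeal to the cited literature, as the paper does) is what actually carries the proof.
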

\begin{proof} First, let $D$ be a degree $3$ central division algebra over $k$. By (\cite{NJ2}, Exercise $9$, Section 4.6), it follows that, for a field extension $L$ of $k$ of degree coprime to $3$, $D\otimes L$ is a division algebra. When $D$ is an Albert division algebra, the result follows from (\cite{PR2}, Cor., p. 205). 
\end{proof}
\noindent
{\bf Octonion algebras for groups of type $F_4$, $G_2$ and $A_2$}
\vskip1mm
\noindent
To a simple, simply connected algebraic group $G$ of type $F_4$, $G_2$ or $A_2$ defined over $k$, we associate an octonion algebra as follows. Let $G$ be a $k$-group of type $F_4$. Then $G \cong {\bf Aut}(A)$, for an Albert algebra $A$ over $k$. Let $A_{red}=\mathcal{H}_3(C,\Gamma)$ be the reduced model for $A$, where $C$ is an octonion algebra over $k$ and $\Gamma=diag(\gamma_1,\gamma_2,\gamma_3)\in GL_3(k)$. Define $Oct(G):= Oct(A)= C$. Observe that $f_3(A)=0$ if and only if $Oct(G)$ splits. Let $G$ be a $k$-group of type $G_2$. Then $G \cong {\bf Aut}(C)$, for an octonion algebra $C$ over $k$. Define $Oct(G):=C$. Observe that $f_3(G)=0$ if and only if $G$ splits, if and only if $Oct(G)$ splits. Let $G$ be a simple, simply connected $k$-group of type $A_2$, then $G \cong {\bf SU}(B, \sigma)$, for some degree $3$ central simple algebra $B$ over a quadratic \'etale extension $K$ of $k$, with an involution $\sigma$ of the second kind. Define $Oct(G):=C$, where $C$ is the octonion algebra determined by the $3$-fold Pfister form $f_3(B, \sigma)$. The Arason invariant $f_3(B, \sigma)$ of the norm form of the octonion algebra of $(B, \sigma)_+$ is an invariant for $G$ as well, we will denote this by $f_3(G)$, see (\cite{NH}, Remark 2.8). Note that $\sigma$ is distinguished if and only $f_3(B, \sigma)=0$ if and only if $Oct(G)$ splits. Moreover, these constructions are functorial and respect base change.

\begin{lemma}{\label{isotropycond}}{\bf (a)} Let $G$ be a group of type $F_4$ defined over $k$. If $G$ has k-rank $\geqq 1$, then $f_5(G)=0$. Moreover, if $G$ has k-rank $\geqq 2$, then $G$ splits over $k$ and $f_3(G)=0= f_5(G)$. \\
\noindent
{\bf (b)} Let $G$ be a simple, simply connected group of type $G_2$ or $A_2$ defined over $k$. If $k$-rank of $G \geq 1$, then $Oct(G)$ splits.
\end{lemma}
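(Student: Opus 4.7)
For part (a), suppose first that $G$ of type $F_4$ has $k$-rank at least $1$, so $G$ is $k$-isotropic. Writing $G={\bf Aut}(A)$ for an Albert algebra $A$, Proposition~\ref{red} yields that $A$ is reduced and $f_5(A)=0$, which is the first assertion. For the moreover clause I would invoke the Tits classification of admissible $k$-indices for type $F_4$: the only possibilities are the split index (with $k$-rank $4$), a single non-split isotropic index with $k$-rank $1$, and the anisotropic index. Hence $k$-rank $\geq 2$ forces $G$ to be $k$-split. Equivalently, one may argue directly: for a reduced Albert algebra $A_{red}=\mathcal{H}_3(C,\Gamma)$ the $k$-rank of ${\bf Aut}(A_{red})$ is $1$ when $C$ is a division octonion and $4$ when $C$ is split, so $k$-rank $\geq 2$ forces $C$ to split, $A$ itself to split, and therefore $f_3(G)=e_3(n_C)=0$ as well as $f_5(G)=0$.

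For part (b), the $G_2$ case is immediate from Proposition~\ref{red1}: such a $G$ is either $k$-anisotropic or $k$-split, so $k$-rank $\geq 1$ forces $G$ split, which is equivalent to $Oct(G)$ being split. For the $A_2$ case, write $G={\bf SU}(B,\sigma)$ with $K=k(\sqrt{\alpha})$ quadratic \'etale and $\sigma$ of the second kind. Positive $k$-rank forces the hermitian form attached to $\sigma$ to be isotropic (the $k$-rank equals its Witt index), hence $B\cong M_3(K)$; after Witt decomposition and a suitable change of basis, one may take $\sigma=\mathrm{Int}(a)\circ\tau$ with $a=\mathrm{diag}(1,-1,c)$ for some $c\in k^*$. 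The formula recorded in \S 2.2 then gives $f_3(B,\sigma)=<<\alpha,-a_1a_2,-a_2a_3>>=<<\alpha,1,c>>$, which is hyperbolic because the factor $<<1>>=<1,-1>$ is hyperbolic. Hence $\sigma$ is distinguished and, by the discussion of $Oct(G)$ in \S 2.5, $Oct(G)$ splits. (The inner form case, where $K=k\times k$ and $\alpha$ is a square, is subsumed since $<<\alpha>>$ is then automatically hyperbolic.)

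The main obstacle is the moreover clause in (a): verifying that no $k$-form of $F_4$ has $k$-rank $2$ or $3$. This is classical via Tits' classification of semisimple groups, but if one wishes to avoid it as a black box, one must compute the $k$-rank of ${\bf Aut}(\mathcal{H}_3(C,\Gamma))$ directly and verify that only a split octonion $C$ can yield $k$-rank exceeding $1$—a standard but non-trivial input. The $A_2$ step also implicitly uses the identification of the $k$-rank of a unitary group with the Witt index of its hermitian form, which is standard for degree-$3$ central simple algebras with involution of the second kind.
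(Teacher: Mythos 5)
Your treatment of part (a) and of the $G_2$ half of (b) is essentially the paper's: isotropy gives $f_5(G)=0$ via Proposition~\ref{red} (the paper cites the same Petersson--Racine reference), $k$-rank $\geq 2$ forces splitness by Tits' classification of the admissible indices of type $F_4$ (the paper cites Tits, p.~60), whence $f_3(G)=f_5(G)=0$; and for $G_2$ both arguments use that such a group is either $k$-anisotropic or $k$-split (Proposition~\ref{red1}). Where you genuinely diverge is the $A_2$ half of (b): the paper simply invokes its Theorem~\ref{f31} (Prop.~3.4 of the earlier paper), i.e.\ that ${\bf SU}(B,\sigma)$ becoming isotropic over an odd-degree extension is equivalent to $\sigma$ being distinguished, whereas you compute directly — isotropy forces $B\cong M_3(K)$ (a nontrivial unipotent $u\in SU(B,\sigma)(k)$ would give the nonzero nilpotent $u-1$ in a division algebra), the $k$-rank equals the Witt index of the hermitian form, so one may take $a=diag(1,-1,c)$, and the formula of \S 2.2 gives $f_3(B,\sigma)=<<\alpha,1,c>>$, which is hyperbolic. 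This is more self-contained and avoids quoting the earlier paper, at the cost of using the standard rank--Witt-index identification, which you correctly flag. One caution about your optional aside in (a): it is not true that ${\bf Aut}(\mathcal{H}_3(C,\Gamma))$ has $k$-rank exactly $1$ whenever $C$ is an octonion division algebra — by Proposition~\ref{red} isotropy also requires $f_5=0$, and e.g.\ the compact real Albert algebra gives $k$-rank $0$ with $C$ division — so only the implication ``$C$ division $\Rightarrow$ $k$-rank $\leq 1$'' is available, and that is precisely what the Tits-index citation (your main route, and the paper's) supplies; since the aside is not load-bearing, this does not affect the correctness of your proof.
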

\begin{proof}
{\bf (a)} Let $G$ be a group of type $F_4$ defined over $k$. If $G$ has k-rank $\geqq$ $1$ then $G$ is k-isotropic and by (\cite{PR2}, Pg. 205), $f_5(G)=0$. If $G$ has k-rank $\geqq$ $2$ then by (\cite{T3}, Pg. 60), $G$ splits over $k$. Hence $A$ and thereby $Oct(A)$ splits over $k$ (\cite{Spr}, Chap. 17, \S 17.6.4, \cite{HH}, Pg. 164). \\
\noindent
{\bf (b)} Let $G$ be a group of type $G_2$ defined over $k$. If $k$-rank of $G \geq 1$, then by (\cite{Spr}, Chap. 17, \S 17.4.2), $G$ is $k$-split. Hence $Oct(G)$ is split (\cite{Spr}, Chap. 17, \S 17.4.5, \cite{T3}, Pg. 60). Let $G$ be a simple, simply connected group of type $A_2$ defined over $k$. If $k$-rank of $G \geq 1$ then $G$ is $k$-isotropic and by (\cite{NH}, Prop. 3.4), $Oct(G)$ splits.
\end{proof}
\noindent
{\bf Remark on notation :} In the paper, to avoid surplus of notation, we will often confuse between the mod-$2$ invariants, $f_3$, $f_5$ (resp. $f_3$) of groups of type $F_4$ (resp. $G_2$), $f_3$-invariant for simple, simply connected groups of type $A_2$ and the corresponding Pfister quadratic forms whenever no confusion is likely to arise. Let $q$ be an $n$-fold Pfister form and $e_n(q) \in H^n(k, \mathbb{Z}/2\mathbb{Z})$ be the Arason invariant of $q$. We will write $e_n(q)= 0$ to mean $q$ is hyperbolic. Ler $q_1, q_2$ be Pfister forms over $k$. We say $q_2$ divides $q_1$ over $k$ if there exists a Pfister form $q_3$ over $k$ such that $q_1 = q_2 \otimes q_3$ over $k$. If $q_2$ divides $q_1$ over $k$ then by Theorem \ref{subformfactor}, $q_2$ is a subform of $q_1$ over $k$.
\subsection{\bf \'Etale algebras}

Let $k$ be a field of characteristic different from $2, 3$ and $L$ be an \'etale $k$-algebra of dimension $n$. Let $T: L \times L \rightarrow F$ be the bilinear form induced by the trace, $T(x, y)= T_{L/k}(xy)$, for $x, y \in L$. Let $\delta(L)$ denote the discriminant algebra of $L$ over $k$.
\begin{proposition}(\cite{KMRT}, Prop. 18.24) Let  $L$ be an \'etale $k$-algebra of dimension $n$. Then $\delta(L)= k[T]/(t^2-d)$, where $d \in k^*$ represents the determinant of the bilinear form $T$.
\end{proposition}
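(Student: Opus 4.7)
The plan is to recognize both $\delta(L)$ and $k[t]/(t^{2}-d)$ as quadratic \'etale $k$-algebras and identify them through Galois descent. Since $L$ is \'etale, the trace form $T$ is non-degenerate, so $d\in k^{*}$, and both sides are genuine quadratic \'etale $k$-algebras; their isomorphism classes are classified by $H^{1}(k,\mathbb{Z}/2\mathbb{Z})\cong k^{*}/k^{*2}$, so it suffices to show they correspond to the same character of $\Gamma_{k}=\mathrm{Gal}(k_{s}/k)$.

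First I would recall the definition of $\delta(L)$ used in KMRT. Let $\sigma_{1},\ldots,\sigma_{n}\colon L\hookrightarrow k_{s}$ be the distinct $k$-embeddings. The natural action of $\Gamma_{k}$ on $\{\sigma_{1},\ldots,\sigma_{n}\}$ yields a continuous homomorphism $\rho\colon\Gamma_{k}\to S_{n}$, and $\delta(L)$ is by definition the quadratic \'etale algebra corresponding via Galois descent to the composition $\mathrm{sgn}\circ\rho\colon\Gamma_{k}\to\mathbb{Z}/2\mathbb{Z}$.

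Next I would produce a square root of $d$ explicitly. Pick a $k$-basis $e_{1},\ldots,e_{n}$ of $L$ and form the matrix $M=(\sigma_{i}(e_{j}))_{i,j}\in GL_{n}(k_{s})$. Because $T_{L/k}(e_{i}e_{j})=\sum_{r}\sigma_{r}(e_{i})\sigma_{r}(e_{j})=(M^{t}M)_{ij}$, setting $\alpha=\det M$ gives $\alpha^{2}=\det(M^{t}M)=d$, so $k(\alpha)\subset k_{s}$ is a copy of $k[t]/(t^{2}-d)$. Then I would compute the Galois action on $\alpha$: for $\gamma\in\Gamma_{k}$, the identity $\gamma\circ\sigma_{i}=\sigma_{\rho(\gamma)(i)}$ says that applying $\gamma$ to the entries of $M$ permutes its rows by $\rho(\gamma)$, hence $\gamma(M)=P_{\rho(\gamma)}M$ and $\gamma(\alpha)=\mathrm{sgn}(\rho(\gamma))\,\alpha$. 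Thus the character of $\Gamma_{k}$ cut out by $k(\alpha)$ is exactly $\mathrm{sgn}\circ\rho$, which yields $k[t]/(t^{2}-d)\cong\delta(L)$.

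The only delicate point is the bookkeeping of the row-permutation action and its sign, namely checking that the chosen labelling of the $\sigma_{i}$ converts $\gamma M$ into $P_{\rho(\gamma)}M$ with $\det P_{\pi}=\mathrm{sgn}(\pi)$; once that convention is fixed, everything falls into place. A different choice of the basis $e_{1},\ldots,e_{n}$ multiplies $\alpha$ by an element of $k^{*}$ and $d$ by its square, so the resulting class of $d$ in $k^{*}/k^{*2}$, and hence the quadratic algebra $k[t]/(t^{2}-d)$, is intrinsic.
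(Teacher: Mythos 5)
This proposition is quoted in the paper directly from (\cite{KMRT}, Prop.\ 18.24) with no proof supplied, so there is nothing internal to compare against; your argument is correct and is essentially the standard (KMRT-style) proof: identify $\delta(L)$ with the quadratic \'etale algebra attached to $\mathrm{sgn}\circ\rho$, and exhibit $\alpha=\det(\sigma_i(e_j))$ as a square root of $d$ on which $\Gamma_k$ acts through exactly that sign character. One cosmetic point: when $d\in k^{*2}$ the element $\alpha$ lies in $k$ and ``$k(\alpha)$'' is not quadratic, so the conclusion should be phrased as you do at the level of characters (the class of $\gamma\mapsto\gamma(\alpha)/\alpha$ in $H^1(k,\mathbb{Z}/2\mathbb{Z})\cong k^*/k^{*2}$ equals that of $k[t]/(t^2-d)$, both being the split algebra $k\times k$ in that case), which covers the split case uniformly.
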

\noindent
For the special case when $L$ is a cubic \'etale $k$-algebra, by (\cite{KMRT}, Prop. 18.25) we have,
\begin{proposition}{\label{disccubic}} Let $L$ be an \'etale algebra of dimension $3$ over $k.$ There is a canonical $k$-isomorphism $L \otimes L \cong L \times L \otimes \delta(L).$
\end{proposition}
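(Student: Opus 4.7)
The plan is to decompose $L \otimes_k L$ using the multiplication map and then identify the complementary factor with $L \otimes_k \delta(L)$ using a discriminant computation.

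First I would consider the multiplication map $\mu : L \otimes_k L \to L,\ x \otimes y \mapsto xy$. Since $L$ is \'etale of dimension $3$ over $k$, the algebra $L \otimes_k L$ is itself \'etale of dimension $9$, and $\mu$ is a surjective $k$-algebra homomorphism split by $x \mapsto x \otimes 1$. Therefore the idempotent $e \in L \otimes_k L$ corresponding to the diagonal component yields a direct product decomposition
\[
L \otimes_k L \;\cong\; L \;\times\; R,
\]
where $R := (1-e)(L \otimes_k L)$ is a $6$-dimensional \'etale $k$-algebra. Viewing $R$ as an $L$-algebra via the second tensor factor, it becomes an \'etale $L$-algebra of rank $2$, so it is of the form $L[t]/(t^2 - d')$ for some $d' \in L^*$ (modulo squares).

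Next I would compute $d'$ explicitly. Write $L = k[x]/(f(x))$ for some separable monic cubic $f \in k[x]$. Then
\[
L \otimes_k L \;\cong\; L[x]/(f(x)).
\]
Over $L$, the polynomial $f(x)$ has a distinguished root $\alpha$ (the image of $x$), hence factors as $f(x) = (x-\alpha) g(x)$ with $g(x) \in L[x]$ a separable monic quadratic. This factorization produces exactly the decomposition $L[x]/(f) \cong L \times L[x]/(g)$ above, so $R \cong L[x]/(g)$. The discriminant of $g$ over $L$ equals $\mathrm{disc}(f)$ divided by $[(\alpha-\beta)(\alpha-\gamma)]^2$ (where $\beta,\gamma$ are the other roots in an algebraic closure), hence agrees with $\mathrm{disc}(f)$ modulo $(L^*)^2$. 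Since $\delta(L) = k(\sqrt{\mathrm{disc}(f)})$ by the previous proposition, we conclude
\[
R \;\cong\; L\bigl(\sqrt{\mathrm{disc}(g)}\bigr) \;\cong\; L \otimes_k \delta(L).
\]

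The step I expect to require the most care is verifying that these identifications are canonical, that is, independent of the presentation $L = k[x]/(f)$. For this I would note that both sides, $L \otimes_k L$ and $L \times (L \otimes_k \delta(L))$, carry natural $S_3$-actions after base change to a Galois closure $\widetilde L$ of $L$: on the left via permutation of the three components in $\widetilde L \otimes L \cong \widetilde L^{\,3}$, and on the right via the corresponding split structures together with the sign action on $\delta(L)$. A Galois descent argument then shows that the isomorphism constructed over $\widetilde L$ is $S_3$-equivariant and therefore descends to a $k$-algebra isomorphism which depends only on $L$, establishing canonicity.
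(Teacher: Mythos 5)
Your argument is essentially correct, but note that the paper does not prove this statement at all: it simply quotes it from the Book of Involutions (\cite{KMRT}, Prop.\ 18.25), where the discriminant algebra $\delta(L)$ is constructed intrinsically and the isomorphism is produced functorially. What you supply instead is the standard hands-on proof: split off the diagonal factor of the \'etale algebra $L\otimes_k L$ via the idempotent attached to the multiplication map, observe that the complement $R$ is a rank-$2$ \'etale $L$-algebra, and compute its discriminant using a primitive-element presentation $L=k[x]/(f)$, $f=(x-\alpha)g$ over $L$. The computation is sound; the only points worth tightening are: (i) the existence of a primitive element for a cubic \'etale algebra (clear over infinite fields, and fine here since $\mathrm{char}\,k\neq 2,3$ rules out the only problematic finite fields); (ii) in the discriminant step you should say explicitly that $(\alpha-\beta)(\alpha-\gamma)=g(\alpha)=f'(\alpha)$ lies in $L^*$, which is what legitimizes passing to square classes \emph{in} $L^*$ (the identity $\mathrm{disc}(f)=\mathrm{disc}(g)\,g(\alpha)^2$ holds as a resultant identity, also when $L$ is not a field); (iii) writing $R\cong L[t]/(t^2-d')$ and $\delta(L)=k(\sqrt{\mathrm{disc}\,f})$ uses $\mathrm{char}\,k\neq 2$ and the standard fact that the trace-form determinant of $k[x]/(f)$ is $\mathrm{disc}(f)$ mod squares. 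The canonicity discussion via an $S_3$-equivariant descent is the sketchiest part of your write-up — you would have to specify the actions and check equivariance of your explicit map — but this is exactly the content that the citation to \cite{KMRT} packages away, and for every use of the proposition in this paper only the existence of the isomorphism is needed, so your proof buys an elementary, self-contained argument at the cost of (unexercised) canonicity.
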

\noindent
In this paper we will denote $\delta(L)$ by $Disc(L)$ and at times write also $Disc(L)= d$.
\subsection{\bf Maximal tori of special unitary groups}
Let $k$ be a field (of characteristic different from $2$, $3$) and $K$ a quadratic field extension of $k$ with the non-trivial $k$-automorphism $~\bar{}$ . Let $V$ be a $K$-vector space of dimension $n$. Let $h$ be a non-degenerate hermitian form on $V$. By (\cite{MT}, Theorem 5.1) and Corollary 5.2, we have the following well known explicit description of maximal tori in a special unitary group of a non-degenerate hermitian space, 
\begin{theorem}{\label{imp}} {\bf (a)} Let $k$ be a field and $K$ a quadratic field extension of $k.$ Let $V$ be a $K$-vector space of dimension $n$ with a non-degenerate hermitian form $h$. Let $T\subseteq {\bf U}(V, h)$ be a maximal $k$-torus. Then there exists an \'etale $K$-algebra $E_T$ of dimension $n$ over $K$, with an involution $\sigma_h$ restricting to the non-trivial $k$-automorphism of $K$, such that $T= {\bf U}(E_T, \sigma_h)$. \\
\noindent
{\bf (b)} Let $T \subset {\bf SU}(V, h)$ be a maximal $k$-torus. Then there exists an \'etale algebra $E_T$ over $K$ of dimension $n$, such that $T= {\bf SU}(E_T, \sigma_h)$
\end{theorem}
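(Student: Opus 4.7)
The plan is to identify $T$ with the unitary group of its own commutant in $\mathrm{End}_K(V)$. Set $E_T := Z_{\mathrm{End}_K(V)}(T)$, the centralizer of $T$ in the central simple $K$-algebra $\mathrm{End}_K(V)$. Since $T$ commutes with scalar multiplication by $K$, the algebra $E_T$ is automatically a unital $K$-subalgebra of $\mathrm{End}_K(V)$.

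First I would show that $E_T$ is a commutative \'etale $K$-algebra of $K$-dimension $n$. The isomorphism $K \otimes_k \bar k \cong \bar k \times \bar k$ induces a splitting $V \otimes_k \bar k = V_+ \oplus V_-$ and
\[
\mathrm{End}_K(V) \otimes_k \bar k \cong M_n(\bar k) \times M_n(\bar k),
\]
with $\sigma_h$ swapping the two factors (composed with the transpose induced by the perfect pairing $V_+ \times V_- \to \bar k$ coming from $h$). Projection onto one factor identifies $\mathbf{U}(V,h)_{\bar k}$ with $\mathrm{GL}_n(\bar k)$, and carries $T_{\bar k}$ to a maximal torus of $\mathrm{GL}_n$. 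Since the centralizer of a maximal torus of $\mathrm{GL}_n$ in $M_n$ is the diagonal subalgebra, $E_T \otimes_k \bar k \cong \bar k^n \times \bar k^n$, confirming \'etaleness and the asserted dimension.

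Second, I would verify that $\sigma_h$ stabilizes $E_T$ and restricts to an involution of the second kind. For $a \in E_T$ and $t \in T$, the defining relation $\sigma_h(t) = t^{-1}$ of $\mathbf{U}(V,h)$ gives
\[
t^{-1}\,\sigma_h(a)\,t \;=\; \sigma_h(t)\,\sigma_h(a)\,\sigma_h(t^{-1}) \;=\; \sigma_h(t^{-1} a t) \;=\; \sigma_h(a),
\]
so $\sigma_h(a) \in E_T$. The restriction of $\sigma_h$ to the scalars $K \subset E_T$ is the non-trivial $k$-automorphism of $K$ by the very definition of the adjoint involution of a non-degenerate hermitian form. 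Thus $\mathbf{U}(E_T, \sigma_h|_{E_T})$ is a well-defined $k$-torus, and its $\bar k$-points form a rank-$n$ torus as is visible from the structure of $E_T \otimes_k \bar k$.

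For the equality $T = \mathbf{U}(E_T, \sigma_h|_{E_T})$, every $t \in T$ lies in $E_T$ (since $T$ is abelian, $T \subset Z_{\mathrm{End}_K(V)}(T)$) and satisfies $t\sigma_h(t) = 1$, giving the inclusion $T \subseteq \mathbf{U}(E_T, \sigma_h|_{E_T})$. Conversely, $\mathbf{U}(E_T, \sigma_h|_{E_T})$ has rank $n$, is contained in $\mathbf{U}(V,h)$, and centralizes $T$, so maximality of $T$ forces equality. For part (b), I would take any maximal $k$-torus $T' \subset \mathbf{SU}(V,h)$ (of rank $n-1$) and extend it to $T := Z_{\mathbf{U}(V,h)}(T')$, a maximal torus of $\mathbf{U}(V,h)$; applying part (a) to $T$ and intersecting with the determinant-one condition (i.e.\ the kernel of the reduced norm $N_{E_T/K}$) produces $T' = \mathbf{SU}(E_T, \sigma_h|_{E_T})$. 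The main obstacle is the first step, where one must carefully translate the maximality of $T$ in $\mathbf{U}(V,h)$ into the concrete structure of $E_T \otimes_k \bar k$, simultaneously tracking the $K$-module structure and the identification of $\mathbf{U}(V,h)_{\bar k}$ with $\mathrm{GL}_n(\bar k)$ induced by the splitting of $K$ over $\bar k$.
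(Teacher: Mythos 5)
Your proof is correct and follows exactly the route the paper has in mind: the paper quotes this result from Singh--Thakur (\cite{MT}, Theorem 5.1 and its corollary) without reproving it, and the remark immediately after the theorem identifies $E_T= Z_{End_K(V)}(T)$, which is precisely your centralizer construction, with the same splitting over $\bar{k}$ and maximality argument underlying the cited proof. The only small glosses (that the formation of the centralizer commutes with base change since $T$ is a torus, and that the determinant on $V$ restricts to $N_{E_T/K}$ on $E_T$ because $V$ is a free $E_T$-module of rank one) are routine and do not affect correctness.
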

\noindent
Note that in Theorem \ref{imp}, $E_T= Z_{End_K(V)}(T)$ with involution $\sigma_h$, where $T$ is a maximal torus in ${\bf U}(V, h)$ or ${\bf SU}(V,h)$.\\
\noindent
By (\cite{MT}, Remark after Lemma 5.1) we have,
\begin{lemma}{\label{count}} Let $K$ be a quadratic \'etale extension of $k$. Let $E$ be an \'etale algebra of dimension $2n$ over $k$ containing $K$, equipped with an involution $\sigma$, restricting to the non-trivial $k$-automorphism of $K$. Let $L= E^{\sigma}= \{x \in E| \sigma(x)= x\}.$ Then $E= L {\otimes}_k K$ and $(E, \sigma)= (L {\otimes}_k K, 1 \otimes~\bar{}~)$, where $x \mapsto \overline{x}$ is the non-trivial $k$-automorphism of $K$. 
\end{lemma}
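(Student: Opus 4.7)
The plan is to exploit the eigenspace decomposition of $E$ under $\sigma$. Since $\mathrm{char}(k)\neq 2$, the involution $\sigma$ (viewed as an order-$2$ $k$-automorphism of the commutative algebra $E$) gives a direct sum decomposition $E = E^+ \oplus E^-$ of $k$-vector spaces into its $\pm 1$ eigenspaces. By definition $E^+ = L$, and $L$ is a $k$-subalgebra of $E$ since $\sigma$ is a ring automorphism. The goal is to show $\dim_k L = n$ and that the natural multiplication map $L \otimes_k K \to E$ is an isomorphism of $k$-algebras intertwining $1\otimes \bar{}~$ with $\sigma$.

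The key step is to compare the two eigenspaces via multiplication by a square root. Write $K = k(\sqrt{\alpha})$ with $\alpha\in k^*$; then $\sigma(\sqrt{\alpha}) = \overline{\sqrt{\alpha}} = -\sqrt{\alpha}$, so $\sqrt{\alpha}\in E^-$. Since $\sqrt{\alpha}$ is a unit in $E$ (being a unit already in $K$), multiplication by $\sqrt{\alpha}$ is a $k$-linear bijection $E\to E$. For $x\in E^+$ one checks $\sigma(\sqrt{\alpha}\,x) = -\sqrt{\alpha}\,x$, so multiplication by $\sqrt{\alpha}$ carries $E^+$ into $E^-$; the same computation with the sign reversed shows it carries $E^-$ into $E^+$. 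These two restrictions are mutually inverse (up to the scalar $\alpha$), hence $\dim_k E^+ = \dim_k E^-$, and from $\dim_k E = 2n$ we conclude $\dim_k L = n$.

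Next I consider the $k$-algebra homomorphism
\[
\phi : L\otimes_k K \longrightarrow E, \qquad \phi(l\otimes z) = lz,
\]
well defined because $L$ and $K$ commute inside the commutative algebra $E$. For any $x\in E$ the decomposition $x = x^+ + x^-$ with $x^\pm\in E^\pm$ combined with the previous step yields $x^- = \sqrt{\alpha}\,y$ for a unique $y\in L$, so $x = x^+ + y\sqrt{\alpha} = \phi(x^+\otimes 1 + y\otimes \sqrt{\alpha})$. Thus $\phi$ is surjective, and since both sides have $k$-dimension $2n$, $\phi$ is an isomorphism of $k$-algebras. Under $\phi$, the involution $1\otimes\bar{}~$ on $L\otimes_k K$ fixes $L\otimes 1$ pointwise and sends $1\otimes\sqrt{\alpha}$ to $-1\otimes\sqrt{\alpha}$, which matches the action of $\sigma$ on $L$ and on $\sqrt{\alpha}$; hence $\phi$ transports $1\otimes\bar{}~$ to $\sigma$, as required.

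The only substantive point is the eigenspace swap via $\sqrt{\alpha}$ and the resulting dimension equality $\dim_k L = n$; everything else is a dimension count and a routine verification on generators. I do not anticipate any serious obstacle, as the statement is essentially a Galois-descent argument for a quadratic étale extension in a setting where $\mathrm{char}(k)\neq 2$ makes the eigenspace decomposition available.
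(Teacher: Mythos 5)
Your proof is correct and complete. Note that the paper itself gives no argument for this lemma --- it simply imports it from the reference (\cite{MT}, remark after Lemma 5.1) --- so there is no in-paper proof to compare against; your eigenspace argument is the standard descent proof one would expect behind that citation. The three steps are all sound: the $\pm1$-eigenspace decomposition $E=E^+\oplus E^-$ (valid since $\mathrm{char}(k)\neq 2$ and $E$ is commutative, so $\sigma$ is an order-$2$ algebra automorphism), the swap of eigenspaces by the unit $\sqrt{\alpha}\in E^-$ giving $\dim_k L=n$, and the dimension count showing the multiplication map $L\otimes_k K\to E$ is an isomorphism intertwining $1\otimes~\bar{}~$ with $\sigma$ (equality of the two involutions on the generators $L\otimes 1$ and $1\otimes\sqrt{\alpha}$ suffices, as both are algebra maps). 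The only point you use implicitly is that any quadratic \'etale $k$-algebra can be written as $k(\sqrt{\alpha})$ with $\overline{\sqrt{\alpha}}=-\sqrt{\alpha}$, which is standard in characteristic different from $2$ and covers the split case $K\cong k\times k$ as well, so your argument handles both cases uniformly.
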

\noindent
In view of the above lemma, $dim(E^{\sigma})= n$ over $k$. Let $k$ be a field and $L, K$ be \'etale $k$-algebras of $k$-dimension $n, 2$ resp. and $E=  L \otimes K$. Then $E$ is an \'etale algebra of dimension $2n$ over $k$. Let ~$\bar{}$~ denote the non-trivial $k$-automorphism of $K$ and $\tau$  the involution $1\otimes \bar{}$ on $E$. We will refer to $(E, \tau)$ as the $K$-unitary algebra associated with the ordered pair $(L, K)$.
\begin{lemma}{\label{x}} Let $L, K$ be \'etale algebras of $k$-dimensions $n, 2$ resp. Let $(E, \tau)$ be the $K$-unitary algebra associated with $(L, K)$. Then $ E^{\tau}= \{x \in E~| \tau(x)=x\}=L$.
\end{lemma}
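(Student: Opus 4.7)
The plan is to establish the two inclusions $L \subseteq E^{\tau}$ and $E^{\tau} \subseteq L$ separately, where $L$ is identified with its image $L \otimes 1 \subseteq E = L \otimes_k K$. The first inclusion is immediate from the definition $\tau = 1 \otimes \bar{\ }$: for $\ell \in L$ one has $\tau(\ell \otimes 1) = \ell \otimes \overline{1} = \ell \otimes 1$, so $L \otimes 1 \subseteq E^{\tau}$.

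For the reverse inclusion, the main step is to exhibit a convenient decomposition of $E$ into $\tau$-eigenspaces. Since $\mathrm{char}(k) \neq 2$ and $K$ is a $2$-dimensional \'etale $k$-algebra, the non-trivial $k$-automorphism $\bar{\ }$ of $K$ admits an element $\omega \in K$ with $\overline{\omega} = -\omega$ such that $\{1, \omega\}$ is a $k$-basis of $K$: take $\omega = \sqrt{\alpha}$ when $K = k(\sqrt{\alpha})$ is a field, and $\omega = (1,-1)$ when $K \cong k \times k$. Then $\{1 \otimes 1,\, 1 \otimes \omega\}$ is a free $L$-module basis of $E = L \otimes_k K$, so every $x \in E$ has a unique expression $x = a \otimes 1 + b \otimes \omega$ with $a, b \in L$.

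Applying $\tau$ to this expression gives $\tau(x) = a \otimes 1 - b \otimes \omega$, so the condition $\tau(x) = x$ forces $2(b \otimes \omega) = 0$. Using once more that $\mathrm{char}(k) \neq 2$ and that $1 \otimes \omega$ is a unit in $E$ (equivalently, part of a basis), we conclude $b = 0$, hence $x = a \otimes 1 \in L$. This yields $E^{\tau} \subseteq L$ and completes the proof. There is no real obstacle here; the only point worth flagging is the mild but essential use of $\mathrm{char}(k) \neq 2$, both in producing the skew eigenvector $\omega$ and in passing from $2b = 0$ to $b = 0$.
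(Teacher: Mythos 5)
Your proof is correct, and it takes a genuinely different (and more self-contained) route than the paper. The paper argues in two cases: when $K$ is a field it invokes the structural result quoted as Lemma \ref{count} (that $E=E^{\tau}\otimes_k K$, so $\dim_k(E^{\tau})=n$) and concludes by comparing dimensions with $L\subseteq E^{\tau}$; when $K=k\times k$ it identifies $(E,\tau)$ with $(L\times L,\epsilon)$, the switch involution, whose fixed algebra is visibly the diagonal copy of $L$. You instead work directly with the $\tau$-eigenspace decomposition $E=(L\otimes 1)\oplus(L\otimes\omega)$ coming from a trace-zero unit $\omega\in K$ with $\overline{\omega}=-\omega$, which handles the field and split cases by one uniform computation and avoids citing the external lemma; the only input is $\mathrm{char}(k)\neq 2$, exactly as you flag. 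One tiny remark: your parenthetical ``equivalently, part of a basis'' conflates being a unit with being a basis element --- these are not equivalent in general --- but either property of $1\otimes\omega$ (it is both a unit, since $\omega^2=\alpha\in k^*$, and a member of the free $L$-basis $\{1\otimes 1,\,1\otimes\omega\}$) suffices to conclude $b=0$, so the argument stands. What your approach buys is an elementary, characteristic-$\neq 2$ computation independent of the cited structure theory; what the paper's buys is brevity given that Lemma \ref{count} is already on hand for later use.
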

\begin{proof} Let $K$ be a quadratic field extension. By Lemma \ref{count}, $dim(E^{\tau})= n= dim(L)$. Since $L \subseteq E^{\tau}$, and the dimensions are equal, we have $L= E^{\tau}$. Let $K= k \times k$. Then $(E, \tau)= (L \times L, \epsilon)$, where is the switch involution on $L \times L$. Clearly $E^{\tau}= L$.
\end{proof}
\noindent
Let $L, K$ be \'etale algebras of $k$-dimensions $n, 2$ resp. and $(E, \tau)$ be the $K$-unitary algebra associated with the pair $(L, K)$. We call the torus ${\bf SU}(E, \tau)$ as the $K$-unitary torus associated to the ordered pair $(L, K)$. With such a $K$-unitary torus $T$, we associate the quadratic form $q_T:= <1, -\alpha \delta>$, where $Disc(L)= k(\sqrt{\delta})$ and $K= k(\sqrt {\alpha})$.
 Such tori are important as they occur as maximal tori in simple, simply connected groups of type $A_{n-1}$ and $G_2$. \\
\noindent
We record below an evident, yet useful result:
\begin{lemma}{\label{maxtorusina2}}   Let $K$ be a quadratic \'etale algebra over $k$ and $B$ be a degree $3$ central simple algebra over $K$ with an involution $\sigma$ of the second kind. Let $L \subseteq (B, \sigma)_+$ be a cubic \'etale subalgebra. Let $T$ be the $K$-unitary torus associated with the pair $(L, K)$. Then there exists a $k$-embedding $T \hookrightarrow {\bf SU}(B, \sigma)$.
\end{lemma}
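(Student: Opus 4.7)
The plan is to realize $T = {\bf SU}(L\otimes_k K, 1\otimes\bar{})$ as the special unitary group of a maximal \'etale $K$-subalgebra of $B$ whose induced involution coincides with $\tau = 1 \otimes \bar{}$. First I would construct the natural $K$-algebra map
\[
\iota \colon L \otimes_k K \longrightarrow B, \qquad \ell \otimes \kappa \longmapsto \ell \kappa,
\]
which is well defined because $K = Z(B)$ so $L$ and $K$ commute in $B$. Injectivity is the one point that requires a hypothesis: since $L \cap K$ is a $k$-subalgebra of the two-dimensional \'etale algebra $K$, it is either $k$ or $K$, and $K \subseteq L$ is ruled out because $L \subseteq (B,\sigma)_+$ would then force $\sigma|_K = \mathrm{id}$, contradicting that $\sigma$ is of the second kind. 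A standard linear-disjointness argument over $k$ then upgrades $L \cap K = k$ to injectivity of $\iota$, and the image is a commutative \'etale $K$-subalgebra of $B$ of $K$-dimension $3$, that is, a maximal \'etale $K$-subalgebra.

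Next I would verify that $\sigma$ restricts through $\iota$ to $\tau$. For $\ell \in L$ and $\kappa \in K$,
\[
\sigma(\iota(\ell \otimes \kappa)) \;=\; \sigma(\ell\kappa) \;=\; \sigma(\kappa)\sigma(\ell) \;=\; \bar{\kappa}\,\ell \;=\; \iota(\ell \otimes \bar{\kappa}) \;=\; \iota(\tau(\ell \otimes \kappa)),
\]
using $\sigma|_L = \mathrm{id}$ (from $L \subseteq (B,\sigma)_+$) and $\sigma|_K = \bar{}$ (since $\sigma$ is of the second kind). Thus $\iota$ is $(\tau,\sigma)$-equivariant.

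Finally I would invoke the standard fact that on any maximal commutative \'etale $K$-subalgebra $E$ of a degree-$3$ central simple $K$-algebra $B$, the reduced norm $N_B$ restricts to the \'etale-algebra norm $N_{E/K}$ (check this after passing to a common splitting field, where $B$ becomes $M_3$, $E$ becomes the diagonal, and both norms become the product of the diagonal entries). Applying this to $E = \iota(L \otimes_k K)$ shows that for any commutative $k$-algebra $R$ and any $x \in (L \otimes_k K)\otimes_k R$ satisfying $x\tau(x) = 1$ and $N_{L\otimes K/K}(x) = 1$, the image $\iota(x)$ satisfies $\iota(x)\sigma(\iota(x))=1$ and $N_B(\iota(x)) = 1$. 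The construction is functorial in $R$, so it yields a closed $k$-embedding of algebraic groups $T = {\bf SU}(L\otimes_k K,\tau) \hookrightarrow {\bf SU}(B,\sigma)$, as required.

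The only remotely delicate step is the injectivity of $\iota$, which the hypothesis $L \subseteq (B,\sigma)_+$ is exactly designed to ensure; the rest is a transcription of definitions together with the norm-compatibility of a maximal \'etale subalgebra.
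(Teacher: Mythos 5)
Your overall route---multiply $L\otimes_k K$ into $B$, check that $\sigma$ corresponds to $\tau=1\otimes\bar{}\,$, and use that the reduced norm of $B$ restricts to the \'etale norm on a maximal commutative \'etale $K$-subalgebra---is the natural one (the paper records this lemma without proof), and those parts of your argument are fine. The genuine gap is exactly at the step you single out: injectivity of $\iota$. The implication you invoke, that $L\cap K=k$ plus ``a standard linear-disjointness argument'' yields injectivity of $L\otimes_k K\to B$, is false in this generality. Take $B=M_3(K)$ with $K=k(\sqrt{\alpha})$ a field, and $L=\{\mathrm{diag}(a,x,x)\mid a\in k,\ x\in K\}\cong k\times K$. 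This is a cubic \'etale $k$-subalgebra of $B$ with $L\cap K\cdot 1=k\cdot 1$, yet the $K$-span of $L$ is the $2$-dimensional algebra $\{\mathrm{diag}(\lambda,\mu,\mu)\mid \lambda,\mu\in K\}$, so the multiplication map $L\otimes_k K\to B$ has nontrivial kernel. (Of course this $L$ lies in $(B,\sigma)_+$ for no unitary involution $\sigma$: such a $\sigma$ would fix both $\mathrm{diag}(0,1,1)$ and $\sqrt{\alpha}\,\mathrm{diag}(0,1,1)$, impossible since $\sigma$ acts on the center $K$ by the nontrivial automorphism.) So the hypothesis $L\subseteq(B,\sigma)_+$ must enter the injectivity argument more substantially than merely ruling out $K\subseteq L$; trivial intersection is not linear disjointness here, in particular because $L$ need not be a field and $L$, $K$ do not sit inside a common field.

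The repair is short and uses the equivariance you already verified. Since $\sigma\circ\iota=\iota\circ\tau$, the kernel $I=\ker\iota$ is a $\tau$-stable ideal of the \'etale algebra $E=L\otimes_k K$; hence $I=eE$ for a unique idempotent $e$, and uniqueness forces $\tau(e)=e$, so $e\in E^{\tau}=L$ by Lemma \ref{x}. But $\iota$ restricted to $L$ is the inclusion $L\subseteq B$, which is injective, so $e=0$ and $\iota$ is injective. (Alternatively, when $L$ and $K$ are both fields, $L\otimes_k K$ is automatically a field because $2$ and $3$ are coprime, so injectivity is immediate; it is precisely the non-field cases that need the $\tau$-stability argument.) With injectivity secured, the remainder of your proof---the identification $\sigma|_{\iota(E)}=\tau$, the norm compatibility checked over a splitting field, and the functorial passage to $R$-points giving the closed $k$-embedding of $T$ into ${\bf SU}(B,\sigma)$---goes through as written.
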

\subsection{Maximal tori in groups of type $G_2$}
The following result is well known (cf. for example, \cite{AS1}), we supply a proof for convenience of the reader. 
\begin{proposition}{\label{maximaltorusG2}} Let $G$ be a group of type $G_2$ over $k$ and $T$ be a maximal $k$-torus of $G$. Then there exists \'etale algebras $L, K$ of $k$-dimensions $3,2$ resp. such that $T$ is the  $K$- unitary torus associated to the pair $(L, K)$.
\end{proposition}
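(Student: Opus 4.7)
The plan is to exhibit the \'etale algebras $L, K$ directly from the action of $T$ on the octonion algebra underlying $G$. By Proposition~\ref{red1}, $G \cong {\bf Aut}(C)$ for some octonion algebra $C$ over $k$, so $T$ acts $k$-rationally on $C$ by algebra automorphisms. I take $K$ to be the $k$-subalgebra of $T$-fixed points
\[
C^T \;:=\; \{x \in C \mid \phi(x) = x~\text{for all}~\phi \in T\},
\]
and I produce $L$ from the centralizer-type description of maximal tori in Theorem~\ref{imp}.

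The first key step is to verify that $K := C^T$ is a quadratic \'etale subalgebra of $C$. Because $T$ acts by algebra automorphisms fixing the identity, $C^T$ is a unital $k$-subalgebra; its $k$-dimension is read off after base change to $\bar{k}$. There $G_{\bar k}$ is the split group of type $G_2$, $T_{\bar k}$ is a maximal torus inside it, and $C_{\bar k}$ decomposes as $\bar k \cdot 1 \oplus C^0$ with $C^0$ carrying the $7$-dimensional fundamental representation of $G_2$ whose weights are exactly the six short roots of $G_2$ together with a single zero weight. Hence $(C^0)^{T_{\bar k}}$ is one-dimensional and $(C^T)_{\bar k}$ is two-dimensional, so Galois descent yields $\dim_k C^T = 2$. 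Over $\bar k$ any $T_{\bar k}$-invariant two-dimensional unital subalgebra of the split octonion algebra is forced to coincide with the diagonal copy of $\bar k \times \bar k$ fixed by the Cartan, hence is \'etale; by descent $K$ is a quadratic \'etale subalgebra of $C$.

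Since $T$ fixes $K$ pointwise, $T \subseteq {\bf Aut}(C/K)$. By Theorem~\ref{typeA2}, ${\bf Aut}(C/K)$ is a simply connected, simple $k$-group of type $A_2$, in particular of rank $2$, so the rank-$2$ torus $T$ is automatically a maximal $k$-torus of ${\bf Aut}(C/K)$. Using the hermitian structure of Proposition~\ref{hermitian}, the complement $K^\perp \subset C$ is a free rank-$3$ hermitian $K$-module $(K^\perp, h)$, and the natural action identifies ${\bf Aut}(C/K) \cong {\bf SU}(K^\perp, h)$. Applying Theorem~\ref{imp}(b) to $T \subseteq {\bf SU}(K^\perp, h)$ produces an \'etale $K$-algebra $E_T$ of $K$-dimension $3$ with an involution $\sigma_h$ of the second kind such that $T = {\bf SU}(E_T, \sigma_h)$. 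Lemma~\ref{count} then writes $(E_T, \sigma_h) = (L \otimes_k K,~1 \otimes~\bar{}~)$ with $L := E_T^{\sigma_h}$ a cubic \'etale $k$-algebra, and so $T$ is precisely the $K$-unitary torus associated with the pair $(L, K)$.

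The main obstacle is the first step: the identification of $C^T$ as a quadratic \'etale subalgebra. It rests on the weight-multiplicity analysis of the $7$-dimensional representation of $G_2$ together with a Galois descent argument. Once this is in hand, the conclusion is a formal application of the structure results already cited on $A_2$-subgroups of $G_2$ (Theorem~\ref{typeA2}) and on maximal tori of special unitary groups (Theorem~\ref{imp} and Lemma~\ref{count}).
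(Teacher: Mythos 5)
Your proposal is correct, and its second half (once $K=C^{T}$ is known to be quadratic \'etale, pass to ${\bf Aut}(C/K)\cong {\bf SU}(K^{\perp},h)$ and invoke Theorem~\ref{imp} and Lemma~\ref{count}) coincides with the paper's argument. Where you genuinely diverge is in the key step, the identification of $C^{T}$ as a quadratic \'etale subalgebra. The paper proves this by working with rational points: it picks elements $t\in T(k)$, uses the classification of fixed subalgebras of octonion automorphisms (quadratic or quaternion), writes $C=Q\oplus Qb$ by doubling, and carries out an explicit computation inside $Aut(C,Q)$ to produce a quadratic \'etale algebra $K=k(c)$ fixed pointwise by $T$. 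You instead argue geometrically: after base change to $\bar k$, the trace-zero part of $C$ is the $7$-dimensional fundamental representation of $G_2$, whose $T_{\bar k}$-weights are the six short roots plus a single zero weight, so the fixed algebra is $2$-dimensional, and by conjugacy of maximal tori it is isomorphic to $\bar k\times\bar k$, hence \'etale; descent then gives the statement over $k$. Your route is shorter, avoids the case analysis and the cited results on automorphisms of Cayley algebras, and does not depend on $T(k)$ being rich in elements with prescribed fixed subalgebras (a point the paper's argument quietly relies on); the paper's route is more elementary and entirely explicit. Two small points you should make explicit: $C^{T}$ must be taken as the weight-zero (scheme-theoretic) fixed subspace so that its formation commutes with base change to $\bar k$ (this is automatic for a torus, being linearly reductive), and the sentence about the fixed algebra coinciding with ``the diagonal copy fixed by the Cartan'' should be phrased as: $T_{\bar k}$ is conjugate in ${\bf Aut}(C_{\bar k})$ to the standard diagonal torus of the Zorn model, whose fixed algebra is the diagonal $\bar k\times\bar k$. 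Finally, note that Theorem~\ref{imp} is stated for $K$ a quadratic \emph{field} extension, so the case $C^{T}\cong k\times k$ needs the (easy) split analogue; this caveat applies equally to the paper's own proof.
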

\begin{proof} Let $G$ be as in hypothesis. Then there exists an octonion algebra $C$ over $k$ such that $G= {\bf Aut}(C).$
Let $T\subset G$ be a maximal $k$-torus in $G$.\\
\noindent
{\bf Claim:} There exists a quadratic \'etale subalgebra $K$ of $C$ such that $K= C^T$, the fixed points of the octonion algebra $C$ under the action of $T$.
\vskip0.5mm
\noindent
To see this, we may assume that the dimension $[C^t : k]= 4$ for all $t \in T(k)$. 
If not, then there exists $t \in T(k)$ such that $C^t= K$ is a quadratic \'etale subalgebra of $C$ (\cite{Wob}, cf. also \cite{HUL}, Paragraph before Theorem 4 and \cite{Alf}). Now $T$ stabilizes, and hence, by a connectedness argument, fixes $K$ pointwise. 
The claim then follows. Let $t \in T(k)$ be such that $C^t= Q$ for some quaternion subalgebra $Q$ of $C$. Since $T$ centralizes $t$, we see that $T \subseteq{\bf Aut}(C, Q)$. We write, by doubling process, $C= Q \oplus Qb$ for some $b \in Q^{\perp}$. 
Then by (\cite{SV}, \S 2.1),
$$Aut(C, Q)= \{\phi_{c, p}, c \in Q^*, p \in SL_1(Q)| \phi_{c, p}(x+ yb)\rightarrow cxc^{-1}+ (pcyc^{-1})b, \forall x, y \in Q\}.$$ 
\noindent
 By an easy computation it follows that for $c, c' \in Q^*,~ p, p' \in SL_1(Q)$ if $\phi_{c, p}\phi_{c', p'}= \phi_{c', p'}\phi_{c, p}$, then there exists $a \in k^*$ such that $cc'= ac'c$. \\
\noindent
{\bf Claim:} There exists $\phi_{c, p}\in T(k)$ such that $c \notin k^*$.
\vskip0.5mm
\noindent
If not, then for all $\phi_{c, p}\in T(k)$ we have 
$c \in k^*$. Let $x \in Q$ be arbitary and $y=0$. Then, for any $\phi_{c, p} \in T$, $\phi_{c, p}(x)= x$. 
Thus $Q \subseteq C^T$ and hence $T \subseteq{\bf Aut}(C/Q)$, where ${\bf Aut}(C/Q)$ denotes subgroup of ${\bf Aut}(C)$ consisting of automorphisms of $C$ which fixes $Q$ pointwise. 
This is a contradiction, since $T$ is a rank-$2$ torus and ${\bf Aut}(C/Q)$ is a simple group of type $A_1$.\\
\noindent
 Thus there exists $\phi_{c, p}\in T(k)$ such that  $c \notin k^*$. Since 
$\phi_{c, p}\in T$ is semisimple, $c$ generates a quadratic \'etale subalgebra, $K:= k(c)$ of $Q$. Let $\phi_{c', p'} \in T$. Since $\phi_{c', p'}$ commutes with $\phi_{c, p}$, we have $cc'= ac'c$ for some $a\in k^*$. Now, any element $\gamma\in K$ is a polynomial expression in $c$ with coefficients from $k$, say,
 $$\gamma= a_0+ a_1c+...+a_m c^m ~ \text{for} ~a_i \in k, ~m \in \mathbb{N}.$$
\noindent
Now $\phi_{c', p'}(\gamma)= c'\gamma c'^{-1}$=  $a_0+ a a_1c+\cdots+ aa^m a_mc^m \in K$. Hence $\phi_{c', p'}(\gamma) \in K$ for all $\gamma \in K$. Since $\phi_{c', p'}$ was chosen arbitrarily in $T$, we see that $T$ stabilizes, and hence, fixes $K$ pointwise.  Hence $K \subseteq C^T$. Therefore $T \subseteq {\bf SU}(K^\bot, h),$ where $h$ is the non-degenerate hermitian form on $K^\bot \subseteq C$ over $K$, induced by the norm bilinear form $n_C$ (see \cite{NJ1}, \S 5, cf. Prop. \ref{hermitian}). Note that ${\bf SU}(K^\bot, h)= {\bf SU}(M_3(K), *_h)$, where $*_h$ is the involution on $M_3(K)$ given by $*_h(X)=h^{-1}\overline{X}^th$. By Theorem \ref{imp}, any maximal torus of ${\bf SU}(M_3(K), *_h)$ is of the form ${\bf SU}(E, *_h)$ for some six dimensional $K$-unitary algebra $E$ over $k$. Hence $T= {\bf SU}(E, *_h)$ for some $E$ as above. 
\end{proof}
\noindent 
In what follows, we will call a $k$-torus $T$ as {\bf distinguished} if there exists 
\'etale $k$-algebras $L, K$ be of $k$-dimensions $3, 2$ resp. such that $disc(L)= K$ and 
$T= {\bf SU}(E, \tau)$, where $(E, \tau)$ is the $K$-unitary algebra associated to the pair 
$(L, K)$. Also observe that when $T$ is a distinguished $k$-torus, the associated quadratic form $q_T$ splits over $k$. Note also that $T$ has rank-$2$.
\section {\bf Distinguished tori in groups of type $A_2$, $G_2$ and $F_4$}
In this section we study embeddings of distinguished $k$-tori in simply connected, simple algebraic groups of type $A_2$, $G_2$ and $F_4$, defined over a field $k$, in terms of the mod-$2$ Galois cohomological invariants attached with these groups.
We prove that a group $G$ of type $F_4$ contains a distinguished $k$-torus if and only if $f_5(G)=0$. A stronger version of this result holds for groups of type $G_2$ and $A_2$. Let $G$ be a simple, simply connected group of type $G_2$ or $A_2$ defined over $k$. We prove that $Oct(G)$ splits if and only if it $G$ contains a distinguished (maximal) torus. We begin with,
\begin{theorem}{\label{isom}} Let $T$ be a distinguished torus defined over $k$. Then $T$ is isotropic over an odd degree extension of $k$.
\end{theorem}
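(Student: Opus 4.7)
I will reduce, by base change to a suitable odd-degree extension $k'/k$, to the special case in which the cubic \'etale algebra $L$ has a factor isomorphic to $k$; for that case I will write $T$ down explicitly and exhibit a $k$-split $\mathbb{G}_m$ inside it.

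\emph{Reduction step.} Since $L$ is cubic \'etale, $L=\prod L_i$ with $\sum [L_i:k]=3$, so at least one factor $L_i$ has odd $k$-dimension (equal to $1$ or $3$). Set $k'=L_i$; this is either $k$ itself or a cubic field, hence an odd-degree extension of $k$. The embedding $L_i\hookrightarrow k'$ yields $L\otimes_k k'\cong k'\times M'$ for an \'etale $k'$-algebra $M'$ of rank $2$. Since discriminant commutes with base change and $\mathrm{Disc}_{k'}(k'\times M')=\mathrm{Disc}_{k'}(M')$, the distinguished hypothesis $K=\mathrm{Disc}(L)$ descends: $K\otimes_k k'=\mathrm{Disc}_{k'}(M')$. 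Hence it suffices to prove that $T$ is isotropic over $k$ in the case $L=k\times M$, with $M$ \'etale of rank $2$ and $K=\mathrm{Disc}(M)$.

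\emph{The reduced case.} Identifying $M$ with $K$ as quadratic \'etale $k$-algebras, the standard isomorphism $M\otimes_k K\cong K\times K$ via $a\otimes b\mapsto(ab,\bar a b)$ transports $1\otimes\bar{\,}$ on $M\otimes K$ to the twisted swap $(x,y)\mapsto(\bar y,\bar x)$. Thus $E=L\otimes K\cong K\times K\times K$ with $\tau(u,v,w)=(\bar u,\bar w,\bar v)$, and $N_{E/K}(u,v,w)=uvw$ (where $E$ is a $K$-algebra via the diagonal $1\otimes K$). The equation $x\tau(x)=1$ forces $u\bar u=1$ and $v=\bar w^{-1}$, and then $N_{E/K}(x)=1$ forces $u=\bar w/w$. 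This parametrizes $T$ via the morphism $R_{K/k}\mathbb{G}_m\to T$, $w\mapsto(\bar w/w,\bar w^{-1},w)$, which is injective and, by equality of dimensions, an isomorphism $T\cong R_{K/k}\mathbb{G}_m$. Composing with the scalar embedding $\mathbb{G}_m\hookrightarrow R_{K/k}\mathbb{G}_m$ (i.e., $t\mapsto(1,t^{-1},t)$) produces a $k$-split $\mathbb{G}_m$ inside $T$, so $T$ is $k$-isotropic. Applying this to $T\otimes_k k'$ completes the proof in general.

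\emph{Main obstacle.} The only delicate point is to verify the chain of isomorphisms and the twisted-swap form of $\tau$ uniformly in the two sub-cases of the reduced setting, namely $M$ a quadratic field and $M\cong k\times k$ (where $K$ is also split and the computation becomes slightly degenerate). Once the identification $M\otimes K\cong K\times K$ is set up correctly in both sub-cases, the rest of the argument -- producing $\mathbb{G}_m$ via scalars in $R_{K/k}\mathbb{G}_m$ -- is immediate.
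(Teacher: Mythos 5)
Your proof is correct and follows essentially the same route as the paper's: when $L$ is a field you base change to $L$ itself (an odd-degree extension) to reduce to $L=k\times M$ with $M\cong K$ (via the distinguished hypothesis), and there you compute the torus explicitly as $R_{K/k}(\mathbb{G}_m)$, hence isotropic. The only cosmetic difference is that you handle the paper's split case ($L=k\times k\times k$, $K=k\times k$, where $T\cong\mathbb{G}_m\times\mathbb{G}_m$) and non-split case uniformly through the identification $K\otimes_k K\cong K\times K$.
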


\begin{proof} Let $T$ be a distinguished torus over $k$. Then, by definition, there exists \'etale $k$-algebras $L, K$ of $k$-dimensions $3, 2$ resp. such that $disc(L)= K$ and $T= {\bf SU}(E, \tau)$, where $(E, \tau)$ is the $K$-unitary algebra associated to $(L, K)$.
By Lemma \ref{x}, $L= E^{\tau}= \{x \in E~| \tau(x)=x\}$.
We divide the proof into three cases.
\vskip0.5mm
\noindent
{\bf  Case (i)} $L = k \times k \times k$.
\vskip0.5mm
\noindent
Since $T$ is distinguished, we have $disc(L)= K= k \times k$.
Hence $(E, \tau ) \cong (L \times L, \epsilon)$, where $\epsilon: L\times L \rightarrow  L\times L$ is given by $\epsilon(x, y)=(y, x)$, the switch involution on $L \times L.$ 
Now $$SU(E, \tau) \cong \{(x, y) \in L\times L| (x, y)\epsilon(x, y)= 1, (N_{L/k}(x), N_{L/k}(y))= (1, 1)\} \cong L^{(1)} \cong k^* \times k^*,$$ where $L^{(1)}$ denotes the group of norm $1$ elements of $L$.
It follows that ${\bf SU}(E, \tau) \cong \mathbb{G}_m \times \mathbb{G}_m$ over $k$, and hence $T= {\bf SU}(E, \tau)$ splits over $k$ in this case.
\vskip0.5mm
\noindent
{\bf  Case (ii)} $L= k \times K$, $K$ is a field.
\vskip0.5mm
\noindent
Let $\overline{\epsilon}: K \times K \rightarrow  K \times K$ be given by $\overline{\epsilon}(x, y)= (\overline{y}, \overline{x})$. Then $(E, \tau) = ((k \times K) \otimes K, 1 \otimes \bar{}~) \cong (K \times (K \times K), (~\bar{}, ~\overline{\epsilon}~))$. We have therefore,
\begin{eqnarray*}
SU(E, \tau)& \cong &\{(x, y, z)\in K \times K \times K | (x, y, z)(\overline{x}, \overline{z}, \overline{y})
= (1, 1, 1) , xyz=1\}.\\
& \cong & \{(\overline{z}z^{-1}, \overline{z}^{-1}, z)|~ z \in K^*\}= \{ (z^{-2}N(z), zN(z^{-1}), z)|~z \in K^* \}.\\
& \cong & K^*.
\end{eqnarray*}
Hence $T= {\bf SU}(E, \tau) \cong R_{K/k}(\mathbb{G}_m$) is isotropic over $k$.
\vskip0.5mm
\noindent
 {\bf  Case (iii)} $L$ is a field.
\vskip0.5mm
\noindent
 Base changing to $L$ we get, $L\otimes L \cong L \times K_0$ as $L$-algebras, where $K_0= L \otimes \Delta$, and $\Delta= Disc(L)$ (see Prop. \ref{disccubic}). By case {\bf (i)} and {\bf (ii)}, it follows that ${\bf SU}(E, \tau) \otimes L$ is isotropic. Hence $T= {\bf SU}(E, \tau)$ is isotropic over $L$.
\end{proof}
\noindent
We now study the presence of distinguished $k$-tori in groups of type $A_2$, $G_2$ and $F_4$ defined over $k$. We see that existence of such tori has a direct relation with the mod-$2$ invariants attached to these groups.
We obtain as an immediate consequence of the above theorem the following,
\begin{theorem}{\label{g2to}} Let $G$ be a group of type $G_2$ over $k$. Then $G$ splits over $k$ (equivalently, $Oct(G)$ splits over $k$) if and only if there exists a maximal $k$-torus in $G$ which is distinguished. 
\end{theorem}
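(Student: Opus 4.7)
The plan is to establish both directions using the classification of maximal $k$-tori in Proposition~\ref{maximaltorusG2}, the isotropy result for distinguished tori in Theorem~\ref{isom}, and the split/anisotropic dichotomy for groups of type $G_2$ recorded in Proposition~\ref{red1}.

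Forward direction: Assume $G$ splits over $k$. As a split reductive group of rank $2$, $G$ contains a $k$-split maximal torus $T_0\cong\mathbb{G}_m\times\mathbb{G}_m$. By Proposition~\ref{maximaltorusG2}, $T_0$ is the $K$-unitary torus attached to some \'etale pair $(L,K)$. Inspecting the three cases treated in the proof of Theorem~\ref{isom}, a $k$-split $T_0$ arises only in Case~(i), which forces $L=k\times k\times k$ and $K=k\times k$. Then $Disc(L)=k\times k=K$, so $T_0$ is distinguished by definition.

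Converse direction: Assume $G$ contains a distinguished maximal $k$-torus $T$ associated to an \'etale pair $(L,K)$ with $Disc(L)=K$. By Theorem~\ref{isom}, $T$ becomes isotropic over an extension $k'/k$ of odd degree --- either $k'=k$ (when $L=k\times k\times k$ or $L=k\times K$) or $k'=L$ (when $L$ is a cubic field). Hence $T\otimes k'\subset G_{k'}$ contains a non-trivial $k'$-split subtorus, so $G_{k'}$ has positive $k'$-rank. Since a group of type $G_2$ is either split or anisotropic over its base (Proposition~\ref{red1}), $G_{k'}$ is $k'$-split. Equivalently, the norm form $n_C$ of $C:=Oct(G)$ is hyperbolic over $k'$. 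Since $n_C$ is a $3$-fold Pfister form --- hence either anisotropic or hyperbolic over every extension --- and $[k':k]$ is odd, Springer's classical theorem on odd degree extensions forces $n_C$ to be hyperbolic already over $k$. Therefore $C$ splits, and Proposition~\ref{red1} then yields that $G$ splits over $k$.

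The only non-routine step is the descent argument in the cubic-field case: one passes from the isotropy of $T$ over $L$ to the splitting of $G_L$ (via the $G_2$ dichotomy), and then descends the hyperbolicity of the $3$-fold Pfister form $n_C$ from $L$ back to $k$ by Springer's theorem. The remaining two cases of $L$ are essentially immediate because $T$ is already $k$-isotropic there.
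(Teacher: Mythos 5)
Your converse direction is essentially the paper's own proof: Theorem~\ref{isom} gives isotropy of the distinguished torus over an odd degree extension $M$, hence $G$ has positive $M$-rank, hence (by the split/anisotropic dichotomy of Proposition~\ref{red1}, where the paper instead quotes Lemma~\ref{isotropycond} -- same content) the $3$-fold Pfister form $n_C$ is hyperbolic over $M$, and Springer's theorem descends the splitting to $k$. No issues there.

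The forward direction is where you deviate, and the justification you give has a gap. You take a split maximal torus $T_0$, present it via Proposition~\ref{maximaltorusG2} as the unitary torus of some pair $(L,K)$, and then assert that ``inspecting the three cases treated in the proof of Theorem~\ref{isom}'' shows a $k$-split $T_0$ can only occur with $L=k\times k\times k$, $K=k\times k$. But the three cases of Theorem~\ref{isom} classify only \emph{distinguished} tori, i.e.\ pairs with $Disc(L)=K$; they say nothing about arbitrary unitary tori. For instance the pair $(k\times k\times k,\,K)$ with $K$ a quadratic field gives the torus ${\bf K}^{(1)}\times{\bf K}^{(1)}$ (Lemma~\ref{useful1}), which is not among those cases, and similarly for $(k\times K_0,\,K)$ with $K\neq K_0$ or $L$ a cubic field with $K\neq Disc(L)$. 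The assertion ``split occurs only for $L=k^3$, $K=k\times k$'' is in fact true, but proving it needs a separate argument (a character-lattice computation, or a case-by-case use of Lemmas~\ref{useful}, \ref{useful1} and the field case), not the cited inspection. The gap is easily closed, and indeed the detour through Proposition~\ref{maximaltorusG2} is unnecessary: since ``distinguished'' only asks for the \emph{existence} of some presentation of the torus as the unitary torus of a pair with $Disc(L)=K$, it suffices to note, as the paper does, that a split rank-$2$ torus is isomorphic to the unitary torus of the pair $(k\times k\times k,\,k\times k)$ by the Case (i) computation in Theorem~\ref{isom}, hence is distinguished by definition. With that one-line repair your argument coincides with the paper's.
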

\begin{proof} Let $T \subseteq G$ be a distinguished maximal $k$-torus. By Theorem \ref{isom}, $T$ becomes isotropic over an odd degree extension, say $M$, of $k$. Hence $M$-rank of $G \geq 1$. Thus $Oct(G) \otimes M$ is split (Lemma \ref{isotropycond}). By Springer's theorem, $Oct(G)$ splits over $k$ itself and consequently $G$ is $k$-split. Conversely, suppose $G$ splits over $k$. Let $L= k \times k \times k$ and $K= k \times k$ and $T= {\bf SU}(L \otimes K, 1\otimes \bar{}~)$. By case {\bf (i)} of the proof of Theorem \ref{isom}, $T \cong \mathbb{G}_m \times \mathbb{G}_m$ and $\mathbb{G}_m \times \mathbb{G}_m \hookrightarrow G$ over $k$ as $G$ is $k$-split. Hence $T$ is the required distinguished $k$-torus.
\end{proof}
\noindent
A similar result holds for groups of type $A_2$. We first recall, 
\begin{theorem}{\label{f31}} (\cite{NH}, Prop. 3.4)  Let $F$ be a quadratic \'etale $k$-algebra and $B$ be a degree 3 central simple algebra over $F$ with an involution $\sigma$ of the second kind. Then $\sigma$ is distinguished over $k$ if and only if ${\bf SU}(B,\sigma)$ becomes isotropic over an odd degree extension. 
\end{theorem}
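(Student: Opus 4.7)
The statement is an ``if and only if'' whose two directions require rather different techniques: the backward direction is a short Galois-cohomological deduction via Springer's theorem, whereas the forward direction requires producing a distinguished maximal $k$-torus inside ${\bf SU}(B,\sigma)$.

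For the backward direction, suppose $G:={\bf SU}(B,\sigma)$ becomes $M$-isotropic over some finite odd-degree extension $M/k$. Then $G_M$ is a simple, simply connected $M$-group of type $A_2$ with $M$-rank at least $1$, so Lemma \ref{isotropycond}(b) shows that $Oct(G_M)$ splits over $M$. Since $Oct$ is functorial in base change and the norm form of $Oct(G)$ is the $3$-fold Pfister form $f_3(B,\sigma)$, we see that $f_3(B,\sigma)\otimes_kM$ is hyperbolic. Springer's theorem on quadratic forms under odd-degree extensions then forces $f_3(B,\sigma)$ itself to be hyperbolic, i.e.\ $\sigma$ is distinguished.

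For the forward direction, my plan is to exhibit a distinguished $k$-torus $T\subseteq G$ and then invoke Theorem \ref{isom}, which shows that any such $T$ becomes isotropic over a cubic extension of $k$. By Lemma \ref{maxtorusina2}, it suffices to produce a cubic \'etale $k$-subalgebra $L\subseteq (B,\sigma)_+$ with $Disc(L)=F$: then $T:={\bf SU}(E,\tau)$, where $(E,\tau)$ is the $F$-unitary algebra associated with $(L,F)$, is a distinguished rank-$2$ torus embedding in $G$, and the cubic extension $L/k$ (when $L$ is a field) is the odd-degree extension over which $T$ is isotropic. The remaining degenerate cases for $L$ are handled in exactly the same way, giving isotropy already over $k$.

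The main obstacle is thus producing $L$ with the correct discriminant. I would pass to the Tits second construction $A:=J(B,\sigma,1,1)$. Since $\mu=1$ is trivially a reduced norm from $B$, $A$ is reduced; since $\sigma$ is distinguished, $f_3(A)=f_3(B,\sigma)=0$, so $Oct(A)$ is split and $f_5(A)=0$ is automatic. Hence $A$ is the split Albert algebra, whose $9$-dimensional Jordan subalgebra $(B,\sigma)_+$ should inherit a rich supply of cubic \'etale $k$-subalgebras realizing every class in $k^*/k^{*2}$ as discriminant. Concretely, I would look for a $\sigma$-symmetric element $x\in B$ whose reduced characteristic polynomial over $k$ has discriminant $\alpha$ modulo squares (where $F=k(\sqrt{\alpha})$); the vanishing of the Pfister form $<<\alpha,b,c>>$ arising from the decomposition of $Q_\sigma$ in \S 2.2 is exactly what should provide the freedom to realize $\alpha$ as such a discriminant, and this is the point at which the hypothesis that $\sigma$ is distinguished really enters. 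Verifying this last step is the technical heart of the argument.
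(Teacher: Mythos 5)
First, note that this paper does not actually prove Theorem \ref{f31}: it is quoted from the earlier paper (\cite{NH}, Prop.\ 3.4), so there is no internal proof to compare against, and your attempt has to stand on its own. Your backward direction is sound in substance (isotropy over an odd degree extension $M$ forces $Oct(G)\otimes M$ to split, then Springer's theorem descends the hyperbolicity of $f_3(B,\sigma)$ to $k$), but be aware that inside this paper Lemma \ref{isotropycond}(b) for type $A_2$ is itself justified by citing (\cite{NH}, Prop.\ 3.4), i.e.\ the very statement under proof; to avoid that circularity you should argue the isotropic case directly (isotropy of ${\bf SU}(B,\sigma)$ forces $B\cong M_3(F)$ with an isotropic rank-$3$ hermitian form, and for $a=diag(1,-1,c)$ the form $f_3(B,\sigma)=<<\alpha,-a_1a_2,-a_2a_3>>$ is visibly hyperbolic) before applying Springer.

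The forward direction has a genuine gap at its decisive step. Your reduction is correct: if one has a cubic \'etale $L\subseteq(B,\sigma)_+$ with $Disc(L)=F$, then Lemma \ref{maxtorusina2} embeds the distinguished torus $T={\bf SU}(L\otimes F,1\otimes~\bar{}~)$ into ${\bf SU}(B,\sigma)$ and Theorem \ref{isom} gives isotropy over an odd degree extension; this is exactly the mechanism used in the paper's proof of Theorem \ref{dist}. But the existence of such an $L$ is precisely the nontrivial content, and it is a theorem of Haile--Knus--Rost--Tignol ((\cite{HKRT}, Theorem 16): a distinguished involution admits a symmetric cubic \'etale subalgebra with discriminant algebra $F$), which the paper cites rather than reproves. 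Your proposed substitute --- observe that $J(B,\sigma,1,1)$ is reduced with $f_3=f_5=0$, hence split, and conclude that $(B,\sigma)_+$ ``should'' contain cubic \'etale subalgebras of every discriminant --- does not close this gap: splitness of the ambient $27$-dimensional Albert algebra gives no control over which cubic \'etale $k$-algebras, with which discriminants, embed in the particular $9$-dimensional subalgebra $(B,\sigma)_+$ (indeed $(B,\sigma)_+$ can be a division algebra even when $\sigma$ is distinguished, so its subalgebra structure is genuinely constrained), and producing a symmetric element whose characteristic polynomial has discriminant $\alpha$ mod squares is exactly the assertion to be proved. You acknowledge this yourself by calling it ``the technical heart''; as written, the proof is therefore incomplete unless you either invoke (\cite{HKRT}, Theorem 16) or supply an argument for that existence statement.
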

\begin{theorem}{\label{dist}} Let $G$ be a simple, simply connected group of type $A_2$ over $k$. Then $Oct(G)$ splits over $k$ if and only if there exists a maximal $k$-torus in $G$ which is distinguished. 
\end{theorem}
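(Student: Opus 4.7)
The approach mirrors the proof of Theorem \ref{g2to}, but the situation for $A_2$ is more delicate: when $Oct(G)$ splits, $G={\bf SU}(B,\sigma)$ need not split, so we cannot simply invoke a split torus. Write $G={\bf SU}(B,\sigma)$ for a degree-$3$ central simple $K$-algebra $B$ and $\sigma$ an involution of the second kind over the quadratic \'etale extension $K/k$. By definition $Oct(G)$ splits iff the $3$-fold Pfister form $f_3(B,\sigma)$ is hyperbolic iff $\sigma$ is distinguished. By Theorem \ref{imp}(b) together with Lemma \ref{count} and Lemma \ref{x}, every maximal $k$-torus of $G$ has the form $T={\bf SU}(L\otimes_k K,\,1\otimes\bar{}\,)$ for a unique cubic \'etale $k$-subalgebra $L\subseteq (B,\sigma)_+$, and such $T$ is distinguished precisely when $Disc(L)=K$. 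The problem therefore reduces to the existence, assuming $\sigma$ distinguished, of a cubic \'etale $k$-subalgebra $L\subseteq (B,\sigma)_+$ with $Disc(L)=K$.

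For the forward implication, suppose $T\subseteq G$ is a distinguished maximal $k$-torus. By Theorem \ref{isom}, $T$ becomes isotropic over some odd-degree extension $M/k$, so $G\otimes_k M$ has $M$-rank $\geq 1$. Lemma \ref{isotropycond}(b) then yields that $Oct(G)\otimes_k M$ splits. Since $Oct(G)$ is determined by a $3$-fold Pfister form and $[M:k]$ is odd, Springer's theorem on odd-degree descent for Pfister forms forces $Oct(G)$ to split already over $k$. (One could alternatively invoke Theorem \ref{f31} directly, as isotropicity of $T$ over $M$ gives isotropicity of $G$ over $M$, hence $\sigma$ distinguished.)

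For the reverse implication, assume $\sigma$ is distinguished. The plan is first to exhibit an auxiliary cubic \'etale $k$-subalgebra $L'\subseteq B$ with $Disc(L')=K$; then by Proposition~2.1 of the preliminaries (HKRT Prop.~17) there exists a \emph{distinguished} involution $\sigma'$ of the second kind on $B$ with $L'\subseteq (B,\sigma')_+$. Since both $\sigma$ and $\sigma'$ are distinguished, their Arason invariants vanish, $f_3(B,\sigma)=f_3(B,\sigma')=0$, and by the classification of unitary involutions on a degree-$3$ CSA via $f_3$ (the Rost invariant being a complete mod-$2$ invariant for outer-type $A_2$), the pairs $(B,\sigma)$ and $(B,\sigma')$ are isomorphic as $K$-algebras with involution. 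Transporting $L'$ along such an isomorphism $\phi:(B,\sigma')\to (B,\sigma)$ yields $L:=\phi(L')\subseteq (B,\sigma)_+$, still cubic \'etale over $k$ with $Disc(L)=Disc(L')=K$. Lemma \ref{maxtorusina2} then produces the desired distinguished maximal $k$-torus $T={\bf SU}(L\otimes_k K,\,1\otimes\bar{}\,)\hookrightarrow G$.

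The main obstacle is producing the initial auxiliary $L'\subseteq B$ with $Disc(L')=K$ when $B$ is a division algebra; in that case one needs a cubic cyclic extension of $K$ sitting inside $B$ that descends to $k$ and realizes the discriminant $K$. Here the distinguished hypothesis is essential: the hyperbolicity of $f_3(B,\sigma)=\langle\langle\alpha,b,c\rangle\rangle$ with $K=k(\sqrt{\alpha})$ constrains the Brauer class of $B$ enough to force such a subfield to exist, via the cubic norm structure on the $9$-dimensional Jordan algebra $(B,\sigma)_+$. As a safer alternative, one can bypass the transfer step by working directly in $(B,\sigma)_+$: using the explicit shape of $Q_\sigma$ from Proposition~2.1 of the preliminaries, exhibit a $\sigma$-symmetric regular element $u\in (B,\sigma)_+$ whose characteristic polynomial has discriminant representing $\alpha$ in $k^*/k^{*2}$; then $L:=k[u]$ has $Disc(L)=K$ and lies in $(B,\sigma)_+$ by construction.
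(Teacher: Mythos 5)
Your forward direction is exactly the paper's: Theorem \ref{isom} gives isotropy of the distinguished torus over an odd-degree extension $M$, hence $G$ is $M$-isotropic, and Theorem \ref{f31} (equivalently, Lemma \ref{isotropycond} plus Springer's theorem) forces $\sigma$ to be distinguished, i.e.\ $Oct(G)$ split. That half is fine.

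The reverse direction, however, has a genuine gap at precisely the point you flag yourself: the existence of a cubic \'etale $k$-algebra with discriminant algebra $K$ inside $B$ (or inside $(B,\sigma)_+$) is never actually established. Your justification --- that hyperbolicity of $f_3(B,\sigma)$ ``constrains the Brauer class of $B$ enough to force such a subfield to exist'' --- is not a proof and is also misleading as reasoning: every degree-$3$ central simple algebra with a unitary involution admits a distinguished involution, so the distinguished hypothesis places no constraint whatsoever on the Brauer class of $B$; whatever forces the subalgebra to exist must come from a finer result about involutions, not about $B$. The paper closes exactly this gap with a single citation: HKRT, Theorem 16, which states that $\sigma$ is distinguished if and only if $(B,\sigma)_+$ contains a cubic \'etale $k$-algebra $L$ with discriminant algebra $K$. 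With that, $L\subseteq(B,\sigma)_+$ is already $\sigma$-symmetric, $E=L\otimes K$ sits in $B$ with $\sigma|_E=1\otimes\bar{}\,$, and Lemma \ref{maxtorusina2} gives the distinguished maximal torus directly --- no auxiliary involution $\sigma'$, no transfer step. Your transfer step could in principle be repaired (for degree-$3$ unitary involutions on a fixed $B$ the trace form, equivalently the $3$-fold Pfister form together with $K$, does classify the involution up to isomorphism, by HKRT Theorem 15; the appeal to ``the Rost invariant being a complete mod-$2$ invariant for outer type $A_2$'' is not the right statement), but it is superfluous once one knows HKRT Theorem 16, and without Theorem 16 (or an equivalent argument) your construction of $L'$, including the ``safer alternative'' via a symmetric element with prescribed characteristic-polynomial discriminant, remains an unproved assertion rather than a proof.
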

\begin{proof} Let $G$ be as in the hypothesis. Then $G \cong {\bf SU}(B, \sigma)$ for some degree $3$ central simple algebra $B$ over a quadratic \'etale extension $F$ of $k$ with an involution $\sigma$ of the second kind. Let $T \hookrightarrow G$ be a maximal $k$-torus which is distinguished. Then, by Theorem \ref{isom}, $T$ is isotropic over an odd degree extension $M$ of $k$. Thus $G$ is isotropic over $M$. Hence by Theorem {\ref{f31}}, $\sigma$ is distinguished over $k$. Hence, $f_3(B, \sigma)=0$ and $Oct(G)$ is split over $k$. Conversely, if $Oct(G)$ is split over $k$, then $f_3(B, \sigma)=0$ and hence $\sigma$ is distinguished over $k$. By (\cite{HKRT}, Theorem 16, pg. 317), $(B, \sigma)_+$ contains a cubic \'etale $k$- algebra $L$ with $F$ as its discriminant algebra. Let $E= L \otimes F$. Then $E \hookrightarrow B$ and $\sigma$ restricted to $E$ equals $\tau:= 1 \otimes~\bar{}~$, where $~\bar{}~$ denotes the non-trivial $k$-automorphism of $F$. Hence ${\bf SU}(E, \sigma)$ is a distinguished $k$-torus and, by Lemma \ref{maxtorusina2}, ${\bf SU}(E, \tau) \hookrightarrow G \cong {\bf SU}(B, \sigma)$ over $k$.
\end{proof}
\noindent
For groups of type $F_4$ we have the following,
\begin{theorem}{\label{f5}}  Let $A$ be an Albert algebra over $k$ and $G= {\bf Aut}(A)$. Then $f_5(A) = 0$ if and only if $G$ contains a distinguished $k$-torus.
\end{theorem}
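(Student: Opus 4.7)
\emph{Forward direction.} If $G$ contains a distinguished $k$-torus $T$, then by Theorem~\ref{isom}, $T$ (and hence $G$) becomes isotropic over some odd-degree extension $M/k$. Lemma~\ref{isotropycond}(a) applied to $G \otimes_k M$ then yields $f_5(A \otimes_k M) = 0$. Since $f_5(A)$ is the class of a $5$-fold Pfister form over $k$, and Pfister forms that become hyperbolic over an odd-degree extension are already hyperbolic over the base (Springer's theorem), we conclude $f_5(A) = 0$.

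\emph{Converse, first case.} Assume $f_5(A) = 0$, and suppose first $f_3(A) = 0$. Then $Oct(A)$ splits, so by (\cite{KMRT}, Proposition~40.5) $A = J(D,\mu)$ is a first Tits construction with $D$ a degree-$3$ central simple $k$-algebra. The embedding ${\bf SL}_1(D) \hookrightarrow G$ recalled in \S 2.4 realizes a simply connected $A_2$-subgroup of $G$. Identifying ${\bf SL}_1(D)$ with ${\bf SU}(D \times D^{op},\epsilon)$ for the switch involution $\epsilon$ (with center $k \times k$), the formula for $f_3$ in \S 2.2 forces $f_3({\bf SL}_1(D)) = 0$, so $Oct({\bf SL}_1(D))$ splits. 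Theorem~\ref{dist} then supplies a distinguished $k$-torus $T \subseteq {\bf SL}_1(D)$, which embeds in $G$.

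\emph{Converse, second case.} Now suppose $f_3(A) \neq 0 = f_5(A)$. Write $A = J(B,\sigma,u,\mu)$ as a second Tits construction; since $f_3(B,\sigma) = f_3(A) \neq 0$, $\sigma$ is not distinguished, so Theorem~\ref{dist} cannot be applied directly to ${\bf SU}(B,\sigma) \hookrightarrow G$. The plan is to exploit an identity of the form $f_5(A) = f_3(B,\sigma) \otimes q_{u,\mu}$, where $q_{u,\mu}$ is the $2$-fold Pfister form attached to the Tits parameters $(u,\mu)$: its vanishing together with the anisotropy of $f_3(B,\sigma)$ forces, via Arason-type divisibility results for Pfister forms, that $q_{u,\mu}$ is controlled by $f_3(B,\sigma)$. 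One then adjusts the Tits data $(u,\mu)$ within the isomorphism class of $A$ so as to exhibit a cubic \'etale subalgebra $L \subseteq (B,\sigma)_+$ with $\mathrm{Disc}(L) = K$, the center of $B$; Lemma~\ref{maxtorusina2} then embeds the distinguished torus $T = {\bf SU}(L \otimes K,\, 1 \otimes \bar{}~)$ into ${\bf SU}(B,\sigma) \subseteq G$.

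\emph{Main obstacle.} The principal difficulty is exactly the last step of the second case: producing, solely from the hyperbolicity $f_5(A) = 0$, a cubic \'etale $L \subseteq (B,\sigma)_+$ whose discriminant equals the center of $B$, in a situation where $\sigma$ is not distinguished and so the Haile--Knus--Rost--Tignol existence result (\cite{HKRT}, Theorem~16) used in the proof of Theorem~\ref{dist} is not directly available. Resolving this should combine the Petersson--Racine structural analysis of second Tits constructions (dependence of $f_5$ on the parameters $u,\mu$) with Pfister-form divisibility to allow a compatible choice of $L$ and $K$ realizing the distinguished torus inside $G$.
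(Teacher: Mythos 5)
Your forward direction is exactly the paper's argument (Theorem~\ref{isom}, Lemma~\ref{isotropycond}, Springer), and your first case of the converse is a correct but strictly easier special case. The real content is your second case, and there you have a genuine gap which you yourself flag: you never produce the cubic \'etale algebra $L\subseteq (B,\sigma)_+$ with $\mathrm{Disc}(L)=K$, and the route you sketch is unlikely to close it, because you have fixed one presentation $A=J(B,\sigma,u,\mu)$ and are trying to find a distinguished torus inside that particular ${\bf SU}(B,\sigma)$. The missing idea is that the presentation itself can be changed: the paper quotes (\cite{KMRT}, Prop.~40.7), which says precisely that $f_5(A)=0$ if and only if $A$ is isomorphic to some second Tits construction $J(B,\sigma,u,\mu)$ in which $\sigma$ is already \emph{distinguished}. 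With that, the converse is immediate: Theorem~\ref{dist} (via \cite{HKRT}, Theorem~16) gives a cubic \'etale $L\subseteq (B,\sigma)_+$ with discriminant algebra $F=Z(B)$, hence a distinguished maximal torus in ${\bf SU}(B,\sigma)$, and the standard embedding ${\bf SU}(B,\sigma)\hookrightarrow {\bf Aut}(A)$ of \S 2.4 finishes the proof. So the whole difficulty you isolate is exactly what this one citation resolves, and without it (or an equivalent statement about changing Tits data) your case~2 is not a proof.

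Two further inaccuracies in your case~2 setup. First, for a second Tits construction $A=J(B,\sigma,u,\mu)$ one has $f_3(A)=f_3(B,\mathrm{Int}(u)\circ\sigma)$, not $f_3(B,\sigma)$ in general (compare the computation in the proof of Theorem~\ref{cohomoo}), so "$f_3(B,\sigma)=f_3(A)\neq 0$" is not justified for an arbitrary presentation. Second, the proposed identity $f_5(A)=f_3(B,\sigma)\otimes q_{u,\mu}$ with a $2$-fold Pfister form read off from $(u,\mu)$ is not established anywhere in the paper (what is true is that $f_3(A)$ divides $f_5(A)$, via the reduced model $\mathcal{H}_3(C,\Gamma)$), so even the divisibility step of your plan would need its own proof. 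Also note that the case split on $f_3(A)$ is unnecessary once Prop.~40.7 is available: the paper's converse treats both cases uniformly.
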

\begin{proof} Assume that $G$ contains a distinguished $k$-torus $T$. Then by Theorem \ref{isom}, $T$ is isotropic over an odd degree extension $M$ of $k$, hence $G$ becomes isotropic over $M$. Therefore $M$-rank of $G \otimes M \geq 1$ and $f_5(A \otimes M)= f_5(A) \otimes M= 0$ (Lemma \ref{isotropycond}). By Springer's theorem $f_5(A)=0$. Conversely, if $f_5(A)=0$, by (\cite{KMRT}, Prop. 40.7), $A \cong J(B,\sigma,u, \mu)$ for a central simple algebra $B$ over a quadratic \'etale extension $F$ of $k$, with a distinguished involution $\sigma$. Since $\sigma$ is distinguished, by Theorem {\ref{dist}} there exists a $k$-embedding of a distinguished $k$-torus $T$  in ${\bf SU}(B, \sigma)$. Now ${\bf SU}(B, \sigma) \hookrightarrow G$ over $k$ (see \S 2.4). Hence $T \hookrightarrow G$ over $k$ and $T$ is distinguished.
\end{proof}
\noindent
As a consequence of the above theorem, we have an alternative proof of ( \cite{NH}, Theorem $3.4$).
\begin{corollary}  Let $A$ be an Albert algebra over $k$ and $G= {\bf Aut}(A)$. Then $f_5(A) = 0$ if and only if there exists a $k$-embedding $ {\bf SU}( B, \sigma) \hookrightarrow G$ for some degree $3$ central simple algebra $B$ with center a quadratic \'etale $k$-algebra $F$ and with a distinguished involution $\sigma$.
\end{corollary}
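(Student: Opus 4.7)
The plan is to deduce both implications directly from Theorem \ref{f5} together with Theorem \ref{dist}, using the standard embedding ${\bf SU}(B,\sigma)\hookrightarrow {\bf Aut}(A)$ associated to a second Tits construction recalled in \S 2.4.

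First I would establish the forward direction. Assume $f_5(A)=0$. By (\cite{KMRT}, Prop. 40.7), one can write $A\cong J(B,\sigma,u,\mu)$ where $B$ is a degree $3$ central simple algebra over a quadratic \'etale extension $F/k$ and $\sigma$ is a distinguished involution of the second kind. The construction of $A$ then yields the embedding ${\bf SU}(B,\sigma)\hookrightarrow G={\bf Aut}(A)$ described in \S 2.4, which is precisely the desired embedding with $\sigma$ distinguished.

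For the converse, suppose there exists a $k$-embedding ${\bf SU}(B,\sigma)\hookrightarrow G$ with $\sigma$ distinguished. Since $\sigma$ is distinguished, by definition $f_3(B,\sigma)=0$, so $Oct({\bf SU}(B,\sigma))$ splits over $k$. Applying Theorem \ref{dist} to the simple, simply connected $A_2$-type group ${\bf SU}(B,\sigma)$, we obtain a maximal $k$-torus $T\subseteq {\bf SU}(B,\sigma)$ which is distinguished in the sense of \S 2.7. Composing with the given embedding, $T\hookrightarrow G$ is a distinguished $k$-torus of $G$. Theorem \ref{f5} then forces $f_5(A)=0$.

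There is no real obstacle here; the argument is an assembly of Theorems \ref{dist} and \ref{f5} with the structural result (\cite{KMRT}, Prop. 40.7) that vanishing of $f_5$ is equivalent to $A$ being a second Tits construction from a distinguished involution. The only point worth attention is to make sure the implication ``$\sigma$ distinguished $\Rightarrow$ ${\bf SU}(B,\sigma)$ contains a distinguished maximal $k$-torus'' is invoked through Theorem \ref{dist} rather than reproved, so that the corollary is genuinely a corollary of Theorem \ref{f5} and not an independent argument.
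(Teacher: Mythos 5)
Your proposal is correct and follows essentially the same route as the paper: the direction $f_5(A)=0 \Rightarrow$ embedding is exactly the argument in the converse part of Theorem \ref{f5} (via (\cite{KMRT}, Prop. 40.7) and the standard embedding of \S 2.4), and the other direction combines Theorem \ref{dist} with Theorem \ref{f5} just as the paper does. No gaps.
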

\begin{proof} Suppose $ {\bf SU}( B, \sigma) \hookrightarrow G$ over $k$ for $(B, \sigma)$ as in the hypothesis. Since $\sigma$ is distinguished, by Theorem \ref{dist},  there exists a $k$-embedding $T \hookrightarrow {\bf SU}(B, \sigma)$ for a distinguished $k$-torus $T$. Hence $T \hookrightarrow {\bf SU}(B, \sigma)  \hookrightarrow G$ over $k$. Therefore, by Theorem \ref{f5}, $f_5(A)=0$. The proof of the converse follows exactly along the same lines as in the proof of Theorem \ref{f5}.
\end{proof}

\section {\bf Embeddings of rank-$2$ tori in $A_2$, $G_2$ and $F_4$}
It turns out that embeddings of unitary tori in groups of type $A_2$, $G_2$ and $F_4$ are intricately linked to the mod-$2$ invariants of these groups. We discuss this below. First we fix some terminology which will be used in the sequel.
\vskip1mm
\noindent
{\bf Groups arising from division algebras:} Let $G$ be a simple, simply connected $k$-group of type $A_2$. We will refer to $G$ as arising from a division algebra  if either $G \cong {\bf SU}(D, \sigma)$ for some degree $3$ central division algebra $D$ over a quadratic field extension $F$ of $k,$ with an involution $\sigma$ of the second kind or $G \cong {\bf SL_1}(D)$ for some degree $3$ central division algebra $D$ over $k$. Let $G$ be a $k$-group of type $F_4$. We will refer to $G$ as arising from a division algebra if $G \cong {\bf Aut}(A)$, where $A$ is an Albert division algebra over $k$. Let $G$ be a $k$-group of type $G_2$. We will refer to $G$ as arising from a division algebra if $G \cong {\bf Aut}(C)$, where $C$ is an octonion division algebra over $k$.

\begin{theorem}{\label{involution}} Let $G$ be a simple, simply connected group of type $A_2$ or $F_4$ defined over $k$, arising from a division algebra over $k$. Then,\\
\noindent
{\bf (1)} $G(k)$ contains no non-trivial involution over $k$.\\
{\bf (2)} There does not exists any rank-$1$ torus $T$ over $k$ such that $T \hookrightarrow G$ over $k$.\\
{\bf (3)} $G$ is $k$-anisotropic.\\
Moreover, these conditions hold over any field extension of $k$ of degree coprime to $3$.
\end{theorem}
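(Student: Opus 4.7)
The plan is to prove (1) directly in each of the two types and then observe that (2) and (3) are formal consequences of (1), because any rank-$1$ $k$-torus or any non-trivial $k$-split subtorus of $G$ produces a non-trivial involution $-1$ inside $G(k)$.

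For type $A_2$, in either case $G \cong {\bf SL}_1(D)$ (with $D$ a degree $3$ central $k$-division algebra) or $G \cong {\bf SU}(D,\sigma)$ (with $D$ a degree $3$ division algebra over a quadratic field extension $F$ of $k$ and $\sigma$ of the second kind), an element $g \in G(k)$ with $g^2 = 1$ satisfies $(g-1)(g+1)=0$ inside the division algebra $D$, so $g = \pm 1$; but $N_D(-1) = (-1)^3 = -1 \ne 1$, which rules out $g = -1$ and forces $g = 1$. For type $F_4$, let $A$ be an Albert division algebra and let $\phi \in {\bf Aut}(A)(k)$ satisfy $\phi^2 = \mathrm{id}$; decompose $A = A^+ \oplus A^-$ into the $\pm 1$ eigenspaces of $\phi$. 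The crucial point is that $\phi$ preserves the cubic norm $N$ of $A$, because it preserves the unit and the Jordan product and hence the coefficients of the generic minimum polynomial. Consequently, for any $y \in A^-$ we obtain $N(y) = N(\phi(y)) = N(-y) = -N(y)$, so $N(y) = 0$. Since $A$ is a Jordan division algebra (invertibility being equivalent to the non-vanishing of $N$), this forces $y = 0$; thus $A^- = 0$ and $\phi = \mathrm{id}$.

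With (1) in hand, (2) and (3) follow at once. A rank-$1$ $k$-torus $T$ is a $k$-form of $\mathbb{G}_m$, and the unique order-$2$ element $-1 \in \mathbb{G}_m(\bar{k})$ is Galois-invariant, so $-1 \in T(k)$; any embedding $T \hookrightarrow G$ then yields a non-trivial involution in $G(k)$, contradicting (1). Likewise, $k$-isotropy of $G$ would embed $\mathbb{G}_m$ non-centrally (since $F_4$ has trivial center and simply connected $A_2$ has center of order $3$), and the image of $-1$ would be a non-trivial involution in $G(k)$, again against (1). For the ``moreover'' clause, Proposition \ref{coprime3} guarantees that a degree $3$ central division algebra or an Albert division algebra remains division under base change to any field extension $L$ of $k$ of degree coprime to $3$; hence $G \otimes_k L$ continues to arise from a division algebra over $L$ (in the ${\bf SU}$-case, either $K \otimes_k L$ remains a field, yielding a ${\bf SU}$ of a division algebra over a quadratic extension of $L$, or $K \otimes_k L \cong L \times L$, in which case the group becomes ${\bf SL}_1$ of a division algebra over $L$), and the already proved conclusions apply over $L$.

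The decisive step is the $F_4$ half of (1); it rests on the twin facts that automorphisms of an Albert algebra preserve its cubic norm and that this norm is anisotropic on an Albert division algebra, forcing the $-1$ eigenspace of any involutive automorphism to vanish. The remaining implications are routine once the involution-free property is in place.
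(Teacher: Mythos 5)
Your proof is correct, but it follows a genuinely different route from the paper's for most of the statement. For part (1) in type $A_2$ you argue exactly as the paper does in spirit (the factorization $(g-1)(g+1)=0$ in the division algebra plus $N_D(-1)=-1$ is a slightly cleaner version of the paper's degree argument), but for the $F_4$ half the paper simply cites Jacobson (\cite{J}, Chap. IX, Thm. 9), whereas you reprove it directly via the $\pm1$-eigenspace decomposition and the fact that the cubic norm, being automorphism-invariant and of odd degree, must vanish on the $(-1)$-eigenspace and is anisotropic on an Albert division algebra; this makes the argument self-contained. The bigger divergence is in (2) and (3): you deduce both from (1) by observing that any rank-$1$ $k$-torus (and hence any $k$-split $\mathbb{G}_m$ witnessing isotropy) contains the Galois-invariant element $-1$ as a $k$-rational involution, whereas the paper proves (2) independently of (1) by classifying rank-$1$ tori as $K^{(1)}$ for a quadratic $K$, base-changing to $K$, and invoking the isotropy criteria (Prop. \ref{red} for $F_4$, the Tits index tables for $A_2$) together with Prop. \ref{coprime3}, and proves (3) by citing the Tits tables outright. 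Your reduction buys economy and uniformity — no appeal to the classification of Tits indices or to the isotropy criteria is needed once (1) is in hand — while the paper's route keeps the three parts logically independent and records the intermediate fact that such a group stays anisotropic over quadratic extensions. On the ``moreover'' clause both arguments rest on Prop. \ref{coprime3}, and both leave the same small detail implicit (in the ${\bf SU}(D,\sigma)$ case one should note that when $F\otimes_k L\cong L\times L$ the resulting degree-$3$ algebra $D\otimes_F L$ is still division because $[L:F]$ is coprime to $3$), which you at least flag explicitly; so no gap, just a different and somewhat more elementary organization.
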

\begin{proof} First we prove ${\bf (1)}$. Recall that an involution in a group is an element of order atmost $2$. Let $G$ be a simple, simply connected group of type $A_2$, arising from a division algebra $D$ over $k$. Let $Z(D)$ denote the center of $D.$ Then $[D: Z(D)]= 9.$ Let $\theta \in G(k)\subseteq D^*$ be an involution. Then $\theta^2=1$ and $N_D(\theta)=1$. Since $\theta^2=1$, $\theta$ generates the field extension $k(\theta)$ of $k$ of degree $\leq 2$ over $Z(D)$. Since the dimension $[D : Z(D)]= 9$, it follows that $\theta \in Z(D).$ Since $\theta^2=1$ and $\theta \in Z(D)$, $\theta= 1$ or $-1$ ($Z(D)$ is a field). Since $N_D(\theta)=1$ we have $\theta= 1$. Hence $G(k)$ does not contain any non-trivial involutions. When $G$ is a group of type $F_4$, the implication follows from a theorem of Jacobson ( \cite{J}, Chap. IX, Theorem 9). Moreover, let $M$ be any field extension of $k$ of degree coprime to $3.$ As seen above, if $G(M)$  contains a non-trivial involution, then $G \otimes M$ cannot arise from a division algebra. By Proposition \ref{coprime3}, $G$ cannot arise from a division algebra.
\vskip0.5mm
\noindent
We now prove ${\bf (2)}$. Suppose there exists a rank-$1$ torus $T$ over $k$ such that $T \hookrightarrow G$ over $k$. Necessarily, $T= K^{(1)}$, the norm torus of a quadratic extension $K/k$ (\cite{Vos}, Chap.II, \S IV, Example 6). But then $T$ splits over $K$, which in turn implies that $G$ becomes isotropic over $K$. Suppose $G$ is a group of type $F_4$ over $k$. Then $G= {\bf Aut}(A)$ for some Albert algebra $A$ over $k$. Since $G$ becomes isotropic over $K$, $A \otimes K$ is reduced (see Prop. \ref{red}). Hence $G$ does not arise from a division algebra over $k$, since no extension of degree coprime to $3$ can reduce a Albert division algebra (Proposition \ref{coprime3}). This is a contradiction. Now suppose $G$ is a group of type $A_2$ over $k$. Since $G$ becomes isotropic over $K$, by (\cite{T3},Table of Tits indices), $G \otimes K$ does not arise from a division algebra over $K$. By Proposition \ref{coprime3}, $G$ does not arise from a division algebra over $k$, a contradiction. Moreover, let $M$ be any field extension of $k$ of degree coprime to $3.$  Suppose there exists a rank-$1$ torus $T$ over $M$ such that $T \hookrightarrow G$ over $M.$ Then, as seen above, $G$ does not arise from a division algebra over $M$. Hence by Proposition \ref{coprime3}, $G$ does not arise from a division algebra over $k$. This is a contradiction. The proof of ${\bf (3)}$ follows from (\cite{T3}, Remark on Page 61, Table of Tits indices). 
\end{proof}

\begin{theorem}{\label{f3a}} Let $A$ be an Albert algebra over $k$ and $G= {\bf{Aut}}(A)$. Then the following are equivalent.
\vskip 1mm
\noindent
{\bf (a)} $f_3(A)=0$ (i.e, $Oct(G)$ is split).\\
\noindent
{\bf (b)} There exists a cubic \'etale $k$-algebra $L$ of trivial discriminant such that ${\bf L}^{(1)} \hookrightarrow G$ over $k$.\\
\noindent
{\bf (c)} ${\bf SL}_1(D) \hookrightarrow G$ over $k$, for a degree $3$ central simple algebra $D$ over $k$.
\end{theorem}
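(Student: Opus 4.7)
The plan is to establish the cycle (a) $\Rightarrow$ (c) $\Rightarrow$ (b) $\Rightarrow$ (a).

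For (a) $\Rightarrow$ (c), the idea is to invoke the dictionary recalled in \S 2.3: by KMRT Prop.~40.5, $f_3(A)=0$ (equivalently $Oct(A)$ is split) is precisely the condition that $A$ is a first Tits construction $J(D,\mu)$ for some degree-$3$ central simple $k$-algebra $D$. The explicit $k$-embedding ${\bf SL}_1(D)\hookrightarrow G$ for such an $A$ is recorded at the end of \S 2.4, giving (c) at once.

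For (c) $\Rightarrow$ (b), given ${\bf SL}_1(D)\hookrightarrow G$ with $D$ a degree-$3$ central simple $k$-algebra, I would produce a cubic \'etale subalgebra $L\subseteq D$ with trivial discriminant and take ${\bf L}^{(1)}\hookrightarrow {\bf SL}_1(D)\hookrightarrow G$. If $D$ is split, take $L$ to be the diagonal copy of $k\times k\times k$ in $M_3(k)$. Otherwise $D$ is a division algebra of degree $3$, and Wedderburn's cyclicity theorem supplies a cyclic cubic subfield $L/k$ inside $D$. Since $L/k$ is Galois with $\mathrm{Gal}(L/k)\subseteq A_3$, one has $Disc(L)\cong k\times k$; in the split case this is immediate.

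For (b) $\Rightarrow$ (a), the key tools are Springer's theorem and Lemma~\ref{isotropycond}(a). Given $L$ cubic \'etale with $Disc(L)\cong k\times k$ and ${\bf L}^{(1)}\hookrightarrow G$, I split into cases. If $L\cong k\times k\times k$, then ${\bf L}^{(1)}\cong\mathbb{G}_m\times\mathbb{G}_m$ is a rank-$2$ $k$-split torus in $G$, so $G$ has $k$-rank $\geq 2$, and Lemma~\ref{isotropycond}(a) forces $G$ to split over $k$, whence $f_3(A)=0$. Otherwise $L/k$ is a cyclic cubic field; by Proposition~\ref{disccubic} combined with triviality of $Disc(L)$, one has $L\otimes_kL\cong L\times L\times L$, so ${\bf L}^{(1)}$ becomes $L$-split. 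Thus $G\otimes_kL$ has $L$-rank $\geq 2$ and splits over $L$ by Lemma~\ref{isotropycond}(a), so the norm form of $Oct(A)$ becomes hyperbolic over the odd-degree extension $L/k$. Springer's theorem then gives $n_{Oct(A)}$ hyperbolic over $k$, i.e., $f_3(A)=0$.

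The main obstacle lies in (c) $\Rightarrow$ (b): producing a cubic \'etale subalgebra of $D$ with trivial discriminant in the non-split case. This is handled by Wedderburn's cyclicity theorem for degree-$3$ algebras, which gives us more than a mere \'etale subalgebra, namely a Galois one. The remaining implications follow rather directly from results already assembled in \S\S 2.3--2.4 and Lemma~\ref{isotropycond}.
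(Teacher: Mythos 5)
Your proposal is correct, and it uses the same toolkit as the paper (KMRT Prop.~40.5 and the explicit embedding of \S 2.4 for (a)$\Rightarrow$(c), Wedderburn's cyclicity theorem, Proposition~\ref{disccubic}, Lemma~\ref{isotropycond} and Springer's theorem), but the logical organization differs in one step. The paper does not prove (c)$\Rightarrow$(b) at all: it hubs everything at (a), proving (a)$\Rightarrow$(b) and (a)$\Rightarrow$(c) together from the first Tits construction $A\cong J(D,\mu)$, and then proving (c)$\Rightarrow$(a) directly by taking \emph{any} separable cubic subfield $L\subseteq D$, noting $D\otimes_k L\cong M_3(L)$ so that ${\bf SL}_3\hookrightarrow G\otimes L$, and running the rank-$\geq 2$ plus Springer argument a second time. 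Your cycle (a)$\Rightarrow$(c)$\Rightarrow$(b)$\Rightarrow$(a) instead handles (c)$\Rightarrow$(b) by choosing the cubic subfield to be cyclic (Wedderburn), observing that a cyclic cubic field has trivial discriminant, and restricting the reduced norm to get ${\bf L}^{(1)}\hookrightarrow{\bf SL}_1(D)$ — a fact the paper itself uses implicitly in its (a)$\Rightarrow$(b) step. The trade-off: your route invokes cyclicity (rather than just the existence of a separable splitting cubic subfield) at the (c)$\Rightarrow$(b) stage, but in return it runs the isotropy/Springer descent argument only once, inside (b)$\Rightarrow$(a), where your treatment coincides with the paper's.
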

\begin{proof}
Let $f_3(A)=0$. Then, by (\cite{KMRT}, Prop. 40.5), $A$ is a first Tits construction and $A \cong J(D, \mu)$, where $ D$  is a degree $3$ central simple algebra over $k$. If $D$ is split, let $L= k \times k \times k$ and if $D$ is a division algebra, let $L$ a cubic cyclic extension of $k$ such that $L  \subseteq D_+$ (This is possible by Weddernburn's Theorem \cite{KMRT}, Pg. 303, 19.2). In either case, since ${\bf SL}_1(D) \hookrightarrow G$ (see \S 2.4), ${\bf L}^{(1)}  \hookrightarrow G$ over $k$. Hence  ${\bf (a)} \Rightarrow {\bf (b)}$ and  ${\bf (a)} \Rightarrow {\bf (c)}$ follows. \\
\noindent
For the proof of ${\bf (b)} \Rightarrow {\bf (a)}$, let ${\bf L}^{(1)} \hookrightarrow G$ over $k$, where $L$ is a cubic \'etale $k$-algebra of trivial discriminant. Clearly $L\cong k \times k \times k$ or $L$ is a cubic cyclic field extension of $k$. If  $L\cong k \times k \times k$ then ${\bf L}^{(1)} \cong \mathbb{G}_m \times \mathbb{G}_m$. Hence the $k$-rank of $G$ $\geq 2$ and, by Lemma \ref{isotropycond}, $f_3(A)= 0$. Let $L$ be a cubic cyclic field extension of $k$. Observe that ${\bf L}^{(1)} \otimes L \cong {\bf E}^{(1)}$, where $E= L \otimes L$. By Proposition \ref{disccubic}, $L \otimes L \cong L \times L \times L$ and hence ${\bf E}^{(1)}$ is an  $L$-split torus of rank-$2$, embedding in $G \otimes L$. Hence the $L$-rank of  $G \otimes L \geq 2$ and thus, by Lemma \ref{isotropycond}, $f_3(A \otimes L)=0$. By Springer's theorem, $f_3(A)=0$. We now prove ${\bf (c)} \Rightarrow {\bf (a)}$. Let ${\bf SL}_1(D) \hookrightarrow G$ over $k$, where $D$ is a degree $3$ central simple algebra over $k$. If $D$ is a division algebra, choose a cubic separable extension $L$ over $k$, $L \subseteq D$. Now,
$$ D \otimes_k L \cong M_3(L)\rm{~and~} {\bf SL}_1(D \otimes_k L) \cong {\bf SL}_3  \hookrightarrow G \otimes L.$$
Hence $G \otimes L$ has L-rank $\geqq$ $2$. By Lemma \ref{isotropycond}, $Oct(A)$ splits over $L.$ Since $[L:K]=3$, by Springer's theorem, $Oct (A)$ must split over $k$ and $f_3(A)= 0.$ In the case when $D$ is split, ${\bf SL}_1(D) \cong {\bf SL}_3 \hookrightarrow G$ over $k$. Hence $G$ is split over $k$ and $f_3(A)=0$. 
\end{proof}
\begin{lemma}{\label{useful}} Let $L= k \times K_0$ be a cubic \'etale algebra over $k$, where $K_0$ is a quadratic \'etale extension of $k.$ Let $K= k \times k$ and $T$ be the $K$-unitary torus associated with the pair $(L, K)$. Then $T \cong R_{K_0/k}(\mathbb{G}_m)$.
\end{lemma}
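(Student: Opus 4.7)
The plan is to directly unwind the definitions. Since $K = k \times k$ is split and the non-trivial $k$-automorphism $~\bar{}~$ swaps the two factors of $K$, the $K$-unitary algebra $(E,\tau) = (L \otimes_k K,\, 1 \otimes~\bar{}~)$ decomposes as $(L \times L,\, \epsilon)$, where $\epsilon(a,b) = (b,a)$ is the switch involution on $L \times L$. This step is identical in flavor to Case (i) of the proof of Theorem \ref{isom}.

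Next I would compute the functor of points of $T = \mathbf{SU}(E,\tau)$. For any $k$-algebra $R$, an element $x = (a,b) \in (L \otimes R) \times (L \otimes R)$ satisfies $x\tau(x) = 1$ precisely when $(ab,ba) = (1,1)$, i.e., when $b = a^{-1}$. Identifying $E = L \oplus L$ componentwise over $K = k \oplus k$, the \'etale norm $N_E \colon E \to K$ is given componentwise by $(a,b) \mapsto (N_{L/k}(a), N_{L/k}(b))$, so the condition $N_E(x) = 1$ together with $b = a^{-1}$ reduces to the single equation $N_{L/k}(a) = 1$. This identifies $T$ functorially with ${\bf L}^{(1)} = R^{(1)}_{L/k}(\mathbb{G}_m)$, the norm-$1$ torus of $L$, via $a \mapsto (a, a^{-1})$.

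Finally, using the decomposition $L = k \times K_0$, one has $R_{L/k}(\mathbb{G}_m) \cong \mathbb{G}_m \times R_{K_0/k}(\mathbb{G}_m)$, and the norm map $N_{L/k}$ corresponds to $(c,d) \mapsto c \cdot N_{K_0/k}(d)$. Projection to the second factor therefore restricts to a functorial isomorphism ${\bf L}^{(1)} \xrightarrow{\sim} R_{K_0/k}(\mathbb{G}_m)$, with inverse $d \mapsto (N_{K_0/k}(d)^{-1}, d)$. Combining this with the previous identification gives $T \cong R_{K_0/k}(\mathbb{G}_m)$ as required. The only mild subtlety is verifying the componentwise formula for the \'etale norm $N_E$ over $K = k \times k$, but this is routine from the definition of the norm as the determinant of multiplication and presents no real obstacle.
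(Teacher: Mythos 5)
Your proposal is correct and follows essentially the same route as the paper: both identify $(E,\tau)$ with $(L\times L,\epsilon)$ for the switch involution $\epsilon$, observe that $x\tau(x)=1$ forces the second coordinate to be the inverse of the first, and reduce the norm condition to $N_{L/k}(a)=1$, which with $L=k\times K_0$ is parametrized exactly by the $K_0$-component. The only difference is cosmetic: you pass explicitly through the intermediate identification $T\cong {\bf L}^{(1)}$ before decomposing $L$, while the paper substitutes $L=k\times K_0$ from the outset and reads off the $k$-points as $K_0^*$ directly.
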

\begin{proof} By definition, $T= {\bf SU}(E, \tau)$, where $(E, \tau)= (L \otimes K, 1 \otimes ~\bar{}~)$. Note that
 $$(L \otimes K, 1 \otimes~\bar{}~) \cong (L \times L, \epsilon) \cong ((k \times K_0) \times (k \times K_0), \epsilon),$$ where $\epsilon: L \times L \mapsto L \times L$ is the switch involution. Hence 
$${\bf SU}(E,\tau)\cong{\bf SU}((k \times K_0) \times (k \times K_0), \epsilon).$$ 
For $((a,x),(b,y))\in (k \times K_0) \times (k \times K_0)$ we have, 
\vskip2mm
$((a,x),(b,y))\epsilon((a,x),(b,y)) = (((a,x),(b,y))((b,y),(a,x)) = ((ab, xy),(ba,yx))$, and 
\vskip2mm 
$N_{E/K}((a,x),(b,y)) =  N_{(k \times K_0) \times (k \times K_0)/(k\times k)} = (a.N_{K_0/k}(x), b.N_{K_0/k}(y))$.
\vskip1mm
\noindent
Hence,
 $$SU(E, \tau)\cong\{((a, x), (a^{-1}, x^{-1})) \in (k \times K_0) \times (k \times K_0)~| a.N_{K_0/k}(x)=1\}\cong K_0^*.$$
 From this it follows that ${\bf SU}(E, \tau) \cong R_{K_0/k}(\mathbb{G}_m)$.
\end{proof}
\begin{lemma}{\label{useful1}} Let $L= k \times k \times k$ and $K$ be a quadratic \'etale extension of $k$. Let $T$ be the $K$-unitary torus associated with the pair $(L, K).$ Then $T \cong  {\bf K}^{(1)} \times {\bf  K}^{(1)}$.
\end{lemma}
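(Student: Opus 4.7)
The plan is to unwind the definition of $T$ and work directly from it. By definition, $T=\mathbf{SU}(E,\tau)$ where $(E,\tau)=(L\otimes K,\,1\otimes\bar{~})$. With $L=k\times k\times k$, the algebra $E$ decomposes canonically as $K\times K\times K$, and under this identification the involution $\tau$ is simply componentwise conjugation $(x,y,z)\mapsto(\bar x,\bar y,\bar z)$. In particular the norm $N_{E/K}$ becomes the product map $(x,y,z)\mapsto xyz$.

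With this description in hand, for any commutative $k$-algebra $R$ I would compute
\[
\mathbf{U}(E,\tau)(R)=\{(x,y,z)\in (K\otimes_k R)^3 : x\bar x=y\bar y=z\bar z=1\}\cong \mathbf{K}^{(1)}(R)\times\mathbf{K}^{(1)}(R)\times\mathbf{K}^{(1)}(R).
\]
Imposing the additional condition $N_{E/K}(x,y,z)=xyz=1$ cuts out $\mathbf{SU}(E,\tau)$ as the kernel of the product map $\mathbf{K}^{(1)}\times\mathbf{K}^{(1)}\times\mathbf{K}^{(1)}\to\mathbf{K}^{(1)}$ (note the codomain really is $\mathbf{K}^{(1)}$, since a product of norm-one elements is again of norm one).

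Finally, I would exhibit the isomorphism $T\cong\mathbf{K}^{(1)}\times\mathbf{K}^{(1)}$ explicitly by projection onto the first two coordinates: the map $(x,y,z)\mapsto(x,y)$ is a $k$-group homomorphism, and its inverse is $(x,y)\mapsto(x,y,(xy)^{-1})$, which is well-defined because $\mathbf{K}^{(1)}$ is closed under multiplication and inversion. Both maps are morphisms of $k$-group schemes, giving the desired isomorphism.

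There is no real obstacle here: the whole statement is a direct computation once the decomposition $L\otimes K\cong K\times K\times K$ (with componentwise $\tau$) is written down. The only mild point of care is checking functoriality in $R$ so that the isomorphism is one of $k$-tori and not merely of $k$-points, but this is automatic since the constructions above are given by polynomial formulas.
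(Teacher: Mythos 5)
Your argument is correct and follows essentially the same route as the paper: decompose $(E,\tau)\cong(K\times K\times K,(\,\bar{}\,,\bar{}\,,\bar{}\,))$, identify the unitary condition with three norm-one conditions, and cut out $\mathbf{SU}$ by $xyz=1$, eliminating the third coordinate. Your only addition is making the isomorphism explicit at the level of the functor of points (projection with inverse $(x,y)\mapsto(x,y,(xy)^{-1})$), which the paper leaves implicit when passing from $k$-points to the statement about $k$-tori.
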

\begin{proof}  By definition, $T= {\bf SU}(E, \tau)$, where $(E, \tau)= (L \otimes K, 1 \otimes ~\bar{}~).$ It is immediate  that $(E, \tau) \cong (K \times K \times K, (~ \bar{}, ~ \bar{}, ~ \bar{}~))$. Hence,
$$SU(E, \tau) \cong \{ (x, y, z)\in K \times K \times K| x \overline{x}= y \overline{y}= z \overline{z}= 1, xyz=1\} \cong K^{(1)} \times  K^{(1)}.$$
It follows that ${\bf SU}(E, \tau) \cong  {\bf K}^{(1)} \times {\bf  K}^{(1)}$.
\end{proof}
\begin{theorem}{\label{KC}} 
{\bf (a)} Let $G$ be a $k$-group of type $G_2$ or a simply connected, simple group of type $A_2$. Let $L, K$ be \'etale algebras of dimension $3, 2$ resp. and $T$ be the $K$-unitary torus  associated with the pair $(L, K)$. Suppose there exists a $k$-embedding $T \hookrightarrow G$. Then $K \subseteq Oct(G)$.\\
{\bf (b)} If $G$ is a $k$-group of type $F_4$ or a simply connected, simple group of type $A_2$ arising from a division algebra and $T \hookrightarrow G$ over $k$, then $L$ must be a field extension.
\end{theorem}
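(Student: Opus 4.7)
The proof separates into part (b), a quick application of Theorem \ref{involution}{\bf (2)}, and part (a), which is more delicate.

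\textbf{Part (b).} Suppose $L$ is not a field. Then $L \cong k \times k \times k$ or $L \cong k \times K_0$ for some quadratic \'etale $K_0$. By Lemmas \ref{useful1} and \ref{useful} respectively, $T$ is isomorphic to $\mathbf{K}^{(1)} \times \mathbf{K}^{(1)}$ or to $R_{K_0/k}(\mathbb{G}_m)$; in either case $T$ visibly contains a rank-$1$ $k$-subtorus (one of the factors, or the $\mathbb{G}_m \hookrightarrow R_{K_0/k}(\mathbb{G}_m)$ induced by $k^{*} \hookrightarrow K_0^{*}$). Composing with $T \hookrightarrow G$ then produces a rank-$1$ $k$-torus inside $G$, contradicting Theorem \ref{involution}{\bf (2)} since $G$ arises from a division algebra.

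\textbf{Part (a).} Since $G_2$ and $A_2$ both have absolute rank $2$, the rank-$2$ torus $T$ is automatically a maximal $k$-torus of $G$. Consider first the type $A_2$ case and write $G \cong \mathbf{SU}(B,\sigma)$ with $B$ of degree $3$ over a quadratic \'etale $F/k$ and $\sigma$ of the second kind. The standard centralizer argument in $\mathbf{SU}(B,\sigma)$ shows that $T$ corresponds to a $\sigma$-stable maximal commutative \'etale $F$-subalgebra $E \subset B$ with $T = \mathbf{SU}(E, \sigma|_E)$; by Lemma \ref{count}, $E = L' \otimes_k F$ for some cubic \'etale $k$-subalgebra $L' \subset (B,\sigma)_+$. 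Thus $T$ admits two presentations as a unitary torus: the given one for the pair $(L,K)$, and the induced $T \cong \mathbf{SU}(L' \otimes_k F, 1 \otimes \bar{}_F)$ for $(L',F)$. The central step is the uniqueness assertion $K \cong F$. Granting this, the decomposition of $Q_\sigma$ recalled in Section 2.2 writes $f_3(G) = f_3(B,\sigma) = \langle\langle \alpha, b, c\rangle\rangle$ with $F = k(\sqrt{\alpha})$, so the Pfister form $\langle 1,-\alpha\rangle$ divides $f_3(G)$ and therefore $K \cong F = k(\sqrt{\alpha})$ embeds in $Oct(G)$. The type $G_2$ case reduces to this: for $G = \mathbf{Aut}(C)$, Proposition \ref{maximaltorusG2} exhibits a quadratic \'etale $k$-subalgebra $K_0 := C^T \subseteq C$ with $T \subseteq \mathbf{Aut}(C/K_0)$, and by Theorem \ref{typeA2} together with Proposition \ref{hermitian} we may identify $\mathbf{Aut}(C/K_0) \cong \mathbf{SU}(K_0^\perp, h_{K_0})$, a simply connected simple $k$-group of type $A_2$ whose associated quadratic \'etale $k$-algebra is precisely $K_0$. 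Applying the $A_2$ case to this smaller embedding yields $K \cong K_0 \subseteq C = Oct(G)$.

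The main obstacle is the uniqueness step: that a rank-$2$ $K$-unitary torus $T$ determines the quadratic \'etale $K$ up to $k$-isomorphism. My plan is to compute $X^{*}(T)$ explicitly using the short exact sequences $1 \to \mathbf{U}(E,\tau) \to R_{E/k}\mathbb{G}_m \to R_{L/k}\mathbb{G}_m \to 1$ (via $N_{E/L}$) and $1 \to \mathbf{SU}(E,\tau) \to \mathbf{U}(E,\tau) \to \mathbf{K}^{(1)} \to 1$ (via $N_{E/K}$), identifying $X^{*}(T)$ with $\mathbb{Z}[\mathrm{Hom}_k(L,\bar k)]/\mathbb{Z}(\sum_\sigma e_\sigma)$, with $\mathrm{Gal}(\bar k/k)$ acting by its natural permutation action on $\mathrm{Hom}_k(L,\bar k)$ twisted by the sign character $\epsilon: \mathrm{Gal}(\bar k/k) \twoheadrightarrow \mathrm{Gal}(K/k) = \{\pm 1\}$. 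From this explicit description, $L$ is recovered from the underlying permutation action (up to $k$-isomorphism) and $K$ as the fixed field of $\ker(\epsilon)$, settling uniqueness.
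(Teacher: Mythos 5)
Your part~(a) hinges on a claim that is false: that the abstract $k$-torus $T$ determines the quadratic \'etale algebra $K$ up to $k$-isomorphism (equivalently, that $K\cong F$, the centre of $B$). Take $L=k\times K_0$ with $K_0=k(\sqrt{\delta_0})$ and $K=k(\sqrt{\alpha})$ a field. As in the proof of Theorem~\ref{split}, the $K$-unitary torus of the pair $(L,K)$ is $R_{K_0/k}$ applied to the norm-one torus of $K\otimes K_0$ over $K_0$. Since $K\otimes K_0\cong K_0(\sqrt{\alpha})\cong K_0(\sqrt{\alpha\delta_0})\cong k(\sqrt{\alpha\delta_0})\otimes K_0$ as $K_0$-algebras, the pairs $(L,K)$ and $(L,k(\sqrt{\alpha\delta_0}))$ produce isomorphic $k$-tori, while $K\not\cong k(\sqrt{\alpha\delta_0})$ in general. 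On character lattices this is the statement that the two representations of $(\mathbb{Z}/2)^2$ sending the generators to $\left(\begin{smallmatrix}0&1\\1&0\end{smallmatrix}\right),\,-I$ and to $\left(\begin{smallmatrix}0&-1\\-1&0\end{smallmatrix}\right),\,-I$ are conjugate in $GL_2(\mathbb{Z})$ (by $\mathrm{diag}(1,-1)$): once the permutation action on $\mathrm{Hom}_k(L,\bar k)$ is non-trivial, the twisting character $\epsilon$ cannot be recovered from the module, so $K$ is not the "fixed field of $\ker\epsilon$'' in any well-defined sense. Consequently your argument only establishes the easy (and always true) fact that the centre $F$ of $B$ embeds in $Oct(G)$; it says nothing about the given $K$ when $K\not\cong F$, which is exactly the content of the theorem. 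The statement is genuinely about the pair $(L,K)$ and not about the abstract torus, and the paper's proof uses the presentation accordingly: base changing to $K$ (resp.\ to $L$) splits the quadratic (resp.\ cubic) member of the pair, Lemmas~\ref{useful} and~\ref{isotropycond} then force $Oct(G)$ to split over $K$ (resp.\ over the cubic extension $K\otimes L$ of $K$), and Springer's theorem plus (\cite{FG}, Lemma 5) descend this to $K\subseteq Oct(G)$. The same defect propagates to your $G_2$ reduction, which needs $K\cong C^T$.

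Part~(b) also has a misstep: Lemma~\ref{useful} yields $T\cong R_{K_0/k}(\mathbb{G}_m)$ only under the hypothesis $K=k\times k$. When $L=k\times K_0$ and $K$ is a field, $T$ is $R_{K_0/k}$ of the norm-one torus of $K\otimes K_0$ over $K_0$, which is $k$-anisotropic whenever $K\otimes K_0$ is a field, so it is certainly not $R_{K_0/k}(\mathbb{G}_m)$. Your strategy can be repaired, since this torus is isogenous to $K^{(1)}\times k(\sqrt{\alpha\delta_0})^{(1)}$ and hence still contains a rank-one $k$-subtorus, after which Theorem~\ref{involution}(2) applies; but the step as written is wrong. (The paper handles this case differently, by exhibiting a non-trivial involution in $T(K_0)$ and invoking Theorem~\ref{involution}(1) over the quadratic extension $K_0$.)
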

\begin{proof} Let $(E, \tau)$ and $T$ be the $K$-unitary algebra and torus resp. associated with the pair $(L, K)$. By definition $T= {\bf SU}(E, \tau)$. For the assertion ${\bf (a)}$, we divide the proof into two cases.
\vskip0.5mm
\noindent
{\bf Case 1:}~ $L= k \times K_0$ for some quadratic \'etale extension $K_0$ of $k$.
 \vskip0.5mm
\noindent
Let $K= k \times k.$ By Lemma \ref{useful}, $T \cong R_{K_0/k}(\mathbb{G}_m) \hookrightarrow G.$ Therefore, the $k$-rank of $G \geq 1$. Thus by Lemma \ref{isotropycond}, $Oct(G)$ is split. When $K$ is a field extension, base changing to $K$ and applying the same argument, it follows that $Oct(G) \otimes K$ is split. Hence $K \subseteq Oct(G)$ (\cite{FG}, Lemma 5).
\vskip0.5mm
\noindent
{\bf Case 2:}~L is a field extension.
\vskip0.5mm
\noindent
Base changing to $L$, by Proposition \ref{disccubic}, we have, $L\otimes L \cong L \times K_0$ for $K_0= L \otimes \Delta$, where $\Delta$ is the discriminant algebra of $L$ over $k$. By case {\bf 1}, $K \otimes L \subseteq Oct(G) \otimes L$. Therefore if $K= k \times k$, $Oct(G) \otimes L$ is split and by Springer's theorem, $Oct(G)$ splits and  $K \subseteq Oct(G).$ Hence we may assume that $K$ is a field. Then $K \otimes L$ is a cubic field extension of $K$ and
$$(Oct(G) \otimes L) \otimes_L (L\otimes K) \cong Oct(G) \otimes L \otimes K \cong (Oct(G) \otimes K) \otimes_ K (K \otimes L)$$ is split, since $K \otimes L \subseteq Oct(G) \otimes L.$ Hence $(Oct(G) \otimes K)$ is split over the cubic extension $(K \otimes L)$ of $K$. Therefore by Springer's theorem, $Oct(G) \otimes K$ is split. Hence $K \subseteq Oct(G)$ (\cite{FG}, Lemma 5).
\vskip1mm
\noindent
Now we prove {\bf (b)}. Let $G$ be a $k$-group of type $F_4$ or $A_2$ as in the hypothesis and let $T \hookrightarrow G$ over $k$, where $T$ is the $K$-unitary torus associated to the pair $(L, K)$ as in the hypothesis. Assume that $L$ is not a field. Let $L= k \times K_0$ for some quadratic field extension $K_0$ of $k$. If $K= k \times k$ then, as in the proof of case ${\bf 1}$, $G$ is $k$-isotropic. Therefore, by Theorem \ref{involution}, $G$ cannot arise from a division algebra. Let $K$ be a field extension. By an easy calculation we see that $T \otimes K_0 = {\bf SU}(E \otimes K_0, \tau) \cong {\bf M}^{(1)} \times {\bf M}^{(1)}$, where $M= (K \otimes K_0)$. Note that $M^{(1)} \times M^{(1)}$ contains the involution $(-1, 1)$ defined over $K_0.$ Hence $G(K_0)$ contains a non-trivial involution. Therefore, by Theorem \ref{involution}, $G$ cannot arise from a division algebra. In the case when $L= k \times k \times k$, by Lemma \ref{useful1}, $T \cong {\bf K}^{(1)} \times  {\bf K}^{(1)}$. Again $K^{(1)} \times  K^{(1)} $ contains the involution $(-1, 1)$ defined over $k$. Hence $G(k)$ contains a non-trivial involution. Therefore, by Theorem \ref{involution}, $G$ cannot arise from a division algebra. Hence ${\bf (b)}$ follows.
\end{proof}
\begin{remark}{\label{countereg}} 
{\bf (1)} For $k$-groups of type $G_2$, {\bf (b)} fails to hold. To see this, let $L= k \times k \times k$ and $K$ be a quadratic field extension of $k.$ Let $T$ be the $K$-unitary torus associated with the pair $(L, K)$.
By Lemma \ref{useful1}, $T \cong  {\bf K}^{(1)} \times {\bf  K}^{(1)}$. Such a torus embeds in a $k$-group of type $G_2$ arising from a division algebra (see \cite{SV}, \S 2.1).
\vskip0.5mm
\noindent
{\bf (2)} For $k$-groups of type $F_4$, {\bf (a)} fails to hold. Let $C$ be an octonion division algebra over $k$. Let $\Gamma= diag(1,-1, -1) \in GL_3(k)$. Consider the reduced Albert algebra $A:= \mathcal{H}_3(C, \Gamma)$. Let $G= {\bf Aut}(A)$. Then $C= Oct(G)$ (see \S 2.3). Let $F \subseteq C$ be a quadratic subfield. By (\cite{PRT1}, \S 1, Thm. 1.1), there exists an isomorphism of Jordan algebras $  \mathcal{H}_3(C, \Gamma) \cong J(M_3(F), *_{\Gamma}, V, \mu)$, where $*_{\Gamma}(X)= \Gamma^{-1}\overline{X}^t\Gamma$, $V \in GL_3(F)$ with $*_{\Gamma}(V)= V$ and $det V= \mu \overline{\mu}$ for some $\mu \in F^*$. Let $L= k \times F$. Note that $L \subseteq M_3(F)$ as a $k$-subalgebra (via the embedding $(\gamma, x) \rightarrow diag(\gamma, x, x)$, $\gamma \in k$, $x \in F$). Since $*_{\Gamma}$ is a distinguished involution on $M_3(F)$ (\cite{HKRT}, Theorem 16), by (\cite{HKRT}, Cor. 18), it follows that $L \hookrightarrow (M_3(F), *_{\Gamma})_+$ over $k$. Let $T$ be the $F$-unitary torus associated with the pair $(L, F)$. Then $T \hookrightarrow {\bf SU}(M_3(F), *_{\Gamma}) \hookrightarrow G$ over $k$ (see \S 2.4). By case {\bf (ii)} of Theorem \ref{isom}, $T \cong R_{F/k}(\mathbb{G}_m)$. Hence $R_{F/k}(\mathbb{G}_m) \hookrightarrow G$ over $k$. Now consider $K= k \times k$. By Lemma \ref{useful}, ${\bf SU}(L \otimes K, \tau) \cong R_{F/k}(\mathbb{G}_m)$. Hence ${\bf SU}(L \otimes K, \tau) \hookrightarrow G$ over $k$ but $K$ does not embed in $C= Oct(G)$, since $C$ is a division algebra. 
\end{remark}
\noindent
However we have the following,
\noindent
\begin{theorem}{\label{KC1}} Let $G$ be a group of type $F_4$ defined over $k$. Let $K$ be a quadratic \'etale $k$-algebra and $L$ be a cubic \'etale $k$-algebra with trivial discriminant. Let $T$ be the $K$-unitary torus associated with the pair $(L, K)$. Suppose $T \hookrightarrow G$ over $k$. Then $K \subseteq Oct(G)$.
\end{theorem}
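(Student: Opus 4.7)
The plan is to base change $T$ to the \'etale $k$-algebra $L \otimes_k K$, show that the base-changed torus becomes a split rank-$2$ torus there, and then combine the $F_4$ isotropy criterion (Lemma \ref{isotropycond}) with Springer's theorem to descend the splitting of $Oct(G)$ down to $K$. The conclusion $K \subseteq Oct(G)$ will then follow from \cite{FG}, Lemma~5, exactly as in Case~2 of the proof of Theorem \ref{KC}.

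The crucial input is the trivial-discriminant hypothesis on $L$: by Proposition \ref{disccubic}, $L \otimes_k L \cong L \times L \times L$ as $L$-algebras. Setting $K_L := K \otimes_k L$ and base changing the $K$-unitary algebra $(E, \tau) = (L \otimes K,\; 1 \otimes~\bar{}~)$ to $L$ yields an $L$-algebra isomorphism
$$(E \otimes_k L,\; \tau \otimes 1) \;\cong\; (K_L \times K_L \times K_L,\; (~\bar{}~,\, ~\bar{}~,\, ~\bar{}~)),$$
and the computation establishing Lemma \ref{useful1} (now carried out over the base $L$, with $K_L$ in place of $K$) then gives $T \otimes_k L \cong R^{(1)}_{K_L/L}(\mathbb{G}_m) \times R^{(1)}_{K_L/L}(\mathbb{G}_m)$. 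Since the norm-one torus $R^{(1)}_{K_L/L}(\mathbb{G}_m)$ splits over $K_L$, a further base change to $L \otimes_k K = K_L$ produces $T \otimes_k (L \otimes K) \cong \mathbb{G}_m \times \mathbb{G}_m$, a split rank-$2$ torus over $L \otimes K$.

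Consequently $G \otimes_k (L \otimes K)$ contains a split rank-$2$ torus and is therefore $(L \otimes K)$-split by Lemma \ref{isotropycond}; in particular $Oct(G) \otimes_k (L \otimes K)$ splits. When $K$ is a field, every field component of the \'etale $K$-algebra $L \otimes K$ is of odd degree over $K$ (dividing $[L:k] = 3$), so Springer's theorem applied to the $3$-fold Pfister norm form of $Oct(G) \otimes K$ forces $Oct(G) \otimes K$ to be split, and \cite{FG}, Lemma~5, yields $K \subseteq Oct(G)$. When $K = k \times k$, the same argument truncated at the base change to $L$ shows $Oct(G) \otimes L$ is split, so Springer's theorem applied over $k$ (using that $L$ is either a cubic field or already split) gives that $Oct(G)$ itself splits over $k$, and $K = k \times k$ embeds trivially. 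The principal technical step is thus the identification $T \otimes_k L \cong R^{(1)}_{K_L/L}(\mathbb{G}_m) \times R^{(1)}_{K_L/L}(\mathbb{G}_m)$, driven entirely by the trivial-discriminant hypothesis on $L$; everything else is a standard Springer-type descent.
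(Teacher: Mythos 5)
Your proposal is correct and follows essentially the same route as the paper: the paper reduces to the split-$K$ case by base changing to $K$, identifies $T \cong {\bf L}^{(1)}$ there, and invokes Theorem \ref{f3a}, whose proof is precisely the base-change-to-$L$ (via $L \otimes L \cong L \times L \times L$), rank-$\geq 2$, Springer-descent argument that you carry out by hand. The only difference is that you inline that argument (doing the base changes in the order $L$, then $L \otimes K$) instead of citing Theorem \ref{f3a}, and you then conclude with Lemma \ref{isotropycond}, Springer's theorem and \cite{FG}, Lemma 5, exactly as the paper does.
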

\begin{proof} Let $L$ be as in the hypothesis. When $K= k \times k$, we have $(L \otimes K, \tau) \cong (L \times L, \epsilon)$, where $\epsilon(x, y)= (y, x)$ for all $(x, y) \in L \times L$. Hence $T \cong {\bf L}^{(1)}$. By Theorem \ref{f3a}, $Oct(G)$ splits and hence $K \subseteq Oct(G)$. When $K$ is a field extension, base changing to $K$ we see that $Oct(G) \otimes K$ splits. Hence $K \subseteq Oct(G)$ (\cite{FG}, Lemma5). 
\end{proof}
\noindent
We now prove a factorization result for the mod-2 invariant $f_5(G)$ associated to an algebraic group $G$ of type $F_4$ defined over $k$, given an embedding of a rank-$2$ $K$-unitary torus in $G$. Let $L, K$ be \'etale algebras of dimension $3, 2$ resp. and let $T$ be the $K$-unitary torus associated with the pair $(L, K)$. Recall that with $T$, we associate the quadratic form $q_T:= <1, -\alpha \delta>$, where $Disc(L)= k(\sqrt{\delta})$ and $K= k(\sqrt {\alpha})$.\\
We need a classical factorization result for quadratic forms. Let $q$ be a quadratic form over a field $F$. Let $D_F(q)$ denote the set of values in $F^*$ represented by $q$ and $[D_F(q)]$ be the subgroups of $F^*$ generated by $D_F(q)$. When $q$ is a Pfister form $[D_F(q)]= D_F(q)$ (\cite{TYL}, Chap. X, Theorem 1.8).
\vskip1mm
\noindent
The following result on quadratic forms is essential for what follows.
\begin{theorem}{\label{subformfactor}} ( \cite{TYL}, Chap. IX, Pg. 305, Chap. X, Cor. 4.13)  For any quadratic form $\phi$ and any anisotropic quadratic form $\gamma$ over $k$, the following are equivalent,
\vskip0.5mm
\noindent
{\bf(i)} $ \phi \subseteq \gamma$ ( i.e, $\phi$ is isometric to a subform of the form $\gamma$ over $k$).\\
\noindent
{\bf(ii)} $ D_K(\phi) \subseteq D_K(\gamma)$ for any field $K \supseteq k$, where $D_K(\phi)$ denotes the set of values in $K^*$  represented by $\phi$. Moreover, if ${\phi}$ and ${\gamma}$ are both Pfister forms, then the above conditions are also equivalent to\\
{\bf(iii)} $  \gamma= \phi \otimes \tau$ for some Pfister form $\tau$ over $k$ (In this case we will call $\phi$ as a factor of $\gamma$).
\end{theorem}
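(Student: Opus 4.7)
I would attack the three implications separately. The implication $(\mathrm{i}) \Rightarrow (\mathrm{ii})$ is immediate: any diagonalisation of $\phi$ extends to a diagonalisation of $\gamma$ in suitable coordinates, so every $c \in D_K(\phi)$ is automatically in $D_K(\gamma)$ for any extension $K/k$.

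For $(\mathrm{ii}) \Rightarrow (\mathrm{i})$, I would diagonalise $\phi = \langle a_1, \ldots, a_n\rangle$ and introduce independent indeterminates $t_1, \ldots, t_n$. Over $K = k(t_1,\ldots,t_n)$ the generic value $f(t) = a_1 t_1^2 + \cdots + a_n t_n^2$ lies in $D_K(\phi)$, hence by hypothesis in $D_K(\gamma)$. Now I would invoke the Cassels--Pfister subform theorem: for an anisotropic form $\gamma$ over $k$ which represents the polynomial $\phi(t_1,\ldots,t_n)$ over the rational function field $k(t_1,\ldots,t_n)$, the form $\phi$ is already a subform of $\gamma$ over $k$. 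The heart of this is Cassels's representation lemma---that a form representing a polynomial over $k(t)$ in fact represents it over $k[t]$---combined with an inductive descent on total degree that uses Witt cancellation, for which the anisotropy of $\gamma$ is essential.

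For the Pfister-form addendum, $(\mathrm{iii}) \Rightarrow (\mathrm{i})$ is trivial because $\phi \otimes \tau$ contains $\phi \otimes \langle 1\rangle \cong \phi$. For $(\mathrm{i}) \Rightarrow (\mathrm{iii})$ I would use the Pfister factor theorem, whose proof exploits the roundness of Pfister forms: if $\gamma$ is Pfister and $c \in D_k(\gamma)^*$, then $c\gamma \cong \gamma$. Writing $\gamma \cong \phi \perp \gamma'$ by Witt cancellation and picking $c \in D_k(\gamma')$ when $\gamma' \neq 0$, one shows that $\phi \perp c\phi$ sits inside $\gamma$; iterating produces an orthogonal decomposition $\gamma \cong \phi \otimes \langle 1, c_1, \ldots, c_r\rangle$ for suitable scalars $c_i \in k^*$. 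A dimension count (both sides are powers of $2$) together with the multiplicativity of Pfister forms then forces $\langle 1, c_1, \ldots, c_r\rangle$ to be isometric to a Pfister form $\tau$.

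The main obstacle is the Cassels--Pfister subform theorem used in the second step; once that is available, both the reverse direction of the first equivalence and the Pfister factorisation become comparatively routine structural arguments. The anisotropy hypothesis on $\gamma$ enters crucially in both the second and third steps through applications of Witt cancellation.
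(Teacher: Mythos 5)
The paper does not actually prove this statement: it is a classical result imported wholesale from Lam's book (\cite{TYL}, Chap.~IX and Chap.~X, Cor.~4.13), so there is no internal argument to compare your sketch against. Your outline is precisely the standard textbook proof --- the generic-value substitution over $k(t_1,\dots,t_n)$ combined with the Cassels--Pfister subform theorem for (ii)$\Rightarrow$(i), and Pfister divisibility via roundness and Witt cancellation for the addendum --- and it is sound in structure. One caveat on the last step: the inclusion $\phi \perp c\phi \subseteq \gamma$ does not follow from roundness alone; one either first invokes the divisibility theorem to write $\gamma \cong \phi \otimes \rho$ and then regroups using the binary identity $\langle a,b\rangle \cong \langle a+b,\, ab(a+b)\rangle$, or otherwise supplies an argument, and the iteration should be arranged so that at each stage the subform constructed is already of the shape $\phi \otimes \langle\langle -c_1,\dots,-c_j\rangle\rangle$, hence Pfister by construction. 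A bare dimension count on a decomposition $\gamma \cong \phi \otimes \langle 1,c_1,\dots,c_r\rangle$ does not force $\langle 1,c_1,\dots,c_r\rangle$ itself to be a Pfister form; the existence of a Pfister complement is exactly what is being proved, so that sentence of your plan should be replaced by the inductive construction just described.
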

\noindent
We can now prove:
\begin{theorem}{\label{albi}} Let $A$ be an Albert algebra over $k$ and $G= {\bf Aut}(A).$ Let  $K= k({\sqrt{\alpha}})$ be a quadratic \'etale $k$-algebra and $L$ be a cubic \'etale $k$-algebra with discriminant $\delta.$ Let $T$ be the  $K$-unitary torus associated with the pair $(L, K)$. Suppose $T \hookrightarrow G$ over $k$. Then $f_5(A)= q_{T} \otimes \gamma$ for some $4$-fold Pfister form $\gamma$ over $k$.
\end{theorem}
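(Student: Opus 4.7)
The plan is to base change to $F := k(\sqrt{\alpha\delta})$, verify that $T \otimes_k F$ becomes a distinguished $F$-torus, apply Theorem \ref{f5} over $F$ to conclude $f_5(A) \otimes_k F = 0$, and then recover the factorization over $k$ by combining Theorem \ref{subformfactor} with Scharlau's norm principle.

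If $\alpha\delta \in k^{*2}$, then $q_T$ is hyperbolic, $\mathrm{Disc}(L) = K$, and $T$ is already distinguished over $k$; Theorem \ref{f5} then gives $f_5(A) = 0 = q_T \otimes \gamma$ for any hyperbolic $4$-fold Pfister form $\gamma$. Assume henceforth $\alpha\delta \notin k^{*2}$, so $F$ is a genuine quadratic extension of $k$. Over $F$ the class of $\alpha\delta$ in $F^*/F^{*2}$ is trivial, forcing $\alpha$ and $\delta$ to have the same class in $F^*/F^{*2}$; hence $K \otimes_k F = F(\sqrt{\alpha}) \cong F(\sqrt{\delta}) = \mathrm{Disc}(L \otimes_k F)$ as quadratic \'etale $F$-algebras. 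The $F$-torus $T \otimes_k F$, namely the $(K \otimes_k F)$-unitary torus associated with $(L \otimes_k F, K \otimes_k F)$, is therefore distinguished over $F$. The hypothesis $T \hookrightarrow G$ base-changes to $T \otimes_k F \hookrightarrow G \otimes_k F$, so Theorem \ref{f5} applied to $A \otimes_k F$ gives $f_5(A) \otimes_k F = f_5(A \otimes_k F) = 0$; by further base change, $f_5(A) \otimes_k K(\sqrt{\alpha\delta}) = 0$ for every field extension $K/k$.

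To conclude, I apply Theorem \ref{subformfactor} with $\phi = q_T$ and $\gamma = f_5(A)$, both Pfister forms; if $f_5(A)$ is already hyperbolic the conclusion is trivial, so assume $f_5(A)$ is anisotropic. It suffices to verify $D_K(q_T) \subseteq D_K(f_5(A))$ for every field $K \supseteq k$. Writing $L_K := K(\sqrt{\alpha\delta})$, the form $q_T$ is the norm form of $L_K/K$, so $D_K(q_T) = N_{L_K/K}(L_K^*)$ (with the split convention in the case $\alpha\delta \in K^{*2}$, when $L_K = K \times K$). Since $f_5(A) \otimes_k L_K = 0$ is hyperbolic, $D_{L_K}(f_5(A)) = L_K^*$; applying Scharlau's norm principle to the Pfister form $f_5(A)$ and the extension $L_K/K$ (see Lam, \emph{Introduction to Quadratic Forms over Fields}, Chapter VII) yields $N_{L_K/K}(L_K^*) = N_{L_K/K}(D_{L_K}(f_5(A))) \subseteq D_K(f_5(A))$. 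Thus $D_K(q_T) \subseteq D_K(f_5(A))$ for every $K \supseteq k$, and Theorem \ref{subformfactor}(iii) produces a Pfister form $\gamma$ with $f_5(A) = q_T \otimes \gamma$; the dimension equation $2^5 = 2 \cdot \dim \gamma$ identifies $\gamma$ as a $4$-fold Pfister form. The key step is the observation that base change to $F$ makes $T \otimes_k F$ distinguished over $F$; Theorem \ref{f5} together with Scharlau's norm principle then complete the argument.
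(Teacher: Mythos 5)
Your proposal is correct and follows essentially the same route as the paper: base change to $F=k(\sqrt{\alpha\delta})$ makes $T$ distinguished, Theorem \ref{f5} kills $f_5(A)$ over $F$ (and hence over $K(\sqrt{\alpha\delta})$ for every $K\supseteq k$), and the Knebusch/Scharlau norm principle combined with Theorem \ref{subformfactor} yields the factorization. The paper runs the same argument, merely phrased by verifying $D_M(q_T\otimes M)\subseteq D_M(f_5(A)\otimes M)$ over each extension $M$ separately rather than deducing everything from the single base change to $F$.
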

\begin{proof} Let $G= {\bf Aut}(A)$ be as in the hypothesis and let $T \hookrightarrow G$ over $k$.  
\vskip0.5mm
\noindent
{\bf{Claim}}: \hspace{0.1cm} $ D_M(q_T\otimes M) \subseteq  D_M( f_5( A) \otimes M)$ for all field extensions $M$ of $k$.
\vskip0.5mm
\noindent
 Let $F= k({\sqrt{{\alpha}{\delta}}})$. Then $ N_{F/k}= q_T$. Note that, $ N_{F/k} \otimes M=N_{F \otimes M/M }\rm{~and~} f_5(A) \otimes M=f_5(A \otimes M).$ If $N_{F \otimes M/M}$ is hyperbolic over $M$, then ${\alpha}= {\delta}~ M^{*^2}$ and hence $T \otimes M$ is a distinguished torus. Therefore, by Theorem \ref{f5}, $ f_5(A \otimes  M)= 0$ and the claim follows trivially. We may therefore assume both $N_{F \otimes M/M}$ and $f_5(A \otimes M)$ are anisotropic. Hence $K':= F \otimes M$ is a field extension of $M$. Now further base  changing to $K' \cong M {\otimes}_M K'$ we get, 
$$~~~~~~~~~~~~~~~~~~~~~~~~~~~~~~~~~~~(T \otimes M)  \otimes_{M} K' \cong T \otimes_{M} K' \hookrightarrow  {\bf Aut}  (  (A \otimes_{k}  M)  \otimes_{M}  K' ).~~~~~~~~~~~~~~~~~~~~~~~~~~~~~~~~~~(*)$$
Since ${\alpha}= {\delta}~ K'^{*^2}$, $T \otimes_{M} K'$ is a distinguished torus. Taking $K'$ as the base field and applying Theorem \ref{f5} to the embedding $(*)$ we get, $f_5((A \otimes_k M) \otimes_M K') = 0$.
\noindent
Now, since $K'$  over $M$ is a finite field extension and $f_5(A \otimes_k M) \otimes_M K'$ is split, we have, by Theorem (\cite{TYL}, Chap. VII, Cor. 4.4), $N_{{K'} / M}( {K'}^{*})\subseteq D_M(f_5(A \otimes_k M)).$ Since $N_{{K'} / M}( {K'}^{*})=  D_M(q_T \otimes M)$, we have $ D_M(q_T \otimes M ) \subseteq  D_M( f_5( A) \otimes M)$ for all extensions $M$ of $k$.
Hence by Theorem \ref{subformfactor}, $ N_{F/k}$ is isometric to a subform of $f_5(A)$ and we have, $f_5(A) = q_T \otimes \gamma$, for some 4-fold Pfister form $\gamma$ over $k$.
\end{proof}
\begin{remark} {\bf 1)} Note that the converse of the above theorem fails to hold.
\vskip0.5mm
\noindent
 Let $C$ denote the octonion division algebra represented by the $3$-fold (anisotropic) Pfister form  $<1,-x>\otimes<1,-y>\otimes<1,-z>$ over $k=\mathbb{C}(x,y,z,w)$. Let $F \subset C$ be a quadratic subfield and let $h=diag(h_1,h_2,h_3)$ denote the hermitian form on $F^\bot\subset C$ induced by the norm bilinear from (see \cite{NJ1}, \S 5, cf. Prop. \ref{hermitian}). Consider the Albert algebra $A:=J(M_3(F),*_h,1,\mu)$ where $*_h(X)=h^{-1}\overline{X}^th$ and $\mu\in F$ satisfies $\mu\overline{\mu}=1.$ Let $G= {\bf Aut}(A)$. Then $Oct(G)= C$ (\cite{PRT1}, \S 1, Theorem 1.1). By (\cite{NH}, Lemma 3.2), $ f_5(A)= n_C \otimes <<-1, -1>>.$ Since $-1$ is a square in $k$, we have $f_5(A)=0$. Let $K= k(\sqrt{w})$ 
and let $L$ be any cubic cyclic field extension of $k$. Let $T$ be the $K$-unitary torus associated with the pair $(L, K)$. Since $-w$ is not represented by $n_C$, $K \not \subset C$. Hence, by Theorem \ref{KC1}, $T$ cannot embed in $G$ over $k$, however $q_T$ divides $f_5(A)$.
\vskip0.5mm
\noindent
{\bf 2)}  Let $q_T$ be as in the hypothesis of Theorem \ref{albi}. Note that $q_T$ does not divide $f_3(G)$ in general. We use the construction as in the case {\bf (2)} of Remark \ref{countereg}. Let $C$ be an octonion division algebra. Let $\Gamma= diag(1,-1, -1) \in GL_3(k)$. Consider the reduced Albert algebra $A:= \mathcal{H}_3(C, \Gamma)$. Let $G= {\bf Aut}(A)$. Note that $Oct(G)= C$. Let $F \subseteq C$ be a quadratic subfield and $L= k \times F$. Let $T$ be the $F$-unitary algebra associated with the pair $(L, F)$. Then, as in the case {\bf (2)} of Remark \ref{countereg}, $T \hookrightarrow G$ over $k$. Since $Disc(L)= F$, we have $\alpha= \delta ~mod~{k^*}^2$. Hence the Pfister form $q_T= <1, -\alpha \delta> \cong <1, -1>$ and $q_T$ does not divide $f_3(G)$, since $C$ is a division algebra.
\end{remark}
\noindent
 On exactly similar lines we can derive a necessary condition for a rank-2 unitary torus to embed in a connected simple algebraic group of type $A_2$ or $G_2$:
\begin{theorem}{\label{fact1}}Let $G$ be a simple, simply connected $k$-group of type $A_2$ or $G_2$. Let $C:= Oct(G)$ and  $n_C$ denote the norm form of $C$. Let $K= k({\sqrt{\alpha}})$ be a quadratic \'etale $k$-algebra and $L$ be a cubic \'etale $k$-algebra with discriminant $\delta$. Let $T$ be the $K$-unitary torus associated with the pair $(L, K)$. Suppose there exists a $k$-embedding $T \hookrightarrow G$. Then $n_C= q_T \otimes \gamma$ for some two fold Pfister form $\gamma$ over $k$.
\end{theorem}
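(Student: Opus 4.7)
My plan is to proceed in direct parallel with the proof of Theorem \ref{albi}, substituting the octonion norm form $n_C$ for $f_5(A)$ and the distinguished-torus criteria of Theorem \ref{dist} (type $A_2$) and Theorem \ref{g2to} (type $G_2$) for Theorem \ref{f5}. Observe first that since $T$ has rank $2$ while both $A_2$- and $G_2$-groups have absolute rank $2$, $T$ is in fact a maximal $k$-torus of $G$, and remains maximal under any base change; this is essential to invoking Theorems \ref{dist} and \ref{g2to} at each stage.

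The core of the argument is to show that $D_M(q_T \otimes M) \subseteq D_M(n_C \otimes M)$ for every field extension $M/k$. Set $F = k(\sqrt{\alpha\delta})$, so that $q_T = N_{F/k}$ and $q_T \otimes M = N_{F \otimes M / M}$. If $q_T \otimes M$ is isotropic, then $\alpha\delta \in M^{*2}$, whence $T \otimes M$ is a distinguished maximal torus of $G \otimes M$; Theorem \ref{dist} or Theorem \ref{g2to} then forces $Oct(G) \otimes M = n_C \otimes M$ to split, making the inclusion vacuous. Otherwise $K' := F \otimes_k M$ is a quadratic field extension of $M$, and we may assume $n_C \otimes M$ is anisotropic. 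Base-changing the given embedding to $K'$, we obtain $T \otimes_M K' \hookrightarrow G \otimes_M K'$, where the source is a distinguished maximal torus of the simply connected simple $K'$-group $G \otimes_M K'$ of type $A_2$ or $G_2$ (type and rank are preserved by base change). By Theorem \ref{dist} or Theorem \ref{g2to} again, $n_C \otimes K'$ splits. The classical norm principle (\cite{TYL}, Chap. VII, Cor. 4.4) then gives $N_{K'/M}(K'^*) \subseteq D_M(n_C \otimes M)$, and since $N_{K'/M}(K'^*) = D_M(q_T \otimes M)$, the desired containment follows.

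Having the inclusion of represented-value sets over every extension, Theorem \ref{subformfactor} shows that $q_T$ is isometric to a subform of $n_C$ over $k$ (the split case, where $n_C$ is hyperbolic, is handled separately by choosing $\gamma$ to be any hyperbolic $2$-fold Pfister form). Since both $q_T$ and $n_C$ are Pfister forms, part (iii) of Theorem \ref{subformfactor} supplies a Pfister form $\gamma$ over $k$ with $n_C = q_T \otimes \gamma$; a dimension count ($8 = 2 \cdot 4$) forces $\gamma$ to be $2$-fold, as required.

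The sole point of care in the argument is confirming that $T$ remains maximal after the successive base changes and that $G \otimes_M K'$ is still of type $A_2$ or $G_2$; both follow from rank and type invariance under base change, so no genuine obstacle arises beyond those already resolved in the proof of Theorem \ref{albi}. In effect, the $F_4$-argument transposes without modification once the distinguished-torus characterizations for $A_2$ and $G_2$ are in hand.
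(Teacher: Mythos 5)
Your proposal is correct and is essentially the paper's own argument: the paper proves Theorem \ref{fact1} precisely by noting (via Theorems \ref{g2to} and \ref{dist}) that a distinguished torus forces $C$ to split and then repeating the argument of Theorem \ref{albi} with $n_C$ in place of $f_5(A)$. Your additional remarks (maximality of $T$ after base change, the separate treatment of the split/hyperbolic case before applying Theorem \ref{subformfactor}, and the dimension count making $\gamma$ two-fold) are accurate refinements of the same route, not a different one.
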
 
\begin{proof} By Theorems \ref{g2to}, \ref{dist}, one sees that if $T$ is distinguished then $C$ splits. Now using same arguments as in the proof of Theorem \ref{albi}, we get the desired result.
\end{proof}
\begin{remark} Note that the converse of the above theorem fails to hold. 
\vskip0.5mm
\noindent
{\bf 1)} Let $*$  denote the unitary involution $*(X)= \overline{X}^t$ on $M_3(\mathbb{C})$ and let $G= {\bf SU}(M_3(\mathbb{C}), *)$. Let $C= Oct(G)$. Then $n_C= <1, 1> \otimes <1, 1> \otimes <1, 1>$ ( see \S 2.2). Hence $C$ is the unique octonion an division algebra over $\mathbb{R}$. Take $K= \mathbb{R} \times \mathbb{R}$ and $L= \mathbb{R} \times \mathbb{C}$. Let $T$ be the $K$-unitary torus associated with the pair $(L, K)$. Since $C$ is a division algebra, 
$K= \mathbb{R} \times \mathbb{R} \not \subset C$. Hence, by Theorem \ref{KC}, $T$ does not embed in $G$ over $\mathbb{R}$ but the quadratic form $q_T= <1, 1>$  associated  with $T$, is a factor of $n_C$.
\vskip0.5mm
\noindent
{\bf 2)} Let $G$ be a group of group of type $G_2$ over $k$ arising from an octonion division algebra $C$. Let $K_0= k(\sqrt{\delta}) \subset C$ be a fixed quadratic subfield. Note that $<1, -\delta>$ is a factor of $n_C$. Take $K= k \times k$ and $L= k \times K_0$. Let $T$ be the $K$-unitary torus  associated with the pair $(L, K)$. Since $C$ is a division algebra, we have $K \not \subset C$. Hence, by Theorem \ref{KC}, $T$ does not embed in $G$ over $k$, but the quadratic form $q_T= <1, -\delta>$  associated  with $T$, is a factor of $n_C$
\end{remark}
\noindent
Let $A$ be an Albert algebra over $k$ and $G= {\bf Aut}(A)$. Let $L, K$ be \'etale algebras of dimension $3, 2$ resp. and $T$ be the $K$-unitary torus associated with the pair $(L, K)$. By case {\bf(2)} of Remark \ref{countereg}, if there is a $k$-embedding $T \hookrightarrow G$, then $K$ need not embed in $Oct(G)$, i.e. if  $K= k({\sqrt{\alpha}})$ then $<1, -\alpha>$ is not a factor of $f_3(G)$ in general. However,
\begin{theorem}{\label{albi1}} Let $A$ be an Albert algebra over $k$ and $G= {\bf Aut}(A)$. Let  $K= k({\sqrt{\alpha}})$ be a quadratic \'etale $k$-algebra and $L$ be a cubic \'etale $k$-algebra. Let $T$ be the $K$-unitary torus  associated with the pair $(L, K)$. Suppose there exists a $k$-embedding $T \hookrightarrow G$. Then $f_5(A)= <1, -{\alpha}>\otimes \gamma$ for some $4$-fold Pfister form $\gamma$ over $k$.
\end{theorem}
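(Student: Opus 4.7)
The plan is to parallel the proof of Theorem \ref{albi}, base-changing now to $K' := M(\sqrt{\alpha})$ in place of $M(\sqrt{\alpha\delta})$. As in that proof, the goal is to establish the inclusion
\[
  D_M(\langle 1, -\alpha\rangle \otimes M) \;\subseteq\; D_M(f_5(A) \otimes M)
\]
for every field extension $M/k$, and then invoke Theorem \ref{subformfactor}: since $\langle 1, -\alpha\rangle$ will then be a Pfister subform of the Pfister form $f_5(A)$, it is a Pfister factor, giving the desired decomposition $f_5(A) = \langle 1, -\alpha\rangle \otimes \gamma$ for a $4$-fold Pfister form $\gamma$.

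The inclusion is trivial when $\alpha$ is a square in $M$, so assume $K' = M(\sqrt{\alpha})$ is a field. Base-changing the given embedding $T \hookrightarrow G$ to $K'$, note that $K \otimes_k K' \cong K' \times K'$. A routine calculation (analogous to Case (i) of Theorem \ref{isom}) identifies the base change of $(L \otimes_k K, 1 \otimes \bar{\;})$ to $K'$ with $((L\otimes K') \times (L \otimes K'),\varepsilon)$, $\varepsilon$ the switch involution. Hence $T \otimes_k K' \cong R^{(1)}_{(L\otimes K')/K'}(\mathbb{G}_m)$, the norm-one torus of the cubic \'etale $K'$-algebra $L \otimes K'$, and this torus embeds in $G \otimes_k K'$.

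Next I show $f_5(A \otimes_k K') = 0$. If $L \otimes K'$ has a rational idempotent, say $L \otimes K' \cong K' \times R$ with $R$ a two-dimensional \'etale $K'$-algebra, then the norm-one torus is isomorphic to $R_{R/K'}(\mathbb{G}_m)$, which is always $K'$-isotropic; hence the $K'$-rank of $G \otimes K'$ is $\geq 1$, and $f_5(A \otimes K') = 0$ by Lemma \ref{isotropycond}. If instead $F := L \otimes K'$ is a cubic field extension of $K'$, base-change once more to $F$. By Proposition \ref{disccubic}, $F \otimes_{K'} F \cong F \times (F \otimes \delta(F))$, so the norm-one torus becomes $R_{(F \otimes \delta(F))/F}(\mathbb{G}_m)$ over $F$, again isotropic. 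Thus $f_5(A \otimes F) = 0$, and Springer's theorem applied to the odd degree extension $F/K'$ gives $f_5(A \otimes K') = 0$.

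Finally, since $f_5(A \otimes M) \otimes_M K'$ is hyperbolic and $K'/M$ is a quadratic field extension, the classical norm-principle (\cite{TYL}, Chap.~VII, Cor.~4.4) yields
\[
  D_M(\langle 1, -\alpha\rangle \otimes M) \;=\; N_{K'/M}(K'^{*}) \;\subseteq\; D_M(f_5(A) \otimes M),
\]
completing the argument. The main subtlety is the sub-case in which $L \otimes K'$ is a cubic field over $K'$; there a secondary odd-degree Springer descent is required, whereas the other configurations reduce directly to the isotropy criterion of Lemma \ref{isotropycond}.
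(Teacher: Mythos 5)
Your main line of argument is the same as the paper's: after base change to $K'=M(\sqrt{\alpha})$ the torus becomes the norm-one torus of the cubic \'etale algebra $L\otimes K'$, one kills $f_5$ over $K'$ by the dichotomy ($L\otimes K'$ has an idempotent, hence an isotropic subtorus and Lemma \ref{isotropycond}; or $L\otimes K'$ is a cubic field, and one descends by Springer), and one then feeds the Knebusch norm principle into the value-set criterion of Theorem \ref{subformfactor}, exactly as in Theorem \ref{albi} with $\langle 1,-\alpha\rangle$ in place of $q_T$. That part is fine.

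The gap is the opening sentence: ``the inclusion is trivial when $\alpha$ is a square in $M$'' is false, and it hides the case the paper spends half its proof on. If $\alpha\in M^{*2}$ then $\langle 1,-\alpha\rangle\otimes M$ is hyperbolic, so $D_M(\langle 1,-\alpha\rangle\otimes M)=M^{*}$, and the required inclusion says that $f_5(A)\otimes M$ is universal --- not a triviality. When $\alpha\notin k^{*2}$ this is repairable from your own argument: the instance $M=k$ gives $f_5(A)\otimes k(\sqrt{\alpha})=0$, and any $M$ in which $\alpha$ is a square contains a copy of $k(\sqrt{\alpha})$, so $f_5(A)\otimes M$ is hyperbolic and the inclusion holds; but this needs to be said. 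The genuine hole is the case $\alpha\in k^{*2}$, i.e.\ $K\cong k\times k$, which is allowed since $K$ is only assumed quadratic \'etale (and is precisely the case the paper uses later, for ${\bf L}^{(1)}\hookrightarrow G$): then $M(\sqrt{\alpha})$ is never a field, your argument never starts, and the conclusion of the theorem amounts to $f_5(A)$ being hyperbolic, which must be proved. The paper does this first: for $K=k\times k$ one has $T\cong {\bf L}^{(1)}$ (and $T\cong R_{K_0/k}(\mathbb{G}_m)$ if $L=k\times K_0$, Lemma \ref{useful}), so if $L$ is not a field $G$ is $k$-isotropic and $f_5(A)=0$ by Lemma \ref{isotropycond}, while if $L$ is a field one base changes to $L$, where $L\otimes L\cong L\times K_0$, gets $f_5(A\otimes L)=0$, and concludes by Springer. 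This is exactly your $K'$-level analysis carried out over $k$ itself; you must include it (both to cover $K$ split and to justify the ``square in $M$'' step) for the proof to be complete.
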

\begin{proof} We first assume that $K\cong k \times k.$ If $L$ is not a field, then $L= k \times K_0$, where $K_0$ is a quadratic \'etale $k$-algebra. By Lemma \ref{useful}, $T \cong R_{K_0/k}(\mathbb{G}_m) \hookrightarrow G$. Therefore $T$ is $k$-isotropic and $k$-rank of $G \geq 1$. Hence by Lemma \ref{isotropycond}, $f_5(A)= 0$. Let $L$ be a field extension. Base changing to $L$ we have, $L\otimes L \cong L \times K_0$, where $K_0= L \otimes \Delta$ and $\Delta$ is the discriminant algebra of $L$ over $k$. Hence by the above argument, $T \otimes L$ is $L$-isotropic and $f_5(A \otimes L)= 0$. By Springer's theorem $f_5(A)= 0$. Therefore, if $\alpha \in {k^*}^2$ (i.e, $K= k \times k$) and $T \hookrightarrow G$ over $k$, then $f_5(A)=0$. Using the same arguments as in Theorem \ref{albi}, with Pfister form $<1, -\alpha>$ instead of $q_T$, we get the desired result.
\end{proof}
\begin{theorem} Let $G$ be a simple, simply connected algebraic group defined over $k$. Let $L$ be a cubic \'etale $k$-algebra with discriminant $K_0$. Suppose there exists an $k$-embedding ${\bf L}^{(1)} \hookrightarrow G$. We then have:\vskip0.5mm
\noindent
{\bf (a)} if $G$ is of type $G_2$ or $A_2$ then $Oct(G)$ splits. \\
{\bf (b)} if $G$ is of type $F_4$ then $f_5(G)=0$ and $K_0 \subset Oct(G)$.
\end{theorem}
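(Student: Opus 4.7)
The torus ${\bf L}^{(1)}$ has rank $2$, and a direct computation (or Case (i) of the proof of Theorem \ref{isom}) identifies it with the $K$-unitary torus associated with $(L, K)$ for the split quadratic algebra $K = k \times k$; the associated quadratic form is then $q_T = \langle 1, -\delta \rangle$ where $K_0 = k(\sqrt{\delta}) = Disc(L)$. The plan is to exhibit isotropy of ${\bf L}^{(1)}$ (possibly after an odd-degree base change) and apply Lemma \ref{isotropycond} together with Springer's theorem to obtain the vanishing assertions; the embedding $K_0 \subseteq Oct(G)$ in (b) will follow from Theorem \ref{f3a} applied after base change to $K_0$.

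\textbf{Isotropy and vanishing.} If $L \cong k \times k \times k$ then ${\bf L}^{(1)} \cong \mathbb{G}_m \times \mathbb{G}_m$ is $k$-split; if $L \cong k \times K_0$ with $K_0$ a quadratic field, a direct calculation (as in Lemma \ref{useful} with $K = k \times k$) gives ${\bf L}^{(1)} \cong R_{K_0/k}(\mathbb{G}_m)$, which is $k$-isotropic. In either sub-case $G$ is $k$-isotropic, and Lemma \ref{isotropycond} yields $Oct(G)$-splitting (types $G_2$, $A_2$) and $f_5(G) = 0$ (type $F_4$). If instead $L$ is a cubic field extension, I would base change to $L$: by Proposition \ref{disccubic}, $L \otimes_k L \cong L \times K_0'$ with $K_0' = L \otimes_k K_0$, so ${\bf L}^{(1)} \otimes_k L \cong {\bf (L \times K_0')}^{(1)}$ is $L$-isotropic by the preceding case. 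Consequently $Oct(G) \otimes L$ splits (types $G_2$, $A_2$) or $f_5(A) \otimes L = 0$ (type $F_4$); since $[L:k] = 3$ is odd and $n_C$, $f_5(A)$ are Pfister forms, Springer's theorem on odd-degree extensions descends the vanishing back to $k$. This settles (a) and the $f_5$-statement in (b).

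\textbf{The $K_0$-embedding in (b).} For the second part of (b), base change to $K_0$. The cubic \'etale $K_0$-algebra $L \otimes_k K_0$ has discriminant $Disc(L) \otimes_k K_0 = K_0 \otimes_k K_0 \cong K_0 \times K_0$, which is trivial over $K_0$, and by hypothesis ${\bf (L \otimes_k K_0)}^{(1)} = {\bf L}^{(1)} \otimes_k K_0 \hookrightarrow G \otimes_k K_0$. Applying Theorem \ref{f3a} over the base field $K_0$ to this embedding yields $f_3(A \otimes K_0) = 0$, i.e.\ $Oct(G) \otimes K_0$ splits, equivalently $K_0 \subseteq Oct(G)$ by (\cite{FG}, Lemma 5). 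The only delicate point in the whole argument is the Springer descent when $L$ is a cubic field, but it is routine since the invariants in question are Pfister forms and the extension $L/k$ is of odd degree; the rest is bookkeeping with the explicit shape of norm-one tori of split cubic \'etale algebras.
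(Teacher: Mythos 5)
Your proposal is correct and takes essentially the same route as the paper: the paper gets the vanishing statements by quoting its Theorems \ref{KC} and \ref{albi1} with $K=k\times k$ (whose proofs are precisely the isotropy-plus-Springer argument you rederive via Lemma \ref{useful}, Proposition \ref{disccubic} and Lemma \ref{isotropycond}), and its treatment of $K_0\subset Oct(G)$ --- base change to $K_0$, trivial discriminant of $L\otimes K_0$, Theorem \ref{f3a}, then (\cite{FG}, Lemma 5) --- is exactly what you propose. The only point to tidy is the case $K_0\cong k\times k$, where ``base change to the field $K_0$'' and Ferrar's lemma do not literally apply; the paper treats it separately and trivially: $L$ then has trivial discriminant, so Theorem \ref{f3a} over $k$ already gives $f_3(G)=0$, hence $Oct(G)$ splits and $K_0\subset Oct(G)$.
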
 
\begin{proof} Let $L$ be as in the hypothesis and $K= k \times k$. Let $(E, \tau)$ and $T$ be the $K$-unitary algebra and torus resp. associated with the pair $(L, K)$.  We have $(E, \tau) \cong (L \times L, \epsilon)$, where $\epsilon(x, y)= (y, x)$ for all $(x, y) \in L \times L$. Hence $T \cong {\bf L}^{(1)}$. Therefore $T \hookrightarrow G$ over $k$. 
\vskip0.5mm
\noindent
{\bf (a)}~Let $G$ be a simply connected, simple group of type $G_2$ or $A_2$. Then, by Theorem \ref{KC}, $K \subseteq Oct(G)$. Since $K$ is split, $Oct(G)$ splits.
\vskip0.5mm
\noindent
{\bf (b)}~Let $G$ be a $k$-group of type $F_4$. Then by Theorem \ref{albi1}, $f_5(A)= <1, -{\alpha}>\otimes \gamma$ for some $4$-fold Pfister form $\gamma$ over $k$ where $K= k(\sqrt{\alpha})$. Since $K= k \times k$, we have $\alpha \in {k^*}^2$ and hence $f_5(A)= 0$. If $K_0$ is split then $L$ has trivial discriminant. Hence, by Theorem \ref{f3a}, $f_3(G)=0$ and $Oct(G)$ splits. Therefore $K_0 \subset Oct(G)$. If $K_0$ is a field extension, base changing to $K_0$ we see that $L \otimes K_0$ is a cubic \'etale algebra over $K_0$ of trivial discriminant. Applying Theorem \ref{f3a} to the $K_0$-embedding ${\bf L}^{(1)}\otimes K_0 \hookrightarrow G \otimes K_0$, we get $f_3(G \otimes K_0)= f_3(G) \otimes K_0= 0$. Hence $Oct(G)$ splits over $K_0$ and thus $K_0 \subseteq Oct(G)$.
\end{proof}
\begin{remark} Let $G$ be a group of type $F_4$ over $k$. Let $L$ be a cubic \'etale $k$-algebra. Suppose there exists an $k$-embedding ${\bf L}^{(1)} \hookrightarrow G$. Then $f_3(G)$ may not be zero. We use the construction as in the case {\bf (2)} of Remark \ref{countereg}. Let $C$ be an octonion division algebra. Let $\Gamma= diag(1,-1, -1) \in GL_3(k)$. Consider the reduced Albert algebra $A:= \mathcal{H}_3(C, \Gamma)$. Let $G= {\bf Aut}(A)$. Note that $Oct(G)= C$. Let $F \subseteq C$ be a quadratic subfield and $L= k \times F$. Let $T$ be the $F$-unitary torus associated with the pair $(L, F)$. As in the case {\bf (2)} of Remark \ref{countereg}, $T \hookrightarrow G$. Note that $T \cong {\bf SU}(( k \times F) \otimes F,~ 1 \otimes ~\bar{}~) \cong R_{K/k}(\mathbb{G}_m) \cong {\bf L}^{(1)}$ (Theorem \ref{isom}, case {\bf (ii)}). Hence ${\bf L}^{(1)} \hookrightarrow G$ but $f_3(G) \neq 0$.

\end{remark}
\subsection{Cohomology of unitary tori}
In this section we will use some definitions and results from (\cite{KMRT}, \S 29). Fix a separable closure $k_{sep}$ of $k$ and let $\Gamma= Gal(k_{sep}/k)$. Let $G$ be an algebraic group defined over $k$ and let $\rho: G \longrightarrow {\bf GL}(W)$ be a representation for a finite dimensional $k$-vector space W. Fix an element $w \in W$ and identify $W$ with a $k$-subspace of $W_{sep}= W \otimes k_{sep}$. An element $w' \in W_{sep}$ is called a twisted $\rho$-forms of $w$ if $w'= \rho_{sep}(g)(w)$ for some $g \in G(k_{sep})$, where $\rho_{sep}= \rho \otimes k_{sep}$.
Let ${\overline{A}(\rho, w)}$ denote the groupoid whose objects are the twisted $\rho$-form of $w$ and whose morphisms $w' \rightarrow w''$ are the elements $g \in G(k_{sep})$ such that $\rho_{sep}(g)(w')= w''$. Let $ {A(\rho, w)}$ denote the groupoid whose objects are the twisted $\rho$-forms of $w$ which lie in $W$ and morphisms $w' \rightarrow w''$ are the elements $g \in G(k)$ such that $\rho(g)(w')= w''$. Let $X$ denote the $\Gamma$-set of objects of $\overline{A}(\rho, w)$. Then $X^{\Gamma}$ is the set of objects of $A(\rho, w)$. Also, the set of orbits of $G(k)$ in  $X^{\Gamma}$ is the set of isomorphism classes $Isom(A(\rho, w))$ of objects of $A(\rho, w)$. Let $w' \in A(\rho, w)$. By $[w'] \in Isom(A(\rho, w))$ we will denote the  isomorphism class of $w'$. Let ${\bf Aut}_G(w)$ denote the stabilizer of $w$ in $G$.
\begin{proposition}(\cite{KMRT}, Prop. 29.1){\label{cohomo}} If $H^1(k, G)= 0$, there is a natural bijection of pointed sets
$$Isom(A(\rho, w)) \leftrightarrow H^1(k, {\bf Aut}_G(w))$$
which maps the isomorphism class of $w$ to the base point of $H^1(k, {\bf Aut}_G(w))$. 
\end{proposition}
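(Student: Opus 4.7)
The plan is to construct the claimed bijection explicitly via the standard twisting construction from non-abelian Galois cohomology, and to see that the hypothesis $H^1(k,G) = 0$ enters solely through surjectivity.

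I would begin by defining $\Phi : Isom(A(\rho, w)) \longrightarrow H^1(k, {\bf Aut}_G(w))$ as follows. Given $w' \in A(\rho, w)$, choose some $g \in G(k_{sep})$ with $\rho_{sep}(g)(w) = w'$; such a $g$ exists by definition of a twisted $\rho$-form. For $\gamma \in \Gamma$, applying $\gamma$ to the identity $\rho_{sep}(g)(w) = w'$ and using that both $w$ and $w'$ lie in $W^{\Gamma} = W$, one obtains $\rho_{sep}(\gamma(g))(w) = w'$, so $c_\gamma := g^{-1}\gamma(g)$ belongs to ${\bf Aut}_G(w)(k_{sep})$. A routine computation shows that $(c_\gamma)$ satisfies the cocycle identity $c_{\gamma\delta} = c_\gamma \cdot \gamma(c_\delta)$. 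Replacing $g$ by $gh$ with $h \in {\bf Aut}_G(w)(k_{sep})$, or replacing $w'$ by $\rho(x)(w')$ with $x \in G(k)$, alters $c$ by a coboundary inside ${\bf Aut}_G(w)(k_{sep})$, so the class $[c] \in H^1(k, {\bf Aut}_G(w))$ depends only on $[w'] \in Isom(A(\rho, w))$. By construction, taking $g = 1$ shows that $\Phi([w])$ is the neutral class.

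For surjectivity, given $c \in Z^1(\Gamma, {\bf Aut}_G(w)(k_{sep}))$, view $c$ as a cocycle with values in $G(k_{sep})$ via the inclusion ${\bf Aut}_G(w) \hookrightarrow G$. Since $H^1(k, G) = 0$, there exists $g \in G(k_{sep})$ with $c_\gamma = g^{-1}\gamma(g)$. Setting $w' := \rho_{sep}(g)(w)$, a direct calculation gives
$$\gamma(w') = \rho_{sep}(\gamma(g))(w) = \rho_{sep}(g \cdot c_\gamma)(w) = \rho_{sep}(g)(w) = w',$$
since $c_\gamma$ stabilises $w$; thus $w' \in W$ is defined over $k$, and $\Phi([w']) = [c]$. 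For injectivity, suppose $w'_i = \rho_{sep}(g_i)(w)$ yield cohomologous cocycles $c^i_\gamma = g_i^{-1}\gamma(g_i)$, so $c^2_\gamma = a^{-1} c^1_\gamma \gamma(a)$ for some $a \in {\bf Aut}_G(w)(k_{sep})$. A short rearrangement shows $h := g_2 a^{-1} g_1^{-1}$ is $\Gamma$-invariant, hence lies in $G(k)$, and one checks $\rho(h)(w'_1) = w'_2$, so $[w'_1] = [w'_2]$ in $Isom(A(\rho, w))$.

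The hard part is essentially bookkeeping: verifying that the cocycle class is independent of all choices and matching the coboundary conventions consistently on both sides. Conceptually, the argument works because any twisted form defined over $k$ produces, via a splitting in $G(k_{sep})$, a $1$-cocycle measuring the failure of that splitting to descend to $G(k)$; the vanishing of $H^1(k, G)$ ensures that every cocycle in the subgroup ${\bf Aut}_G(w)$ arises in this way. Without this hypothesis, the same construction would yield only a bijection between $Isom(A(\rho, w))$ and the kernel of the natural map $H^1(k, {\bf Aut}_G(w)) \to H^1(k, G)$, exactly as in the standard long exact sequence associated with $1 \to {\bf Aut}_G(w) \to G \to G/{\bf Aut}_G(w) \to 1$.
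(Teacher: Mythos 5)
Your construction is correct and is exactly the bijection the paper (following \cite{KMRT}, Prop.\ 29.1) describes right after the statement: the cocycle $\gamma \mapsto g^{-1}\gamma(g)$ attached to a splitting $\rho_{sep}(g)(w)=w'$, with $H^1(k,G)=0$ used only for surjectivity. The well-definedness, injectivity and surjectivity checks you supply are the standard twisting bookkeeping and all go through, so this is essentially the same argument, just written out in full.
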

\noindent
The bijection between these sets is as follows: for $w' \in A(\rho, w),$ choose $g \in G(k_{sep})$ such that $ \rho_{sep}(g)(w)= w'$. Define a 1-cocycle class $[\alpha_{\sigma}] \in H^1(k, {\bf Aut}_G(w))$ by $\alpha_{\sigma}= g^{-1}\sigma(g)$. Conversely let $[\alpha] \in H^1(k, {\bf Aut}_G(w))$. Since $H^1(k, G)= 0$, $\alpha= g^{-1}\sigma(g)$ for some $g \in G(k_{sep})$. The corresponding object in $A(\rho, w)$ is $ \rho_{sep}(g)(w)$.\\
\noindent
We now specialize and adapt some of the computations done in (\cite{KMRT}, \S 29.17), to the case of unitary algebras.
Let $k$ be a field and $K$ be a quadratic \'etale  $k$- algebra. Let ~$\bar{}$~ denote the non-trivial $k$-automorphism of $K.$ Let $L$ be an \'etale algebra of dimension $n$ over $k$ and $E=  L \otimes K$ be the associated $K$-unitary algebra with the involution $\tau= 1\otimes \bar{}$.\\
\noindent
We first calculate  $H^1(k, {\bf SU}(E, \tau))$. Let $W= E \oplus K$. Define a representation $\rho: {\bf GL}_1(E) \longrightarrow {\bf GL}(W)$ by, 
$$ \rho(b)(x,y)= (bx \tau(b), N_{E \otimes k_s/K \otimes k_s }(b)y) $$ for all $b \in {\bf GL}_1(E)$, $x \in E$, $y \in K$.
Let $w_0= (1,1) \in W$. Note that ${\bf GL}_1(E)(k_s)= (E\otimes k_s)^*$.
\vskip0.5mm
\noindent
{\bf{Claim}}: \hspace{0.2cm} ${\bf Aut}_{{\bf GL}_1(E)}(w_0)= {\bf SU}(E, \tau)$.\\
We have,
\begin{eqnarray*}
 {\bf Aut}_{{\bf GL}_1(E)}(w_0) & = & \{g \in  (E\otimes k_s)^*| ~\rho_{sep}(g)(1,1)= (1,1)\}\\
& = & \{g \in  (E\otimes k_s)^*|~ g\tau(g)=1,~ N_{E \otimes k_s/K \otimes k_s}(g)=1\}\\
& = & {\bf SU}(E, \tau).
\end{eqnarray*}
Hence, in view of Proposition \ref{cohomo}, we have a bijection $$ \eta: Isom(A(\rho, w_0)) \leftrightarrow H^1(k, {\bf SU}(E, \tau)).$$ 
We define a  product on $Isom(A(\rho, w_0))$ as follows: $$\rho_{sep}(g)(w_0) \rho_{sep}(g')(w_0):= \rho_{sep}(gg')(w_0) ~\text{for}~ \text{all}~ g, g' \in (E\otimes k_{sep})^*.$$ A routine calculation shows that this product is well defined. Since ${\bf SU}(E, \tau)$ is a torus, $H^1(k, {\bf SU}(E, \tau))$ is an abelian group. It is immediate that $\eta$ is a homomorphism of groups. Define $$V:= \{ (s,z) \in L^* \times K^*| ~N_{L/k}(s)= z\overline{z}\}.$$
Given a twisted $\rho$-form $w'$ of $w_0$ which lies in $W$, there exists  $b \in (E\otimes k_{sep})^*$ such that $w'=\rho_{sep}(b)(w_0)$. Now $\rho_{sep}(b)(w_0)= \rho_{sep}(b)(1, 1)= (b\tau(b), N_{E\otimes k_{sep}/(K\otimes k_{sep})}(b))$. Along similar lines as in (\cite{KMRT}, \S 29.17), we can show that $\rho_{sep}(b)(w_0) \in V$ and $V$ is precisely the set of twisted $\rho$-forms of $w_0$ which lie in $W$. Define an equivalence ${\sim}$ on $V$ as follows:
$$(s, z)\sim (s', z')~ \text{if}~ \text {and}~ \text {only}~\text{if}~ s'= bs\tau(b) ~\text {and}~ z'= N_{E/K}(b)z ~\text {for}~ \text {some}~ b \in E^*.$$ We will denote equivalence class of $(s, z) \in V$ by $[(s, z)]$. Note that $V$ is a subgroup of $L^* \times K^*$. It is easy to see that the product on $V$ induces a well defined product on $V/ \sim$ as follows: $$[(s, z)][(s', z')]= [(ss', zz')]~ \text{for}~\text{all} ~(s, z), (s', z') \in V.$$
\noindent
 Define $\xi: Isom(A(\rho, w_0)) \rightarrow V/\sim $ by $\xi([w'])= [(b\tau(b), N_{E\otimes k_{sep}/(K\otimes k_{sep})}(b))]$, where $w'= \rho_{sep}(b)(w_0)$, for some $b \in (E \otimes k_{sep})^*$. It follows that $\xi$ is a homomorphism of groups. \\
\noindent
We have proved the following,
\begin{theorem} Let $L, K$ be \'etale $k$-algebras of dimension $n, 2$ resp. and $T$ be the $K$-unitary torus associated with the pair $(L, K)$. Then there exists a natural isomorphism: $H^1(k, T) \mapsto V/ \sim$ of groups.
\end{theorem}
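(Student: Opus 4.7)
The plan is to exhibit the stated isomorphism as the composition $\xi \circ \eta^{-1}: H^1(k,T) \to V/\sim$, with all pieces essentially assembled in the setup preceding the theorem. What remains is to verify that $\xi$ is a well-defined, bijective group homomorphism; the map $\eta$ is a bijection by Proposition \ref{cohomo}, whose hypothesis holds because ${\bf GL}_1(E) = R_{E/k}(\mathbb{G}_m)$ with $E$ \'etale over $k$, whence $H^1(k, {\bf GL}_1(E)) = 0$ by Shapiro's lemma and Hilbert~90.

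For $\xi$, I would first note that $G := {\bf GL}_1(E)$ is commutative (since $E$ is), which makes the ``product'' on $\mathrm{Isom}(A(\rho, w_0))$ genuinely well-defined and makes both $\eta$ (via the commutative cocycle law, since $T$ is a torus) and $\xi$ into group homomorphisms. Well-definedness of $\xi$ is then immediate: the fact already noted that $V$ is precisely the set of twisted $\rho$-forms of $w_0$ lying in $W$ ensures $(b\tau(b), N_{E/K}(b)) \in V$; and any two lifts $b, b'$ of the same $\mathrm{Isom}$-class satisfy $b' = a b \cdot t$ for some $a \in E^*$ and $t \in T(k_{sep})$, so their images in $V$ differ exactly by the relation $\sim$. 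The same computation, read in reverse, yields injectivity. Surjectivity of $\xi$ is the quoted fact itself: any $(s,z) \in V$ arises as $(b\tau(b), N_{E/K}(b))$ for some $b \in (E \otimes k_{sep})^*$, producing the required preimage.

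With $\xi$ an isomorphism of groups, the composition $\xi \circ \eta^{-1}$ is the desired natural isomorphism $H^1(k,T) \xrightarrow{\sim} V/\sim$; naturality is automatic since $E$, $\rho$, $w_0$, and $V$ are all functorial in the pair $(L,K)$. The main obstacle, as I see it, is the self-contained verification of surjectivity (equivalently, the claim that every $\rho$-form of $w_0$ in $W$ lies in $V$): the norm compatibility $N_{L/k}(s) = z\bar{z}$ defining $V$ is exactly what is required to solve $b\tau(b) = s$ and $N_{E/K}(b) = z$ over $k_{sep}$. This is not literally Hilbert~90, but reduces to it after decomposing $E \otimes k_{sep}$ as a product of copies of $k_{sep}$ and reading off $\tau$ as the corresponding permutation action; the norm identity is precisely the cocycle condition needed for this decomposed system to admit a solution.
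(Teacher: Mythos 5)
Your proposal is correct and follows essentially the same route as the paper: the paper's proof is precisely the composition of the bijection from Proposition \ref{cohomo} (valid since $H^1(k,{\bf GL}_1(E))=0$) with the map $\xi$, with the well-definedness and homomorphism checks treated as routine and the identification of $V$ with the set of twisted $\rho$-forms of $w_0$ in $W$ deferred to the computation of (\cite{KMRT}, \S 29.17). The one step you flag as the main obstacle --- solving $b\tau(b)=s$, $N_{E/K}(b)=z$ over $k_{sep}$ using $N_{L/k}(s)=z\overline{z}$ --- is exactly that deferred KMRT-style verification, and your decomposition of $E\otimes k_{sep}$ into copies of $k_{sep}$ handles it (it is in fact a direct solvability argument over the separably closed field rather than an appeal to Hilbert 90).
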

\noindent
Henceforth we will identity $H^1(k, T)$ with $V/ \sim$ and write elements in $H^1(k, T)$ as equivalence classes $[(s, z)] \in V/ \sim$.
\begin{theorem}{\label{cohomo1}}  Let $L, K$ be \'etale $k$-algebras of dimension $n, 2$ resp. Let $E$ be the $K$-unitary algebra and $T$ be the $K$-unitary torus associated with the pair $(L, K)$. Then,  $$\frac{H^1(k, ~T)}{K_0^{(1)}}\cong \frac{S}{ N_{E/L}(E^*)},$$ where
 $$S:= \{u \in L^*| ~N_{L/k}(u) \in N_{K/k}(K^*)\}~\rm{and} ~K_0^{(1)}:= \{[(1, \mu)] \in H^1(k, ~T) |~ \mu\overline{\mu}=1\}.$$
\end{theorem}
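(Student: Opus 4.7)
The plan is to construct a surjective group homomorphism $\pi: H^1(k, T) \to S/N_{E/L}(E^*)$ sending $[(s,z)]$ to the coset $s \cdot N_{E/L}(E^*)$, identify $\ker \pi$ with $K_0^{(1)}$, and invoke the first isomorphism theorem.

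The crucial preliminary computation, which I would set up first, is that for $s \in L^*$ and $b \in E^*$, the element $b\, s\, \tau(b) \in E$ equals $s \cdot N_{E/L}(b)$ and in particular lies in $L^*$. This follows because $E = L \otimes_k K$ is commutative and $\tau|_L = \mathrm{id}$, so $b\,s\,\tau(b) = s \cdot b\,\tau(b) = s \cdot N_{E/L}(b)$, where $N_{E/L}: E \to L$ is the norm of the degree-$2$ \'etale extension $E/L$ with nontrivial automorphism $\tau$. Under this reformulation, the equivalence defining $V/\sim$ becomes: $(s,z) \sim (s',z')$ if and only if there exists $b \in E^*$ with $s' = s \cdot N_{E/L}(b)$ and $z' = z \cdot N_{E/K}(b)$.

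Next I would observe that for $(s,z) \in V$ we have $N_{L/k}(s) = z\bar z \in N_{K/k}(K^*)$, so $s \in S$, and that $N_{E/L}(E^*) \subseteq S$ by transitivity of norms, $N_{L/k}(N_{E/L}(b)) = N_{E/k}(b) = N_{K/k}(N_{E/K}(b))$. The reformulated equivalence makes $\pi$ well-defined on $V/\sim$, and the coordinate-wise product on $V/\sim$ (inherited from the multiplication on $\mathbf{GL}_1(E)$ used to define the product on $\mathrm{Isom}(A(\rho, w_0))$) ensures $\pi$ is a homomorphism. Surjectivity is immediate: for any $u \in S$, pick $z \in K^*$ with $z\bar z = N_{L/k}(u)$; then $(u,z) \in V$ and $\pi([(u,z)]) = u \cdot N_{E/L}(E^*)$.

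The last step is the kernel computation. If $[(s,z)] \in \ker \pi$, then $s = N_{E/L}(b_0)$ for some $b_0 \in E^*$; applying the equivalence with $b = b_0^{-1}$ yields $(s,z) \sim (1, z')$ where $z' = z \cdot N_{E/K}(b_0)^{-1}$, and a short norm chase using $N_{L/k}(s) = N_{E/k}(b_0) = N_{K/k}(N_{E/K}(b_0))$ shows $z'\bar{z'} = 1$, so $[(s,z)] \in K_0^{(1)}$. Conversely, any $[(1,\mu)] \in K_0^{(1)}$ visibly maps to the trivial class. I expect the only real technical obstacle to be precisely this bookkeeping of the four norms $N_{E/L}, N_{E/K}, N_{L/k}, N_{K/k}$ together with the transitivity identities relating them; none of it is deep, but the argument stands or falls on tracking these correctly and on the commutativity of $E$ used to shuffle factors past $\tau$.
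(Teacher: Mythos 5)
Your proposal is correct and follows essentially the same route as the paper: the same map $[(s,z)]\mapsto s\,N_{E/L}(E^*)$, the same well-definedness and surjectivity arguments, and the same kernel computation identifying $\ker\phi$ with $K_0^{(1)}$ via the norm identity $N_{L/k}(s)=N_{E/K}(b)\overline{N_{E/K}(b)}$ for $s=b\tau(b)$. Your preliminary observation that $b\,s\,\tau(b)=s\,N_{E/L}(b)$ is exactly the commutativity fact the paper uses implicitly, so there is nothing to add.
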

\begin{proof} By definition, $T= {\bf SU}(E, \tau)$, where $(E, \tau)= (L \otimes K, 1 \otimes ~\bar{}~)$ is the  $K$-unitary algebra associated with the pair $(L, K)$. Define $\phi: H^1(k, {\bf SU}(E, \tau)) \longrightarrow \frac{S}{N_{E/L}(E^*)}$ by $\phi([(s, z)])= sN_{E/L}(E^*).$ If $(s, z)\sim (s', z')$ then $s = s' b\tau(b)$ for some $b \in E$. Hence $s= s' N_{E/L}(E^*)$ and $\phi$ is well defined. We now check that $\phi$ is surjective. Let $s \in S$. By definition, there exists $z \in K^*$ such that $N_{L/k}(s)= z\overline{z}$, for some $z \in K$. Hence $\phi([(s, z)])= s$, showing that $\phi$ is onto. Clearly $\phi$ is a homomorphism. Now, $Ker~{\phi}= \{ [(s, z)]|~ s\in N_{E/L}(E^*)\}.$ Clearly, $K_0^{(1)} \subseteq Ker~{\phi}$. Let $[(s, z)]\in Ker~{\phi}.$ Then $s \in N_{E/L}(E^*)= e\tau(e)$ for some $e\in E.$ Let $\mu= zN_{E/K}(e^{-1})$. Then $$N_{L/k}(s)= z\overline{z}= N_{E/K}(s)= N_{E/K}(e\tau(e))= N_{E/K}(e)\overline{N_{E/K}(e)}.$$ Hence $z\overline{z}= N_{E/K}(e)\overline{N_{E/K}(e)}.$ Therefore $\mu\overline{\mu}=1$. It follows that $(s, z)= (e\tau(e), z) \sim (1, \mu)$. Hence, $Ker{\phi}= K_0^{(1)}.$
\end{proof}
\noindent
We obtain below an explicit expression for $H^1(k, {\bf SU}(E, \tau)).$ Consider the the exact sequence,
$$1\longrightarrow K_0^{(1)}\xrightarrow{q}  H^1(k, {\bf SU}(E, \tau))\xrightarrow{\phi} \frac{S}{N_{E/L}(E^*)}\longrightarrow 1,$$
where $q$ denotes the inclusion map and $\phi$ is as above.
We provide a splitting of this sequence when dimension of $L$ is odd. We will from here on assume that the $k$-dimension $n$ of $L$ is odd. Let $n= 2r+1$.\\
\noindent
Define $t:  H^1(k, {\bf SU}(E, \tau))\longrightarrow K_0^{(1)}$ by,
$$t([(u, \mu)])= [(1, \mu^{-r}{\overline{\mu}}^{r})].$$
We first check that this map is well defined. Let $w \in E^*$. Then $(u, \mu) \sim (wu\tau(w), N_{E/K}(w)\mu)$. Now $w^{-r}{\tau(w)}^{r} \in U(E, \tau)$. Hence
 $$(1, \mu^{-r}{\overline{\mu}}^{r}) \sim (1, N_{E/K}(w^{-r}{\tau(w)}^r)\mu^{-r}{\overline{\mu}}^{r})= (1, (N_{E/K}(w)\mu)^{-r}{\overline{(N_{E/K}(w)\mu)^r}}).$$ Therefore, $t$ is well defined. It is immediate that $t$ is a homomorphism. We have $$t\circ~ q[(1,~\mu)]= [(1, \mu^{-r}{\overline{\mu}}^r)]= [(1, \mu)]$$ (Since ${\mu}^{2r+1}= N_{E/K}(\mu)$ and $\mu\overline{\mu}= 1$). Hence, $t\circ q= Id_{K_0^{(1)}}.$
Therefore there exists a homomorphism $\psi: \frac{S}{N_{E/L}(E^*)} \longrightarrow H^1(k, {\bf SU}(E, \tau))$ such that $\phi \circ \psi= Id$. In fact $\psi$ is given by, 
$$\psi([u]):= [(u, ~\mu)]q({t([(u,~ \mu)])}^{-1})= [(u,~ \mu^{r+1}{\overline{\mu}^{-r}})],$$ where $N_{L/k}(u)= \mu\overline{\mu}$, $\mu \in K.$ We now make some observations based on the above exact sequence. We have,
\begin{eqnarray*}
Ker~ t &= &\{[(u,~ \mu)]|~(1,~\mu^{-r}{\overline{\mu}}^r)\sim (1,1)\}=  \{[(u,~ \mu)]|~ \mu^{-r}{\overline{\mu}}^r\in N_{E/K}(U(E, \tau)) \}.\\ 
& = & Image~ \psi= \{[(u, ~{\mu}^{r+1}{\overline{\mu}}^{-r})]|~ N_{L/k}(u)= \mu\overline{\mu}\}.
\end{eqnarray*}
Since $\psi$ is an injective homomorphism, we have $Image~\psi \cong \frac{S}{N_{E/L}(E^*)}$. Hence 
$$\frac{S}{N_{E/L}(E^*)} \cong   \{[(u,~ \mu)]|~ \mu^{-r}{\overline{\mu}}^r\in N_{E/K}(U(E, \tau)) \}\cong \{[(u, ~\mu^{r+1}{\overline{\mu}}^{-r})]|~ N_{L/k}(u)= \mu\overline{\mu}\}.$$
Owing to the splitting of the exact sequence above we have, $H^1(k, {\bf SU}(E, \tau))= Image~q \times Ker~t$. We have already seen that $Ker~t \cong \frac{S}{N_{E/L}(E^*)}$. Now, $Image~q= K_0^{(1)}$. Let $K^{(1)}$ denote the norm $1$ elements of $K$. Define a map $$\chi: K^{(1)} \longrightarrow K_0^{(1)}$$ by
$\chi(\mu):= [(1, \mu)]$ for all $\mu \in K^{(1)}$. This map is clearly a surjective homomorphism.
 Now,
\begin{eqnarray*}
Ker~\chi &= &\{\mu \in K^{(1)}|~[(1,~\mu)]= [(1,~1)]\}.\\
& = &\{\mu \in K^{(1)}|~\mu= N_{E/K}(w),~ w\tau(w)=1,~w \in E\}.\\
& = &N_{E/K}(U(E, \tau)). 
\end{eqnarray*}
Hence, $\frac{K^{(1)}}{N_{E/K}(U(E, \tau))}\cong K_0^{(1)}.$ We summarize this as:
\begin{theorem}{\label{dcps}} Let $K$ be a quadratic \'etale $k$-algebra and $L$ be an \'etale $k$-algebra of dimension  $n=2r+1$. Let $E$ be the $K$-unitary algebra and $T$ the $K$-unitary torus associated with the pair $(L, K)$. Then,
$$H^1(k, T) \cong \frac{K^{(1)}}{N_{E/K}(U(E, \tau))} \times  \frac{S}{ N_{E/L}(E^*)}.$$
\end{theorem}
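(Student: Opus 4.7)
The plan is to decompose $H^1(k, T)$ as a direct product by splitting the short exact sequence already established in Theorem \ref{cohomo1}:
$$1 \longrightarrow K_0^{(1)} \xrightarrow{q} H^1(k, T) \xrightarrow{\phi} \frac{S}{N_{E/L}(E^*)} \longrightarrow 1,$$
where $\phi([(s, z)]) = s \cdot N_{E/L}(E^*)$. The key move is to produce an explicit retraction $t \colon H^1(k, T) \to K_0^{(1)}$ by the formula $t([(u, \mu)]) := [(1,\, \mu^{-r} \bar\mu^{\,r})]$, using crucially that $n = 2r+1$ is odd.

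First I would check that $t$ is well defined: given $(u, \mu) \sim (wu\tau(w),\, N_{E/K}(w)\mu)$ for some $w \in E^*$, one needs $[(1, \mu^{-r}\bar\mu^r)] = [(1,\, (N_{E/K}(w)\mu)^{-r}\overline{(N_{E/K}(w)\mu)}^{\,r})]$; this reduces to the observation that $w^{-r}\tau(w)^r$ lies in $U(E, \tau)$ and has norm $N_{E/K}(w^{-r}\tau(w)^r) = N_{E/K}(w)^{-r}\overline{N_{E/K}(w)}^{\,r}$, which is exactly the required adjustment. Next, $t \circ q = \mathrm{Id}_{K_0^{(1)}}$: for $\mu \in K^{(1)}$ we have $\bar\mu = \mu^{-1}$, so the target is $[(1, \mu^{-2r})]$, and choosing the witness $w = \mu \in K \hookrightarrow E$ gives $w\tau(w) = \mu\bar\mu = 1$ and $N_{E/K}(\mu) = \mu^{n} = \mu^{2r+1}$, hence $(1, \mu^{-2r}) \sim (1, \mu)$. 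The odd-dimension hypothesis is essential at precisely this step, since it guarantees $\mu^{n} = \mu^{2r+1}$ yields the required power to reconcile the retraction with the inclusion.

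Having the retraction, $H^1(k, T) \cong \mathrm{Image}(q) \times \mathrm{Ker}(t) = K_0^{(1)} \times \tfrac{S}{N_{E/L}(E^*)}$, since $\phi|_{\mathrm{Ker}(t)}$ is an isomorphism onto its target (this is the standard splitting lemma for short exact sequences of abelian groups). To finish, I would identify $K_0^{(1)}$ via the homomorphism $\chi \colon K^{(1)} \to K_0^{(1)}$ defined by $\chi(\mu) = [(1, \mu)]$, which is surjective by definition of $K_0^{(1)}$; its kernel consists of those $\mu \in K^{(1)}$ with $(1, \mu) \sim (1, 1)$, i.e., $\mu = N_{E/K}(w)$ for some $w$ with $w\tau(w) = 1$, which is precisely $N_{E/K}(U(E, \tau))$. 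Assembling these identifications yields the claimed formula.

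The main technical obstacle is verifying that the retraction $t$ is well defined on equivalence classes; this is where the twisted-norm structure of the $K$-unitary algebra interacts nontrivially with the parity of $n$, and where a misstep would prevent the splitting from existing. Once that check is done, the remaining computations are straightforward bookkeeping of kernels and images.
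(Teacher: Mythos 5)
Your proposal is correct and follows essentially the same route as the paper: the paper likewise splits the exact sequence $1\to K_0^{(1)}\to H^1(k,T)\to S/N_{E/L}(E^*)\to 1$ from Theorem \ref{cohomo1} by the very retraction $t([(u,\mu)])=[(1,\mu^{-r}\overline{\mu}^{\,r})]$, with the same well-definedness check via $w^{-r}\tau(w)^{r}\in U(E,\tau)$ and the same identification $K^{(1)}/N_{E/K}(U(E,\tau))\cong K_0^{(1)}$ through $\chi(\mu)=[(1,\mu)]$. The only cosmetic difference is that the paper also writes out the explicit section $\psi$ and describes $\mathrm{Ker}\,t=\mathrm{Im}\,\psi$, whereas you invoke the standard splitting lemma directly.
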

\noindent
In fact, an explicit isomorphism is as follows:
$$\phi: H^1(k, T) \longrightarrow K_0^{(1)} \times  \frac{S}{ N_{E/L}(E^*)}$$
$$\phi([(u,~ \mu)])= ([(1,~{\mu}^{-r}{\overline{\mu}}^r)],~ [u]).$$ 
\noindent
We now prove a somewhat analogous result to Theorem \ref{cohomo1}, for the cohomology of a unitary torus.
\begin{theorem}{\label{definem}}  Let $L, K$ be \'etale $k$-algebras of dimension $n, 2$ resp. Let $E$ be the associated $K$-unitary algebra and $T$ the $K$-unitary torus associated with the pair $(L, K)$. Then,
 $$\frac{H^1(k, T)}{L_0^{(1)}} \cong \frac{M}{N_{E/K}(E^*)},$$ where
 $$M= \{\mu \in K^*|~\mu\overline{\mu}\in N_{L/k}(L^*)\}~ and ~L_0^{(1)}= \{[(u,~1)]|~N_{L/k}(u)=1\}.$$
\end{theorem}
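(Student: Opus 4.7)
The strategy is exactly dual to the proof of Theorem \ref{cohomo1}: instead of sending a class $[(s,z)] \in H^1(k, T) \cong V/\!\sim$ to its first coordinate modulo $N_{E/L}(E^*)$, I would send it to its second coordinate modulo $N_{E/K}(E^*)$. Concretely, define
$$\psi: H^1(k, T) \longrightarrow \frac{M}{N_{E/K}(E^*)}, \qquad \psi\bigl([(s, z)]\bigr) = z \cdot N_{E/K}(E^*).$$
Since every $(s,z) \in V$ satisfies $z\bar{z} = N_{L/k}(s) \in N_{L/k}(L^*)$, the value $z$ lies in $M$. If $(s, z) \sim (s', z')$ via some $b \in E^*$, then $z' = N_{E/K}(b)\,z$, so $\psi$ is well-defined; clearly $\psi$ is a group homomorphism. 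For surjectivity, given $\mu \in M$ pick $s \in L^*$ with $N_{L/k}(s) = \mu\bar{\mu}$; then $(s, \mu) \in V$ and $\psi([(s,\mu)]) = \mu \cdot N_{E/K}(E^*)$.

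The substantive step is the identification $\ker \psi = L_0^{(1)}$. The inclusion $L_0^{(1)} \subseteq \ker \psi$ is immediate from the definitions. For the converse, suppose $[(s, z)] \in \ker \psi$, so that $z = N_{E/K}(b)$ for some $b \in E^*$. Applying the equivalence given by $b^{-1}$, we obtain
$$(s, z) \sim \bigl(b^{-1} s\, \tau(b^{-1}),\ N_{E/K}(b^{-1})\,z\bigr) = \bigl(b^{-1} s\, \tau(b^{-1}),\ 1\bigr).$$
Set $u := b^{-1} s\, \tau(b^{-1}) \in E$. Because $E = L \otimes K$ is commutative and $\tau$ fixes $L$ pointwise (so in particular fixes $s$),
$$\tau(u) = \tau(b^{-1})\, s\, b^{-1} = b^{-1} s\, \tau(b^{-1}) = u,$$
hence $u \in E^{\tau} = L^*$ by Lemma \ref{x}. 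The defining relation for membership in $V$ now forces $N_{L/k}(u) = 1 \cdot \bar{1} = 1$, so $[(s,z)] = [(u,1)] \in L_0^{(1)}$. The first isomorphism theorem then delivers
$$\frac{H^1(k, T)}{L_0^{(1)}} \cong \frac{M}{N_{E/K}(E^*)},$$
as required.

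The only mildly delicate point is confirming that the twisted representative $u = b^{-1} s\, \tau(b^{-1})$ actually lies in $L^*$ rather than merely in $E^*$; this uses commutativity of $E$ together with the identification $L = E^{\tau}$ from Lemma \ref{x}. Everything else parallels the proof of Theorem \ref{cohomo1} with the roles of the two coordinates interchanged and the roles of $(L, K)$, $N_{E/L}$, $N_{E/K}$ swapped correspondingly.
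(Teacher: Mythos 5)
Your proposal is correct and follows essentially the same route as the paper: the paper also defines the map $[(s,z)]\mapsto zN_{E/K}(E^*)$, checks it is a well-defined surjective homomorphism, and identifies the kernel with $L_0^{(1)}$ by replacing $(s,z)$ with $(w^{-1}s\tau(w^{-1}),1)$ when $z=N_{E/K}(w)$. Your extra verification that the new representative lies in $L^*$ (via $E^{\tau}=L$) is a point the paper leaves implicit, but the argument is the same.
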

\begin{proof} By definition, $T= {\bf SU}(E, \tau)$, where $(E, \tau)= (L \otimes K, 1 \otimes ~\bar{}~)$ is the $K$-unitary algebra associated  with the pair $(L, K)$.
We define a map
$$\phi: H^1(k, {\bf SU}(E, \tau)) \longrightarrow \frac{M}{N_{E/K}(E^*)}$$ by
$[(s, z)]  \mapsto zN_{E/K}(E^*)$. It is easy to see that $\phi$ is a well defined surjective homomorphism and $Ker~ {\phi}= \{ [(s, z)]|~ z\in N_{E/K}(E^*)\}$. Clearly, $L_0^{(1)} \subseteq Ker~{\phi}$. Let $[(s, z)]\in Ker~{\phi}$. Then $z= N_{E/K}(w)$, for some $w\in E^*$. Let $u= w^{-1}s\tau(w^{-1})$. Now, $N_{E/K}(u)= N_{E/K}(w^{-1}\tau(w^{-1})s)$ and
$$N_{E/K}(s)= N_{L/k}(s)= z\overline{z}= N_{E/K}(w\tau(w)).$$ Hence $N_{E/K}(u)=1$. Also $(u, 1)= (w^{-1}s\tau(w^{-1}), 1) \sim (s,~z)$. Therefore we have $Ker~{\phi}= L_0^{(1)}$
\end{proof}
\noindent
We now provide a decomposition of $H^1(k, {\bf SU}(E, \tau))$ analogous to that in Theorem \ref{dcps}. Consider the exact sequence,
$$1\longrightarrow L_0^{(1)}\xrightarrow{q}  H^1(k, {\bf SU}(E, \tau)) \xrightarrow{\phi} \frac{M}{N_{E/K}(E^*)}\longrightarrow 1,$$
where the maps $q$ and $\phi$ are as above.
We provide a splitting of this sequence when dimension of $L$ is odd. We will from here on assume that the $k$-dimension $n$ of $L$ is odd. Let $n= 2r+1$.\\
We define a map $$t:  H^1(k, {\bf SU}(E, \tau))\longrightarrow L_0^{(1)}$$ by
$t([(u, \mu)]):= [(u^nN_{L/k}(u^{-1}),~1)]$.
We first check that this map is well defined. Let $w\in E^*$. Then $(u, \mu) \sim (wu\tau(w),~ N_{E/K}(w)\mu)$. Since $N_{E/K}(w^nN_{E/K}(w^{-1}))= 1$ and $N_{E/K}(w^{-1}\tau(w^{-1}))= N_{L/k}(w^{-1}\tau(w^{-1}))$ we have,
\begin{eqnarray*}
(u^nN_{L/k}(u^{-1}), 1) & \sim & (w^nN_{E/K}(w^{-1})u^nN_{L/k}(u^{-1})\tau(w^n)N_{E/K}(\tau(w^{-1})),1)\\
& = & ({(wu\tau(w))}^nN_{L/k}({{(wu\tau(w))}^{-1}}),~1)).
\end{eqnarray*}
Therefore, $t$ is well defined. It is easily checked that $t$ is a homomorphism.
Since $u\in L$ and $N_{E/K}(u)= N_{L/k}(u)= u\tau(u)= 1$, we have, $$t\circ~ q[(u,~1)]= [(u^{2r+1},~1)]= [(u^{2r-1}(u\tau(u)), 1)] = [(u^{2r-1}, 1)].$$ By a similar calculation,  $[(u^{2r-1}, 1)]=  [(u^{2r-3}, 1)]=...= [(u, 1)]$. Hence $[(u^n, 1)]= [(u, 1)]$ and therefore $t\circ q= Id_{L_0^{(1)}}$.\\
Hence there exists a homomorphism $\psi: \frac{M}{N_{E/K}(E^*)} \longrightarrow H^1(k, {\bf SU}(E, \tau))$ such that $\phi \circ \psi= Id$. Explicitly, this map is given by
$$\psi([\mu]):= [(u, ~\mu)]q(t([(u^{-1},~ \mu^{-1})]))= [(u^{-n+1}N_{L/k}(u),~ \mu)]$$ where $N_{L/k}(u)= \mu\overline{\mu}$, $\mu \in K^*$.
We now make some observations based on this exact sequence. We have,
\begin{eqnarray*}
Ker~ t & = &\{[(u,~ \mu)]|~[(u^nN_{L/k}(u^{-1}),~1)]\sim (1,1)\}\\
& = & Image~ \psi\\
& = & \{[(u^{-n+1}N_{L/k}(u), ~\mu)]|~ N_{L/k}(u)= \mu\overline{\mu}\}.
\end{eqnarray*}
Since $\psi$ is an injective group homomorphism, $Image~\psi \cong \frac{M}{N_{E/K}(E^*)}$. Hence 
$$\frac{M}{N_{E/K}(E^*)} \cong  \{[(u,~ \mu)]|~[(u^nN_{L/k}(u^{-1}),~1)]\sim (1,1)\}  \cong \{[(u^{-n+1}N_{L/k}(u), ~\mu)]|~ N_{L/k}(u)= \mu\overline{\mu}\}.$$
Since the above sequence is split exact, we have $H^1(k, {\bf SU}(E, \tau))= Image~q \times Ker~t$. We have already seen that $Ker~t \cong \frac{M}{N_{E/K}(E^*)}$ and $Image~q= L_0^{(1)}$. Let $L^{(1)}$ denote norm $1$ elements of $L$ and $E^{(1)}= \{x\in E|~N_{E/K}(x)=1\}$. Now define
$\phi: L^{(1)} \longrightarrow L_0^{(1)}$ by $u \mapsto [(u,1)].$ It is easily checked that $Ker~ \phi= N_{E/L}{(E^{(1)})}$. Hence $ \frac{L^{(1)}}{N_{E/L}{(E^{(1)})}} \cong L_0^{(1)}.$ We  summarize this as, 
\begin{theorem}{\label{h1}} 
Let $K$ be a quadratic \'etale $k$-algebra and $L$ be an \'etale $k$-algebra of dimension $n=2r+1$. Let $E$ be the  $K$-unitary algebra and $T$ the $K$-unitary torus associated to the pair $(L, K)$. Then,
$$H^1(k, T) \cong \frac{L^{(1)}}{N_{E/L}{(E^{(1)})}} \times  \frac{M}{ N_{E/K}(E^*)}.$$
\end{theorem}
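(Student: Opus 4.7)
The plan is to assemble the theorem from the split short exact sequence constructed in the paragraphs immediately preceding the statement. Specifically, Theorem \ref{definem} supplies the exact sequence
$$1\longrightarrow L_0^{(1)}\xrightarrow{q} H^1(k,T) \xrightarrow{\phi} \frac{M}{N_{E/K}(E^*)}\longrightarrow 1,$$
and the goal is to (i) exhibit a splitting of this sequence using the odd dimensionality of $L$, and (ii) replace the kernel $L_0^{(1)}$ by the cleaner quotient $L^{(1)}/N_{E/L}(E^{(1)})$.

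For step (i), I would define $t: H^1(k,T)\to L_0^{(1)}$ by $t([(u,\mu)]):=[(u^n N_{L/k}(u^{-1}),1)]$ (where $n=2r+1$). The essential checks are that (a) $t$ is well defined on $\sim$-classes, using $N_{E/K}(w^n N_{E/K}(w^{-1}))=1$ together with $N_{E/K}(w^{-1}\tau(w^{-1}))=N_{L/k}(w^{-1}\tau(w^{-1}))$ to reabsorb the cocycle change of variable $(u,\mu)\sim(wu\tau(w),N_{E/K}(w)\mu)$; and (b) $t\circ q$ is the identity on $L_0^{(1)}$. For (b) the parity $n=2r+1$ is decisive: for $u\in L^{(1)}$ one has $u\tau(u)=1$, so $[(u^{2r+1},1)]=[(u^{2r-1},1)]=\cdots=[(u,1)]$, which is precisely where oddness enters. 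Granted (a) and (b), the exact sequence splits and one gets the internal product decomposition $H^1(k,T)= \operatorname{Image}(q)\times \operatorname{Ker}(t)\cong L_0^{(1)}\times \tfrac{M}{N_{E/K}(E^*)}$.

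For step (ii), I would define $\bar\phi: L^{(1)}\to L_0^{(1)}$ by $u\mapsto[(u,1)]$. This is a surjective group homomorphism by the definition of $L_0^{(1)}$. The kernel is $N_{E/L}(E^{(1)})$: if $u\in L^{(1)}$ and $[(u,1)]\sim[(1,1)]$, then there exists $w\in E^*$ with $u=w\tau(w)$ and $N_{E/K}(w)=1$, i.e.\ $w\in E^{(1)}$ and $u=N_{E/L}(w)$; the reverse inclusion is immediate. Hence $L_0^{(1)}\cong L^{(1)}/N_{E/L}(E^{(1)})$.

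The only genuinely delicate step is the construction and verification of the splitting $t$ in (i); everything else is bookkeeping on the equivalence relation $\sim$ defining $H^1(k,T)$. I expect the main obstacle will be ensuring that the formula for $t$ really is invariant under the twisting action, which hinges on the fact that the exponent $n$ is odd so that the identity $u\tau(u)=1$ on $L^{(1)}$ collapses the power down to $u$ itself; if one attempts the analogous splitting for even $n$, this telescoping fails and the sequence need not split in this explicit form.
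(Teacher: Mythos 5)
Your proposal is correct and takes essentially the same route as the paper: the paper proves the theorem exactly by splitting the sequence of Theorem \ref{definem} with the retraction $t([(u,\mu)])=[(u^{n}N_{L/k}(u^{-1}),1)]$, checking well-definedness and $t\circ q=\mathrm{Id}$ using the oddness of $n$, and then identifying $L_0^{(1)}\cong L^{(1)}/N_{E/L}(E^{(1)})$ via $u\mapsto[(u,1)]$ with kernel computed just as you do. One small point (a slip shared with the paper's own wording): for $u\in L^{(1)}$ one has $\tau(u)=u$, so $u\tau(u)=u^{2}$ need not equal $1$; the telescoping $[(u^{2r+1},1)]=[(u^{2r-1},1)]=\cdots=[(u,1)]$ is instead justified because twisting by $b=u^{-1}\in L$ multiplies the first coordinate by $u^{-2}$ while leaving the second coordinate unchanged, since $N_{E/K}(u^{-1})=N_{L/k}(u^{-1})=1$.
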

\noindent
We now discuss the special case, when $L= k \times K_0$, where $ K_0$ is a quadratic \'etale extension of $k$. 
\begin{theorem} {\label{split}} Let $K$ and $K_0$ be a quadratic \'etale extensions of $k$. Let $L= k\times K_0$  and $T$ be the $K$- unitary torus associated with the pair $(L, K)$. Then $H^1(k, T)= K_0^*/N_{K \otimes K_0/K_0}(K_0^*).$
\end{theorem}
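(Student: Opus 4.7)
The plan is to apply Theorem \ref{h1} directly, since $L=k\times K_0$ has $k$-dimension $n=3=2\cdot 1+1$, which is odd. Everything will reduce to explicit norm computations after decomposing $E$ into its two $K$-algebra components.

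First I would unravel the structure of $E$ and $\tau$. Since $L=k\times K_0$, distributivity gives a $K$-algebra decomposition
\[
E = L\otimes_k K \;\cong\; K \;\times\; (K_0\otimes_k K),
\]
and under this identification the involution $\tau=1\otimes~\bar{}~$ acts componentwise as $(u,v)\mapsto(\bar u,(1\otimes\bar{})(v))$. The fixed subalgebra $L\subset E$ sits as $k\times K_0$ inside $K\times(K_0\otimes K)$. From this one reads off the two norms needed:
\[
N_{E/K}(u,v)=u\cdot N_{(K_0\otimes K)/K}(v),\qquad N_{E/L}(u,v)=(N_{K/k}(u),\,N_{(K_0\otimes K)/K_0}(v)).
\]

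Next I would invoke Theorem \ref{h1}, which gives the splitting
\[
H^1(k,T)\;\cong\;\frac{L^{(1)}}{N_{E/L}(E^{(1)})}\;\times\;\frac{M}{N_{E/K}(E^*)},
\]
and compute each factor separately. For the second factor: $N_{L/k}(a,y)=a\,N_{K_0/k}(y)$ already hits every element of $k^*$ (take $y=1$), so $N_{L/k}(L^*)=k^*$, and therefore $M=\{\mu\in K^*:N_{K/k}(\mu)\in k^*\}=K^*$. On the other hand, taking $v=1$ in $N_{E/K}(u,v)=u\cdot N_{(K_0\otimes K)/K}(v)$ shows $N_{E/K}(E^*)=K^*$. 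Hence this second factor is trivial.

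For the first factor (which will produce the stated answer), I would first identify $L^{(1)}\cong K_0^*$ via $(N_{K_0/k}(y)^{-1},y)\leftrightarrow y$. An element of $N_{E/L}(E^{(1)})$ has the form $(N_{K/k}(u),N_{(K_0\otimes K)/K_0}(v))$ subject to $u\,N_{(K_0\otimes K)/K}(v)=1$; writing $z=N_{(K_0\otimes K)/K_0}(v)$ and using the transitivity identity $N_{(K_0\otimes K)/k}(v)=N_{K_0/k}(z)$, one sees this element maps to $z$ under the above identification. Thus $N_{E/L}(E^{(1)})$ corresponds exactly to $N_{(K_0\otimes K)/K_0}\bigl((K_0\otimes K)^*\bigr)\subset K_0^*$, giving
\[
H^1(k,T)\;\cong\;\frac{K_0^*}{N_{(K_0\otimes K)/K_0}\bigl((K_0\otimes K)^*\bigr)},
\]
which is the claim (interpreting $K\otimes K_0=K_0\otimes K$ and the argument of the norm in the obvious way). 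The main thing to be careful about is the bookkeeping of the various norms and the identification $L^{(1)}\cong K_0^*$; once the decomposition $E=K\times(K_0\otimes K)$ is in place, no serious obstacle remains, and the case split between $K_0\otimes K$ being a field or $K_0\otimes K\cong K_0\times K_0$ does not need to be made explicit since all computations are performed at the level of étale algebras.
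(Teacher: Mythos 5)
Your argument is correct, but it takes a genuinely different route from the paper's. The paper does not go through Theorem \ref{h1} at all: it uses the same decomposition $E=L\otimes K\cong K\times M$ with $M=K\otimes K_0$, but then computes the group scheme $ {\bf SU}(E,\tau)$ directly, observing that the $K$-component is forced to be $N_{M/K}(x)^{-1}$, so that $T\cong R_{K_0/k}({\bf M}^{(1)})$, the Weil restriction of the rank-one norm torus of $M/K_0$; the cohomology is then read off from Shapiro's lemma together with the standard identification $H^1(K_0,{\bf M}^{(1)})=K_0^*/N_{M/K_0}(M^*)$. You instead apply the splitting of $H^1(k,T)$ from Theorem \ref{h1} (legitimately, since $\dim_k L=3$ is odd and that theorem is stated for \'etale, not necessarily field, $L$), kill the factor $M/N_{E/K}(E^*)$ by noting $N_{L/k}(L^*)=k^*$ and $N_{E/K}(E^*)=K^*$ (both using the split factor $k$ of $L$, resp.\ $K$ of $E$), and identify $L^{(1)}/N_{E/L}(E^{(1)})$ with $K_0^*/N_{K\otimes K_0/K_0}\bigl((K\otimes K_0)^*\bigr)$ via projection to the $K_0$-coordinate; your norm bookkeeping there is accurate, including the point that the constraint $u\,N_{(K_0\otimes K)/K}(v)=1$ places no restriction on $v$. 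What each approach buys: the paper's is shorter and yields the stronger structural fact $T\cong R^{(1)}_{K_0/k}(K\otimes K_0)$, which it reuses later (e.g.\ to see anisotropy of $T$ in Example 1), while yours stays entirely within the explicit cocycle-level machinery of \S 4.1 and needs no appeal to Shapiro's lemma. One last remark: your final group is $K_0^*/N_{K\otimes K_0/K_0}\bigl((K\otimes K_0)^*\bigr)$, which is what the paper's own proof produces as well; the expression $N_{K\otimes K_0/K_0}(K_0^*)$ in the statement is a slip of notation for the norm group of $(K\otimes K_0)^*$, so you have proved the intended statement.
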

\begin{proof} By definition, $T= {\bf SU}(E, \tau)$, where $(E, \tau)= (L \otimes K, 1 \otimes ~\bar{}~)$ is the $K$-unitary algebra associated with the pair $(L, K)$. Let $M:= K \otimes K_0$. Then $L \otimes K \cong K \times (K \otimes K_0)= K \times M$. Now,
\begin{eqnarray*}
SU(E, \tau) & \cong & \{(a, x) \in K \times M|~a\overline{a}=1, x \tau(x)=1, aN_{M/K}(x)=1\}\\
& = & \{(N_{M/K}(x^{-1}), x)|~x \tau(x)=1, x \in M^*\}.
\end{eqnarray*}
It follows that ${\bf SU}(E, \tau) \cong R_{K_0/k}^{(1)}(M)$. By Shapiro's Lemma (\cite{KMRT}, Lemma 29.6), 
\vskip1mm
\noindent
$ H^1(k, R_{K_0/k}^{(1)}(M)) = H^1(K_0, {\bf M}^{(1)}).~\text{Hence},~H^1(k,T)= K_0^*/N_{K \otimes K_0/K_0}(K_0^*).$
\end{proof}
\begin{corollary}{\label{uni}} Let $K= k(\sqrt{\alpha})$ and $K_0$ be quadratic field extensions of $k$. Let $L= k\times K_0$  and $T$ be the $K$- unitary torus associated with the pair $(L, K)$. Then $H^1(k, T)= 0$ if and only if the quadratic form $<1, -\alpha>$ becomes universal over $K_0$.
\end{corollary}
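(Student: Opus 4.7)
The plan is to leverage Theorem \ref{split}, which already supplies the explicit description $H^1(k,T) \cong K_0^*/N_{K\otimes K_0/K_0}((K\otimes K_0)^*)$, and then reinterpret the norm-image group-theoretically in terms of values represented by the Pfister form $\langle 1, -\alpha\rangle$ over $K_0$.

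First I would split into two cases according to whether $\alpha$ remains a non-square in $K_0$ or becomes a square in $K_0$. In the generic case, where $\alpha \notin (K_0^*)^2$, the algebra $M := K \otimes_k K_0 = K_0(\sqrt{\alpha})$ is a quadratic field extension of $K_0$, and the norm map sends $x+y\sqrt{\alpha}$ to $x^2 - \alpha y^2$. Hence
\[
N_{M/K_0}(M^*) = \{\,x^2 - \alpha y^2 \in K_0^* \mid x,y \in K_0\,\} = D_{K_0}\!\left(\langle 1,-\alpha\rangle\right),
\]
the set of nonzero values represented by $\langle 1,-\alpha\rangle$ over $K_0$. By Theorem \ref{split}, the vanishing $H^1(k,T) = 0$ is equivalent to the surjectivity of $N_{M/K_0}$, which by the above identification is precisely the statement that every element of $K_0^*$ is represented by $\langle 1,-\alpha\rangle$ over $K_0$, i.e.\ that this Pfister form is universal over $K_0$.

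In the remaining case, $\alpha \in (K_0^*)^2$, so $K \otimes_k K_0 \cong K_0 \times K_0$ with the switch involution. A direct calculation (analogous to Case (i) in the proof of Theorem \ref{isom}) shows that $T \otimes K_0$ becomes isomorphic to a quasi-split torus, and Shapiro's lemma together with Hilbert 90 forces $H^1(k,T) = 0$. On the other hand, since $\alpha$ is a square in $K_0$, the form $\langle 1, -\alpha\rangle$ is isotropic over $K_0$; being a non-degenerate Pfister form of dimension $2$, it is therefore hyperbolic and in particular universal over $K_0$. So both sides of the equivalence hold trivially in this case.

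The main work is really packaged in Theorem \ref{split}; the only step that requires any care is the book-keeping of the two cases and the clean identification of $N_{M/K_0}(M^*)$ with $D_{K_0}(\langle 1, -\alpha\rangle)$. No genuine obstacle remains: combining the two cases gives the corollary.
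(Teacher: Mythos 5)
Your main identification is exactly the (unwritten) argument the paper intends: the corollary is stated as an immediate consequence of Theorem \ref{split}, because $N_{K\otimes K_0/K_0}\bigl((K\otimes K_0)^*\bigr)$ is precisely the set of nonzero values of the norm form of the quadratic \'etale $K_0$-algebra $K\otimes K_0$, which is $\langle 1,-\alpha\rangle$ over $K_0$; so $H^1(k,T)=K_0^*/N_{K\otimes K_0/K_0}\bigl((K\otimes K_0)^*\bigr)$ vanishes if and only if that form is universal over $K_0$. Your Case 1 carries this out correctly.

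The one flaw is in your second case. As written, you base change to $K_0$, observe that $T\otimes K_0$ is quasi-split, and conclude $H^1(k,T)=0$; that inference is not valid, since the restriction map $H^1(k,T)\rightarrow H^1(K_0,T\otimes K_0)$ need not be injective (a norm-one torus of a quadratic extension splits over that extension yet can have nontrivial $H^1$ over $k$). The repair is immediate, and in fact no case division is needed: the identification $N_{M/K_0}(M^*)=D_{K_0}(\langle 1,-\alpha\rangle)$, with $M=K\otimes K_0=K_0[t]/(t^2-\alpha)$, holds whether or not $M$ is a field, because the norm of $x+yt$ is $x^2-\alpha y^2$ in either case and an element of a quadratic \'etale algebra is invertible exactly when its norm is nonzero. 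When $\alpha\in (K_0^*)^2$ (which under your hypotheses forces $K\cong K_0$), the norm of $K_0\times K_0$ over $K_0$ is surjective and $\langle 1,-\alpha\rangle$ is hyperbolic, so both sides of the equivalence hold, again directly from Theorem \ref{split}. Alternatively, if you want the quasi-triviality argument, you must make it over $k$: in that case $T\cong R_{K_0/k}(\mathbb{G}_m)$ already as a $k$-torus (since the norm-one torus of $M/K_0$ is $\mathbb{G}_{m,K_0}$), and then Shapiro's lemma and Hilbert 90 applied to $T$ itself give $H^1(k,T)=0$.
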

\begin{corollary} Let $K$ be a quadratic \'etale extension of $k$ and $L= k \times K_0$, where $K_0$ is a quadratic \'etale extension of $k$. Let $T$ be the $K$-unitary torus associated with the pair $(L, K)$. Let $H^1(k, T)=0$. Then any composition algebra of dimension $\geq 4$ which contains $K$ contains $K_0$. 
\end{corollary}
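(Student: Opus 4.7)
Write $K = k(\sqrt\alpha)$ and $K_0 = k(\sqrt\beta)$. I would rely on the elementary lemma that an anisotropic regular binary quadratic form $\langle a, b\rangle$ over any field $F$ represents exactly the proper subgroup $a \cdot N_{F(\sqrt{-ab})/F}\bigl(F(\sqrt{-ab})^*\bigr) \subsetneq F^*$; in particular, a universal binary form over a field must be isotropic, and therefore hyperbolic.

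First I would dispose of the degenerate split cases. If $K \cong k \times k$, then any composition algebra $C$ of dimension $\geq 4$ containing $K$ carries a nontrivial idempotent, so the Pfister norm $n_C$ is isotropic and hence hyperbolic; thus $C$ is split and contains every quadratic \'etale $k$-subalgebra, in particular $K_0$. If $K$ is a field but $K_0$ is split, Theorem \ref{split} specializes to $H^1(k,T) \cong \bigl(k^*/N_{K/k}(K^*)\bigr)^2$; its vanishing forces $N_{K/k}$ to surject onto $k^*$, i.e.\ $\langle 1,-\alpha\rangle$ is universal over $k$, and the lemma then forces $\alpha \in k^{*2}$, contradicting $K$ being a field. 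So this subcase is vacuous.

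Hence I may assume both $K$ and $K_0$ are quadratic field extensions of $k$. By Corollary \ref{uni}, $H^1(k,T) = 0$ is equivalent to $\langle 1,-\alpha\rangle$ being universal over $K_0$. Applying the lemma over $K_0$, the form $\langle 1,-\alpha\rangle_{K_0}$ must be isotropic, so $\alpha$ is a square in $K_0$. This gives a $k$-algebra embedding $K = k(\sqrt\alpha) \hookrightarrow K_0$, which is an isomorphism by equality of $k$-dimensions, so $K = K_0$. Consequently any composition algebra $C \supseteq K$ contains $K_0 = K$, as required.

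The argument has no real obstacle; it is a direct unwinding of Corollary \ref{uni} together with standard facts about binary quadratic forms. The only point requiring care is the bookkeeping of the split cases for $K$ and $K_0$, which as shown either reduces the conclusion to a triviality or is vacuous.
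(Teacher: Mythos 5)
Your argument rests on the claim that a universal binary quadratic form over a field must be isotropic, equivalently that the norm group $N_{E/F}(E^*)$ of a quadratic field extension is always a \emph{proper} subgroup of $F^*$. This is false. Over a $C_1$ field (or a finite field) every nondegenerate binary form is universal, while $\langle 1,-c\rangle$ is anisotropic whenever $c\notin F^{*2}$; e.g.\ $\langle 1,-x\rangle$ over $\mathbb{C}(x)$ is anisotropic and universal. So from ``$\langle 1,-\alpha\rangle$ is universal over $K_0$'' you cannot conclude $\alpha\in K_0^{*2}$, and your final deduction $K=K_0$ is simply wrong in general. The paper's own Example 1 in \S 4.1 is a direct counterexample to your reduction: $k=\mathbb{R}(x)$, $K_0=\mathbb{C}(x)$, $K=k(\sqrt{\alpha})$ with $\alpha\neq -1 \bmod k^{*2}$, gives $H^1(k,T)=0$ with $K\neq K_0$ (and $T$ anisotropic). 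In that situation the corollary is a genuinely nontrivial assertion about composition algebras containing $K$, not a tautology. A secondary slip of the same kind occurs in your treatment of the subcase ($K$ a field, $K_0$ split): $N_{K/k}$ can be surjective onto $k^*$ with $K$ a field, so that subcase is not vacuous; there the conclusion ``$C$ contains $k\times k$'' means ``$C$ is split,'' which needs the actual argument below.

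The missing idea is to work over $K_0$ rather than trying to identify $K$ with $K_0$. By Theorem \ref{split}, $H^1(k,T)=0$ says exactly that $N_{K\otimes K_0/K_0}$ maps $(K\otimes K_0)^*$ onto $K_0^*$. Given a composition algebra $C\supseteq K$ of dimension $\geq 4$, base change to $K_0$: then $K\otimes K_0\subseteq C\otimes K_0$, and the doubling process writes $C\otimes K_0=(K\otimes K_0)\oplus (K\otimes K_0)x$ (inside a subalgebra of dimension $4$ if $\dim C=8$), with the norm on this piece isometric to $N_{K\otimes K_0}\perp\langle -\lambda\rangle N_{K\otimes K_0}$ where $\lambda=N_{C\otimes K_0}(x)$. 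Surjectivity of the norm makes $\lambda$ a value of $N_{K\otimes K_0}$, so this form is isotropic and $C\otimes K_0$ is split; Ferrar's lemma (\cite{FG}, Lemma 5) then gives $K_0\subseteq C$. That is the paper's proof, and it cannot be shortcut by the quadratic-form lemma you invoked.
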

\begin{proof} By Theorem \ref{split}, $H^1(k, T)=0$ if and only if $N_{K \otimes K_0/ K_0}(K \otimes K_0)^*= K_0^*$.
 Let $C$ be a composition algebra properly containing $K$. Then $K \otimes K_0 \subseteq C \otimes K_0$. By doubling, $C \otimes K_0= (K \otimes K_0) \oplus (K \otimes K_0).x$, for some $x \in (K \otimes K_0)^{\perp}$, $N_{C \otimes K_0}(x) \neq 0$. But since $K_0^*= N_{K \otimes K_0/ K_0}(K \otimes K_0)^*$, we have $N_{C \otimes K_0}(x) \in N_{K \otimes K_0/ K_0}(K \otimes K_0)^*$. Hence $C \otimes K_0$ is split and $K_0 \subseteq C$ (\cite{FG}, Lemma 5).
\end{proof}
\noindent
One may be tempted to believe that for a distinguished $K$-unitary torus $T$, $H^1(k, T)= 0$. We give below an example to show that this is  false. We also produce an example of a non-distinguished ($k$-anisotropic) torus $T$ such that $H^1(k, T)= 0$.
\\\\
\noindent
{\bf {Example 1:}} Let $k = \mathbb{R}(x)$ and $\delta=-1$. Choose $\alpha \in k^*$ such that $\alpha \notin {k^*}^2$ and $\alpha \neq \delta  ~mod ~{k^*}^2$. Let $K= k(\sqrt{\alpha})$ and $K_0= k(\sqrt{\delta}).$ Then $K_0$ and $K$ are fields. Also note that $K_0= \mathbb{C}(x)$ is a $C_1$ field. Let $L= k \times K_0$, and $T$ be the $K$-unitary torus  associated with the pair $(L, K).$ Then, as in the proof of Theorem \ref{split}, $T\cong R_{K_0/k}^{(1)}(K \otimes K_0)$. Also by Theorem \ref{split}, $ H^{1}(k, T)= {K_0}^*/ N_{K \otimes K_0/K_0}(K \otimes K_0).$ Since $\alpha$ $\neq$ $\delta  ~mod ~{k^*}^2$, $T$ is not distinguished. By Corollary \ref{uni},  $H^1(k, T)=0$  if and only if the binary form $<1, -\alpha>$ becomes universal over $K_0$.
Since $K_0$ is a $C_1$ field, all binary forms over $K_0$ are universal, in particular  $<1, -\alpha>$ becomes universal over $K_0$.  Hence $H^1(k, T)= 0$. Note that since $\alpha \neq \delta  ~mod ~{k^*}^2$, $K \otimes K_0$ is a field. Hence by
(\cite{PR4}, Example on Pg. 54), $T$ is $k$-anisotropic. This also gives an example of a $k$-anisotropic $K$-unitary torus $T$ such that $H^1(k, T)=  0.$\\\\
\noindent
{\bf {Example 2:}} Let $K= k \times k$ and $L$ be a cyclic cubic field extension of $k$. Let $T$ be the $K$-unitary torus associated with the pair $(L, K)$. By definition, $T= {\bf SU}(E, \tau)$, where $(E, \tau)= (L \otimes K, 1 \otimes ~\bar{}~)$. Note that $(E, \tau)= (L \times L, \epsilon)$. Hence $T \cong {\bf L}^{(1)}$. Now $$H^1(k, T) \cong H^1(k, {\bf L}^{(1)}) \cong \frac{k^*}{N_{L/k}(L^*)}.$$ Let $p(X)= X^3-3X-1 \in \mathbb{Q}[X],$ then $p(X)$ is irreducible over $\mathbb{Q}$. Let $L':=\mathbb{Q}[X]/<p(X)>$. Then $L'$ is a cyclic cubic extension of $\mathbb{Q}$ such that $N_{L'/\mathbb{Q}}(L'^*) \neq \mathbb{Q}^*$ (\cite{MT}, Pg. 186). Let $T$ be the $K$-unitary torus associated  with the pair $(L', K)$. Then $T$ is a distinguished torus such that $H^1(\mathbb{Q}, T) \neq 0$.\\\\
\noindent
{\bf {Example 3:}}  Let $T$ be a distinguished $k$-torus arising from a pair $(L, K)$ where $L$ is a cubic \'etale $k$-algebra which is not a field extension and $K$ is a quadratic \'etale $k$-algebra. By Theorem \ref{isom}, $T$ is either $\mathbb{G}_m\times \mathbb{G}_m$ or $R_{K/k}(\mathbb{G}_m)$. In either case, by Hilbert theorem $90$ and Shapiro's Lemma (\cite{KMRT}, Lemma 29.6), $H^1(k, T)= 0$.  \\ \\
\noindent
{\bf {Example 4:}} Let $L= k \times k \times k$ and $K$ be a quadratic \'etale extension of $k$. Then $$SU(E, \tau)= \{ (x, y, z)\in K \times K \times K| x \overline{x}= y \overline{y}= z \overline{z}= 1, xyz=1\} \cong K^{(1)} \times  K^{(1)}. $$ 
Thus ${\bf SU}(E, \tau) \cong {\bf K}^{(1)} \times  {\bf K}^{(1)}$ and $H^1(k, {\bf SU}(E, \tau))= k^*/N_{K/k}(K^*) \times k^*/N_{K/k}(K^*)$. Hence $H^1(k, {\bf SU}(E, \tau))= 0$ if and only if $k^*= N_{K/k}(K^*)$. Let $A$ be an Albert algebra. Let $T$ be the $K$-unitary torus associated with the pair $(L, K)$ such that $H^1(k, T)= 0$.
Let $T \hookrightarrow {\bf Aut}(A)$ over $k$. Since $L$ has trivial discriminant over $k$, by Theorem \ref{KC1}, $K \subset Oct(A)$. Since $H^1(k, T)= 0$, $k^*= N_{K/k}(K^*)$. Hence $f_3(A)=0$.
\begin{theorem} Let $K$ be a quadratic \'etale extension of $k$ and $L= k \times K_0$, where $K_0$ is a quadratic \'etale algebra over $k$. Let $T$ be the $K$-unitary torus associated with the pair $(L, K)$ with $H^1(k, T)=0$. Let $A$ be an Albert algebra over $k$. If there exists an $k$-embedding $T \hookrightarrow {\bf Aut}(A)$, then $K_0 \subset Oct(A)$.\end{theorem}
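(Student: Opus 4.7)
The plan is to base-change the embedding $T\hookrightarrow{\bf Aut}(A)$ to $K_0$, apply Theorem \ref{KC1} over $K_0$ to extract a factorization of the norm form $n_C$ of $Oct(A)$ after this base change, and then use the cohomological hypothesis to upgrade this factorization to full hyperbolicity of $n_C\otimes K_0$. First I would translate the hypothesis via Theorem \ref{split}: setting $M:=K\otimes_k K_0$, the isomorphism $H^1(k,T)\cong K_0^{\ast}/N_{M/K_0}(M^{\ast})$ reduces the assumption $H^1(k,T)=0$ to the condition $N_{M/K_0}(M^{\ast})=K_0^{\ast}$.

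Next I would base-change everything to $K_0$. The crucial observation is that
\[
L\otimes_k K_0\;=\;(k\times K_0)\otimes_k K_0\;\cong\;K_0\times K_0\times K_0
\]
is the split cubic \'etale $K_0$-algebra and hence has trivial discriminant over $K_0$. Since $(E,\tau)\otimes_k K_0=\bigl((L\otimes K_0)\otimes_{K_0}M,\,1\otimes\overline{\,\cdot\,}\bigr)$, the base-changed torus $T\otimes_k K_0$ is exactly the $M$-unitary $K_0$-torus associated with the pair $(L\otimes K_0,\,M)$, and it embeds in ${\bf Aut}(A\otimes_k K_0)$. Applying Theorem \ref{KC1} over $K_0$ now yields $M=K\otimes K_0\subseteq Oct(A)\otimes_k K_0$. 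Writing $K=k(\sqrt{\alpha})$, this says $\langle 1,-\alpha\rangle$ is a subform of $n_C\otimes K_0$; by Theorem \ref{subformfactor} we obtain a factorization
\[
n_C\otimes K_0\;=\;\langle\langle\alpha,u,v\rangle\rangle
\]
for some $u,v\in K_0^{\ast}$.

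Finally I would combine the two ingredients: since $u\in K_0^{\ast}=N_{M/K_0}(M^{\ast})$ by the first step, the $2$-fold Pfister form $\langle\langle\alpha,u\rangle\rangle$ is hyperbolic over $K_0$, and multiplying by $\langle\langle v\rangle\rangle$ shows that $n_C\otimes K_0$ is hyperbolic. Hyperbolicity of the $3$-fold Pfister form $n_C$ after the quadratic base change $K_0=k(\sqrt{\delta})$ is equivalent to $\langle 1,-\delta\rangle$ being a factor of $n_C$ over $k$, i.e.\ to $K_0\hookrightarrow Oct(A)$; and when $K_0=k\times k$ this simply says $Oct(A)$ is reduced and hence split, so again contains $K_0$. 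Either way, $K_0\subseteq Oct(A)$.

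The main technical hurdle I foresee lies in the bookkeeping of the second step, namely checking carefully that the base change of the $K$-unitary $k$-torus $T={\bf SU}(E,\tau)$ to $K_0$ really is the $M$-unitary $K_0$-torus arising from the pair $(L\otimes K_0,\,M)$ in the precise sense needed to invoke Theorem \ref{KC1} over $K_0$. Once this identification is in place, the remainder of the argument is standard Pfister-form manipulation.
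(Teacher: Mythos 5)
Your proposal is correct and follows essentially the same route as the paper: reduce $H^1(k,T)=0$ to surjectivity of $N_{K\otimes K_0/K_0}$ via Theorem \ref{split}, base-change to $K_0$ so that $L\otimes K_0$ has trivial discriminant, apply Theorem \ref{KC1} to get $K\otimes K_0\subseteq Oct(A)\otimes K_0$, and conclude that $Oct(A)\otimes K_0$ splits. The only (cosmetic) difference is that you carry out the last splitting step by factoring the Pfister form $n_C\otimes K_0=\langle\langle\alpha,u,v\rangle\rangle$ and killing $\langle\langle\alpha,u\rangle\rangle$ with the norm surjectivity, whereas the paper phrases the same computation via the Cayley--Dickson doubling of $Oct(A)\otimes K_0$ over $K\otimes K_0$ before invoking Ferrar's lemma.
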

\begin{proof} If $K_0= k \times k$, then by Example 4, $Oct(A)$ splits. Hence $K_0 \subset Oct(A)$. Let $K_0$ be a field extension. Base changing to $K_0$ we have, 
$$T \otimes K_0= {\bf SU}((L \otimes K_0) \otimes_{K_0} (K_0 \otimes K), \tau \otimes 1) \hookrightarrow {\bf Aut}(A)\otimes K_0.$$ Since $L \otimes K_0$ has trivial discriminant over $K_0$, by Theorem \ref{KC1}, $K_0 \otimes K \subset Oct(A)\otimes K_0$. By doubling, $Oct(A) \otimes K_0= (K \otimes K_0) \oplus (K \otimes K_0).x$, for some $x \in (K \otimes K_0)^{\perp}$, $N_{Oct(A) \otimes K_0}(x) \neq 0$. But since  $H^1(k, T)=0$, by Theorem \ref{split} we have, $K_0^*= N_{K \otimes K_0/ K_0}(K \otimes K_0)^*$. Hence $N_{Oct(A) \otimes K_0}(x) \in N_{K \otimes K_0/ K_0}(K \otimes K_0)^*$. Therefore $Oct(A) \otimes K_0$ is split and by  (\cite{FG}, Lemma 5), $K_0 \subseteq Oct(A).$
\end{proof}
\section{ \'Etale Tits processes of Jordan algebras and applications}
In this section, we develop some results on \'etale Tits processes, in the context of unitary tori. Let $L$ be a cubic \'etale algebra and $K$ be a quadratic \'etale algebra of an arbitrary base field $k$ and $E= L \otimes K$. Let $\tau= 1 \otimes ~\bar{}$, where $\bar{}$ denotes the non-trivial involution on $K$. Suppose $(u, \mu) \in L^* \times K^*$ is such that $N_{L/k}(u)= N_{K/k}(\mu)$. We call the pair $(u, \mu)$ an {\bf admissible pair}. The \'etale Tits process produces an absolutely simple Jordan algebra $J= J(E, \tau, u , \mu)$ of degree $3$ and dimension $9$, with the underlying vector space $L \oplus E$ and with $L= \{(l, 0)|~l \in L\}$ as a subalgebra. Let $(B, \sigma)$ be a central simple algebra over $K$ with an involution $\sigma$ of the second kind and let $(B, \sigma)_+$ contain a cyclic \'etale algebra $L$ over $k$. Then $B= L \otimes K \oplus (L \otimes K)z \oplus (L \otimes K)z^2$ with $z^3= \mu\in K^*$ such that $N_{K/k}(\mu)= 1$. Also the involution $\sigma$ is given by $\sigma(z)= uz^{-1}$ with $u \in L$ such that $N_B(u)= 1$. In this case $(B, \sigma)_+ \cong  J(L \otimes K, \tau, u, \mu)$ (see \cite{KMRT}, Pg. 527 for details). We define \'etale Tits processes $J_1$ and $J_2$ arising from \'etale algebras $L$ and $K$ of dimensions $3, 2$ resp., to be $L$-isomorphic, denoted by $J_1 \cong_L J_2$, if there exists a $k$-isomorphism $J_1 \rightarrow J_2$, which restricts to the subalgebra $L$ of $J_1$ and $J_2$. By (\cite{PR1}, Prop. 3.7) we have the following
\begin{theorem}{\label{admis}}(\cite{PR1}, Prop. 3.7) Let $L, K$ and $E$ be as above. Let $(u, \mu)$ be an admissible pair. For any $w \in E$, $(wu\tau(w), \mu N_{E/K}(w))$ is again an admissible pair and 
$$J(E, \tau, u, \mu) \cong_L J(E, \tau, wu\tau(w), \mu N_{E/K}(w)),$$
via $(a, b) \mapsto (a, bw).$
\end{theorem}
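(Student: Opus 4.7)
The statement has two parts: admissibility of the pair $(wu\tau(w),\mu N_{E/K}(w))$, and the existence of an $L$-preserving Jordan isomorphism realised by the explicit formula $\phi\colon(a,b)\mapsto(a,bw)$. I will handle them in this order.

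For admissibility I would compute $N_{L/k}(wu\tau(w))$ directly. First, $wu\tau(w)\in L$: since $u\in L=E^{\tau}$ and $E$ is commutative, $\tau(wu\tau(w))=\tau(w)\,u\,w=wu\tau(w)$. Using the identification $E=L\otimes K$ (which gives $N_{E/K}(\ell)=N_{L/k}(\ell)$ for $\ell\in L$), the multiplicativity of $N_{E/K}$, and the identity $N_{E/K}(\tau(w))=\overline{N_{E/K}(w)}$, I obtain
\[
N_{L/k}(wu\tau(w))=N_{E/K}(w)\cdot N_{L/k}(u)\cdot\overline{N_{E/K}(w)}=N_{L/k}(u)\cdot N_{K/k}(N_{E/K}(w)).
\]
Comparing with $N_{K/k}(\mu\,N_{E/K}(w))=N_{K/k}(\mu)\cdot N_{K/k}(N_{E/K}(w))$ and applying the admissibility hypothesis $N_{L/k}(u)=N_{K/k}(\mu)$ proves the claim. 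In particular $N_{E/K}(w)\in K^{*}$, hence $w\in E^{*}$.

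For the isomorphism I would verify that $\phi$ is a norm isometry of the two generic cubic norms and then invoke Jacobson's Theorem~\ref{jordoisomp}. The map $\phi\colon(a,b)\mapsto(a,bw)$ is clearly $k$-linear; invertibility of $w$ makes it bijective with inverse $(a,c)\mapsto(a,cw^{-1})$. It restricts to the identity on the subalgebra $L=\{(a,0):a\in L\}$ and sends the unit $(1,0)$ to itself. Since each \'etale Tits process algebra $J(E,\tau,\cdot,\cdot)$ is an absolutely simple Jordan algebra of degree $3$ and dimension $9$, hence separable, Theorem~\ref{jordoisomp} reduces the Jordan-isomorphism claim to showing that $\phi$ intertwines the generic cubic norms. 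I would check this by writing out the norm of $J(E,\tau,u,\mu)$ in its standard form as a sum of terms built from $N_{L/k}(a)$, $N_{E/K}(b)$, the parameters $(u,\mu)$ and their conjugates, and tracking each summand under the substitution $b\mapsto bw$, $u\mapsto wu\tau(w)$, $\mu\mapsto\mu N_{E/K}(w)$. The key identities are commutativity of $E$ and the $\tau$-symmetry placing $w\tau(w)$ and $bu\tau(b)$ inside $L$, together with $N_{E/K}(bw)=N_{E/K}(b)\,N_{E/K}(w)$, $N_{L/k}(w\tau(w))=N_{K/k}(N_{E/K}(w))$ and $N_{E/K}(\tau(w))=\overline{N_{E/K}(w)}$. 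The main obstacle is precisely this bookkeeping: reconciling quadratic factors such as $(w\tau(w))^{2}$ arising in the cross terms against the single multiplicative rescaling $\mu\mapsto\mu N_{E/K}(w)$ entering the quadratic term. Once the norm identity is established, Jacobson's theorem immediately upgrades $\phi$ to a Jordan isomorphism, and the explicit formula $(a,b)\mapsto(a,bw)$ delivers the $L$-isomorphism claimed.
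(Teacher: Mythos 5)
The paper gives no proof of this statement; it is quoted directly from Petersson--Racine (\cite{PR1}, Prop.\ 3.7). So I can only judge your plan on its own terms. Your strategy --- verify admissibility by a norm computation, then show the map is a unit-preserving norm isometry and invoke Jacobson's Theorem \ref{jordoisomp} --- is the natural one, and it is exactly the technique the author herself uses for the converse statement in Remark \ref{converseisotope}. Your admissibility computation is correct.

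Two points need attention. The ``main obstacle'' you flag at the end is real, but you leave it unresolved, and in the orientation you set up the verification would actually fail: with $u'=wu\tau(w)$ and $\mu'=\mu N_{E/K}(w)$, the map $(a,b)\mapsto(a,bw)$ is \emph{not} a norm isometry from $J(E,\tau,u,\mu)$ into $J(E,\tau,u',\mu')$, since $N_{u',\mu'}(a,bw)$ produces $\mu N_{E/K}(w)^{2}N_{E/K}(b)$ in the cubic term and $t_{L/k}\bigl(ab\tau(b)\,u\,(w\tau(w))^{2}\bigr)$ in the cross term --- precisely the quadratic factors you worried about. The fix is simply to orient the map the other way:
$$N_{u,\mu}(a,bw)=N_{L/k}(a)+\mu N_{E/K}(w)N_{E/K}(b)+\overline{\mu N_{E/K}(w)N_{E/K}(b)}-t_{L/k}\bigl(ab\tau(b)\cdot wu\tau(w)\bigr)=N_{u',\mu'}(a,b),$$
using only the commutativity of $E$; hence $(a,b)\mapsto(a,bw)$ is an $L$-isomorphism from $J(E,\tau,u',\mu')$ \emph{onto} $J(E,\tau,u,\mu)$ (equivalently, $(a,b)\mapsto(a,bw^{-1})$ in the direction as literally written). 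Once the direction is right there is no quadratic bookkeeping at all. Separately, for $(u',\mu')$ to lie in $L^{*}\times K^{*}$ one must assume $w\in E^{*}$; your parenthetical ``$N_{E/K}(w)\in K^{*}$, hence $w\in E^{*}$'' runs the implication backwards --- invertibility of $w$ is a hypothesis (the statement's ``$w\in E$'' should be read as ``$w\in E^{*}$''), not a consequence of the computation.
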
 
\begin{remark}{\label{zdivisors}} Note that $J(E, \tau, 1, 1)$ has zero divisors. Choose $x \in E$ such that $\tau(x)= -x$. Then $(0, x)$ is a zero divisor in $J(E, \tau, 1, 1)$, since it is a norm zero element. More generally, as an easy consequence of Theorem \ref{admis}, one can see that if $\mu \in N_{E/K}(E^*)$, then $J(E, \tau, u, \mu)$ has zero divisors. 
\end{remark}
\begin{theorem}{\label{admis1}} For any \'etale Tits construction $J(E, \tau, u, \mu)$, there exists an $L$-isomorphic Tits process $J(E, \tau, u', \mu')$ with $N_{L/k}(u')= 1= \mu'\tau(\mu')$.
\end{theorem}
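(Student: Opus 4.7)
The plan is to exhibit, for any admissible pair $(u,\mu) \in L^* \times K^*$, an element $w \in E^*$ such that after applying the $L$-isomorphism of Theorem \ref{admis} the new admissible pair $(u', \mu') = (w u \tau(w),\ \mu N_{E/K}(w))$ satisfies both norm conditions simultaneously.

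First I would note a key reduction: by admissibility of $(u',\mu')$ we have $N_{L/k}(u') = N_{K/k}(\mu')$, so the conditions $N_{L/k}(u') = 1$ and $\mu' \tau(\mu') = 1$ are automatically equivalent. Next, since $E$ is commutative and $\tau$ fixes $u \in L$, we have $wu\tau(w) = u \cdot w\tau(w) = u \cdot N_{E/L}(w)$, and transitivity of the norm gives $N_{L/k}(N_{E/L}(w)) = N_{E/k}(w)$. So the required condition reduces to the single equation
\[
N_{E/k}(w) \;=\; N_{L/k}(u)^{-1}.
\]
Thus the whole problem becomes: realize $N_{L/k}(u)^{-1}$ as an $E/k$-norm.

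This is where admissibility plays its role. The inclusions $L \subset E$ and $K \subset E$ of degrees $2$ and $3$ respectively give $N_{L/k}(L^*)^2 \subset N_{E/k}(E^*)$ and $N_{K/k}(K^*)^3 \subset N_{E/k}(E^*)$; and by admissibility $N_{L/k}(u) = N_{K/k}(\mu)$ lies in \emph{both} $N_{L/k}(L^*)$ and $N_{K/k}(K^*)$. Using Bezout on $2,3$, one obtains $N_{L/k}(u)^{-1} = N_{L/k}(u)^{2} \cdot N_{K/k}(\mu)^{-3} \in N_{E/k}(E^*)$. Tracing through, the natural choice is
\[
w \;:=\; u \cdot \mu^{-1} \;\in\; E^*,
\]
viewing $u \in L \hookrightarrow E$ and $\mu \in K \hookrightarrow E$.

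With this $w$, I would verify the two resulting formulas directly: $w\tau(w) = u\mu^{-1}\cdot u\bar\mu^{-1} = u^2 \, N_{K/k}(\mu)^{-1}$, so
\[
u' \;=\; u^3\, N_{L/k}(u)^{-1}, \qquad
\mu' \;=\; \mu \cdot N_{L/k}(u)\,\mu^{-3} \;=\; N_{L/k}(u)\,\mu^{-2},
\]
and compute $N_{L/k}(u') = N_{L/k}(u)^3 \cdot N_{L/k}(u)^{-3} = 1$ and $N_{K/k}(\mu') = N_{L/k}(u)^2 \cdot N_{K/k}(\mu)^{-2} = 1$, using admissibility $N_{L/k}(u) = N_{K/k}(\mu)$ in the last step. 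Theorem \ref{admis} then provides the desired $L$-isomorphism $J(E,\tau,u,\mu) \cong_L J(E,\tau,u',\mu')$.

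There is essentially no obstacle once the right $w$ is found; the only subtle point is recognizing that admissibility forces $N_{L/k}(u)$ to lie in the intersection $N_{L/k}(L^*) \cap N_{K/k}(K^*)$, which together with the coprimality of $[L:k]=3$ and $[K:k]=2$ is exactly what makes $N_{L/k}(u)^{-1}$ an $E/k$-norm. The explicit choice $w = u\mu^{-1}$ then makes the verification a direct computation.
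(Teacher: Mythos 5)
Your proposal is correct and uses exactly the same element as the paper, whose entire proof is ``Take $w=\mu^{-1}u$ and apply Theorem \ref{admis}''; your verification that $u'=u^3N_{L/k}(u)^{-1}$ and $\mu'=N_{L/k}(u)\mu^{-2}$ satisfy the norm conditions is the computation the paper leaves implicit. The motivating remarks (equivalence of the two conditions via admissibility, and the coprimality of $2$ and $3$) are accurate but not needed once $w$ is written down.
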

\begin{proof}
 Take $w= {\mu}^{-1}u$ and apply Theorem \ref{admis}.
\end{proof}
\begin{lemma}{\label{isotope}}
Let  $L, K$ be \'etale  $k$-algebras of dimension $3, 2$ resp. Let $(E, \tau)$ be the $K$-unitary algebra associated with the pair $(L, K)$.
Suppose $\phi:J(E, \tau, u,\mu) \rightarrow J(E, \tau, v ,\nu)$ is an $L$-isomorphism. Then there exists $w \in E$ such that  $u= \phi^{-1}(v) w\tau(w)$ and $\mu= N_{E/K}(w)\nu$ or $\mu= N_{E/K}({w})\overline{\nu}$.
\end{lemma}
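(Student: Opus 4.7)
The plan is to use the Jordan trace form to reduce $\phi$ to a map of shape $\phi(a, b) = (a, g(b))$ with $g \colon E \to E$ an $L$-linear map, and then to exploit the multiplicative structure to pin down $g$ as left multiplication by a unit of $E$, possibly composed with $\tau$. First I would observe that in $J(E, \tau, u, \mu)$ a direct computation from the Tits process multiplication shows that $L = \{(a, 0)\}$ and $E = \{(0, b)\}$ are orthogonal complements under the Jordan trace form $T_J$; since $\phi$ is a Jordan isomorphism restricting to the identity on $L$, it preserves both $L$ and its orthogonal complement, so $\phi$ decomposes as the identity on $L$ together with some additive map $g \colon E \to E$, and the $L$-linearity of $g$ follows from $\phi$-equivariance for the action of $L$ on $E$ via Jordan multiplication.

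Next I would pin down the form of $g$. Expanding the identity $\phi\bigl((0, b_1)(0, b_2)\bigr) = \phi(0, b_1)\,\phi(0, b_2)$ using the Tits process multiplication and comparing the $L$- and $E$-components yields, on the one hand, a hermitian similitude identity of the shape $u\,\tau(b_1)\,b_2 \equiv v\,\tau(g(b_1))\,g(b_2)$ modulo $\ker T_{E/L}$, and on the other hand, a cubic identity relating $\mu$ and $\nu$ through $g$. Writing a general $L$-linear endomorphism of $E = L \otimes_k K$ in the form $g(b) = w_1 b + w_2\,\tau(b)$ with $w_1, w_2 \in E$, the norm identity $T_{K/k}\bigl(\mu\,N_{E/K}(b)\bigr) = T_{K/k}\bigl(\nu\,N_{E/K}(g(b))\bigr)$, valid for every $b \in E$, forces (after straightforward bookkeeping in characteristic different from $2$) the vanishing of the cross term $w_1\tau(w_2)$, hence either $w_2 = 0$ (giving $g(b) = wb$ with $w = w_1$) or $w_1 = 0$ (giving $g(b) = w\,\tau(b)$ with $w = w_2$). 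Establishing this dichotomy cleanly is the principal obstacle; in the degenerate case where $E$ has zero divisors one has to argue componentwise after splitting $E$ accordingly.

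With $g$ thus identified the relations of the lemma drop out. In the case $g(b) = w b$, substituting into the hermitian identity gives $u = v\,w\,\tau(w) = \phi^{-1}(v)\,w\,\tau(w)$ (using that $\phi|_L$ is the identity and that $v \in L$ commutes with $w$ and $\tau(w)$ inside $E$), while the cubic identity, or equivalently comparing the norms of $(0, 1)$ on both sides, yields $\mu = N_{E/K}(w)\,\nu$. In the case $g(b) = w\,\tau(b)$, the same analysis applies after first pre-composing with the $L$-isomorphism $J(E, \tau, v, \nu) \to J(E, \tau, v, \bar\nu)$ given by $(a, b) \mapsto (a, \tau(b))$, whose existence is immediate from the Tits process formulas, and one obtains $u = \phi^{-1}(v)\,w\,\tau(w)$ and $\mu = N_{E/K}(w)\,\bar\nu$. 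Combining the two cases yields the two stated alternatives, and Theorem \ref{admis} provides the converse.
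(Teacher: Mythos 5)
Your plan founders on two points. First, you assume throughout that $\phi$ restricts to the \emph{identity} on $L$, but the paper's notion of $L$-isomorphism only requires $\phi(L)=L$; the restriction $h=\phi|_L$ can be a nontrivial automorphism of $L$, and this is exactly why the conclusion reads $u=\phi^{-1}(v)\,w\tau(w)$ rather than $u=v\,w\tau(w)$. Your argument silently collapses $\phi^{-1}(v)$ to $v$ and so proves only a special case; in the general case one must carry $h$ (and its extension $\hat h=h\otimes 1$ to $E$) through the computation. Second, and more seriously, your central structural claim --- that the $L$-linear map $g$ must be of the form $b\mapsto w b$ or $b\mapsto w\tau(b)$ --- is asserted rather than proved. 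You yourself flag the dichotomy as ``the principal obstacle,'' and the proposed route to it (comparing the cubic identity $T_{K/k}(\mu N_{E/K}(b))=T_{K/k}(\nu N_{E/K}(g(b)))$ for $g(b)=w_1b+w_2\tau(b)$ and extracting ``$w_1\tau(w_2)=0$'') is not carried out; it involves linearizing cubic norm forms, and even granting a vanishing product, $E$ is an \'etale algebra with zero divisors in general, so $w_1\tau(w_2)=0$ does not force $w_1=0$ or $w_2=0$. As written, the key step of your proof is a gap.

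It is worth noting how the paper sidesteps this entirely. For the relation on $u$ it never determines $g$ on all of $E$: it linearizes the norm identity to get $t_{L/k}(abu\tau(b))=t_{L/k}(h(a)g(b)v\tau(g(b)))$, sets $b=1$, and uses non-degeneracy of the trace form on $L$ to conclude $u=w\,h^{-1}(v)\,\tau(w)$ with $w=\hat h^{-1}(g(1))$ --- only the single value $g(1)$ is needed. For the relation on $\mu$ it does not attempt to read it off from $g$ either; it invokes the classification of first Tits constructions $J(E,\mu)$ over $K$ (\cite{PT}, Prop.\ 4.3) to get $\mu\in\nu N_{E/K}(E^*)$ or $\mu\in\overline{\nu}N_{E/K}(E^*)$, and then uses \cite{PT}, Lemma 4.5 together with an equivalence on admissible pairs to show that a \emph{single} $w$ can be chosen to realize both relations simultaneously. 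That synchronization of the two $w$'s is a genuine issue which your outline never addresses: your hermitian identity and your cubic identity would a priori produce different elements of $E$ unless the form of $g$ is fully established, which is precisely the step you have not supplied.
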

\begin{proof} By definition, $(E, \tau)= (L\otimes K, 1 \otimes~\bar{}~)$. By Theorem \ref{admis1}, we may assume $N_{L/k}(v)= \nu \overline{\nu}=1$. 
Let $\phi: J(E, \tau, u,\mu) \rightarrow J(E, \tau, v ,\nu)$  be an $L$-isomorphism. Define $h: L \oplus E \rightarrow L$ and $g:L \oplus E  \rightarrow E$ by 

$$\phi(a, b)= (h(a, b), g(a,b)),$$ for $a \in L, b \in E$. Since $\phi$ is an isomorphism, one can easily check that $g$ and $h$ are $k$-linear maps. Since $\phi$ is an isomorphism of Jordan algebras, it preserves the trace forms on both the algebras. Note that $L^{\perp}$ in $J(E, \tau, u,\mu)$ with respect to the trace form, is the $k$-subspace $\{(0, e)|~e \in E\},$ and similarly for $J(E, \tau, v ,\nu).$ Since $\phi$ restricts to $L,$ $\phi$ maps $L^{\perp}$ in $J(E, \tau, u,\mu)$ to $L^{\perp}$ in $J(E, \tau, v,\nu).$ Hence for $b \in E$, $\phi(0, b)= (0, b')$ for some $b' \in E$. It follows that $h(0, b)= 0$ for all $b \in E$. Therefore $h(a, b)= h(a, 0)$ for all $a \in L,~ b \in E$. We will now on write simply $h(a)$ for $h(a, b)$.  Since $\phi$ is an isomorphism of Jordan algebras, it is easy to check that $h:L \rightarrow L$ is an automorphism. Since $\phi$ restricts to $L$, $\phi(a, 0)= (h(a), 0)$ for all $a \in L$. Hence $g(a, 0)= 0$ for all $a \in L$. It follows that $g(a ,b)= g(0, b)$ for all $a \in L, b \in E$. We will now on write simply $g(b)$ for $g(0, b)$. Again since $\phi$ is an isomorphism of Jordan algebras, it is easy to check that $g: E \rightarrow E$ is a bijection. Since $\phi$ preserves norms, $N(a, b)= N(h(a), g(b))$. Expanding norms we get,
\begin{eqnarray*}
&&N_{L/k}(a) + \mu N_{E/k}(b)+ \overline {\mu N_{E/k}(b)}- t_{L/k}(abu\tau(b)) \\
&= & N_{L/k}(h(a))+ \nu N_{E/k}(g(b))+ \overline {\nu N_{E/k}(g(b))}- t_{L/k}(h(a)g(b)v\tau(g(b))).
\end{eqnarray*}
Putting $a=0$, we get
$$\mu N_{E/k}(b)+ \overline{\mu N_{E/k}(b)}= \nu N_{E/k}(g(b))+ \overline{\nu N_{E/k}(g(b))},~b \in E.$$
\noindent
Since $h$ is an automorphism of $L$, we have $N_{L/k}(a)= N_{L/k}(h(a))$, $a \in L$. Hence we get, $$t_{L/k}(abu\tau(b))= t_{L/k}(h(a)g(b)v\tau(g(b))),~a \in L, ~b \in E.$$
\noindent
Putting $b=1$, we get, $$t_{L/k}(au)=t_{L/k}(h(a)g(1)v\tau(g(1))),~a \in L.$$ Since $g(1)v\tau(g(1)) \in L$, there exist $b \in L$ such that $h(b)= g(1)v\tau(g(1))$. Hence $$t_{L/k}(au)= t_{L/k}(h(a)h(b))= t_{L/k}(h(ab))= t_{L/k}(ab)$$ for all $a \in L$. Since the trace bilinear form $T(a, b):= T_{L/k}(ab)$ on $L/k$ is non-degenerate, we have $u= b$. Let $\hat{h}: E \rightarrow E$ be defined by $\hat{h}= h \otimes 1$. Then $\hat{h}$ is the extension of $h$ to a $K$-automorphism of $E$. In particular, $\hat{h}$ commutes with $\tau$. We have, $$u= \hat{h}^{-1}(g(1))\hat{h}^{-1}(v)\hat{h}^{-1}\tau(g(1))= \hat{h}^{-1}(g(1))h^{-1}(v)\tau(\hat{h}^{-1}(g(1))).$$ Hence $u = w h^{-1}(v) \tau(w)= \phi^{-1}(v)w\tau(w)$, where $w= \hat{h}^{-1}(g(1)) \in E$. This proves the first assertion in the Lemma.

Now we prove the assertion on $\mu$. Let $h^{-1}(v)= v_0 \in L$. Then $N_{L/k}(v_0)= N_{L/k}(v)= 1$.
Let $u_1, u_2 \in E$. Define,
$$<u_1> \cong <u_2>~ \text{over}~E~\text{if}~ \text{and}~ \text{only} ~\text{if}~ u_1= w u_2 \tau(w), ~\text{for}~ \text{some} ~w \in E.$$ Hence , $<u> \cong <v_0>$ over $E$.
\vskip0.5mm
\noindent
We now introduce an equivalence on the set of admissible pairs in $L^* \times K^*$ as follows:\\
\noindent
$(u_1, \mu_1) \sim (u_2, \mu_2)$ if and only if there exists $w \in E$ such that $u_2= w u_1 \tau(w)$ and $\mu_2= N_{E/K}(w) \mu_1$ or $\mu_2= N_{E/K}(w) \overline{\mu_1}$. 
\vskip0.5mm
\noindent
{\bf Claim} $(u, \mu) \sim (v_0, \nu)$.
\vskip0.5mm
\noindent
Since $J(E, \tau, u,\mu) \cong J(E, \tau, v ,\nu)$, we have $J(E, \mu) \cong J(E, \nu)$ over $K$ and by (\cite{PT}, Prop. 4.3), $\mu \in \nu N_{E/K}(E^*)$ or $\mu \in \overline{\nu} N_{E/K}(E^*)$. Let $\mu =  \nu N_{E/K}(w)$ or $\mu =  \overline {\nu} N_{E/K}(w)$ as is the case accordingly, for some $w \in E$. Let $v'= w^{-1}u\tau(w^{-1})$. Then $N_{L/k}(v')=\nu \overline{\nu}= 1$ and $  (v', \nu) \sim (u, \mu)$. Now $<u> \cong <v'>$ and $<u> \cong <v_0>$ over $E$. Hence $<v'> \cong <v_0>$ over $E$.\\
\noindent
Therefore, there exists $w'' \in E$ such that $v_0= w''v' \tau(w'')$. 
Let $\lambda = N_{E/K}(w'')$. Since $N_{L/k}(v_0)= N_{L/k}(v')= 1$, we have $\lambda \overline{\lambda}= 1$.
Hence by (\cite{PT}, Lemma 4.5), there exists $w_1 \in E$ such that $\lambda= N_{E/K}(w_1)$ and $w_1\tau(w_1)=1$.\\
Therefore, $$(v', \nu) \sim (v_0,  \lambda \nu) \sim(v_0, \nu).$$
Thus $(u, \mu) \sim (v_0, \nu)= (u, \mu) \sim (h^{-1}(v), \nu)=  (\phi^{-1}(v), \nu).$
Hence, by the definition of the equivalence,  $\mu= N_{E/K}(w)\nu$ or $\mu= N_{E/K}({w})\overline{\nu}$. This completes the proof.
\end{proof}
\begin{remark}{\label{converseisotope}} As a converse to the above lemma, if there exists $w \in E^*$ such that  $u= \phi^{-1}(v) w\tau(w)$ and $\mu= N_{E/K}(w)\nu$ (or $\mu= N_{E/K}({w})\overline{\nu}$), where $\phi \in Gal(L/k)$, then $J(E, \tau, u,\mu)$ is $L$-isomorphic to $J(E, \tau, v ,\nu)$ (resp.  $J(E, \tau, v , \overline{\nu})$). To see this, suppose $\mu= N_{E/K}(w)\nu$. By Theorem \ref{admis}, $$J(E, \tau, u,\mu)= J(E, \tau, \phi^{-1}(v) w\tau(w), N_{E/K}(w)\nu) \cong_L  J(E, \tau, \phi^{-1}(v), \nu). $$ Extend $\phi$ to an automorphism $\hat{\phi}$ of $E$, defined as $\hat{\phi}= \phi \otimes 1$. Note that $\hat{\phi}$ commutes with $\tau$. Consider the map $\psi: J(E, \tau, \phi^{-1}(v), \nu) \longrightarrow J(E, \tau, v, \nu)$ given by $(a, x) \mapsto (\phi(a), \hat{\phi}(x))$ for $a \in L$, $x \in E$. Clearly $\psi((1, 0))= (1, 0)$. We have,
\begin{eqnarray*}
N(\phi(a), \hat{\phi}(x)) &= & N_{L/k}(\phi(a))+ \mu N_{E/k}(\hat{\phi}(x))+ \overline {\mu N_{E/k}(\hat{\phi}(x))}- t_{L/k}(\phi(a)v\hat{\phi}(x)\tau(\hat{\phi}(x)))\\
& = &N_{L/k}(a)+ \mu N_{E/k}(x)+ \overline {\mu N_{E/k}(x)}- t_{L/k}(\hat{\phi}(a\phi^{-1}(v)x\tau(x)))\\
& = & N_{L/k}(a)+ \mu N_{E/k}(x)+ \overline {\mu N_{E/k}(x)}- t_{L/k}(a\phi^{-1}(v)x\tau(x))= N(a, x).
\end{eqnarray*}
 Hence $\psi$ is a $k$-linear bijection preserving norms and identities. Therefore, by Theroem \ref{jordoisomp}, $\psi$ is an isomorphism of Jordan algebras. Also $\psi$ restricts to $L$. Hence $\psi$ is an $L$-isomorphism.
When $\mu= N_{E/K}({w})\overline{\nu}$, a similar argument completes the proof.
\end{remark}
\begin{corollary}{\label{corrlisotope}} Let  $L, K$ be \'etale  $k$-algebras of dimension $3, 2$ resp. Let $(E, \tau)$ be the $K$-unitary torus associated  with the pair $(L, K)$. There exists an  $L$-isomorphism $J(E, \tau, u,\mu) \cong J(E, \tau, 1, 1)$ if and only if there exists  $w \in E$ such that $u= w\tau(w)$ and $\mu= N_{E/K}(w)$. 
\end{corollary}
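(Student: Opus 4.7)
The statement is an immediate specialization of Lemma \ref{isotope} together with its converse, Remark \ref{converseisotope}, so the plan is simply to substitute $v = 1$ and $\nu = 1$ into both results and verify the boundary cases.

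For the forward direction, suppose there is an $L$-isomorphism $\phi : J(E,\tau,u,\mu) \to J(E,\tau,1,1)$. Since $\phi$ restricts to $L$ and is in particular a unital $k$-algebra map on that subalgebra, $\phi^{-1}(1) = 1$. Applying Lemma \ref{isotope} with $v = 1$ and $\nu = 1$ yields $w \in E$ with $u = \phi^{-1}(1) \cdot w\tau(w) = w\tau(w)$ and $\mu \in \{N_{E/K}(w)\cdot 1,\; N_{E/K}(w)\cdot \overline{1}\}$. Since $\overline{1} = 1$, both alternatives collapse to $\mu = N_{E/K}(w)$.

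For the converse, assume $u = w\tau(w)$ and $\mu = N_{E/K}(w)$ for some $w \in E$. Apply Remark \ref{converseisotope} with $v = 1$, $\nu = 1$, and with $\phi$ taken to be the identity automorphism of $L$ (so $\phi^{-1}(v) = 1$): the hypotheses $u = \phi^{-1}(1) w\tau(w) = w\tau(w)$ and $\mu = N_{E/K}(w) \cdot 1$ are exactly the ones given, and the remark produces the desired $L$-isomorphism $J(E,\tau,u,\mu) \cong_L J(E,\tau,1,1)$.

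There is essentially no obstacle here beyond correctly reading off the specialization. The only minor point to check is that the ambiguity between $\nu$ and $\overline{\nu}$ appearing in Lemma \ref{isotope} disappears when $\nu = 1$, which is immediate; and that $\phi^{-1}(1) = 1$ holds for any $L$-isomorphism (respectively, that the identity of $L$ is an admissible choice of $\phi$ in the converse), which is trivial. Thus the corollary follows without any new computation.
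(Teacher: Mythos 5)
Your proof is correct and is exactly the intended argument: the paper states this corollary without proof, as an immediate specialization of Lemma \ref{isotope} (forward direction, with $v=\nu=1$ so that $\phi^{-1}(1)=1$ and the $\nu$ versus $\overline{\nu}$ ambiguity vanishes) and of Remark \ref{converseisotope}, or equivalently Theorem \ref{admis} applied to the pair $(1,1)$, for the converse. Nothing is missing.
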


\begin{theorem}{\label{isomclas}} There exists a surjective map from $H^1(k, {\bf SU}(E, \tau))$ to the set of $L$-isomorphism classes of \'etale Tits process algebras arising from $(L,K)$.
\end{theorem}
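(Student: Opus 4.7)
The plan is to construct the map explicitly and then verify well-definedness and surjectivity by unwinding the identification $H^1(k,{\bf SU}(E,\tau)) \cong V/\sim$ established in the section on cohomology of unitary tori, where
$$V = \{(s,z) \in L^* \times K^* \mid N_{L/k}(s) = z\overline{z}\}$$
and $(s,z) \sim (s',z')$ iff $s' = bs\tau(b)$ and $z' = N_{E/K}(b)z$ for some $b \in E^*$. I would define
$$\Phi : H^1(k,{\bf SU}(E,\tau)) \longrightarrow \{\text{$L$-iso classes of \'etale Tits processes from } (L,K)\}$$
by $\Phi([(s,z)]) = [J(E,\tau,s,z)]_L$.

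First I would observe that this assignment makes sense: the admissibility condition for the Tits process $J(E,\tau,u,\mu)$, namely $(u,\mu) \in L^*\times K^*$ with $N_{L/k}(u) = N_{K/k}(\mu) = \mu\overline{\mu}$, is verbatim the defining condition of $V$. Hence every representative $(s,z) \in V$ produces a bona fide \'etale Tits process algebra $J(E,\tau,s,z)$.

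Next, for well-definedness, suppose $(s,z) \sim (s',z')$, so that there exists $b \in E^*$ with $s' = bs\tau(b)$ and $z' = N_{E/K}(b)z$. Theorem \ref{admis} yields a canonical $L$-isomorphism
$$J(E,\tau,s,z) \;\cong_L\; J(E,\tau,\, bs\tau(b),\, zN_{E/K}(b)) \;=\; J(E,\tau,s',z'),$$
via $(a,c) \mapsto (a, cb)$, so the $L$-isomorphism class is independent of the chosen representative in $V/\sim$.

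For surjectivity, let $J = J(E,\tau,u,\mu)$ be any \'etale Tits process arising from the pair $(L,K)$. By definition of admissibility $(u,\mu)$ lies in $V$, so $[(u,\mu)]$ defines a class in $V/\sim \;\cong\; H^1(k,{\bf SU}(E,\tau))$ whose image under $\Phi$ is precisely $[J]_L$. I do not anticipate any serious technical obstacle here, since the statement is essentially a repackaging of the explicit identification of $H^1(k,{\bf SU}(E,\tau))$ with $V/\sim$ together with Theorem \ref{admis}; the only point to be mildly careful about is aligning the admissibility convention for Tits processes with the defining condition $N_{L/k}(s)=z\overline{z}$ of $V$, which is exactly the content of the first step. (Note that $\Phi$ is not asserted to be injective, which is consistent with Lemma \ref{isotope}, where $L$-isomorphism of Tits processes allows an additional twist by $\mathrm{Gal}(L/k)$ and by conjugation $\nu \mapsto \overline{\nu}$ beyond the relation $\sim$.)
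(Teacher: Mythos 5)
Your proposal is correct and follows essentially the same route as the paper: you define the same map $[(u,\mu)]\mapsto [J(E,\tau,u,\mu)]$ on $V/\sim$, verify well-definedness via Theorem \ref{admis}, and obtain surjectivity from the fact that every admissible pair lies in $V$. The additional remarks about aligning the admissibility condition with the definition of $V$ and about non-injectivity are accurate but not needed.
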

\begin{proof} Let $X$ denote the set of $L$-isomorphism classes of \'etale Tits process algebras arising from $(L,K)$. Given an \'etale Tits process $J$, let $[J]$ denote the $L$-isomorphism class of $J$. Let $\phi: H^1(k, {\bf SU}(E, \tau)) \rightarrow X$ be defined by $\phi([(u, \mu)]):= [J(E, \tau, u, \mu)]$. Let $[(u, \mu)] \in H^1(k, {\bf SU}(E, \tau))$ and $[(u, \mu)]= [(v, \nu)]$. Then $u= vw\tau(w)$ and $\mu= N_{E/K}(w)\nu$ for some $w \in E$. Hence by Theorem \ref{admis}, $J(E, \tau, u, \mu)$ is $L$-isomorphic to $J(E, \tau, v, \nu)$. Therefore $\phi$ is well defined. Clearly $\phi$ is onto.
\end{proof}
\noindent
As an easy consequence of the above theorem, we note that if $H^1(k, {\bf SU}(E, \tau))=0,$ then all \'etale Tits process algebras arising from $(L,K)$ are isomorphic. More precisely, we have,

\begin{theorem}{\label{titsisom}}
 Let $L, K$ be a \'etale  $k$-algebras of dimension $3, 2$ resp. and $(E, \tau)$ be the $K$-unitary algebra and $T$ the $K$-unitary torus associated with the pair $(L, K)$. Then $H^1(k, T)=0$ if and only $J(E, \tau, u, \mu) \cong_L J(E, \tau, 1,1),$ for all admissible pairs $(u, \mu) \in L^* \times K^*.$
\end{theorem}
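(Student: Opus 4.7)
The plan is to deduce this theorem essentially as a direct corollary of Corollary \ref{corrlisotope} combined with the identification of $H^1(k,T)$ with $V/{\sim}$ (admissible pairs modulo the equivalence $(s,z)\sim(s',z')$ iff $s'=bs\tau(b)$ and $z'=N_{E/K}(b)z$ for some $b\in E^*$) established just before Theorem \ref{cohomo1}. Under this identification the trivial class corresponds to $[(1,1)]$, and the content of Corollary \ref{corrlisotope} is precisely that $(u,\mu)\sim(1,1)$ in $V/{\sim}$ is equivalent to $J(E,\tau,u,\mu)\cong_L J(E,\tau,1,1)$.

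For the forward implication, I would assume $H^1(k,T)=0$ and take an arbitrary admissible pair $(u,\mu)\in L^*\times K^*$, which by definition satisfies $N_{L/k}(u)=\mu\overline{\mu}$ and hence represents a class $[(u,\mu)]\in H^1(k,T)$. By hypothesis this class is trivial, so by the definition of $\sim$ there exists $w\in E$ with $u=w\tau(w)$ and $\mu=N_{E/K}(w)$; Corollary \ref{corrlisotope} then yields $J(E,\tau,u,\mu)\cong_L J(E,\tau,1,1)$.

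For the converse, I would start with an arbitrary class $[(u,\mu)]\in H^1(k,T)$ represented by an admissible pair $(u,\mu)$. By hypothesis $J(E,\tau,u,\mu)\cong_L J(E,\tau,1,1)$, so Corollary \ref{corrlisotope} supplies $w\in E$ with $u=w\tau(w)$ and $\mu=N_{E/K}(w)$. This exactly means $(u,\mu)\sim(1,1)$ in $V/{\sim}$, hence $[(u,\mu)]$ is the trivial class. Since $[(u,\mu)]$ was arbitrary, $H^1(k,T)=0$.

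There is no real obstacle here: the hard work is already embedded in Lemma \ref{isotope} (which handles the delicate point that an $L$-isomorphism of Tits processes is forced to come from multiplication by an element of $E$, up to the ambiguity $\nu\mapsto\overline{\nu}$ which however drops out when $\nu=1$) and its consequence Corollary \ref{corrlisotope}. The only minor bookkeeping is to notice that in Lemma \ref{isotope} applied with $(v,\nu)=(1,1)$, we have $\phi^{-1}(1)=1$ because $\phi$ is a unital $k$-algebra map, and both alternatives $\mu=N_{E/K}(w)\nu$ and $\mu=N_{E/K}(w)\overline{\nu}$ collapse to the same condition $\mu=N_{E/K}(w)$, so no case distinction is needed.
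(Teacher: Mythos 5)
Your proof is correct, and it rests on the same essential input as the paper's: Corollary \ref{corrlisotope} (hence Lemma \ref{isotope}) together with the identification of $H^1(k,T)$ with $V/\!\sim$, where $V$ is exactly the set of admissible pairs. The difference is only in the bookkeeping around that input. For the implication ``all \'etale Tits processes are $L$-isomorphic to $J(E,\tau,1,1)$ $\Rightarrow$ $H^1(k,T)=0$'', the paper verifies triviality of the two factors $S/N_{E/L}(E^*)$ and $K^{(1)}/N_{E/K}(U(E,\tau))$ and then invokes the product decomposition of Theorem \ref{dcps}; you instead show directly that an arbitrary class $[(u,\mu)]\in V/\!\sim$ equals $[(1,1)]$, which bypasses Theorem \ref{dcps} altogether (and hence does not even implicitly rely on the odd-dimension splitting used to prove it). For the converse the paper cites the surjection of Theorem \ref{isomclas}, while you use the easy direction of Corollary \ref{corrlisotope}; these amount to the same appeal to Theorem \ref{admis}. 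Your observation that the $\nu\mapsto\overline{\nu}$ ambiguity in Lemma \ref{isotope} disappears when $\nu=1$ is a correct and worthwhile point to make explicit. One tiny point of hygiene: when you extract $w\in E$ with $u=w\tau(w)$ from Corollary \ref{corrlisotope} and feed it into the equivalence $\sim$ (which is defined using $b\in E^*$), you should note that $u\in L^*$ forces $w$ to be a unit of $E$; this is immediate but worth a clause.
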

\begin{proof}
Suppose $J(E, \tau, u, \mu) \cong_L J(E, \tau, 1,1)$ for all admissible pairs $(u, \mu) \in L^* \times K^*.$ Let $S$ be as in Theorems \ref{cohomo1}.
\vskip0.5mm
\noindent
{\bf Claim:} $S= N_{E/L}(E^*)$ and $K^{(1)}= N_{E/K}(U(E, \tau))$.
\vskip0.5mm
\noindent
Let $u \in S$. Since $J(E, \tau, u, \mu) \cong_L J(E, \tau, 1,1)$, by Corollary \ref{corrlisotope}, $u= N_{E/L}(w)= w\tau(w)$ and $\mu= N_{E/K}(w)$ for some $w \in E$. Hence $S= N_{E/L}(E^*)$.  Let $\mu_0 \in K^{(1)}$. 
Since $J(E, \tau, 1, \mu_0) \cong_L J(E, \tau, 1,1)$, by Corollary \ref{corrlisotope}, $\mu_0= N_{E/K}(w)$ where $w\tau(w)=1$. \\
\noindent
Hence $K^{(1)}= N_{E/K}(U(E, \tau)),$ and by Theorem \ref{dcps}, $H^1(k, T)=0$. The converse follows immediately from Theorem \ref{isomclas}.
\end{proof}
\begin{corollary}{\label{deg}} Let $L$ be a cubic \'etale $k$-algebra with discriminant $\delta$ and $K= k(\sqrt{\alpha})$ be a quadratic \'etale $k$-algebra. Let $T$ be the $K$-unitary torus associated with the pair $(L, K)$ and $H^1(k, T)=0$. Let $B$ be any degree $3$ central simple algebra over $k(\sqrt{\alpha\delta})$ with an involution $\sigma$ of the second kind such that $L \subseteq (B, \sigma)_+$. Then $B \cong M_3(k(\sqrt{\alpha\delta}))$ and $\sigma$ is distinguished. 
\end{corollary}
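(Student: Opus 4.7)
Write $K'' := k(\sqrt{\alpha\delta})$ for the center of $B$, and let $T''$ be the $K''$-unitary torus associated with the pair $(L, K'')$. The plan is to realize $(B, \sigma)_{+}$ as an \'etale Tits process arising from $(L, K'')$, transfer the vanishing hypothesis $H^{1}(k, T) = 0$ to the analogous statement $H^{1}(k, T'') = 0$, and then conclude via Theorem~\ref{titsisom}.

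For the first step, since $L$ is a cubic \'etale subalgebra of $(B, \sigma)_{+}$ and $B$ has degree $3$ over $K''$ with $\sigma$ of the second kind, the Wedderburn-type structure theorem recalled just before Theorem~\ref{admis} (cf.~\cite{KMRT}, p.~527) yields a decomposition $B = (L \otimes K'') \oplus (L \otimes K'')\, z \oplus (L \otimes K'')\, z^{2}$, with $z^{3} = \mu \in K''^{*}$ of norm~$1$ and $\sigma(z) = u\, z^{-1}$ where $u \in L$, $N_{L/k}(u) = 1$. Hence $(B, \sigma)_{+} \cong J(L \otimes K'', 1 \otimes \bar{}, u, \mu)$ as an \'etale Tits process arising from $(L, K'')$.

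The crux of the argument is the second step: showing $H^{1}(k, T) = 0 \Rightarrow H^{1}(k, T'') = 0$. The underlying mechanism is a ``duality'' between $K$ and $K''$ governed by $\mathrm{Disc}(L) = k(\sqrt{\delta}) =: \Delta$: both sit as quadratic subextensions of the biquadratic \'etale algebra $k(\sqrt{\alpha}, \sqrt{\delta})$, and passing from $K$ to $K''$ corresponds to tensoring by the discriminant class $[\delta] \in k^{*}/(k^{*})^{2}$. In particular, over $\Delta$ there is a canonical $\Delta$-algebra isomorphism $K \otimes_{k} \Delta \cong K'' \otimes_{k} \Delta$ (both equal $\Delta[x]/(x^{2} - \alpha)$). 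Substituting this into the explicit cohomology formulas of Theorems~\ref{cohomo1}, \ref{definem}, \ref{dcps}, and \ref{h1}, the quotients $S/N_{E/L}(E^{*})$, $M/N_{E/K}(E^{*})$, $L^{(1)}/N_{E/L}(E^{(1)})$, and $K^{(1)}/N_{E/K}(U(E, \tau))$ depend only on $L$ and on the $\Delta$-algebra $K \otimes \Delta$, and so coincide for $T$ and for $T''$. In the special cases $L \cong k \times k \times k$ and $L \cong k \times K_{0}$ this boils down to Example~4 and Theorem~\ref{split}; for $L$ a cubic field I would descend along $\Delta/k$, using that $L \otimes \Delta$ is a cyclic cubic $\Delta$-algebra when $L$ is non-Galois over $k$.

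With $H^{1}(k, T'') = 0$ in hand, Theorem~\ref{titsisom} delivers the $L$-isomorphism $(B, \sigma)_{+} \cong_{L} J(L \otimes K'', 1 \otimes \bar{}, 1, 1)$. By Corollary~\ref{corrlisotope} there is $w \in L \otimes K''$ with $u = w (1 \otimes \bar{})(w)$ and $\mu = N_{(L \otimes K'')/K''}(w)$; substituting $z' := w^{-1} z$ into the Wedderburn decomposition produces $z'^{3} = 1$ and $\sigma(z') = z'^{-1}$, identifying $(B, \sigma)$ with $(M_{3}(K''), X \mapsto \bar{X}^{t})$. Hence $B \cong M_{3}(K'')$ and $\sigma$ is distinguished. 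The main obstacle is the middle step: extracting $H^{1}(k, T'') = 0$ from $H^{1}(k, T) = 0$ uniformly, ideally through a single descent argument along $\Delta/k$ rather than case-by-case inspection on the isomorphism type of $L$.
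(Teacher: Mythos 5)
There is a genuine gap, and it originates in your very first step. The \'etale Tits process that realizes $(B,\sigma)_+$ does \emph{not} arise from the pair $(L,K'')$ with $K''=k(\sqrt{\alpha\delta})$ the center of $B$: by the Petersson--Thakur theorem that the paper invokes (\cite{PT}, Theorem 1.6), if $Z(B)=k(\sqrt{\alpha\delta})$ and $\mathrm{Disc}(L)=k(\sqrt{\delta})$, then $(B,\sigma)_+\cong J(L\otimes K,\tau,u,\mu)$ for $K=k(\sqrt{\alpha})$ --- the quadratic algebra obtained by multiplying the class of the center by the discriminant class. (The Wedderburn-type decomposition $B=(L\otimes K'')\oplus(L\otimes K'')z\oplus(L\otimes K'')z^2$ you quote from p.~527 of \cite{KMRT} is only available when $L\otimes K''$ is \emph{cyclic} over $K''$, i.e.\ essentially when $\delta$ is trivial; in general it fails.) You have the discriminant twist backwards: a Tits process built from $(L,K'')$ would produce an algebra with center $K''*\Delta=k(\sqrt{\alpha})$, not $k(\sqrt{\alpha\delta})$. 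Once the correct pair $(L,K)$ is used, the hypothesis $H^1(k,T)=0$ applies \emph{directly} via Theorem~\ref{titsisom}, so the entire "crux" of your plan --- transferring the vanishing from $T$ to $T''$ --- is both unnecessary and, as you yourself note, not actually carried out (the claim that the quotients in Theorems~\ref{cohomo1}--\ref{h1} depend only on $K\otimes\Delta$ is asserted, not proved). The splitting of $B$ then follows because $J(E,\tau,1,1)$ has zero divisors (Remark~\ref{zdivisors}) together with \cite{PR10}, Theorem~1.

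The final step is also wrong as written: even granting an identification of $(B,\sigma)$ with $(M_3(K''), X\mapsto\overline{X}^t)$, the standard conjugate-transpose involution is \emph{not} distinguished in general --- by \S 2.2 its invariant is $\langle\langle\alpha\delta,-1,-1\rangle\rangle$, which need not be hyperbolic. The paper has to work harder here: it uses Lemma~\ref{copys} to produce $v\in L$ with $\mathrm{Int}(v)\circ\sigma$ distinguished, passes to isotopes via \cite{PR1}, Prop.~3.9, uses $H^1(k,T)=0$ once more to identify the isotope with $J(E,\tau,1,1)$, and concludes $f_3(B,\sigma)=f_3(B,\mathrm{Int}(v)\circ\sigma)=0$ from \cite{KMRT}, Prop.~37.6. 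Some argument of this kind is needed; distinguishedness does not fall out of the matrix model for free.
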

\begin{proof} By (\cite{PT}, Theorem 1.6), there exists an admissible pair $(u, \mu) \in L^* \times K^*$ such that $ \phi: (B, \sigma)_+  \cong J(E, \tau, u , \mu),$ where the isomorphism $\phi$ restricts to the identity of $L$. Since  
 $H^1(k, T)=0$, by Theorem \ref{titsisom},  $(B, \sigma)_+ \cong J(E, \tau, u , \mu) \cong_L J(E, \tau, 1, 1)$.  Since $J(E, \tau, 1, 1)$ is reduced (see Remark \ref{zdivisors}), by (\cite{PR10}, Theorem 1, cf. \cite{PT}, Theorem 1.4), $B \cong M_3(k(\sqrt{\alpha\delta})).$ By Lemma \ref{copys}, there exists $v \in L$ such that $Int(v) \circ \sigma$ is distinguished. Since $\phi$ restricts to the identity of $L$, taking isotopes with respect to $v$ on both sides, we have $(B, Int(v) \circ \sigma)_+ \cong {J(E, \tau, u , \mu)}^{(v)}$ (see \cite{PR1}, Prop. 3.9). By (\cite{PR1}, Prop. 3.9) ${J(E, \tau, u , \mu)}^{(v)} \cong J(E, \tau,uv^{\#}, N(v)\mu) \cong J(E, \tau, 1, 1)$.
 Hence  $(B, \sigma)_+ \cong (B, Int(v) \circ \sigma)_+$. By (\cite{KMRT}, Prop. 37.6), we have $f_3((B, \sigma))= f_3((B, Int(v) \circ \sigma))= 0$ and  $\sigma$ is distinguished. 
\end{proof}
\begin{corollary}{\label{degg}} Let the hypothesis be as in Corollary \ref{deg}. Let $B$ be any degree $3$ central simple algebra over $k(\sqrt{\alpha\delta})$ with an involution of the second kind such that $L \subseteq B$. Then 
$$B \cong M_3(k(\sqrt{\alpha\delta}).$$ 
\end{corollary}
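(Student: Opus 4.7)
The plan is to reduce Corollary \ref{degg} to Corollary \ref{deg} by producing a suitable involution of the second kind on $B$ whose space of symmetric elements contains $L$. In Corollary \ref{deg} the hypothesis is $L \subseteq (B,\sigma)_+$, whereas in Corollary \ref{degg} we are given only the weaker condition $L \subseteq B$. However, the conclusion of Corollary \ref{degg} is an assertion purely about the algebra structure of $B$ (namely, that $B$ is split), and does not involve the involution at all. This leaves us free to replace the given involution by a more convenient one.

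More precisely, first I would invoke the HKRT result recorded as Proposition 2.2 in \S 2.2 of this paper: for any degree $3$ central simple algebra $B$ over a quadratic étale extension $F$ of $k$ which admits an involution of the second kind, and any cubic étale $k$-subalgebra $L \subseteq B$, there exists a distinguished involution $\sigma'$ of the second kind on $B$ such that $L \subseteq (B,\sigma')_+$. In our setting, $F = k(\sqrt{\alpha\delta})$, $B$ is given to carry some involution of the second kind, and $L \subseteq B$ is cubic étale by hypothesis, so all the assumptions of Proposition 2.2 are satisfied and produce the desired $\sigma'$.

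Next, I would apply Corollary \ref{deg} to the pair $(B,\sigma')$ in place of $(B,\sigma)$: the étale pair $(L,K)$, the torus $T$, and the cohomological vanishing $H^1(k,T)=0$ are all preserved, and now $L \subseteq (B,\sigma')_+$, so the hypotheses of Corollary \ref{deg} hold verbatim. The conclusion of that corollary gives $B \cong M_3(k(\sqrt{\alpha\delta}))$, which is exactly what Corollary \ref{degg} asserts.

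The only mild subtlety lies in verifying that we are entitled to invoke Proposition 2.2 in the first place, which requires that $B$ carry some involution of the second kind over $k$. This is already part of the standing hypothesis of Corollary \ref{degg} (the original $\sigma$ plays this role, even though it need not itself satisfy $L \subseteq (B,\sigma)_+$). So there is no real obstacle; the argument is essentially a one-line reduction once Proposition 2.2 is brought to bear.
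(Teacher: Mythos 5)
Your proposal is correct and is essentially identical to the paper's own proof: the paper also invokes (\cite{HKRT}, Prop.\ 17) — recorded as Proposition 2.2 in \S 2.2 — to produce an involution $\sigma'$ of the second kind with $L \subseteq (B,\sigma')_+$, and then applies Corollary \ref{deg} to conclude $B \cong M_3(k(\sqrt{\alpha\delta}))$. Your added remark that the splitting of $B$ is independent of the choice of involution, so one is free to replace $\sigma$ by $\sigma'$, is exactly the (implicit) justification underlying the paper's one-line argument.
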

\begin{proof} By (\cite{HKRT}, Prop.17, cf. \cite{KMRT}, Cor. 19.30), there exists an involution $\sigma$ on $B$ such that $L \subseteq (B, \sigma)_+$. Hence by the above corollary $B$ splits.
\end{proof}
\noindent
In view of Corollary \ref{degg}, when $L$ is a cubic \'etale algebra over $k$ with trivial discriminant, we have the following
\begin{corollary}{\label{deggg}} Let $L$ be a cubic \'etale algebra over $k$ with trivial discriminant and $K$ be a quadratic \'etale $k$-algebra. Let $T$ be the $K$-unitary torus  associated with the pair $(L, K)$ and $H^1(k, T)=0$. Let $B$ be any degree $3$ central simple algebra over $K$ with an involution $\sigma$ of the second kind such that $L \subseteq B$. Then $B \cong M_3(K)$.
\end{corollary}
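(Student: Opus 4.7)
The plan is essentially to observe that Corollary \ref{deggg} is a direct specialization of Corollary \ref{degg}. Recall that the hypotheses of Corollary \ref{deg} (and hence Corollary \ref{degg}) involve a cubic \'etale algebra $L$ with discriminant $\delta$, a quadratic \'etale algebra $K=k(\sqrt{\alpha})$, the vanishing $H^1(k,T)=0$ of the cohomology of the associated $K$-unitary torus, and a degree $3$ central simple algebra $B$ over $k(\sqrt{\alpha\delta})$ equipped with an involution of the second kind such that $L\subseteq B$. The conclusion there is that $B \cong M_3(k(\sqrt{\alpha\delta}))$.

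In our situation, $L$ has trivial discriminant, which by definition means $\delta\in (k^*)^2$, i.e. $k(\sqrt{\delta})\cong k\times k$. Consequently $\alpha\delta\equiv\alpha\pmod{(k^*)^2}$, so that
\[
k(\sqrt{\alpha\delta}) \;\cong\; k(\sqrt{\alpha}) \;=\; K
\]
as quadratic \'etale $k$-algebras. Thus the hypothesis ``$B$ is a degree $3$ central simple algebra over $K$ with an involution of the second kind and $L\subseteq B$'' in Corollary \ref{deggg} is exactly the hypothesis of Corollary \ref{degg} in this case.

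Applying Corollary \ref{degg} therefore yields $B\cong M_3(k(\sqrt{\alpha\delta}))=M_3(K)$, which is the desired conclusion. There is no real obstacle here: the only point to check is the identification of base fields under the trivial-discriminant assumption, which is immediate from the definition of the discriminant algebra $\delta(L)$.
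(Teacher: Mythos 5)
Your proof is correct and follows essentially the paper's own route: the paper likewise deduces the statement from Corollary \ref{degg}, the key point being that the trivial-discriminant hypothesis forces $k(\sqrt{\alpha\delta})\cong k(\sqrt{\alpha})=K$. The only cosmetic difference is that the paper treats the case $L=k\times k\times k$ separately as immediate and invokes Corollary \ref{degg} only when $L$ is a cubic cyclic field, whereas you apply Corollary \ref{degg} uniformly, which is fine since its hypotheses allow an arbitrary cubic \'etale $L$.
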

\begin{proof} Let $L$ be as in the hypothesis. When $L= k \times k \times k,$ it is immediate that $B \cong M_3(K).$ When $L$ is a cubic cyclic field extension of $k$, by Corollary \ref{degg}, we get the desired result.
\end{proof}
\subsection{Applications to groups of type $A_2$ and $G_2$}
We now give some applications based on the cohomology computation of tori done in section \S 4.1. We shall consider cohomology of maximal tori in groups of type $A_2$ and $G_2$. These tori arise from six dimensional unitary algebras, hence we can compute their cohomology using Theorems \ref{h1}, \ref{dcps} with $n=3$. We deduce that a group of type $G_2$ splits if and only if it contains a maximal torus whose first cohomology vanishes. A weaker result holds for groups of type $A_2$. We need a variant of (\cite{HKRT}, Prop. 17) for our purpose.

\begin{lemma}{\label{copys}}  Let $F= k(\sqrt{\alpha})$ be a quadratic \'etale $ k$-algebra and $B$ be a degree $3$ central simple algebra over $F$ with an involution $\sigma$ of the second kind. Let $L$ be a cubic \'etale algebra such that $L \subseteq (B, \sigma)_+$. Then there exists $l \in L$ with $N_{L/k}(l) \in {k^*}^2$ such that $Int(l) \circ \sigma$ is distinguished.
\end{lemma}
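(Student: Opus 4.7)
The plan is to invoke the existence of a distinguished unitary involution on $B$ fixing $L$ pointwise (namely (\cite{HKRT}, Prop.~17), recalled in \S 2.2 above), then to transport it back to $\sigma$ via an inner automorphism, to reduce the resulting ``inner-twisting'' element to an element of $L$ by Hilbert 90, and finally to rescale by a scalar from $k^\times$ in order to achieve the norm-square condition.

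First I would apply the cited HKRT result to produce a distinguished involution $\sigma'$ of the second kind on $B$ with $L\subseteq(B,\sigma')_+$. Since both $\sigma$ and $\sigma'$ restrict to the non-trivial $k$-automorphism of $F$, the composite $\sigma'\sigma$ is an $F$-algebra automorphism of $B$ that fixes $L$ pointwise. Skolem--Noether then yields $\sigma'\sigma = Int(c)$ for some $c$ in the centralizer of $L$ in $B$. Because $L\otimes_k F$ is a maximal commutative $F$-subalgebra of $B$ of $F$-dimension $3 = \deg_F B$, it is its own centralizer, so $c \in (L\otimes_k F)^\times$ and $\sigma' = Int(c)\circ\sigma$.

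The step I expect to be the main obstacle is to show that, after adjusting $c$ by a central element of $F^\times$, one may take $c\in L^\times$. The involution condition on $Int(c)\circ\sigma$, combined with the fact that $\sigma$ acts as $1\otimes\bar{}$ on $L\otimes_k F$, forces $\sigma(c) = \lambda^{-1} c$ for some $\lambda \in F^{(1)}$. By Hilbert~90 for the quadratic \'etale extension $F/k$, one can write $\lambda = \beta/\bar\beta$ for some $\beta \in F^\times$. Setting $l := c\beta^{-1}$, a quick check using $\sigma(\beta^{-1}) = \bar\beta^{-1}$ gives $\sigma(l) = l$, and since $(L\otimes_k F)^\sigma = L$ by Lemma~\ref{x}, we obtain $l \in L^\times$. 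As $\beta \in F$ is central in $B$, $Int(c) = Int(l\beta) = Int(l)$, and therefore $\sigma' = Int(l)\circ\sigma$ with $l \in L^\times$.

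Finally, to arrange $N_{L/k}(l) \in {k^*}^2$, replace $l$ by $l' := N_{L/k}(l)^{-1}\cdot l \in L^\times$. The scalar $N_{L/k}(l)^{-1} \in k^\times$ is central in $B$, so $Int(l') = Int(l)$ and $Int(l')\circ\sigma = \sigma'$ is still distinguished; meanwhile
$$N_{L/k}(l') \;=\; N_{L/k}(l)\cdot\bigl(N_{L/k}(l)^{-1}\bigr)^{3} \;=\; \bigl(N_{L/k}(l)^{-1}\bigr)^{2}\;\in\;{k^*}^2,$$
so $l'$ is the desired element.
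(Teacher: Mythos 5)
Your proof is correct, but it follows a genuinely different route from the paper's. You use (\cite{HKRT}, Prop.~17) as a black box to obtain a distinguished involution $\sigma'$ with $L\subseteq (B,\sigma')_+$, and then recover the conjugating element abstractly: Skolem--Noether together with the fact that $L\otimes_k F$ is self-centralizing in $B$ gives $\sigma'=Int(c)\circ\sigma$ with $c\in (L\otimes_k F)^\times$; the involution condition plus Hilbert~90 for $F/k$ lets you slide $c$ into $(L\otimes_k F)^{\sigma}=L$ without changing $Int(c)$; and the final rescaling by $N_{L/k}(l)^{-1}$ exploits that the degree $3$ is odd. The paper instead works directly with trace forms: it invokes (\cite{HKRT}, Prop.~11) to express $Q_\sigma$ via a transfer form $t_{L/k}(\langle\mu\rangle)$ with $N_{L/k}(\mu)\in {k^*}^2$, picks a trace-zero $\lambda\in L^*$ normalized to have square norm, sets $l=\lambda^{-1}\mu$, and checks via (\cite{HKRT}, Prop.~13) that the restricted trace form of $Int(l)\circ\sigma$ contains an isotropic transfer form, so the Witt-index criterion for distinguished involutions applies. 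Your argument is shorter and avoids all quadratic-form computation (at the price of relying on Prop.~17, whose proof is essentially the computation the paper re-runs while tracking the norm of the conjugating element); the paper's argument produces an explicit $l$ and is self-contained modulo the HKRT form identities. If you write yours up, the two points worth making explicit are the injectivity of $L\otimes_k F\to B$ (needed for the centralizer step) and the split case $F=k\times k$, where Skolem--Noether and Hilbert~90 still go through because your automorphism fixes the center elementwise; both are routine.
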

\begin{proof} Since $L \subseteq (B, \sigma)_+$, by (\cite{HKRT}, Proposition 11), there exists $\mu \in L^*$ with $N_{L/k}(\mu) \in {k^*}^2$
such that $$Q_{\sigma}= <1,2,2 \delta>\perp<2>.<<\alpha \delta>>.~ t_{L/k}(<\mu>).$$ Let $\lambda_0 \in L^*$ be such that $t_{L/k}({\lambda}_0)= 0$ and let $\lambda:= \frac{\lambda_0}{N_{L/k}(\lambda_0)}$. Then $\lambda \in L^*$ and $N_{L/k}(\lambda)\in {k^*}^2$. Hence there exists $\xi \in k^*$ such that $N_{L/k}(\lambda\mu^{-1})= \xi^2$. Consider $\psi:= Int(\xi\lambda^{-1}\mu)\circ \sigma= Int(\lambda^{-1}\mu)\circ \sigma$. 
\vskip0.5mm
\noindent
 {\bf Claim:} $\psi$ is a distinguished involution. 
\vskip0.5mm
\noindent
We will use the proofs of (\cite{HKRT} Prop. 17, Corollary 14). Since $\lambda^{-1}\mu\in L$, we have $L \subseteq (B, \psi)_+$. Let $q: L \rightarrow L$ be defined by, $lq(l)= n_{L/k}(l)$. By (\cite{HKRT}, Proposition 13), we have
$$Q_{\psi}= <1,2,2 \delta>\perp<2>.<<\alpha \delta>>. ~t_{L/k}(<q(\xi\lambda^{-1}\mu)\mu>).$$ It is easy to check that $q(\xi\lambda^{-1}\mu)= \lambda \mu^{-1}$. Hence 
\begin{eqnarray*}
Q_{\psi} &=& <1,2,2 \delta>\perp<2>.<<\alpha \delta>>.~ t_{L/k}(<\lambda>)\\
&= &<1, 1,1> \perp <2 \delta>.<<\alpha>>.~t_{L/k}(<\lambda>).
\end{eqnarray*}
Let $(B, \sigma)_+^{\circ}= \{ x \in (B, \sigma)_+|~ T_B(x)= 0\}$ and ${Q_{\psi}}^{\circ}$ denote the restriction of ${Q_{\psi}}$ to $(B, \sigma)_+^{\circ}$. Then ${Q_{\psi}}^{\circ}= <2>.(<1, 3> \perp <\delta>.<<\alpha>>.~t_{L/k}(<\lambda>)$. Since $t_{L/k}(\lambda)= 0$, the form $t_{L/k}(<\lambda>)$ is isotropicover $k$ and the Witt index of $<<\alpha>>.~ t_{L/k}(<\lambda>)$ is at least two. Hence by (\cite{KMRT}, Theorem 16 (c)), $\psi$ is distinguished.
\end{proof}
\begin{theorem}{\label{cohomoo}} Let $F= k(\sqrt{\alpha})$ be a quadratic \'etale $ k$-algebra and $B$ be a degree $3$ central simple algebra over $F$ with an involution $\sigma$ of the second kind. Let $T$ be a maximal torus of ${\bf SU}(B,\sigma)$. If $H^1(k, T)= 0$ then $\sigma$ is distinguished.
\end{theorem}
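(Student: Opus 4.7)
The plan is to combine Lemma \ref{copys}, which produces a distinguished involution $\sigma'=Int(l)\circ\sigma$ with $l\in L$ satisfying $N_{L/k}(l)\in k^{*2}$, with the hypothesis $H^1(k,T)=0$ to show that $\sigma$ and $\sigma'$ are conjugate by an inner $F$-automorphism of $B$. The invariance of the mod-$2$ invariant $f_3$ under isomorphisms of the plus part (\cite{KMRT}, Prop.~37.6) then delivers $f_3(B,\sigma)=f_3(B,\sigma')=0$.

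First I would describe $T$ concretely. The centralizer $E:=Z_B(T)$ is a $\sigma$-stable maximal commutative \'etale $F$-subalgebra of $B$ of $F$-dimension $3$, and $T={\bf SU}(E,\sigma|_E)$ (Theorem \ref{imp}(b), in the form valid for special unitary groups of central simple algebras). Writing $L:=E^{\sigma}$, Lemma \ref{count} gives $E=L\otimes_k F$ with $\sigma|_E=\tau:=1\otimes\bar{\phantom{a}}$, so $L\subseteq(B,\sigma)_+$ is cubic \'etale and $T$ is exactly the $F$-unitary torus attached to the pair $(L,F)$.

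The cohomological heart of the argument runs as follows. Lemma \ref{copys} yields $l\in L^*$ with $N_{L/k}(l)\in k^{*2}\subseteq N_{F/k}(F^*)$ such that $\sigma':=Int(l)\circ\sigma$ is distinguished; in particular $l$ lies in the group $S:=\{u\in L^*:N_{L/k}(u)\in N_{F/k}(F^*)\}$ of Theorem \ref{cohomo1}. That theorem furnishes a natural isomorphism $H^1(k,T)/K_0^{(1)}\cong S/N_{E/L}(E^*)$; the hypothesis $H^1(k,T)=0$ therefore forces $S=N_{E/L}(E^*)$, and we can write $l=w\tau(w)$ for some $w\in E^*$ (invertibility of $w$ is immediate from invertibility of $l$ in the \'etale algebra $E$). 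A short computation, using that $w$ and $\tau(w)=\sigma(w)$ commute inside the commutative algebra $E$, shows that the inner $F$-automorphism $\phi:=Int(w)$ of $B$ conjugates $\sigma$ to $\sigma'$:
\[
\phi\circ\sigma\circ\phi^{-1}(x)=w\tau(w)\,\sigma(x)\,(w\tau(w))^{-1}=Int(l)\circ\sigma(x)=\sigma'(x).
\]
Hence $(B,\sigma)\cong(B,\sigma')$ as $F$-algebras with involution; in particular their plus parts are isomorphic Jordan $k$-algebras, and (\cite{KMRT}, Prop.~37.6) gives $f_3(B,\sigma)=f_3(B,\sigma')=0$, showing that $\sigma$ is distinguished.

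The principal obstacle is the cohomological translation in the third paragraph: exploiting the explicit description of $H^1(k,T)$ furnished by Theorem \ref{cohomo1} to convert the hypothesis $H^1(k,T)=0$ into the concrete norm equation $l=N_{E/L}(w)$. Once $w$ has been produced, the conjugation $\phi\sigma\phi^{-1}=\sigma'$, and hence the equality $f_3(B,\sigma)=f_3(B,\sigma')$, is a formal consequence of the fact that $E\subseteq B$ is commutative.
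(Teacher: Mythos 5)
Your proposal is correct, and the first two thirds of it (identifying $T={\bf SU}(E,\tau)$ with $E=L\otimes F$, $L=E^{\sigma}\subseteq(B,\sigma)_+$; invoking Lemma \ref{copys} to get $l\in L^*$ with $N_{L/k}(l)\in k^{*2}$ and $Int(l)\circ\sigma$ distinguished; and using the vanishing of $H^1(k,T)$ to force $S=N_{E/L}(E^*)$ and hence $l=w\tau(w)$ for some $w\in E^*$) coincide exactly with the paper's argument, which cites Theorem \ref{dcps} rather than Theorem \ref{cohomo1} for the cohomological step — an immaterial difference since both give $S/N_{E/L}(E^*)$ as a quotient (resp.\ direct factor) of $H^1(k,T)$. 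Where you genuinely diverge is in the concluding step. The paper establishes $f_3(Int(l)\circ\sigma)=f_3(\sigma)$ by building the second Tits construction Albert algebra $A=J(B,\sigma,l,\gamma)$ (which requires also extracting $\gamma\in F$ with $N_{L/k}(l)=\gamma\overline{\gamma}$), applying the isomorphism $J(B,\sigma,u,\gamma)\cong J(B,\sigma,w'u\sigma(w'),N_B(w')\gamma)$ with $w'=w^{-1}$ to reduce to $J(B,\sigma,1,\rho)$, and then using the formula relating $f_3$ of a second Tits construction to $f_3$ of the twisted involution. You instead observe directly that $Int(w)$ conjugates $\sigma$ to $Int(w\sigma(w))\circ\sigma=Int(l)\circ\sigma$, so the two involutions are isomorphic and have equal $f_3$; your verification of the conjugation identity is correct (and in fact does not even need commutativity of $E$, only that $(w\sigma(w))^{-1}=\sigma(w)^{-1}w^{-1}$). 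Your route is shorter and avoids the Albert-algebra machinery entirely; the paper's route has the mild advantage of reusing lemmas already deployed elsewhere in the text. Both correctly reduce the theorem to the same two inputs, so the proposal stands as a valid, and arguably cleaner, proof.
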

\begin{proof} Let $T$ be a maximal torus of ${\bf SU}(B,\sigma)$. By Theorem \ref{imp}, $T \cong {\bf SU}(E, \sigma)$, where $(E, \sigma) \subseteq (B,\sigma)$ is an $F$-unitary algebra. Let $L= E^{\sigma}$ and $Disc(L)= \delta$. By Lemma \ref{count}, $E= L\otimes F$. By Lemma \ref{copys}, there exists $l \in L$, $N_{L/k}(l) \in {k^*}^2$ such that $Int(l) \circ \sigma$ is distinguished. Let $\psi:= Int(l) \circ \sigma$ and $S= \{u \in L^*| ~N_{L/k}(u) \in N_{F/k}(F^*)\}$. Since $H^1(k, T)= 0$, by Theorem \ref{dcps}  $\frac{S}{ N_{E/L}(E^*)}= \{1\}$. Let $u \in S$. Then $u= w\sigma(w)$ for some $w\in E$ and $N_{L/k}(u)= \gamma\overline{\gamma}$  for some $\gamma\in F$. 
Consider the Albert algebra $A:= J(B,\sigma,u,\gamma)$. By (\cite{KMRT} Lemma 39.2), $J(B,\sigma,u,\gamma)\cong J(B,\sigma,w'u\sigma(w'), N_B(w')\gamma)$ for all $w'\in B^*$. Hence for $w'=w^{-1}$, we have $A= J(B,\sigma,u,\gamma) \cong J(B, \sigma, w\sigma(w), \gamma) \cong J(B,\sigma,1,\rho)$, where $\rho= N_B(w)^{-1}\gamma$. Therefore, $f_3(A)= f_3(B, Int(u)\circ \sigma)=  f_3(B, \sigma)$ for all $u \in S$. Taking  $u= l \in S$, we get $f_3(A)= f_3(B,\sigma)= 0$.  Hence $\sigma$ is distinguished.
\end{proof}
\begin{remark} A converse of the above theorem holds when $B$ is split. Let $F= k(\sqrt{\alpha})$ be a quadratic \'etale $ k$-algebra and  $\sigma$ be a distinguished involution on $M_3(F)$.  Let $L= k \times F$. Note that $L \hookrightarrow M_3(F)$ as a $k$-subalgebra (via the embedding $(\gamma, x) \rightarrow diag(\gamma, x, x)$, $\gamma \in k$, $x \in F$). Since  $\sigma$ is distinguished, by (\cite{HKRT}, Cor 18), there exists a $k$-embedding $L \hookrightarrow (M_3(F), \sigma)_+$. Let $T$ be the $F$-unitary torus  associated with the pair $(L, F)$. By Lemma \ref{maxtorusina2}, $T \hookrightarrow G$ over $k$. Then by case {\bf (ii)} of the proof of Theorem \ref{isom}, $T \cong R_{F/k}(\mathbb{G}_m)$. Hence $T \hookrightarrow {\bf SU}(B, \sigma)$ is a maximal $k$-torus with $H^1(k, T)= 0$.
\end{remark}
\begin{corollary} Let $L$ be a cubic \'etale algebra over $k$ of trivial discriminant and $K$ be a quadratic \'etale $k$-algebra. Let $T$ be the  $K$-unitary torus associated with the pair $(L, K)$ and $H^1(k, T)=0$. Let $B$ be any degree $3$ central simple algebra over $K$ with an involution $\sigma$ of the second kind such that $L \subseteq (B, \sigma)_+$. Then $B \cong M_3(K)$ and $\sigma$ is distinguished. 
\end{corollary}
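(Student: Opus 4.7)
The plan is to derive this corollary as an immediate specialization of Corollary~\ref{deg} to the case when the cubic \'etale algebra has trivial discriminant. Write $K = k(\sqrt{\alpha})$ and let $\delta \in k^*$ be chosen so that $Disc(L) = k(\sqrt{\delta})$. The hypothesis that $L$ has trivial discriminant over $k$ means precisely that $\delta \in {k^*}^2$, and hence
$$k(\sqrt{\alpha\delta}) \;=\; k(\sqrt{\alpha}) \;=\; K.$$
Therefore the quadratic \'etale extension $k(\sqrt{\alpha\delta})$ appearing in the statement of Corollary~\ref{deg} coincides with the center $K$ of $B$, and all the hypotheses of that corollary are satisfied by the triple $(B,\sigma,L)$ of the present statement, using the same torus $T$ whose cohomology is assumed trivial.

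Invoking Corollary~\ref{deg} then yields both desired conclusions in one shot: that $B \cong M_3(K)$ and that $\sigma$ is distinguished. No further work is required beyond the identification of parameters above.

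As a sanity check of the two extreme sub-cases: when $L \cong k \times k \times k$, the embedding $L \hookrightarrow (B,\sigma)_+$ produces three orthogonal primitive $\sigma$-symmetric idempotents summing to $1$ inside $B$, which already forces $B \cong M_3(K)$ on elementary grounds, and then Corollary~\ref{deg} (or equivalently Theorem~\ref{titsisom} together with the isotope trick in Lemma~\ref{copys}) gives that $\sigma$ is distinguished; when $L$ is a cubic cyclic field extension of $k$, Corollary~\ref{deg} supplies the full conclusion directly through the \'etale Tits process identification of $(B,\sigma)_+$ with $J(E,\tau,u,\mu)$.

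Since the argument is essentially a specialization, there is no substantive obstacle. The only point requiring verification is the equivalence of ``trivial discriminant'' with $\delta \in {k^*}^2$, which makes $k(\sqrt{\alpha\delta}) = K$ and allows Corollary~\ref{deg} to be invoked verbatim; this is immediate from the definition of the discriminant algebra $\delta(L)$ recalled in \S 2.6.
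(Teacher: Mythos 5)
Your argument is correct: since $L$ has trivial discriminant, $\delta \in {k^*}^2$ and hence $k(\sqrt{\alpha\delta}) = k(\sqrt{\alpha}) = K$ as quadratic \'etale algebras, so the present statement is literally the specialization of Corollary \ref{deg} to this case, and that corollary delivers both conclusions at once. This is a legitimate and economical route, and your two sanity checks (orthogonal idempotents force splitting when $L$ is totally split; the cyclic case via the Tits process identification) are also sound.

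It is, however, not quite the route the paper takes. The paper splits the two conclusions: it first observes that $T \hookrightarrow {\bf SU}(B,\sigma)$ as a maximal $k$-torus (Lemma \ref{maxtorusina2}) and invokes Theorem \ref{cohomoo} --- the statement that a maximal $k$-torus with vanishing $H^1$ inside ${\bf SU}(B,\sigma)$ forces $\sigma$ to be distinguished, proved via Lemma \ref{copys} and an isotope computation inside the second Tits construction Albert algebra $J(B,\sigma,u,\gamma)$ --- to conclude that $\sigma$ is distinguished; it then cites Corollary \ref{deggg} for $B \cong M_3(K)$. Your proof instead routes everything through the \'etale Tits process machinery behind Corollary \ref{deg} (Theorem \ref{titsisom}, the Petersson--Racine splitting criterion, and again Lemma \ref{copys}). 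The two arguments for the ``distinguished'' half are genuinely different in mechanism, though they share Lemma \ref{copys} as the common engine; yours has the advantage of being a pure specialization requiring no new reasoning beyond the identification $k(\sqrt{\alpha\delta})=K$, while the paper's version exhibits the statement as a consequence of the more group-theoretic Theorem \ref{cohomoo}. Either way the result stands.
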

\begin{proof} Since $T \hookrightarrow {\bf SU}(B, \sigma)$ over $k$ (see Lemma \ref{maxtorusina2}) and $H^1(k, T)=0$, by Theorem \ref{cohomoo}, $\sigma$ is distinguished. Also, by Corollary \ref{deggg}, $B \cong M_3(K)$.
\end{proof}
\begin{theorem}{\label{introg2}} Let $G$ be a group of type $G_2$ over $k$. Then $G$ splits over $k$ if and only if there exist a maximal $k$-torus  $T\subset G$ such that $H^1(k,T)=0$. 
\end{theorem}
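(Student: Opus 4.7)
The plan is to handle the forward direction by Hilbert's Theorem 90 and to derive the converse from the already-proved type $A_2$ analogue (Theorem \ref{cohomoo}). For the forward direction, if $G$ is $k$-split then any $k$-split maximal torus $T \cong \mathbb{G}_m \times \mathbb{G}_m$ satisfies $H^1(k,T) = 0$ by Hilbert 90, so the content is in the converse.

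For the converse, I would let $T \subset G$ be a maximal $k$-torus with $H^1(k,T) = 0$. By Proposition \ref{maximaltorusG2} and its proof, $T$ is the $K$-unitary torus associated to an \'etale pair $(L,K)$ of $k$-dimensions $3, 2$, and moreover $T$ fixes a quadratic \'etale subalgebra $K \subseteq C := Oct(G)$ pointwise. If $K \cong k \times k$, then $C$ contains a nontrivial idempotent, so the Pfister form $n_C$ is isotropic, hence hyperbolic, and $G$ splits. I may therefore assume that $K$ is a quadratic field extension of $k$. Next, I would set $H := {\bf Aut}(C/K)$; by Theorem \ref{typeA2}, $H$ is a simple, simply connected $k$-group of type $A_2$, and $T \subset H$ is a maximal $k$-torus (as both have rank $2$). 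By Proposition \ref{hermitian} the orthogonal complement $K^{\perp} \subset C$ carries a rank-$3$ $K$-hermitian form $h$, and $H \cong {\bf SU}(K^{\perp}, h) \cong {\bf SU}(M_3(K), \sigma_h)$, where $\sigma_h$ is the adjoint involution. Applying Theorem \ref{cohomoo} to this realization of $H$, the vanishing $H^1(k,T) = 0$ forces $\sigma_h$ to be distinguished, i.e.\ $f_3(M_3(K), \sigma_h) = 0$.

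To finish, I need the identification $Oct(H) = C$, equivalently $f_3(M_3(K), \sigma_h) = n_C$, so that the preceding vanishing forces $n_C = 0$ and hence $C$ (and $G$) to split. This compatibility of the $Oct$ invariants under the natural embedding $H \hookrightarrow G$ is the main (and only delicate) obstacle. I would verify it by a direct Pfister form calculation: writing $C = K \oplus Kf \oplus Ke \oplus K(fe)$ in an iterated Cayley-Dickson decomposition with $\beta = -f^2$, $\gamma = -e^2$, and $K = k(\sqrt{\alpha})$ yields $n_C = \langle\langle \alpha, \beta, \gamma \rangle\rangle$; in the induced $K$-basis of $K^{\perp}$ the hermitian form is $h = diag(-\beta, -\gamma, \beta\gamma)$, and substituting into the formula $f_3(M_3(K), \sigma_h) = \langle\langle \alpha, -h_1 h_2, -h_2 h_3 \rangle\rangle$ from Section 2.2 simplifies (using that squares vanish in Pfister slots) to $\langle\langle \alpha, \beta, \gamma \rangle\rangle = n_C$, as required.
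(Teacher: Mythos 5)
Your proof is correct and follows essentially the same route as the paper: realize the torus $T$ with $H^1(k,T)=0$ inside ${\bf Aut}(C/K)\cong {\bf SU}(M_3(K),*_h)$ via the hermitian structure on $K^{\perp}$ (the content of \S 2.8 / Proposition \ref{maximaltorusG2}), and invoke Theorem \ref{cohomoo} to conclude that $*_h$ is distinguished. The only divergence is at the final step, where the paper cites (\cite{NH}, Theorem 4.4) to pass from ``$*_h$ distinguished'' to ``$G$ splits,'' whereas you verify the identity $f_3(M_3(K),*_h)=n_C$ by a direct Cayley--Dickson/Pfister-form computation (which does go through, since $n_C=\langle 1,-\alpha\rangle\otimes\langle 1,h_1,h_2,h_3\rangle$ and the hermitian form on $K^\perp$ has trivial discriminant); this is a legitimate self-contained substitute for that citation.
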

\begin{proof} 
 Let $T\subset G$ be a maximal $k$-torus such that $H^1(k,T)=0$. As in $\S 2.8$, there exists a quadratic \'etale $k$-algebra $K$ and $h \in GL_3(k)$ such that $T \subseteq  {\bf SU}(M_3(K), *_h)  \subseteq G$, where $*_h$ denotes the involution on $M_3(K)$ given by $*_h(X)=h^{-1}\overline{X}^th$. Since $H^1(k,T)=0$, $*_h$ is a distinguished involution (see Theorem \ref{cohomoo}). Hence by (\cite{NH}, Theorem 4.4), $G$ splits over $k$. For the converse, we choose $T$ to be a split maximal $k$-torus in $G$, then $H^1(k,T)=0$.
\end{proof}
\noindent
{\bf The Real Case}
\vskip0.5mm
\noindent
Let $G$ be a group of type $F_4$ over $\mathbb{R}$. Let $L, K$ be \'etale algebras over $\mathbb{R}$ of dimension $3, 2$ resp. and $T$ be the $K$-unitary torus associated with the pair $(L, K)$. Suppose $T \hookrightarrow G$ over $\mathbb{R}$. If $H^1(\mathbb{R}, T)=0$ then $f_5(G)= 0$. Note that $K= \mathbb{R} \times \mathbb{R}$ or $\mathbb{C}$. If $K= \mathbb{R} \times \mathbb{R}$, then by Theorem \ref{albi1}, $f_5(A)= 0$. Suppose $K= \mathbb{C}$. Note that $L= \mathbb{R} \times \mathbb{R} \times \mathbb{R}$ or $L= \mathbb{R} \times \mathbb{C}$. If $L= \mathbb{R} \times \mathbb{R} \times \mathbb{R}$, then by case {\bf (i)} of proof of Theorem \ref{isom}, $T \cong \mathbb{G}_m \times \mathbb{G}_m$ over $\mathbb{R}$. Hence $\mathbb{R}$-rank of $G \geq 2$ and by Lemma \ref{isotropycond}, $f_3(G)= f_5(G)= 0$. Suppose $L= \mathbb{R} \times \mathbb{C}$. Then by case {\bf (ii)} of proof of Theorem \ref{isom}, $T \cong R_{\mathbb{C}/\mathbb{R}}(\mathbb{G}_m$) over $\mathbb{R}$. Hence $\mathbb{R}$-rank of $G \geq 1$ and by Lemma \ref{isotropycond}, $f_5(G)= 0$.
\begin{remark}
The real case along with Example 4, leads us to raise the following question: Let $L, K$ be \'etale algebras of dimension $3, 2$ resp. and $T$ be the $K$-unitary torus associated to the pair $(L, K)$. Let $G$ be a group of type $F_4$ defined over $k$ and $T\hookrightarrow G$ over $k$, then does $H^1(k,T)=0$ imply $f_5(G)=0$? Though we have not been able to settle this over an arbitrary field, we can prove a weaker result. 
\end{remark}
\noindent
We first state a result which will be required,
\begin{theorem} (Knebusch norm principle) (\cite{TYL}, Chap. VII, Thm. 5.1){\label{knp}} Let $K/F$ be a finite extension of degree $n$ and $q$ be a quadratic form over $F$. Let $x \in K^*$. If $x \in D_K(q_K)$ then $N_{K/F}(x)$ is a product of $n$ elements of $D_F(q)$. (In particular $N_{K/F}(x) \in [D_F(q)]$). Hence if $q$ is a Pfister form ove $F$ and $q_K$ is isotropic, then $N_{K/k}(K^*) \subseteq D_F(q)$.
\end{theorem}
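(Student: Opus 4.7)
The plan is to prove Knebusch's norm principle in two main steps: first reduce to the case of a simple field extension via a tower argument, then handle simple extensions using a Cassels--Pfister style polynomial substitution. One may immediately assume $q$ is anisotropic over $F$, since an isotropic form over $F$ is universal and $D_F(q) = F^*$ renders every assertion trivial.

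\emph{Step 1 (tower reduction).} Choose a tower $F = F_0 \subset F_1 \subset \cdots \subset F_r = K$ of simple extensions with $[F_i : F_{i-1}] = n_i$ and $n = \prod_i n_i$. By induction on $r$ and transitivity of the norm, it suffices to prove the following simple-extension version: if $y \in F_i^*$ is represented by $q_{F_i}$, then $N_{F_i/F_{i-1}}(y)$ is a product of $n_i$ elements of $D_{F_{i-1}}(q_{F_{i-1}})$. Composing through the tower then expresses $N_{K/F}(x)$ as a product of $\prod_i n_i = n$ elements of $D_F(q)$, giving the statement for arbitrary finite extensions.

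\emph{Step 2 (simple case).} Write $K = F(\alpha)$ with minimal polynomial $p(t) \in F[t]$ of degree $n$. Given a representation $x = \sum_i a_i v_i^2$ of $x \in K^*$ by $q_K = \langle a_1, \ldots, a_m\rangle_K$, I would write $x = h(\alpha)$ and $v_i = f_i(\alpha)$ with all polynomials of degree $< n$, producing in $F[t]$ the identity
\[ \sum_i a_i f_i(t)^2 \;=\; h(t) + p(t) g(t). \]
The left-hand side, of degree at most $2n-2$, exhibits $h(t) + p(t) g(t)$ as a polynomial value of $q$ over $F(t)$. The Cassels--Pfister substitution trick then lets me peel off one linear factor of $p(t)$ at a time: using a root $\alpha_j$ of $p$ and tracking the leading coefficient, each step extracts a value $h(\alpha_j)$ (up to a multiplicative $D_F(q)$-correction) and leaves a polynomial representation of strictly smaller degree. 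Iterating $n$ times yields $N_{K/F}(x) = \prod_j h(\alpha_j)$ as a product of $n$ elements of $D_F(q)$.

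The hard part, I expect, is Step 2: ensuring that at each iteration the residual polynomial remains representable by $q$ over the appropriate extension, and that the accumulated product of extracted values is exactly $N_{K/F}(x)$ and not merely some element of the subgroup $[D_F(q)]$. The careful bookkeeping here (leading coefficient of $p$, anisotropy of $q$, and the use of Cassels--Pfister to stay in polynomials of degree $\leq n-1$) is the technical heart of the argument.

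The concluding assertions are then immediate. The ``in particular'' clause follows because any product of elements of $D_F(q)$ lies in $[D_F(q)]$. For the Pfister case, multiplicativity of Pfister forms gives $D_F(q) \cdot D_F(q) \subseteq D_F(q)$, so $D_F(q) = [D_F(q)]$ is a subgroup of $F^*$; moreover, an isotropic Pfister form is hyperbolic and hence universal (as recorded earlier in the excerpt). Thus every $x \in K^*$ is represented by $q_K$, and the norm principle gives $N_{K/F}(x) \in D_F(q)$, as required.
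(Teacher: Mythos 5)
The paper offers no proof of this statement: it is quoted verbatim as a known result with a pointer to Lam (\cite{TYL}, Chap.~VII, Thm.~5.1), so there is nothing internal to compare your argument against; what follows is an assessment of your sketch on its own terms.

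Your frame is the right one --- reduce to simple extensions by a tower and transitivity of the norm, set up the polynomial identity $\sum_i a_i f_i(t)^2 = h(t) + p(t)g(t)$ with everything of controlled degree, and dispatch the ``in particular'' clause and the Pfister consequence by multiplicativity and universality of isotropic Pfister forms (those last steps are correct and complete). But Step 2, which you yourself flag as the technical heart, is not actually carried out, and the mechanism you describe for it cannot work as stated. You propose to ``use a root $\alpha_j$ of $p$'' and extract $h(\alpha_j)$ at each stage, concluding with $N_{K/F}(x)=\prod_j h(\alpha_j)$ ``as a product of $n$ elements of $D_F(q)$.'' The conjugates $h(\alpha_j)$, however, live in the Galois closure of $K/F$, not in $F^*$, so they cannot be the required elements of $D_F(q)\subseteq F^*$; and substituting a root of $p$ into your identity only kills the $p(t)g(t)$ term and recovers the original hypothesis over a conjugate copy of $K$, yielding no information over $F$. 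The genuine argument has to manufacture a \emph{different} factorization of $N_{K/F}(x)$ into elements of $F^*$: one uses anisotropy of $q$ to show that the represented polynomial $P=h+pg$ has even degree with leading coefficient in $D_F(q)$ (no cancellation among the top terms of the $f_i$), Cassels--Pfister specialization at points of $F$, and a resultant identity $N_{K/F}(h(\alpha))=\pm\,\mathrm{lc}(P)^n\prod_{\pi\mid P}N_{F[t]/(\pi)/F}(p)^{v_\pi(P)}$ together with an induction passing to the residue fields $F[t]/(\pi)$ of the irreducible factors of $P$ (where $q$ becomes isotropic when $v_\pi(P)$ is odd). None of this bookkeeping appears in your sketch, and ``peel off one linear factor of $p$ at a time'' does not substitute for it. As it stands the proposal is a plan with the central step missing and pointed in a direction that would fail.
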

\begin{theorem}{\label{introf4}} Let $L, K$ be \'etale algebras over $k$ of dimension $3, 2$ resp. and $E$ be the $K$-unitary algebra and $T$ the $K$-unitary torus associated with the pair $(L, K)$. Let $G$ be a group of type $F_4$ (resp. $G_2$ or a simple simply connected group of type $A_2$). Assume there is a $k$-embedding $T \hookrightarrow G$. If $H^1({\bf U}(E, \tau))= 0$ then $f_5(A)= 0$ (resp. Oct(G) splits).
\end{theorem}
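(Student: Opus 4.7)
The plan is to treat all three cases by a single cohomological argument: first identify $H^{1}(k,{\bf U}(E,\tau))$ explicitly, then exploit the hypothesis to force the quadratic algebra $K$ to have all of $k^{*}$ as its norm group, and finally combine this with the Pfister-form divisibilities already established in Theorems~\ref{albi1} and~\ref{KC} to kill the relevant mod-$2$ Galois invariant.

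First I would compute $H^{1}(k,{\bf U}(E,\tau))$ from the short exact sequence of $k$-tori
\[
1 \to {\bf U}(E,\tau) \to R_{E/k}(\mathbb{G}_{m}) \xrightarrow{\,x\mapsto x\tau(x)\,} R_{L/k}(\mathbb{G}_{m}) \to 1,
\]
where surjectivity on the right is checked on $k_{\rm sep}$-points (there $E\otimes k_{\rm sep}$ decomposes under $\tau$ as a product of hyperbolic lines over $L\otimes k_{\rm sep}$). Taking Galois cohomology and applying Hilbert $90$ to $R_{E/k}(\mathbb{G}_{m})$ yields $H^{1}(k,{\bf U}(E,\tau)) \cong L^{*}/N_{E/L}(E^{*})$. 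Thus the hypothesis amounts to $L^{*} = N_{E/L}(E^{*})$. For any $l = w\tau(w) \in N_{E/L}(E^{*})$, transitivity of the norm through $k\subset L\subset E$ and $k\subset K\subset E$ gives
\[
N_{L/k}(l) \;=\; N_{E/k}(w) \;=\; N_{K/k}\bigl(N_{E/K}(w)\bigr) \;\in\; N_{K/k}(K^{*}),
\]
so $N_{L/k}(L^{*}) \subseteq N_{K/k}(K^{*})$. Since $L$ is cubic, $(k^{*})^{3} \subseteq N_{L/k}(L^{*})$, and every $y\in k^{*}$ satisfies $y = y^{3}\cdot y^{-2}$, so $(k^{*})^{3}(k^{*})^{2} = k^{*}$. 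Combined with $(k^{*})^{2}\subseteq N_{K/k}(K^{*})$, the chain
\[
k^{*} \;=\; (k^{*})^{3}(k^{*})^{2} \;\subseteq\; N_{K/k}(K^{*})\cdot(k^{*})^{2} \;=\; N_{K/k}(K^{*})
\]
forces $k^{*} = N_{K/k}(K^{*})$. Equivalently, writing $K = k(\sqrt{\alpha})$, the cup product $(\alpha)\cup(c) \in H^{2}(k,\mathbb{Z}/2\mathbb{Z})$ vanishes for every $c \in k^{*}$ (i.e., every $2$-fold Pfister form $\langle\langle\alpha,c\rangle\rangle$ is hyperbolic).

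Finally I would plug this universal vanishing into the known factorisations. For the $F_{4}$ case Theorem~\ref{albi1} writes $f_{5}(A) = \langle 1,-\alpha\rangle \otimes \gamma$ with $\gamma = \langle\langle b_{1},b_{2},b_{3},b_{4}\rangle\rangle$, so by associativity of cup product
\[
e_{5}\bigl(f_{5}(A)\bigr) \;=\; \bigl((\alpha)\cup(b_{1})\bigr)\cup (b_{2})\cup(b_{3})\cup(b_{4}) \;=\; 0,
\]
and since the Arason invariant detects hyperbolicity for Pfister forms, $f_{5}(A)=0$. For the $G_{2}$ and $A_{2}$ cases, Theorem~\ref{KC} gives $K\subseteq Oct(G)$, so the $3$-fold Pfister norm form $n_{C}$ of $C := Oct(G)$ is divisible by $\langle 1,-\alpha\rangle$ (by Theorem~\ref{subformfactor}); writing $n_{C} = \langle 1,-\alpha\rangle \otimes \langle\langle b_{1},b_{2}\rangle\rangle$, the identical cup-product collapse in $H^{3}(k,\mathbb{Z}/2\mathbb{Z})$ yields $e_{3}(n_{C}) = 0$, so $Oct(G)$ splits. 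I expect the only fiddly point to be handling the split subcases ($K = k\times k$ or $L$ not a field) carefully in the first step since the surjectivity of $x\mapsto x\tau(x)$ and the Hilbert $90$ input need to be checked without assuming $E$ is a field; but this should be routine, and the algebraic core of the argument is the elementary cubic-parity observation in Step~2 that a cubic-norm subgroup of $k^{*}$ is already cofinal modulo squares.
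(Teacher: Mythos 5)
Your proposal is correct, and its overall skeleton matches the paper's: compute $H^1(k,{\bf U}(E,\tau))\cong L^*/N_{E/L}(E^*)$ from the norm sequence, convert the vanishing hypothesis into the statement that $N_{K/k}(K^*)=k^*$, and then feed this into Theorems~\ref{albi1} and~\ref{KC}. Where you genuinely diverge is in the middle step. The paper splits into three cases ($K$ split; $L=k\times K_0$; $L$ a field) and, in the last case, invokes the Knebusch norm principle over the cubic extension $L$ together with Springer's theorem to get universality of $\langle 1,-\alpha\rangle$ over $k$. You replace all of this with a single, case-free computation: $L^*=N_{E/L}(E^*)$ plus norm transitivity through $k\subset L\subset E$ and $k\subset K\subset E$ gives $N_{L/k}(L^*)\subseteq N_{K/k}(K^*)$, and then the coprimality trick $(k^*)^3(k^*)^2=k^*$ forces $N_{K/k}(K^*)=k^*$. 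This is cleaner and more elementary: it avoids the norm principle entirely and sidesteps the paper's somewhat loosely justified appeal to Springer's theorem for universality (as opposed to anisotropy). The only blemish is at the very end: to conclude $f_5(A)=0$ you appeal to injectivity of $e_5$ on $5$-fold Pfister forms, which is a deep theorem and is not needed here. Since you have already shown that $\langle\langle\alpha,b_1\rangle\rangle$ is hyperbolic (the elementary $2$-fold case), simply write $f_5(A)=\langle\langle\alpha,b_1\rangle\rangle\otimes\langle\langle b_2,b_3,b_4\rangle\rangle$ and use that a tensor product with a hyperbolic Pfister factor is hyperbolic; the same remark applies to $n_C$ in the $G_2$ and $A_2$ cases.
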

 \begin{proof} Consider the exact sequence \\
$$ 1 \longrightarrow {\bf U}(E, \tau) \longrightarrow E^* \xrightarrow{N_{E/L}} L^* \longrightarrow 1.$$
The long exact cohomology sequence yields the exact sequence,
$$ {\bf U}(E, \tau) \longrightarrow E^* \xrightarrow{N_{E/L}} L^* \longrightarrow H^1({\bf U}(E, \tau)) \longrightarrow 1.$$
Hence $$H^1({\bf U}(E, \tau))= L^*/ N_{E/L}(E^*).$$
Since $H^1({\bf U}(E, \tau))= 0$, $L^*= N_{E/L}(E^*).$ Let $K= k(\sqrt {\alpha})$ and $q=<1, -\alpha>$. If $K= k \times k$ then by Theorem \ref{albi1}, $f_5(G)= 0$. Hence we may assume that $K$ is a field extension. If $L= k \times K_0$, for some quadratic \'etale algebra $K_0$ over $k$, then $H^1({\bf U}(E, \tau))= k^*/ N_{K/k}(K^*) \times  K_0^*/ N_{K_0 \otimes K/K_0}({K_0 \otimes K})^* .$ Since $H^1({\bf U}(E, \tau))= 0$, $k^*=  N_{K/k}(K^*)$. Hence $q$ is universal over $k$. By Theorem \ref{albi1}, $q$ divides $f_5(G)$ and is a subform of $f_5(G)$ (see Theorem \ref{subformfactor}). Hence  we have $f_5(G)= 0$. Suppose $L$ is a field extension. Now $q_L= N_{E/L}(E^*)$. Since $q_L$ splits over $E$, by Theorem \ref{knp}$, N_{E/L}(E^*) =L^* \subseteq D_L(q_L)$. Hence $q$ is universal over an odd degree extension $L$ of $k$. By Springer's theorem, $q$ is universal over $k$. Let $G$ be a group of type $F_4$. By Theorem \ref{albi1}, $q$ divides $f_5(G)$, hence $f_5(G)= 0$. Let $G$ be a group of type $G_2$ or $A_2$. By Theorem \ref{KC},  $q$ divides $f_3(G)$, hence $f_3(G)= 0$. Thus $Oct(G)$ splits.
\end{proof}
\section{Generation of groups $A_2$, $G_2$, $D_4$ and $F_4$ by rank-$2$ tori}
In this section we deduce the number of rank-$2$-tori required for the generation of groups of type $A_2$, $G_2$, and $F_4$ arising from division algebras and subgroups of type $D_4$ of ${\bf Aut}(A)$ for $A$ an Albert division algebra. Let $G$ be a simple, simply connected group of type $A_2$ over $k$. Then the minimum number of maximal tori required to generate $G$ is $2$, when $k$ is a perfect field. Let $H_i$, $i= 1, 2$, be algebraic subgroups of an algebraic group $G$. By $<H_1, H_2>$ we will denote the algebraic subgroup of $G$ generated by $H_i$, $i= 1, 2.$ We will often use the Borel-De Siebenthal algorithm. For details see (\cite{BS}). Let $X$ and $Y$ be types of root systems. If $X$ is a subsystem of $Y$, we write $X \subseteq Y$.
We begin with,
\begin{lemma}{\label{repeat}} Let $G$ be a $k$-anisotropic, connected, reductive algebraic group over a perfect field $k$. Let $H$ be a connected subgroup of $G$. Then $H$ is a reductive, $k$-anisotropic subgroup.
\end{lemma}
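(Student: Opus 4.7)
The plan is to argue in two stages: first that $H$ is reductive, and then that $H$ inherits anisotropy from $G$. Throughout I will use two key facts: (i) since $k$ is perfect, the unipotent radical $R_u(H)$ is defined over $k$, and (ii) Proposition \ref{unipotred}, which characterizes anisotropic reductive $k$-groups (over perfect infinite $k$) as those whose group of $k$-points contains no non-trivial unipotents.

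First I would show that $H$ is reductive. Since $k$ is perfect, $R_u(H)$ is a smooth, connected, unipotent $k$-subgroup of $H$, and hence of $G$. Over a perfect field, every smooth connected unipotent group admits a composition series with successive quotients isomorphic to $\mathbb{G}_a$, so if $R_u(H)$ were non-trivial then $R_u(H)(k)$ would contain a non-trivial unipotent element. This element lies in $G(k)$, contradicting Proposition \ref{unipotred} applied to the anisotropic reductive group $G$. Therefore $R_u(H)=1$, and $H$ is reductive.

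Next I would deduce that $H$ is $k$-anisotropic. One clean route is to apply Proposition \ref{unipotred} directly to $H$, now that it is known to be connected reductive over perfect $k$: any unipotent element of $H(k)$ lies in $G(k)$, so by anisotropy of $G$ the group $H(k)$ has no non-trivial unipotents, whence $H$ is anisotropic. An alternative, more direct argument which avoids reinvoking Proposition \ref{unipotred}, is to observe that any $k$-split torus $S\subseteq H$ is in particular a $k$-split torus of $G$; by anisotropy of $G$, $S$ must be central in $G$, so $S\subseteq Z(G)\cap H\subseteq Z(H)$, showing that every $k$-split torus of $H$ is central in $H$.

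The only delicate point is the invocation of perfectness: without it, $R_u(H)$ need not descend to a $k$-subgroup, and one would be dealing with pseudo-reductive phenomena. Thus the main thing to justify carefully is that over perfect $k$, $R_u(H)$ is a $k$-defined smooth connected unipotent subgroup with enough $k$-points to produce a non-trivial unipotent in $G(k)$. Once this is in place the rest of the argument is essentially formal.
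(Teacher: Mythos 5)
Your proof is correct and follows essentially the same route as the paper: apply Proposition \ref{unipotred} to $G$ to rule out non-trivial unipotents in $G(k)$, hence in $H(k)$, conclude $R_u(H)=1$ using perfectness of $k$, and then deduce anisotropy of $H$. The only cosmetic difference is that you justify the step $R_u(H)(k)=\{1\}\Rightarrow R_u(H)=\{1\}$ via the $\mathbb{G}_a$-filtration of split unipotent groups, whereas the paper invokes density of $k$-points, and you make explicit the final application of Proposition \ref{unipotred} to $H$, which the paper leaves implicit.
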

\begin{proof} Since $G$ is a $k$-anisotropic, by Prop. \ref{unipotred}, $G(k)$ has no non-trivial unipotents. Hence $H(k)$ has no non-trivial unipotents and $R_u(H)(k)= \{1\}$. Since $k$ is perfect, by density of $k$-points it follows that, $R_u(H)= \{1\}$.
\end{proof}
\begin{theorem}{\label{fact}}  Let $k$ be a perfect infinite field and $F$ be a quadratic \'etale $ k$-algebra. Let $(B, \sigma)$ be a degree $3$ central division algebra over $F$ with an involution $\sigma$ of the second kind. Let $G= {\bf SU}(B,\sigma)$.
Let $E_1,E_2 \subset B$ be $F$-unitary subalgebras of $B$ such that $\sigma$ restricts to $E_1$ and $E_2$. Let $\sigma_i= \sigma|_{E_i}$. Assume that ${\bf SU}(E_1, {\sigma_1})\neq {\bf SU}(E_2, {\sigma_2})$.  Then $$G= <{\bf SU}(E_1, {\sigma_1}), {\bf SU}(E_2, {\sigma_2})>.$$
\end{theorem}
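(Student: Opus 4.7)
The plan is to analyze the Zariski closure $H := \overline{\langle T_1, T_2 \rangle}$ in $G$, where $T_i := {\bf SU}(E_i,\sigma_i)$ for $i=1,2$, and to show that the identity component $H^\circ$ must coincide with $G$. By Theorem \ref{imp}, each $T_i$ is a maximal $k$-torus of $G$ of rank $2$. The central structural inputs are Lemma \ref{repeat} and Theorem \ref{involution}: because $G$ arises from the division algebra $B$, the group $G$ is $k$-anisotropic, so every connected $k$-subgroup of $G$ is reductive, and moreover $G$ contains no rank-$1$ $k$-torus.

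First I would verify that $H^\circ$ is a connected reductive $k$-subgroup of $G$ of rank $2$ that is not itself a torus. Reductivity is immediate from Lemma \ref{repeat} applied to the connected $k$-subgroup $H^\circ$ of the $k$-anisotropic, connected, reductive group $G$ over the perfect field $k$. Since $T_1 \subseteq H^\circ$ and $\mathrm{rank}(G)=2$, one has $\mathrm{rank}(H^\circ)=2$. If $H^\circ$ were a torus, it would be a rank-$2$, hence maximal, torus of $G$ containing $T_2$, and rank considerations would force $T_2 = H^\circ = T_1$, contradicting the hypothesis $T_1 \neq T_2$.

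The crux of the argument is ruling out the possibility that $H^\circ$ is a proper subgroup of $G$. Suppose for contradiction that $H^\circ \subsetneq G$. Because the root system $A_2$ admits no proper closed subsystem of rank $2$ (its only rank-$2$ subsystem is $A_2$ itself), the semisimple rank of $H^\circ$ must be strictly less than $2$. Having already excluded the torus case, $H^\circ$ must have semisimple rank exactly $1$. The almost-direct product decomposition $H^\circ = [H^\circ,H^\circ] \cdot Z(H^\circ)^\circ$ of a connected reductive $k$-group then gives $\dim Z(H^\circ)^\circ = \mathrm{rank}(H^\circ) - \mathrm{rank}([H^\circ,H^\circ]) = 2 - 1 = 1$. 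Thus $Z(H^\circ)^\circ$ is a rank-$1$ $k$-torus embedded in $G$, which directly contradicts Theorem \ref{involution}(2).

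Therefore $H^\circ$ has semisimple rank $2$, which by the same subsystem count forces $H^\circ = G$, and hence $H = G$. The delicate point in executing this plan will be the step producing the $1$-dimensional central $k$-torus in the proper-subgroup case: one must ensure that $Z(H^\circ)^\circ$ is genuinely $k$-defined and of the asserted dimension, which relies on the standard fact that a connected reductive $k$-group is the almost-direct product of its derived subgroup and the identity component of its center, with ranks adding correctly.
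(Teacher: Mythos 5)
Your proposal is correct and follows essentially the same route as the paper: both reduce to showing that the connected reductive $k$-subgroup generated by the two tori (reductive by Lemma \ref{repeat}, non-toral since the $T_i$ are distinct maximal tori) cannot have semisimple rank $0$ or $1$, the latter case being killed by the non-existence of rank-$1$ $k$-tori in $G$. The only cosmetic differences are that you extract the forbidden rank-$1$ torus as $Z(H^{\circ})^{\circ}$ and invoke Theorem \ref{involution}(2) directly, whereas the paper takes a maximal torus of $[H,H]$ and re-derives that contradiction, and that you exclude proper subgroups of semisimple rank $2$ by observing that $A_2$ has no proper rank-$2$ subsystem rather than by listing the candidate types and eliminating them via Borel--De Siebenthal and dimension counts.
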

\begin{proof} Let $H= <{\bf SU}(E_1, {\sigma_1}), {\bf SU}(E_2, {\sigma_2})>$. Then $H$ is a connected $k$-subgroup of $G$. Since $B$ is a division algebra, $G$ is a $k$-anisotropic group (see Theorem \ref{involution}). Notice that since ${\bf SU}(E_i, {\sigma_i}),~i=1,2$ are maximal tori of $G$, $H$ is a non-toral subgroup. By Lemma \ref{repeat}, $H$ is a connected, reductive, $k$-anisotropic, non-toral subgroup of $G$. Since $G$ has absolute rank-$2$, $[H, H]$ is a semisimple group of absolute rank $1$ or $2$. Hence $[H, H]$ must be of type $A_2,~A_1, ~A_1\times A_1,~ G_2~\text{or}~B_2= C_2$. By the Borel-De Siebenthal algorithm, $A_1 \times A_1, B_2 \nsubseteq A_2$. Notice that $G_2 \nsubseteq A_2$ (since Lie algebra of $G_2$ has dimension $14$ whereas the dimension of Lie algebra of $A_2$ is $8$). If $[H,H]$ is of type $A_1$, then $G$ has a $k$-torus $S$ of absolute rank $1$, $S \subseteq [H,H]$. Necessarily, $S= {\bf M}^{(1)}$, the norm torus of a quadratic extension $M/k$ (\cite{Vos}, Chap.II, \S IV, Example 6). But then, $S$ splits over $M$ and hence $G$ becomes isotropic over $M$. 
By Prop. \ref{coprime3}, $B$ remains a division algebra over $M$. Hence by Theorem \ref{involution}, $G$ remains anisotropic over $M$, a contradiction. Therefore $[H,H]$ cannot have type $A_1$. Hence $[H, H]$ must be of type $A_2$. Now $H \subseteq G= [G, G]= [H, H] \subseteq H$. Therefore $H= G$.
\end{proof}
\noindent
Let $G$ be a group of type $G_2$ (resp. $F_4$) defined over $k$. We now calculate the number of rank-$2$ tori required to generate $G$. In (\cite{NH}, Theorem 3.11, 4.1) we proved that a group of type $G_2$ is generated by its $k$-subgroups of type $A_1$ and a group of type $F_4$ is generated by its $k$-subgroups of type $A_2$. The results below are continuation of that. We first prove that a group of type $G_2$ (resp. $F_4$) is also generated by two $k$-subgroups of type $A_2$ (resp. $D_4$). Using this we deduce that a group of type $G_2$ (resp. $F_4$) is generated by three (resp. four) rank-$2$ tori.
\begin{theorem}{\label{g2gen}} Let $C$ be an octonion division algebra over $k$, where $k$ is a perfect (infinite) field. Then $G= {\bf Aut}(C)$ is generated by two $k$-subgroups of type $A_2$.
\end{theorem}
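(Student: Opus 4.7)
The strategy mirrors the proof of Theorem \ref{fact}: produce two distinct simple $k$-subgroups of type $A_2$ in $G$, form the subgroup they generate, and use the $k$-anisotropy of $G$ together with the Borel--de Siebenthal classification to force this subgroup to be all of $G$. By Theorem \ref{typeA2}, every quadratic \'etale subalgebra $K \subset C$ yields a simply connected simple $k$-subgroup $\mathbf{Aut}(C/K) \subseteq G$ of type $A_2$. I first choose two such subalgebras $K_1 \ne K_2$: since $C$ is an octonion division algebra and $\mathrm{char}\,k \ne 2$, for any trace-zero $x \in C \setminus k$ the minimal polynomial is $t^2 + N_C(x)$ with $N_C(x) \ne 0$, so $k[x] = k(\sqrt{-N_C(x)})$ is a quadratic \'etale subfield of $C$. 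Pick $x_1$ and then $x_2 \notin k[x_1]$ with trace zero to obtain $K_1 \ne K_2$. Set $H_i := \mathbf{Aut}(C/K_i)$ and $H := \langle H_1, H_2 \rangle$.

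\noindent
Since $H$ is connected (generated by connected subgroups) and $G$ is $k$-anisotropic by Proposition \ref{red1} (as $C$ is a division algebra), Lemma \ref{repeat} implies $H$ is connected, reductive, and $k$-anisotropic. As $H$ contains the non-toral subgroup $H_1$, the derived subgroup $[H,H]$ is a nontrivial semisimple group of absolute rank $1$ or $2$. By Borel--de Siebenthal applied to the $G_2$ root system, the possible absolute types of $[H,H]$ are $A_1$, $A_1 \times A_1$, $A_2$, and $G_2$ (note $B_2 \not\subseteq G_2$). The types $A_1$ and $A_1 \times A_1$ contain no connected simple subgroup of rank $2$, so both are excluded by the presence of $H_1 \subseteq [H,H]$.

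\noindent
The main remaining point is to rule out $[H,H]$ of type $A_2$. If this held, then since $A_2$ has no proper semisimple subgroups of maximal rank, each of $H_1, H_2$ would have to coincide with $[H,H]$, forcing $H_1 = H_2$. I refute this by the following observation: if $H_1 = H_2$, then this common subgroup fixes pointwise the $k$-subalgebra $A \subseteq C$ generated by $K_1 \cup K_2$. Since $K_1 \ne K_2$, $\dim_k A \geq 3$, and since subalgebras of an octonion algebra have dimension in $\{1,2,4,8\}$, in fact $\dim_k A \geq 4$, so $A$ contains a quaternion subalgebra $Q$. Then $H_1 \subseteq \mathbf{Aut}(C/Q)$, which by Theorem \ref{typeA2} is simple of type $A_1$, contradicting $H_1$ of type $A_2$ on rank grounds.

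\noindent
Hence $[H,H]$ has type $G_2$; comparing with $G$ of type $G_2$ and absolute rank $2$ forces $[H,H] = G$, whence $G = [G,G] \subseteq [H,H] \subseteq H \subseteq G$ gives $H = G$. The main obstacle in the argument is thus the $A_2$ case, which hinges on the simple geometric fact that two distinct quadratic \'etale subalgebras of an octonion algebra generate a $k$-subalgebra of dimension at least four; everything else is a direct adaptation of the Borel--de Siebenthal technique used in Theorem \ref{fact}.
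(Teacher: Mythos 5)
Your proposal is correct and follows essentially the same route as the paper's proof: form $H=\langle \mathbf{Aut}(C/K_1),\mathbf{Aut}(C/K_2)\rangle$ for distinct quadratic subfields, use Lemma \ref{repeat} and the Borel--de Siebenthal classification to reduce the type of $[H,H]$ to $A_2$ or $G_2$, and kill the $A_2$ case by observing that $H_1=H_2$ would force the group to fix pointwise the quaternion subalgebra generated by $K_1$ and $K_2$, contradicting Theorem \ref{typeA2}. The only cosmetic difference is that you invoke Borel--de Siebenthal directly where the paper cites Proposition \ref{toralnh3}.
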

\noindent
For the proof of this theorem, we need the following
\begin{proposition}{\label{toralnh3}} (\cite{NH}, Prop. 4.1) Let $C$ be an octonion division algebra over $k$. Let $G= {\bf Aut}(C)$. Let $H$ be a proper connected reductive, non-toral subgroup of $G$ defined over $k$. Then $[H,H]$ is of type $A_1$, $A_1 \times A_1$ or $A_2$.
\end{proposition}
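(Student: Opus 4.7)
The plan is to determine the Cartan--Killing type of $[H,H]$ after base change to the algebraic closure (types are preserved under base change), exploiting the absolute rank constraint together with the Borel--de Siebenthal classification of closed root subsystems of $G_2$. Write $\bar{G} = G\times_k \bar{k}$, $\bar{H} = H \times_k \bar{k}$, and $\Phi$ for the root system of $\bar{G}$ with respect to some fixed maximal torus.

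First I would record the structural setup: $G = {\bf Aut}(C)$ is connected simple of type $G_2$ by Proposition \ref{red1}, of absolute rank $2$ and dimension $14$. Since $H$ is connected reductive, $[H,H]$ is connected semisimple; since $H$ is non-toral, $[H,H] \neq 1$; and the absolute rank of $[H,H]$ is bounded above by $\mathrm{rank}(G) = 2$. If the absolute rank is $1$, the only possibility is type $A_1$ and we are done. So the substantive case is when $[H,H]$ has absolute rank $2$.

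In this case $[\bar{H},\bar{H}]$ contains a maximal torus $T$ of $\bar{G}$, and its root system with respect to $T$ is a closed subsystem $\Psi \subseteq \Phi$. By the Borel--de Siebenthal algorithm (\cite{BS}) applied to the extended Dynkin diagram $\tilde{G}_2$, the proper maximal closed subsystems of $\Phi$ are $A_2$ (deleting the short-simple-root node) and $A_1 \times A_1$ (deleting the long-simple-root node); iterating these, one gets only $A_1$ as a further sub-subsystem. In particular the rank-$2$ simple type $B_2 = C_2$ does not embed as a closed subsystem of $\Phi$, and the only rank-$2$ closed subsystems of $\Phi$ are $A_2$, $A_1 \times A_1$, and $\Phi$ itself.

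Finally I would exclude the case $\Psi = \Phi$: here $[\bar{H},\bar{H}]$ would be a connected reductive subgroup of $\bar{G}$ containing a maximal torus of $\bar{G}$ and having root system equal to all of $\Phi$, forcing $[\bar{H},\bar{H}] = \bar{G}$ by uniqueness of connected reductive subgroups with prescribed root data. Then $\bar{H} \supseteq \bar{G}$, hence $H = G$, contradicting the assumption that $H$ is proper. This leaves $[H,H]$ of type $A_1$, $A_1 \times A_1$, or $A_2$, as required. The step requiring the most care is the closedness of $\Psi$ inside $\Phi$ together with the equal-rank uniqueness of reductive subgroups from their root data; both are standard consequences of the structure theory of reductive groups, so no genuinely new obstacle arises.
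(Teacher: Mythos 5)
Your argument is correct. The paper does not actually prove this proposition---it imports it verbatim from (\cite{NH}, Prop.\ 4.1)---but your route (rank $1$ forces type $A_1$; rank $2$ forces an equal-rank semisimple subgroup whose roots form a proper closed subsystem of $G_2$, which by Borel--de Siebenthal must be $A_1\times A_1$ or the long-root $A_2$, the case $\Psi=\Phi$ being excluded by properness) is exactly the style of Borel--de Siebenthal analysis this paper itself uses in \S 6, and the one point needing care---that the root set of a connected reductive subgroup containing a maximal torus is a \emph{closed} subsystem---is unproblematic here since the characteristic is different from $2$ and $3$.
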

\noindent
We now prove Theorem \ref{g2gen}.
\begin{proof} Choose quadratic subfields $K_1,K_2 \subset C$ such that $K_1 \cap K_2= k$. Let $H_i= {\bf Aut}(C/K_i)$, $i=1,2$. By Theorem \ref{typeA2}, $H_i$, $i=1, 2$, are simple, simply connected subgroups of type $A_2$. Let $H$ denote the closed subgroup of $G$ generated by $H_i$, $i=1,2$. By Lemma \ref{repeat}, $H$ is a connected, reductive, $k$-anisotropic, non-toral subgroup of $G$ containing $H_i$, $i=1,2$ properly. By Prop. \ref{toralnh3}, $[H, H]$ is of type $A_1$, $A_1 \times A_1$, $A_2$ or $G_2$. Now $H_1= [H_1, H_1]\subseteq [H, H]$. Since $H_1$ is of type $A_2$, by the Borel-De Siebenthal algorithm, $[H, H]$ cannot be of type $A_1$ or $A_1\times A_1$. Therefore $[H,H]$ must be of type $A_2$ or $G_2$. If $[H,H]$ is of type $A_2$ then $[H,H]= H_i$, $i=1,2$. Hence ${\bf Aut}(C/K_1)= {\bf Aut}(C/K_2)$ and hence ${\bf Aut}(C/K_1)= {\bf Aut}(C/Q)$ where $Q$ denotes the quaternion subalgebra of $C$ generated by $K_1$ and $K_2$. This is a contradiction since ${\bf Aut}(C/Q)$ is of type $A_1$ (Theorem \ref{typeA2}) while ${\bf Aut}(C/K_1)$ is of type $A_2.$ Hence $[H,H]$ is of type $G_2$. Now $H \subseteq G= [G, G]= [H, H] \subseteq H$. Therefore $H= G$.
\end{proof}
\noindent
From Theorems \ref{fact}, \ref{g2gen}, we can immediately deduce that when $C$ is an octonion division algebra over a perfect (infinite) field $k$, then $G= {\bf Aut}(C)$ is generated by four $k$-tori of rank-$2$. However, we can do better:
\begin{theorem}{\label{octal}} Let $C$ be an octonion division algebra over $k$, where $k$ is a perfect (infinite) field. Then $G= {\bf Aut}(C)$ is generated by three $k$-tori of rank-$2$.
\end{theorem}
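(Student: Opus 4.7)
I will realize $G$ as a chain of generations: first build a type-$A_2$ $k$-subgroup $H$ of $G$ from two of its maximal $k$-tori, then enlarge $H$ to $G$ by adjoining one more maximal $k$-torus. Fix a quadratic \'etale $k$-subalgebra $K\subset C$ and set $H={\bf Aut}(C/K)$; by Theorem~\ref{typeA2}, $H$ is a simply connected simple $k$-group of type $A_2$, and by Lemma~\ref{repeat} together with Proposition~\ref{red1}, $H$ is $k$-anisotropic. Since both $H$ and $G$ have absolute rank $2$, every maximal $k$-torus of $H$ is a rank-$2$ $k$-torus of $G$.

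\textbf{Step 1 (two tori generate $H$).} Identify $H\cong{\bf SU}(M_3(K),*_h)$ via the Hermitian form $h$ on $K^\perp\subset C$ (Proposition~\ref{hermitian}), and choose two cubic \'etale subalgebras $L_1,L_2\subset(M_3(K),*_h)_+$ such that the associated maximal $k$-tori $T_1,T_2$ of $H$ (via Lemma~\ref{maxtorusina2}) have $\dim(T_1\cap T_2)=0$. Set $H'=\langle T_1,T_2\rangle$. By Lemma~\ref{repeat}, $H'$ is a connected reductive $k$-anisotropic non-toral subgroup of $H$ of full rank $2$, so $[H',H']$ is semisimple of rank $\leq 2$; by the Borel--De Siebenthal algorithm inside $A_2$, $[H',H']$ is of type $A_1$ or $A_2$. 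If $[H',H']$ were of type $A_1$, then $H'$ would be a Levi subgroup of type $A_1\cdot\mathbb{G}_m$ of a maximal parabolic of $H$, whose central rank-$1$ $k$-torus $Z(H')^\circ$ would be contained in every maximal $k$-torus of $H'$; in particular $Z(H')^\circ\subset T_1\cap T_2$, contradicting $\dim(T_1\cap T_2)=0$. Hence $[H',H']=H$ by dimension, and $H'=H$.

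\textbf{Step 2 (one more torus gives $G$).} Pick a maximal $k$-torus $T_3\subset G$ arising (via Proposition~\ref{maximaltorusG2}) from a pair $(L',K')$ with $K'$ a quadratic \'etale subalgebra of $C$ distinct from $K$. Then the fixed subalgebra $C^{T_3}=K'\neq K$, so $T_3\not\subset{\bf Aut}(C/K)=H$. Set $H^{\ast}=\langle H,T_3\rangle$, a connected reductive $k$-anisotropic subgroup of $G$ (Lemma~\ref{repeat}) that properly contains $H$. If $H^{\ast}\neq G$, then by Proposition~\ref{toralnh3}, $[H^{\ast},H^{\ast}]$ is of type $A_1$, $A_1\times A_1$, or $A_2$; since the simply connected simple group $H$ of type $A_2$ satisfies $H=[H,H]\subseteq[H^{\ast},H^{\ast}]$, only the $A_2$ option survives, forcing $[H^{\ast},H^{\ast}]=H$ by dimension. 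A rank comparison then gives $Z(H^{\ast})^\circ=\{1\}$ and so $H^{\ast}=H$, contradicting $T_3\not\subset H$. Therefore $H^{\ast}=G$ and $G=\langle T_1,T_2,T_3\rangle$.

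\textbf{Main obstacle.} The delicate point is guaranteeing, in Step 1, the existence of cubic \'etale subalgebras $L_1,L_2\subset(M_3(K),*_h)_+$ whose associated $K$-unitary tori $T_1,T_2$ have zero-dimensional intersection. Theorem~\ref{fact} does not apply verbatim, because the underlying central simple $K$-algebra of $H$ is the split algebra $M_3(K)$ and Proposition~\ref{coprime3} cannot be invoked to preserve anisotropy of $H$ under base change by quadratic extensions. I would instead exploit the fact that $k$ is infinite and perfect: over $\overline{k}$, the condition $\dim(T_1\cap T_2)>0$ cuts out a proper Zariski-closed subset of the variety of pairs $(L_1,L_2)$ of cubic \'etale subalgebras of $(M_3(K),*_h)_+$, so generic $k$-rational choices satisfy the required condition.
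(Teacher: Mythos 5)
Your overall architecture is sound and coincides with the paper's: two unitary maximal $k$-tori generate a type-$A_2$ subgroup ${\bf Aut}(C/K)$, and one further maximal $k$-torus (fixing a different quadratic subalgebra) enlarges it to all of $G$; your Step 2 is essentially the paper's argument for $\langle T, H_2\rangle = G$ run in the opposite order, and it is correct. The genuine divergence is in Step 1, and there you have put your finger on a real soft spot: the paper justifies ``$H_2$ is generated by two rank-$2$ $k$-tori'' by Theorem \ref{fact}, whose hypothesis (that the underlying degree-$3$ algebra is a \emph{division} algebra) fails for ${\bf Aut}(C/K)\cong{\bf SU}(M_3(K),*_h)$, and whose exclusion of the type-$A_1$ case uses precisely that hypothesis via Theorem \ref{involution}; since ${\bf Aut}(C/K)$ does contain rank-$1$ $k$-tori (inside ${\bf Aut}(C/Q)$ for quaternion $Q\supset K$), two distinct maximal $k$-tori of it really can generate only an $A_1\cdot\mathbb{G}_m$. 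Your replacement condition $\dim(T_1\cap T_2)=0$ is exactly what kills this case, because $Z(H')^{\circ}$ lies in every maximal torus of $H'$. Two caveats: first, your parenthetical that $H'$ ``would be a Levi subgroup of a maximal parabolic'' is false for the anisotropic group $H$ (there are no proper $k$-parabolics), though it is also unnecessary — the containment $Z(H')^{\circ}\subseteq T_1\cap T_2$ holds for any connected reductive $H'$ of full rank; second, the genericity argument should be made precise, e.g.\ fix one maximal $k$-torus $T_1$, note that $\{g\in H:\dim(T_1\cap gT_1g^{-1})=0\}$ is open and non-empty over $\overline{k}$ (tori in general position meet in the finite centre), and use that $H(k)$ is Zariski-dense since $H$ is connected reductive and $k$ is infinite; then $T_2=gT_1g^{-1}$ for suitable $g\in H(k)$ is again a maximal $k$-torus, necessarily unitary by Theorem \ref{imp}. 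With those two repairs your proof is complete and, at this sub-step, more careful than the paper's.
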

\begin{proof}
The algebraic group  $G={\bf Aut}(C)$ is a connected, simple algebraic group of type $G_2$, in particular, $G$ has absolute rank-$2$. By Theorem \ref{g2gen}, $G$ is generated by two subgroups  $H_i,~i=1,2$, of type $A_2$ with $H_1 \neq H_2$. Choose a maximal $k$-torus $T \subseteq H_1$ such that $T \nsubseteq H_2$. Let $H= <T, H_2>$ be the (closed) subgroup generated by $T$ and $H_2$. Since $C$ is a division algebra, $G$ is $k$-anisotropic (Prop. \ref{red1}). By Lemma \ref{repeat}, $H$ is a connected reductive $k$-anisotropic non-toral subgroup of $G$ containing $H_2$ properly. Using same arguments as in Theorem \ref{g2gen}, it follows that $[H,H]$ must be of type $A_2$ or $G_2$. If $[H,H]$ is of type $A_2$, then $[H,H]= H_2$ (since $H_2= [H_2, H_2] \subseteq [H, H]$ and both are of type $A_2$). Now $H= [H, H].{Z(H)}^o= H_2.{Z(H)}^o$ and $Z(H)= \cap{T_i}$, $T_i$'s are maximal tori  of $H$ (\cite{BA1}, \S 13.17, Cor. 2). Since any maximal torus in $H_2$ is maximal in $H$ we have, $Z(H)\subset H_2$. Hence $H= H_2$ and $T\subseteq H_2$, contradicting the choice of $T$. Therefore $[H,H]$ is of type $G_2$. Now $H \subseteq G= [G, G]= [H, H] \subseteq H$. Hence $H= G$. The result now follows since $H_2$ is itself generated by two rank-$2$ $k$-tori.
\end{proof}
\noindent
One can derive similar results for  groups of type $D_4 \subseteq F_4$ and $F_4$. We now find the number of rank-$2$ tori required to generate  $D_4 \subseteq F_4$ and $F_4$ type groups arising from division algebras.

\begin{theorem}{\label{xx}} Let $A$ be an Albert division algebra over a perfect (infinite) field $k$ and $G= {\bf Aut}(A)$. Let $H= {\bf Aut}(A/L)$ where $L$ is a $3$-dimensional subalgebra of $A$. Then $H$ is generated by three rank-2 tori over $k$.
\end{theorem}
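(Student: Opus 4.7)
By Proposition \ref{D4}, $H = {\bf Aut}(A/L)$ is a simply connected, simple $k$-group of type $D_4$, of absolute rank $4$. Since $A$ is an Albert division algebra, $G = {\bf Aut}(A)$ is $k$-anisotropic by Proposition \ref{red}, and hence $H$ is $k$-anisotropic by Lemma \ref{repeat}. The plan parallels the proof of Theorem \ref{octal}: exhibit an $A_2$-type $k$-subgroup $H_1 \subseteq H$ arising from a central division algebra---so that $H_1$ is itself generated by two rank-$2$ $k$-tori via Theorem \ref{fact}---and then produce a third rank-$2$ $k$-torus $T_3 \subset H$ with $T_3 \not\subseteq H_1$ such that $\langle H_1, T_3 \rangle = H$.

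To construct $H_1$, note that since $A$ is division and $L \subset A$ is a cubic \'etale subalgebra, one may write $A \cong J(B, \sigma, u, \mu)$ as a second Tits construction, with $B$ a degree-$3$ central division algebra over a quadratic field extension $K$ of $k$, $\sigma$ an involution of the second kind, and $L \subseteq (B,\sigma)_+$. The embedding ${\bf SU}(B, \sigma) \hookrightarrow G$ of \S 2.4 realises ${\bf SU}(B, \sigma)$ as the pointwise stabiliser of $(B,\sigma)_+$, and therefore of $L$, so $H_1 := {\bf SU}(B, \sigma) \subseteq H$ is an $A_2$-type $k$-subgroup arising from the division algebra $B$; Theorem \ref{fact} then furnishes rank-$2$ tori $T_1, T_2 \subset H_1$ generating $H_1$. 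For $T_3$, I would take a maximal $k$-torus of a second $A_2$-subgroup $H_1' := {\bf Aut}(A/S') \subseteq H$ coming from a different $9$-dimensional cubic Jordan subalgebra $S' \supset L$ of $A$ (produced, for instance, via a different second Tits presentation of $A$ adapted to $L$), chosen so that $T_3 \not\subseteq H_1$.

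Set $M := \langle H_1, T_3 \rangle$. By Lemma \ref{repeat}, $M$ is connected, reductive and $k$-anisotropic, and it contains $H_1$ properly. Thus $[M,M]$ is a $k$-anisotropic semisimple subgroup of the $D_4$-group $H$ containing an $A_2$. A Borel--De Siebenthal analysis lists the connected semisimple subgroups of $D_4$ containing an $A_2$-subsystem as $A_2$, $A_3$, $G_2$, $B_3$, $A_3 \times A_1$, and $D_4$ itself (no $D_4$-root is orthogonal to a given $A_2$-subsystem, which rules out $A_1 \times A_2$). If $[M,M] = A_2$, then $M = H_1 \cdot Z(M)^{\circ}$ with a non-trivial anisotropic central torus commuting with $H_1$; this is to be excluded by computing $Z_H(H_1)$ through the $A_2 \times A_2$-structure of ${\bf Aut}(A,(B,\sigma)_+)$ given by Theorem \ref{typeA}, whose second $A_2$-factor shares no non-central torus with $H_1$. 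The remaining possibilities $A_3$, $A_3 \times A_1$, $B_3$, $G_2$ are to be eliminated by combining the rank constraint with the observation that an anisotropic subgroup of such a type containing the prescribed $A_2 = H_1$ would force a composition subalgebra or a degree-$4$ division subalgebra in $A$ incompatible with the second Tits presentation of $A$ relative to $L$. This forces $[M,M] = D_4$, so $M = H$ and $H = \langle T_1, T_2, T_3 \rangle$.

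The principal obstacle is the classification step, and within it the elimination of the $A_3$, $B_3$, $G_2$ possibilities: these require a careful interplay of the triality on $D_4$, the rigidity of the second Tits construction of $A$ adapted to $L$, and an explicit description of $Z_H({\bf SU}(B,\sigma))$. A judicious choice of $T_3$---one that provably lies in no proper intermediate semisimple subgroup of $H$ containing $H_1$---will be the technical crux on which the whole argument rests.
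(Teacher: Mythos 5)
Your overall strategy --- an $A_2$-subgroup of $H$ arising from a division algebra (hence generated by two rank-$2$ $k$-tori by Theorem \ref{fact}), plus one further maximal torus, followed by a classification of the subgroup they generate --- is the same as the paper's, but there are three genuine gaps. First, the construction of $H_1$ is incorrect as stated: the embedding ${\bf SU}(B,\sigma)\hookrightarrow G$ of \S 2.4 sends $p$ to $\phi_p:(x,y)\mapsto(px\sigma(p),py)=(pxp^{-1},py)$, which acts on $(B,\sigma)_+$ by conjugation and therefore does \emph{not} fix $(B,\sigma)_+$ (nor $L$) pointwise; that copy of ${\bf SU}(B,\sigma)$ is not contained in $H={\bf Aut}(A/L)$. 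The paper instead takes $H_i={\bf Aut}(A/S_i)$ for two $9$-dimensional subalgebras $S_i\supset L$ with $S_1\cap S_2=L$, using the earlier result (\cite{NH}, Theorem 3.9) that these two subgroups already generate $H$.

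Second, and most seriously, your classification step does not close. A purely root-theoretic (Borel--De Siebenthal) analysis inside $D_4$ leaves the types $A_3$, $B_3$, $G_2$, $A_3\times A_1$ on the table, and you concede you cannot eliminate them. The paper does not attempt to: it invokes (\cite{NH}, Theorem 3.10), packaged here as Lemma \ref{ntoralnh3}, which says that for $A$ an Albert \emph{division} algebra the derived group of any connected reductive non-toral $k$-subgroup of ${\bf Aut}(A)$ has type $A_2$, $A_2\times A_2$, $D_4$ or $F_4$; combined with $A_2\times A_2\nsubseteq D_4$ this leaves only $A_2$ or $D_4$. Without this arithmetic input (or a substitute for it) your argument is incomplete. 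Third, the residual $A_2$ case is only sketched, and your condition on the third torus ($T_3\nsubseteq H_1$) is too weak: a torus outside $H_1$ but inside ${\bf Aut}(A,S_1)$ could still generate, together with $H_1$, a reductive group whose derived group is just $H_1$. The paper chooses the extra torus $S\subseteq H_1$ with the stronger property $S\nsubseteq{\bf Aut}(A,S_2)$ (such a choice exists, since otherwise $H\subseteq{\bf Aut}(A,S_2)$, impossible as $D_4\nsubseteq A_2\times A_2$), and then runs a careful analysis of $Z(H_0)^{\circ}$ and of its projections to $S$ and $H_2$, using that ${\bf Aut}(A)$ has no rank-$1$ $k$-tori (Theorem \ref{involution}), to reach a contradiction. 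All three points would need to be supplied before your proof is complete.
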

\noindent
For the proof of this theorem we need the following,
\begin{lemma}{\label{ntoralnh3}}Let $A$ be an Albert division algebra over a field $k$. Let  Let $H$ be a subgroup of $G$ of type $D_4$ and $H_0 \subseteq H$ be a non-toral reductive $k$-subgroup. Then $[H_0,H_0]$ is of type $A_2$ or $D_4$.
\end{lemma}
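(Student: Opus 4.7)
The plan combines the anisotropy of $G$ inherited from $A$ being Albert division, a Borel-de Siebenthal enumeration of the semisimple subsystems of the root system of type $D_4$, and Tits's coprime-to-$3$ descent.

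First, since $A$ is Albert division, Proposition \ref{red} gives that $G=\mathbf{Aut}(A)$ is $k$-anisotropic; hence $H\subset G$ is $k$-anisotropic, and by Lemma \ref{repeat}, so is the connected reductive subgroup $H_0$. Being non-toral, $H_0$ has a non-trivial semisimple derived subgroup $[H_0,H_0]$, which is $k$-anisotropic and, over $\bar k$, corresponds to a closed semisimple subsystem of the root system of type $D_4$. By Borel-de Siebenthal, the possibilities (up to conjugacy) are $A_1$, $A_1\times A_1$, $A_2$, $A_1^3$, $A_3$, $A_1^4$, and $D_4$; the task is to rule out every type other than $A_2$ and $D_4$.

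The uniform exclusion template is: for each forbidden type, one realises $[H_0,H_0]$ as a Weil restriction $R_{L_0/k}(\mathbf{SL}_1(D))$ (in the $A_1^n$ cases) or as $\mathbf{SL}_1(B)$/$\mathbf{SU}(B,\sigma)$ (in the $A_3$ case) for a suitable central simple algebra; one then produces a splitting field $M/k$ of degree coprime to $3$ over which $\mathbf{SL}_2$ (resp.\ $\mathbf{SL}_4$) embeds into $G\otimes M$. By Proposition \ref{coprime3}, $A\otimes M$ remains Albert division, so $G\otimes M$ is $M$-anisotropic by Proposition \ref{red}, contradicting that embedding. A $k$-isotropic $A_1$-factor would moreover furnish a split rank-$1$ $k$-torus in $G$, already contradicting Theorem \ref{involution}(2) and giving a shortcut. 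In this way, types $A_1$, $A_1\times A_1$, $A_1^4$, and $A_3$ are excluded using splitting fields of degree in $\{2,4,8\}$, all coprime to $3$.

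The main technical obstacle is the type $A_1^3$ case when the associated étale $L_0/k$ is a cubic field, since the natural splitting field $L_0\otimes K$ has degree $6$, not coprime to $3$. To handle this I would exploit the explicit structure of $H=\mathbf{Aut}(A/L)$ from Proposition \ref{D4}: because $A$ is division, $L\subset A$ is necessarily a cubic field extension of $k$, and $H$ is the spin group of an $8$-dimensional quadratic form attached to the $L$-hermitian module $L^\perp\subset A$. An $A_1^3$-subgroup would force a decomposition of this $8$-dimensional form compatible with $[H_0,H_0]$, which the anisotropic $L$-hermitian structure coming from an Albert division algebra prohibits. Alternatively, base changing to the discriminant $\delta(L_0)/k$ (a quadratic extension, hence coprime to $3$) reduces the $A_1^3$-case to the $A_1\times A_1$ or split configurations already dispatched by the template. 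Once all types other than $A_2$ and $D_4$ are eliminated, the lemma follows.
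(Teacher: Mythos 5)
Your proposal takes a genuinely different route from the paper, and as written it has two real gaps. The paper's own proof is essentially a citation: it invokes (\cite{NH}, Theorem 3.10), which already asserts that for $A$ an Albert \emph{division} algebra every non-toral reductive $k$-subgroup of ${\bf Aut}(A)$ has derived group of type $A_2$, $A_2\times A_2$, $D_4$ or $F_4$, and then disposes of $A_2\times A_2$ (and $F_4$) by noting $A_2\times A_2\nsubseteq D_4$ via Borel--De Siebenthal. You are instead trying to reprove the hard input from scratch inside $D_4$, which is a legitimate ambition but requires more than your sketch delivers.

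First, your enumeration is wrong at the start: the type of $[H_0,H_0]$ need not be a closed subsystem of the root system of $D_4$, because $H_0$ is not assumed to have maximal rank. Borel--De Siebenthal only classifies connected subgroups of maximal rank; $\mathrm{Spin}_8$ also contains semisimple subgroups of types $B_3$, $B_2$ and $G_2$ (e.g.\ $\mathrm{Spin}_7$, $\mathrm{Spin}_5$, and the stabilizer of a generic vector in a half-spin representation), none of which appear on your list. These can in fact be excluded by the same anisotropy template (forms of $B_n$ and $G_2$ become isotropic over quadratic extensions), but they must be addressed; this is exactly the content that the cited Theorem 3.10 of \cite{NH} packages up. Second, and more seriously, your treatment of the $A_1^3$ case with $L_0$ a cubic field does not close. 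The claim that base changing to the discriminant $\delta(L_0)$ ``reduces to the $A_1\times A_1$ or split configurations'' is false: by Proposition \ref{disccubic} the decomposition $L_0\otimes L_0\cong L_0\times(L_0\otimes\delta(L_0))$ occurs only after base change to $L_0$ itself, which has degree $3$; over $\delta(L_0)$ the algebra $L_0\otimes\delta(L_0)$ is still a cubic field (cyclic if $L_0$ was not already Galois), so the group remains of ``cubic $A_1^3$'' shape and no coprime-to-$3$ splitting field is produced. Your alternative appeal to ``the anisotropic $L$-hermitian structure'' of $L^\perp\subset A$ is not an argument --- no contradiction is actually derived from the putative decomposition of the $8$-dimensional form. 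A workable substitute here is Theorem \ref{involution}(1): a $k$-group with an $A_1$-factor over an \'etale algebra carries nontrivial $k$-rational elements of order $2$ (central $-1$'s, or images of trace-zero quaternions in the adjoint case), whereas ${\bf Aut}(A)(k)$ has none by Jacobson's theorem when $A$ is division; but some such argument must be supplied, and in its absence the $A_1^3$ case --- the one you yourself flag as the main obstacle --- is simply not proved.
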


\begin{proof} By (\cite{NH}, Theorem 3.10) $[H_0, H_0]$ is of type $A_2$, $A_2 \times A_2$ or $D_4$. By the Borel-De Siebenthal algorithm, $A_2 \times A_2 \nsubseteq D_4$ and hence $[H_0, H_0]$ must be of type $A_2$ or $D_4$.
\end{proof}
\noindent
We now prove Theorem \ref{xx}.
\begin{proof} By (\cite{NH} Theorem 3.9), $H= <H_1, H_2>$, where $H_i= {\bf Aut}(A/S_i)$ where $S_i$ are $9$-dimensional subalgebras of $A$ with $S_1 \cap S_2= L$.  Note that $H_1 \cap H_2= \{1\}$. By Theorem \ref{typeA}, $H_i$, $i=1, 2$, is simple, simply connected subgroup of type $A_2$. Also $H_i$, $i=1, 2,$ arise from division algebras.  \vskip0.5mm
\noindent
{\bf Claim:} We can choose a maximal torus $S\subseteq H_1$ such that $S \nsubseteq {\bf Aut}(A, S_2).$
\vskip0.5mm
\noindent
 If not, then $H_1 \subseteq {\bf Aut}(A, S_2)$ (since $H_1$ is generated by its maximal $k$-tori). Note that $H_2= {\bf Aut} (A/S_2) \subseteq {\bf Aut}(A, S_2)$. Hence $H \subseteq {\bf Aut}(A, S_2)$, a contradiction, since $D_4 \nsubseteq A_2 \times A_2$. Thus we can choose a maximal $k$-torus $S \subseteq H_1$ such that $S  \nsubseteq {\bf Aut}(A, S_2)$. Let $H_0:= <S, H_2> \subseteq H$. Then, by Theorem \ref{fact}, $H_0$ is generated by three rank $2$ $k$-tori. We will prove that $H_0= H$. By Lemma \ref{repeat}, $H_0$ is a connected reductive, $k$-anisotropic, non-toral subgroup of $G$ containing $S$ and $H_2$ properly. By Lemma \ref{ntoralnh3},
$[H_0, H_0]$ is of type $A_2$ or $D_4$. If $[H_0, H_0]$ is of type $A_2$, then $ H_2= [H_2, H_2]= [H_0, H_0]$ (since $H_2$ is of type $A_2$). This shows that $H_2$ is a normal subgroup of $H_0$. Also $S \cap H_2= \{1\}$, hence $H_0= <S, H_2>= S.H_2$. Now $$H_0=  [H_0, H_0]. Z(H_0)^o= H_2.Z(H_0)^o.$$ Consider the projection maps $\tau$ and $\tau'$ given by,
$$Z(H_0)^o\subseteq H_0= S. H_2 \xrightarrow{\tau} S, H_0= S. H_2 \xrightarrow{\tau'} H_2.$$
Since $H_0 \neq H_2$, we have $Z(H_0)^o \neq \{1\}$. Since $A$ is a division algebra, ${\bf Aut}(A)$ does not have rank-$1$ $k$-tori (Theorem \ref{involution}). Hence $Z(H_0)^o$ is a rank-$2$ $k$-torus. Since $\tau(Z(H_0)^o)$ is connected, $\tau(Z(H_0)^o)= S$ or $\{1\}$. If $\tau(Z(H_0)^o)= \{1\}$, then $Z(H_0)^o \subseteq H_2$, hence $H_0= H_2$, a contradiction, since $S \cap H_2 =\{1\}$. Therefore $\tau(Z(H_0)^o)= S$.\\
\noindent
Let $H'= \tau'({Z(H_0)}^o)$. Note that $1 \in H'$.
\noindent
 If $H'= \{1\}$ then $Z(H_0)^o= S$. Since $ Z(H_0)^o$ centralizes $H_2$, we see that $Z(H_0)^o$ stabilizes $A^{H_2}$. Therefore $S \subseteq {\bf Aut}(A, S_2)$, a contradiction. Hence $H' \neq \{1\}$. 
\vskip0.5mm
\noindent
{\bf  Claim:} $H'$ is a rank-$2$ $k$-torus of $H_2$. 
\vskip0.5mm
\noindent
We have, for $s_i h_i \in  Z(H_0)^o,~(s_1h_1)(s_2h_2)= s_2(s_1h_1)h_2= (s_1s_2)(h_1h_2)$.  
Hence $\tau'$ is a homomorphism. It follows that $H'= \tau'(Z(H_0)^o)$ is a $k$-torus. Now since $H_2$ does not have any rank-$1$ $k$-tori (Theorem \ref{involution}) and $H' \neq \{1\}$, $H'$ is a rank-$2$ $k$-torus of $H_2$.
\vskip0.5mm
\noindent
{\bf Claim:} $S$ centralizes $H'$. 
\vskip0.5mm
\noindent
Let $s \in S$ and $h \in H'$. Since $h \in H'$, there exists $s_0 \in S$ such that $s_0h \in  Z(H_0)^o$.  Since $s_0h \in  Z(H_0)^o$, we have, $$shs^{-1}= ss_0^{-1}s_0hs^{-1}= s_0hss_0^{-1}s^{-1}= s_0hs_0^{-1}= h.$$
Hence $S$ centralizes $H'$ and therefore $S$ stabilizes $A^{H'}$. Since $A^{H'}=S_2$, we have $S \subseteq {\bf Aut}(A, S_2)$,
 a contradiction. Hence $[H_0, H_0]$ cannot be of type $A_2$. Therefore $[H_0, H_0]$ is of type $D_4$. Now $H_0 \subseteq H= [H, H]= [H_0, H_0] \subseteq H_0$. Therefore $H= H_0$ and $H$ is generated by three rank-2 tori over $k$. 
\end{proof}
\begin{theorem} Let $A$ be an Albert division algebra over a perfect (infinite) field $k$. Then $G= {\bf Aut}(A)$ is generated by four rank-2 tori over $k$.
\end{theorem}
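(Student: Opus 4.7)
The plan is to exhibit $G = \langle H, T\rangle$ where $H = \mathbf{Aut}(A/L)$ is the subgroup of type $D_4$ attached (via Proposition \ref{D4}) to a cubic étale subalgebra $L\subset A$, and $T$ is a single rank-$2$ $k$-torus; combined with the three rank-$2$ $k$-tori generating $H$ (Theorem \ref{xx}), this yields four. Since $A$ is a division algebra, no cubic étale subalgebra of $A$ can contain non-trivial zero divisors, so every such subalgebra is a cubic \emph{field} extension of $k$; fix such an $L$.

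To produce $T$, choose a $9$-dimensional cubic separable Jordan subalgebra $S$ of $A$ with $L\nsubseteq S$; such an $S$ exists, e.g.\ by writing $A$ as a second Tits construction $J(B,\sigma,u,\mu)$ and taking $S \cong (B,\sigma)_+$, choosing the construction so that $L$ lies outside $S$. By Theorem \ref{typeA}, $H_S := \mathbf{Aut}(A/S)$ is simple, simply connected of type $A_2$. If $H_S$ were contained in $H$ then $H_S$ would fix $L$ pointwise, forcing $L \subseteq A^{H_S} = S$ and contradicting our choice; therefore some maximal $k$-torus $T$ of $H_S$, automatically of rank $2$, fails to lie in $H$.

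Set $H_0 := \langle H, T\rangle$, a closed connected $k$-subgroup of $G$ properly containing $H$. Since $A$ is division, $G$ is $k$-anisotropic (Theorem \ref{involution}), so Lemma \ref{repeat} makes $H_0$ connected, reductive and $k$-anisotropic. As $H$ already contains a maximal torus of $G$ (both $H$ and $G$ have absolute rank $4$), the derived subgroup $[H_0,H_0]$ is semisimple of rank $4$ containing the $D_4$-subgroup $H$. A Borel--de Siebenthal analysis of $F_4$ shows that the only rank-$4$ root subsystems of $F_4$ containing $D_4$ are $D_4$, $B_4$ and $F_4$ themselves, so $[H_0,H_0]$ must be of one of these three types.

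If $[H_0,H_0]$ is of type $D_4$, then $[H_0,H_0] = H$ by dimension; a maximal torus of $H$ is already maximal in $H_0$, so $Z(H_0)^\circ$ is contained in $Z(H)$ and hence trivial, giving $H_0 = H$ and contradicting $T\nsubseteq H$. The main obstacle is to rule out $M := [H_0,H_0]$ of type $B_4$. Over $\bar k$, $M_{\bar k}\cong\mathbf{Spin}_9$ sits inside $F_4$ as the stabilizer of a primitive idempotent of $A_{\bar k}$, and its fixed subspace in $A_{\bar k}$ is the $2$-dimensional unital subalgebra spanned by $1$ and that idempotent. Hence $V := A^M$ is a $2$-dimensional unital Jordan $k$-subalgebra of $A$. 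Since $M\supseteq H$, we have $V\subseteq A^H = L$; but by the tower law every $k$-subalgebra of the cubic field $L$ has degree dividing $3$, ruling out $\dim V = 2$. This contradiction forces $[H_0,H_0] = F_4 = G$, so $H_0 = G$, and $G$ is generated by the four rank-$2$ $k$-tori described above.
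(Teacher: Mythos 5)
Your proof is correct and follows the same basic strategy as the paper's: generate a $D_4$-subgroup $H=\mathbf{Aut}(A/L)$ by three rank-$2$ $k$-tori via Theorem \ref{xx}, then show that adjoining one further rank-$2$ torus $T\nsubseteq H$ already generates $G$. The differences are in the two supporting steps. First, the paper obtains $T$ from a \emph{second} $D_4$-subgroup $\mathbf{Aut}(A/L')$ (after a lemma showing two distinct such subgroups generate $G$), whereas you extract $T$ from an $A_2$-subgroup $\mathbf{Aut}(A/S)$ with $L\nsubseteq S$; both work. Second, and more substantially, the paper disposes of the intermediate possibilities for $[H_0,H_0]$ by citing its earlier classification (\cite{NH}, Theorem 3.10), which lists only $A_2$, $A_2\times A_2$, $D_4$, $F_4$ and hence never has to confront $B_4$; you instead run Borel--de Siebenthal directly, which forces you to exclude $B_4$ by hand. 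Your exclusion --- a connected maximal-rank subgroup of type $B_4$ is, over $\bar k$, the subsystem copy of $\mathbf{Spin}_9$ stabilizing a primitive idempotent, so by Galois descent its fixed algebra $A^M$ is a $2$-dimensional unital subalgebra sitting inside $A^H=L$, impossible since $L$ is a cubic field --- is sound and makes the argument self-contained where the paper's is not. The $D_4$ case is handled by the same center/maximal-torus argument as in the paper.

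Two steps are asserted rather than proved and deserve a line each. The existence of a $9$-dimensional separable subalgebra $S$ with $L\nsubseteq S$ is true but your justification ("choosing the construction so that $L$ lies outside $S$") is circular as stated; a clean fix is to note that $\phi\mapsto(\phi^{-1}(L)\subseteq S_0)$ is a closed condition on $G$ which visibly fails over $\bar k$, so by Zariski-density of $G(k)$ some $k$-automorphism moves a fixed $S_0$ off $L$. Likewise, the claim that the only rank-$4$ closed subsystems of $F_4$ containing the (long-root) $D_4$ are $D_4$, $B_4$ and $F_4$, and that the corresponding $B_4$-subgroup is the idempotent stabilizer with $2$-dimensional fixed space, are standard but should be referenced; as stated they are used as black boxes playing exactly the role that (\cite{NH}, Theorem 3.10) plays in the paper.
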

\noindent
We first prove the following lemma,
\begin{lemma} Let $A$ be an Albert division algebra over a perfect (infinite) field $k$ and $G= {\bf Aut}(A)$.  Let $H_i:= {\bf Aut}(A/L_i) \subseteq G$, $i= 1, 2$, where $L_1 \neq L_2$ are $3$-dimensional subalgebras of $A$. Then $G$ is generated by $H_i$, $i= 1, 2$. 
\end{lemma}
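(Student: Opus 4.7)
The plan is to set $H := \langle H_1, H_2 \rangle \subseteq G$ and show $H = G$ by a Borel--de Siebenthal analysis of $[H,H]$, together with a fixed-point argument ruling out the intermediate type $B_4$. Since each $H_i$ is connected of type $D_4$ (Proposition \ref{D4}), $H$ is connected; since $A$ is an Albert division algebra, $G$ is $k$-anisotropic by Proposition \ref{red}, and Lemma \ref{repeat} then makes $H$ reductive and $k$-anisotropic. I first observe that $L_1 \neq L_2$ implies $H_1 \neq H_2$: using the branching of the $26$-dimensional representation of $F_4$ under $D_4 = \mathbf{Spin}_8 \subset F_4$, namely $\mathbf{1}^{\oplus 2} \oplus \mathbf{8}_v \oplus \mathbf{8}_s \oplus \mathbf{8}_c$, the fixed subspace $A^{H_i}$ has $k$-dimension $3$, and since $L_i \subseteq A^{H_i}$ is itself $3$-dimensional, $A^{H_i} = L_i$, so $L_i$ is recovered from $H_i$.

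Consequently $H$ properly contains $H_1$. The derived group $[H,H]$ is connected semisimple, contains $H_1 = [H_1, H_1]$ of type $D_4$ and rank $4$, and sits inside $G$ of rank $4$; hence $[H,H]$ has rank exactly $4$, a maximal torus of $H$ lies inside $[H,H]$, and $Z(H)^{\circ}$ is central in the semisimple group $[H,H]$, hence trivial. So $H = [H,H]$ is semisimple of rank $4$. By the Borel--de Siebenthal algorithm (\cite{BS}), the maximal proper connected semisimple subgroups of $F_4$ of rank $4$ are of types $B_4$, $A_1 \times C_3$, and $A_2 \times A_2$; among these only $B_4$ contains a $D_4$ subsystem, so the only rank-$4$ semisimple subgroups of $F_4$ containing a $D_4$ are of types $D_4$, $B_4$, or $F_4$ itself.

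Type $D_4$ is ruled out since it would force $H = H_1 = H_2$, contradicting $H_1 \neq H_2$. To rule out $B_4$, I will use the branching $A|_{B_4} = \mathbf{1}^{\oplus 2} \oplus \mathbf{9} \oplus \mathbf{16}$ of the $27$-dimensional representation of $F_4$ under $B_4 = \mathbf{Spin}_9 \subset F_4$; this yields a $2$-dimensional fixed subspace $A^H$. Since $H$ acts by Jordan algebra automorphisms, $A^H$ is a unital $2$-dimensional $k$-subalgebra of $A$. But no such subalgebra can exist in an Albert division algebra: for every $a \in A \setminus k$ one has $N(r \cdot 1 - a) = r^3 - T(a) r^2 + S(a) r - N(a)$ for $r \in k$, so a rational root $r$ of the reduced characteristic polynomial would force $N(r \cdot 1 - a) = 0$ and hence $r \cdot 1 = a \in k$, a contradiction; thus the reduced characteristic polynomial is irreducible, $k[a]$ is a cubic field, and every unital subalgebra of $A$ containing an element outside $k$ has $k$-dimension at least $3$. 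This contradiction forces $H$ to be of type $F_4$, and then $H = G$ by connectedness and equality of rank. The main obstacle is the $B_4$ exclusion, which rests jointly on the representation-theoretic branching of the $27$-dimensional representation and on the subalgebra-dimension constraint in Albert division algebras.
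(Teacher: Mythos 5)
Your proof is correct, but it takes a genuinely different route from the paper's. The paper disposes of the whole classification step in one line by invoking Theorem 3.10 of \cite{NH}, which already asserts that for a connected reductive non-toral $k$-subgroup $H$ of ${\bf Aut}(A)$, $A$ an Albert division algebra, the type of $[H,H]$ lies in $\{A_2,\ A_2\times A_2,\ D_4,\ F_4\}$; since $D_4\not\subseteq A_2, A_2\times A_2$ and a $D_4$ inside a $D_4$ would force $H_1=H_2$, the conclusion follows immediately. You instead redo the relevant part of that classification from scratch: after showing $Z(H)^{\circ}$ is trivial (so $H=[H,H]$ is semisimple of rank $4$), you run Borel--de Siebenthal \cite{BS} to see that the only rank-$4$ closed subsystems of $F_4$ containing the (long-root) $D_4$ are $D_4$, $B_4$ and $F_4$, and you eliminate $B_4$ by the branching $\mathbf{1}^{\oplus 2}\oplus\mathbf{9}\oplus\mathbf{16}$ of the $27$-dimensional representation under ${\bf Spin}_9$, which would produce a $2$-dimensional unital subalgebra $A^{H}$ --- impossible in an Albert division algebra because the reduced characteristic polynomial of any $a\notin k$ is an irreducible cubic (a rational root $r$ would give $N(r\cdot 1-a)=0$, hence $a=r\cdot 1$). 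All the individual steps check out (the descent $\dim_k A^H=\dim_{\bar k}(A\otimes\bar k)^{H_{\bar k}}$, the triviality of $Z(H)^\circ$, and the subsystem enumeration), so what you lose in brevity you gain in self-containedness: your argument does not depend on the external classification theorem, and as a bonus you actually verify the implication $L_1\neq L_2\Rightarrow H_1\neq H_2$ (via $A^{H_i}=L_i$), a point the paper's proof uses without comment.
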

\begin{proof} Let $H=<H_1, H_2>$. By Lemma \ref{repeat}, $H$ is a connected, reductive, $k$-anisotropic, non-toral subgroup of $G$. By (\cite{NH}, Theorem 3.10), $[H, H]$ is of type $A_2$, $A_2 \times A_2$, $D_4$ or $F_4$. Since $D_4 \nsubseteq A_2, A_2 \times A_2$,  $[H, H]$ is of type  $D_4$ or $F_4$. If $[H, H]$ is of type  $D_4$, then $H_i= [H_i,H_i] \subseteq [H, H]$,  $i= 1, 2$ and $H_i$, $i= 1, 2$, is of type $D_4$, hence $H_i= [H, H]$, $i= 1, 2$, a contradiction since $H_1 \neq H_2$ . Therefore $[H, H]$ is of type $F_4$. Hence $H= G$.
\end{proof}
\noindent
We now give the proof of the above theorem
\begin{proof} By the above lemma, $G= <H_1, H_2>$, $H_i= {\bf Aut}(A/L_i)$ where $L_i,~i=1,2$, are three dimensional subalgebras. Choose a rank-$2$-torus $T \subseteq H_1$ such that $T \nsubseteq H_2$ (otherwise $H_1= H_2$ since $H_i$'s are generated by their rank-$2$ tori). Let $H= <T, H_2>$. By Lemma \ref{repeat}, $H$ is a connected, reductive, $k$-anisotropic, non-toral subgroup of $G$. By (\cite{NH}, Theorem 3.10) the possible types of $[H, H]$ are $A_2$, $A_2 \times A_2$, $D_4$ or $F_4$. Now $H_2= [H_2, H_2] \subseteq [H, H]$.  Since $H$ contains $H_2$ properly, $[H, H]$ cannot be of type  $A_2$ or $A_2 \times A_2$. Suppose $[H, H]$ is of type  $D_4$. Then $H_2 = [H, H]$. Now $H= [H, H].Z(H)^o= H_2.Z(H)^o$. Since the rank of maximal tori of $H$ and $H_2$ is four we have, $Z(H)^o= \{1\}$. Hence $H= H_2$, a contradiction. Therefore $[H, H]$ is of type $F_4$ and $H= G$. 
\end{proof}
\section{\bf Acknowledgement}
I thank the Council of Scientific and Industrial Research, Govt. of India, for its financial support. This research is part of my Ph.D. work and was supported by 
the C.S.I.R. fellowship. I thank Maneesh Thakur for suggesting the problem and his constant guidance throughout the project. I express my thanks to Shripad M. Garge and Dipendra Prasad for many fruitful discussions.

\end{document}